\documentclass[dvipdfmx]{article}

\usepackage[margin=30truemm]{geometry}
\usepackage[all]{xy} 
\usepackage{amsmath,amssymb,amsthm,bm,setspace,mathrsfs}
\numberwithin{equation}{section}
\DeclareMathOperator{\Par}{Par}
\DeclareMathOperator{\Hom}{Hom}
\DeclareMathOperator{\cl}{cl}
\DeclareMathOperator{\wt}{wt}
\DeclareMathOperator{\id}{id}
\DeclareMathOperator{\af}{af}
\DeclareMathOperator{\Ker}{Ker}
\DeclareMathOperator{\aff}{aff}
\DeclareMathOperator{\ad}{ad}

\DeclareMathOperator{\gch}{gch}
\DeclareMathOperator{\re}{re}
\DeclareMathOperator{\SiB}{SiB}
\theoremstyle{definition} 
\newtheorem{theorem}{Theorem}[section]
\newtheorem{proposition}[theorem]{Proposition}
\newtheorem{definition}[theorem]{Definition}
\newtheorem{remark}[theorem]{Remark}

\newtheorem{corollary}[theorem]{Corollary}
\newtheorem{lemma}[theorem]{Lemma}

\title{Branching rules for level-zero extremal weight modules from $U_q(\widehat{\mathfrak{sl}}_{n+1})$ to $U_q(\widehat{\mathfrak{sl}}_n)$}

\author{Shutaro Nakaoka}
\begin{document}
\maketitle

\begin{abstract}
In this paper, we study the structure of a $U_q(\widehat{\mathfrak{sl}}_n)$-module $\Psi_{\varepsilon}^* V(\lambda)$, where $V(\lambda)$ is the extremal weight module of level-zero dominant weight $\lambda$ over the quantum affine algebra $U_q(\widehat{\mathfrak{sl}}_{n+1})$ and $\Psi_{\varepsilon}: U_q(\widehat{\mathfrak{sl}}_n) \to U_q(\widehat{\mathfrak{sl}}_{n+1})$ is an injective  algebra homomorphism. We establish a direct sum decomposition $\Psi_{\varepsilon}^* V(\lambda) \cong M_{0,\varepsilon} \oplus \cdots \oplus M_{m,\varepsilon}$, where $M_{0,\varepsilon}$ and $M_{m,\varepsilon}$ are isomorphic to a tensor product of an extremal weight module over $U_q(\widehat{\mathfrak{sl}}_n)$ and a symmetric Laurent polynomial ring. Moreover, when $\lambda$ is a multiple of a level-zero fundamental weight, we show that $\Psi_{\varepsilon}^* V(\lambda)$ is isomorphic to a direct sum of extremal weight modules.
\end{abstract}

\tableofcontents

\section{Introduction}

Let $\mathfrak{g}$ be a Kac-Moody algebra, and let $U_q(\mathfrak{g})$ denote its associated quantized enveloping algebra, introduced by Drinfeld \cite{Dri} and Jimbo \cite{Jim}. In this paper, we focus on \textit{quantum affine algebras}, that is, we mainly consider the case where $\mathfrak{g}$ is of affine type \cite{Kac}. Quantum affine algebras and their representation theory arise naturally in the study of solvable lattice models in statistical mechanics and quantum field theory (see, for example, \cite[Section 9]{HK}), providing a mathematical framework for understanding symmetries in these systems. They are used, for example, to construct solutions to the Yang-Baxter equation, which is fundamental to solvable lattice models.

 Let $V(\lambda)$ denote the \textit{extremal weight module} with weight $\lambda$ over a quantum affine algebra $U_q(\mathfrak{g})$ introduced by Kashiwara \cite{Kas2}. It has nice properties, including the existence of a global crystal basis. If the level of $\lambda$ is positive (resp. negative), then $V(\lambda)$ is isomorphic to an irreducible highest (resp. lowest) weight module. Our interest lies in the case where $\lambda$ is level-zero. The level-zero extremal weight module $V(\lambda)$ has been studied, with particular focus on the structure of its crystal basis $\mathcal{B}(\lambda)$. In \cite[Section 13]{Kas3}, Kashiwara proposed a conjecture on $V(\lambda)$, which describes the structure of $\mathcal{B}(\lambda)$. This conjecture was proven by Beck-Nakajima \cite{BN}. Ishii-Naito-Sagaki \cite{INS} introduced a combinatorial model for $\mathcal{B}(\lambda)$, known as the \textit{semi-infinite Lakshmibai-Seshadri path model}.

In this paper, we study the structure of the $U_q(\widehat{\mathfrak{sl}}_n)$-module $\Psi_{\varepsilon}^* V(\lambda)$ when $\lambda$ is a level-zero dominant weight, where $\Psi_{\varepsilon}: U_q(\widehat{\mathfrak{sl}}_n) \to U_q(\widehat{\mathfrak{sl}}_{n+1})\;(\varepsilon\in \{\pm1\})$ is an injective  algebra homomorphism. Our main result shows that when $\lambda$ is a multiple of a level-zero fundamental weight, $\Psi_{\varepsilon}^* V(\lambda)$ is isomorphic to a direct sum of extremal weight modules.

Such a result is generally referred to as a \textit{branching rule}. A classical example of a branching rule is the branching rule for  irreducible rational representations $\mathop{GL}_{n+1}(\mathbb{C})$ restricted to $\mathop{GL}_n(\mathbb{C})$ (see, for example, \cite[Section 41]{Bum}). If an embedding of quantized enveloping algebras $U_q(\mathfrak{g}) \hookrightarrow U_q(\mathfrak{g}^{\prime})$ is arising from an embedding of Dynkin diagrams, the branching rule for integrable highest weight representations of $U_q(\mathfrak{g}^{\prime})$ restricted to $U_q(\mathfrak{g})$ is obtained by counting highest weight elements (as a crystal of $U_q(\mathfrak{g})$) in the crystal basis (see \cite[Section 4.6]{Kas5}).

Our result, however, differs from such situations. In fact, the Chevalley generators $E_0, F_0\in U_q(\widehat{\mathfrak{sl}}_n)$ do not correspond to any Chevalley generators in $U_q(\widehat{\mathfrak{sl}}_{n+1})$ via the embedding $\Psi_{\varepsilon}: U_q(\widehat{\mathfrak{sl}}_n) \to U_q(\widehat{\mathfrak{sl}}_{n+1})\;(\varepsilon\in \{\pm1\})$. Our work reveals an unexplored aspect of the structure of the level-zero extremal weight modules, which is not directly based on the properties of the crystal basis.

Let us explain the results in the paper more precisely. We denote the level-zero fundamental weights for $\widehat{\mathfrak{sl}}_n$ by $\varpi_i$ ($i=1,\ldots,n-1$) and those for $\widehat{\mathfrak{sl}}_{n+1}$ by $\breve{\varpi}_i$ ($i=1,\ldots,n$). The set of weights for $\widehat{\mathfrak{sl}}_{n+1}$ of the form $\sum_{i=1}^n m_i \breve{\varpi}_i$ with $m_i \geq 0$ ($i=1,\ldots,n$) is denoted by $\breve{P}_{0,+}$. The coweight lattice for $\widehat{\mathfrak{sl}}_{n+1}$ is denoted by $\breve{P}^*$. Let $\breve{\alpha}_i^\vee$ ($i=0,\ldots,n$) denote the simple coroots for $\widehat{\mathfrak{sl}}_{n+1}$. The generator of the imaginary roots for $\widehat{\mathfrak{sl}}_n$ is denoted by $\delta$.

We fix $\lambda=\sum_{i=1}^n m_i\breve{\varpi}_i\in \breve{P}_{0,+}$ and set $m=m_1+\cdots+m_n$. We define $\tilde{h} \in \breve{P}^*$ by 
\[
\tilde{h}=\sum_{i=1}^n i\breve{\alpha}_i^{\vee}
\]
and set
\[
M_{p,\varepsilon}=\{u\in \Psi_{\varepsilon}^*V(\lambda) \mid q^{\tilde{h}}u=q^{\left\langle \tilde{h},\lambda\right\rangle-(n+1)p}u\}
\]
for $p\in \mathbb{Z}$. Since $q^{\tilde{h}} \in U_q(\widehat{\mathfrak{sl}}_{n+1})$ commutes with all elements in $\Psi_{\varepsilon}(U_q(\widehat{\mathfrak{sl}}_n))$, each $M_{p,\varepsilon}$ is a $U_q(\widehat{\mathfrak{sl}}_n)$-submodule of $\Psi_{\varepsilon}^* V(\lambda)$ for $p \in \mathbb{Z}$. We obtain the following proposition.

\begin{proposition}[= Proposition \ref{dirsum2}]
\textit{There exists a direct sum decomposition
\[
  \Psi_{\varepsilon}^* V(\lambda)\cong M_{0,\varepsilon}\oplus\cdots\oplus M_{m,\varepsilon}
  \]
as a $U_q(\widehat{\mathfrak{sl}}_n)$-module.}
\end{proposition}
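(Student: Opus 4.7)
The plan is to realize the claimed decomposition as the eigenspace decomposition of $q^{\tilde{h}}$. Since $\tilde{h}$ lies in the Cartan subalgebra of $U_q(\widehat{\mathfrak{sl}}_{n+1})$, it acts diagonally on the weight-space decomposition $V(\lambda) = \bigoplus_\mu V(\lambda)_\mu$, with $q^{\tilde{h}}$ acting on $V(\lambda)_\mu$ as the scalar $q^{\langle \tilde{h}, \mu \rangle}$. Grouping weight spaces by this eigenvalue yields a decomposition of $V(\lambda)$ into $q^{\tilde{h}}$-eigenspaces, and because $q^{\tilde{h}}$ commutes with $\Psi_\varepsilon(U_q(\widehat{\mathfrak{sl}}_n))$, each such eigenspace is automatically a $U_q(\widehat{\mathfrak{sl}}_n)$-submodule. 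Thus the work reduces to first showing that the eigenvalues that appear all have the form $q^{\langle \tilde{h}, \lambda \rangle - (n+1)p}$ for integer $p$, and second bounding the range of $p$ to $\{0, 1, \ldots, m\}$.

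For the first point I would compute $\langle \tilde{h}, \alpha_j \rangle$ directly from the $A_n^{(1)}$ Cartan matrix: the pairing is $-(n+1)$ for $j = 0$, zero for $1 \leq j \leq n-1$, and $n+1$ for $j = n$. Since every weight $\mu$ of $V(\lambda)$ lies in $\lambda + Q$ where $Q = \bigoplus_j \mathbb{Z}\alpha_j$, writing $\lambda - \mu = \sum_j c_j \alpha_j$ gives $\langle \tilde{h}, \lambda - \mu \rangle = (n+1)(c_n - c_0)$, always divisible by $n+1$; setting $p = c_n - c_0$ places the eigenvalue in the desired form.

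The main obstacle is the range bound $p \in \{0, 1, \ldots, m\}$. My plan here is to use Kashiwara's embedding $V(\lambda) \hookrightarrow V(\breve{\varpi}_{j_1}) \otimes \cdots \otimes V(\breve{\varpi}_{j_m})$ sending $v_\lambda \mapsto v_{\breve{\varpi}_{j_1}} \otimes \cdots \otimes v_{\breve{\varpi}_{j_m}}$, where the sequence $j_1, \ldots, j_m$ lists each index $i$ exactly $m_i$ times. On a single factor $V(\breve{\varpi}_i)$, the weights lie in $W_{\mathrm{aff}} \breve{\varpi}_i = W_0 \breve{\varpi}_i + \mathbb{Z}\delta$; using $\langle \tilde{h}, \delta \rangle = 0$ together with the Cartan computation above, the $q^{\tilde{h}}$-eigenvalues on $V(\breve{\varpi}_i)$ collapse to just $q^i$ and $q^{i - (n+1)}$, so the per-factor parameter takes values in $\{0, 1\}$. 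Because $q^{\tilde{h}}$ acts on a pure tensor of weight vectors as the product of its actions on the factors, the global parameter on the tensor product is a sum of $m$ terms from $\{0, 1\}$, hence lies in $\{0, 1, \ldots, m\}$; restricting to the embedded copy of $V(\lambda)$ preserves this bound. Justifying the tensor-product embedding and the two-valued spectrum on each $V(\breve{\varpi}_i)$ are the technical points most likely to need care, but both should follow from standard results on level-zero extremal weight modules.
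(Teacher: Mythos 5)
Your proposal is correct and follows essentially the same route as the paper: decompose by $q^{\tilde h}$-eigenvalue, compute $\langle\tilde h,\alpha_j\rangle$ to see the eigenvalues have the form $q^{\langle\tilde h,\lambda\rangle-(n+1)p}$, and bound $p\in\{0,\dots,m\}$ via the Beck--Nakajima injection $\Phi_\lambda\colon V(\lambda)\hookrightarrow\bigotimes_i V(\breve\varpi_i)^{\otimes m_i}$ together with a two-valued spectrum of $q^{\tilde h}$ on each factor $V(\breve\varpi_i)$. The paper obtains that two-valued spectrum (its Lemma \ref{L1L2} and equation (\ref{Lpeps})) as a byproduct of the branching rule Proposition \ref{b1}, whereas you propose to read it off directly from the minuscule weight structure of $W(\breve\varpi_i)$; your route is more elementary at that step since it avoids first establishing the $U_q(\widehat{\mathfrak{sl}}_n)$-module structure of each $\Psi_\varepsilon^*V(\breve\varpi_i)$, but both ultimately rest on the same inputs (Theorem \ref{f} for the affinization picture and Theorem \ref{inj} for injectivity of $\Phi_\lambda$).
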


Let $\mathbb{Q}(q)[t_1^{\pm1},\ldots,t_m^{\pm1}]$ be the ring of Laurent polynomials. We equip $\mathbb{Q}(q)[t_1^{\pm1},\ldots,t_m^{\pm1}]$ with the $U_q(\widehat{\mathfrak{sl}}_n)$-module structure as follows:
\begin{itemize}
  \item $E_i$ and $F_i$ act trivially;
  \item the monomial $t_1^{k_1}\cdots t_l^{k_m}\;(k_1\ldots,k_m \in \mathbb{Z})$ has weight $(k_1+\cdots+k_l)\delta$.
\end{itemize}

The $m$-th symmetric group $\mathfrak{S}_m$ acts on $\mathbb{Q}(q)[t_1^{\pm1},\ldots,t_m^{\pm1}]$ by permuting variables $t_1,\ldots,t_m$. The set of fixed points by this action is denoted by $\mathbb{Q}(q)[t_1^{\pm1},\ldots,t_m^{\pm1}]^{\mathfrak{S}_m}$.

Let $\breve{W}$ denote the Weyl group for $\widehat{\mathfrak{sl}}_{n+1}$. For $x\in \breve{W}$, let $V_x^-(\lambda)\subset V(\lambda)$ denote the Demazure submodule (for the definition, see Section \ref{dema}). We describe the graded character of $(M_{p,\varepsilon}\cap V_e^-(\lambda))$ for $0\le p\le m$ in terms of Macdonald polynomials. Using this, we obtain the following result.

\begin{theorem}[=Theorem \ref{M_0}+Theorem \ref{M_m}]
  \textit
  {There are isomorphisms of $U_q(\widehat{\mathfrak{sl}}_n)$-modules
  \begin{gather*}
    M_{0,\varepsilon}\cong V\left(\sum_{i=1}^{n-1}m_i\varpi_i\right)\otimes\left(\mathbb{Q}(q)[t_1^{\pm1},\ldots,t_{m_n}^{\pm1}]^{\mathfrak{S}_{m_n}}\right),\\
    M_{m,\varepsilon}\cong \left(\mathbb{Q}(q)[t_1^{\pm1},\ldots,t_{m_1}^{\pm1}]^{\mathfrak{S}_{m_1}}\right)\otimes V\left(\sum_{i=1}^{n-1}m_{i+1}\varpi_i\right).
  \end{gather*}}
\end{theorem}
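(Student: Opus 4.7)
My plan is to prove the $M_{0,\varepsilon}$ isomorphism directly and deduce the $M_{m,\varepsilon}$ isomorphism by symmetry --- for instance via the Dynkin diagram involution $i\mapsto n+1-i$ of $\widehat{\mathfrak{sl}}_{n+1}$, which swaps the extremal $q^{\tilde h}$-eigenspaces and sends $\sum m_i\breve\varpi_i$ to $\sum m_{n+1-i}\breve\varpi_i$, thereby producing on the $\widehat{\mathfrak{sl}}_n$-side the shift $\varpi_i\leadsto\varpi_{i+1}$ visible in the $M_{m,\varepsilon}$ statement. The first step for $M_{0,\varepsilon}$ is to locate a canonical extremal vector. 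A direct computation with the Cartan matrix of $\widehat{\mathfrak{sl}}_{n+1}$ gives $\langle\tilde h,\breve\alpha_i\rangle=0$ for $1\le i\le n-1$ and $\pm(n+1)$ for $i\in\{0,n\}$, so the extremal vector $v_\lambda\in V(\lambda)$ lies in the top $q^{\tilde h}$-eigenspace $M_{0,\varepsilon}$, and its $U_q(\widehat{\mathfrak{sl}}_n)$-weight (read off via $\Psi_\varepsilon$) equals $\sum_{i=1}^{n-1}m_i\varpi_i$ modulo $\delta$. The universal property of level-zero extremal weight modules then supplies a $U_q(\widehat{\mathfrak{sl}}_n)$-homomorphism $V(\sum_{i=1}^{n-1}m_i\varpi_i)\to M_{0,\varepsilon}$ sending the extremal generator to $v_\lambda$.

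Next I would build in the symmetric Laurent polynomial factor by exhibiting commuting operators on $M_{0,\varepsilon}$ that shift $\delta$-degree, preserve the $q^{\tilde h}$-eigenvalue, and commute with $\Psi_\varepsilon(U_q(\widehat{\mathfrak{sl}}_n))$. Natural candidates are the imaginary root generators from the Drinfeld loop presentation of $U_q(\widehat{\mathfrak{sl}}_{n+1})$, or suitable words in the boundary Chevalley generators of $U_q(\widehat{\mathfrak{sl}}_{n+1})$ not coming from $\Psi_\varepsilon$. These should generate a commutative algebra on $M_{0,\varepsilon}$ whose graded Hilbert series in the $\delta$-direction matches that of $\mathbb{Q}(q)[t_1^{\pm 1},\ldots,t_{m_n}^{\pm 1}]^{\mathfrak{S}_{m_n}}$, with $m_n$ arising as the coefficient of $\breve\varpi_n$ in $\lambda$. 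Combined with the first step, this produces a $U_q(\widehat{\mathfrak{sl}}_n)$-linear map
\[
V\!\left(\sum_{i=1}^{n-1}m_i\varpi_i\right)\otimes\mathbb{Q}(q)[t_1^{\pm 1},\ldots,t_{m_n}^{\pm 1}]^{\mathfrak{S}_{m_n}}\longrightarrow M_{0,\varepsilon}.
\]

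To verify this is an isomorphism I would compare graded characters. The paper already expresses $\gch(M_{0,\varepsilon}\cap V_e^-(\lambda))$ in terms of (specialized) Macdonald polynomials; on the other side, the graded character of $V_e^-(\sum m_i\varpi_i)$ is a nonsymmetric Macdonald polynomial via the Ion/Sanderson/Naito-Sagaki formulas, and multiplying by the Hilbert series of the symmetric Laurent polynomial ring should reproduce the first expression through a Macdonald identity at the appropriate specialization. Because $V(\mu)$ is exhausted by its Demazure submodules $V_x^-(\mu)$ and $M_{0,\varepsilon}$ is correspondingly filtered by its intersections with the $V_x^-(\lambda)$'s, matching characters on Demazure pieces suffices to conclude both injectivity and surjectivity of the constructed map.

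The main obstacle is Step 2: producing operators on $M_{0,\varepsilon}$ that realize the symmetric Laurent polynomial action and genuinely commute with $\Psi_\varepsilon(U_q(\widehat{\mathfrak{sl}}_n))$. A generic element of $U_q(\widehat{\mathfrak{sl}}_{n+1})$ does not commute with $\Psi_\varepsilon(U_q(\widehat{\mathfrak{sl}}_n))$, so pinpointing the right commuting family --- perhaps via carefully chosen quantum imaginary root vectors, or by exploiting that $\Psi_\varepsilon$ avoids a particular Chevalley generator altogether --- is the technical heart of the argument. Once the commuting operators are in hand, Steps 1 and 3 are essentially formal applications of the universal property and Demazure-filtration character machinery.
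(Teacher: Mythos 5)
Your Step 2 is a genuine gap, and the paper's approach is structurally different precisely at this point. You propose to realize the symmetric Laurent polynomial factor by finding operators on $M_{0,\varepsilon}$ that shift $\delta$-degree and commute with $\Psi_{\varepsilon}(U_q(\widehat{\mathfrak{sl}}_n))$, suggesting Drinfeld imaginary root vectors or boundary Chevalley words. But $\Psi_{\varepsilon}(E_0)=T_n^{\varepsilon}(\breve{E}_0)$ and $\Psi_{\varepsilon}(F_0)=T_n^{\varepsilon}(\breve{F}_0)$ are non-trivially twisted by the braid operator, and there is no evidence that the imaginary root vectors (which commute with the finite-type part) commute with these; you acknowledge the difficulty but leave it unresolved, so the proposed map into $M_{0,\varepsilon}$ is never actually constructed. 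The paper never constructs a commuting family on $M_{0,\varepsilon}$. Instead, it uses the Beck--Nakajima injection $\Phi_{\lambda}\colon V(\lambda)\hookrightarrow\widetilde V(\lambda)=\bigotimes_{i}V(\breve\varpi_i)^{\otimes m_i}$ to pass to a model where the Laurent polynomial factor is already visible: each of the $m_n$ copies of $\Psi_{\varepsilon}^*V(\breve\varpi_n)$ contributes, after the one-fundamental branching of Proposition~\ref{b1}, a summand $N_{n,2}\cong\mathbb{Q}(q)[t^{\pm1}]$ with trivial $E_i,F_i$-action. The composition $\sigma_{0,\varepsilon}=\Xi_{(2,\dots,2),\varepsilon}^{-1}\circ\Phi_{\lambda}|_{M_{0,\varepsilon}}$ then lands in $\bigl(\bigotimes_{i\in I_0}V(\varpi_i)^{\otimes m_i}\bigr)\otimes\mathbb{Q}(q)[t^{\pm1}]^{\otimes m_n}$, and one shows the image is large enough by hitting $(\bigotimes u_{\varpi_i-k_i\delta}^{\otimes m_i})\otimes s_{\Lambda}(t_1^{-1},\dots,t_{m_n}^{-1})(t_1\cdots t_{m_n})^{-k_n}$ for all partitions $\Lambda$ via the explicit Schur action of $\breve{S}_{\mathbf{c}_0}^-\breve{S}_{t_{\xi}}u_{\lambda}$ (Corollary~\ref{BN}). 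The graded-character comparison on Demazure pieces, which you also propose, then closes the argument --- but only after this replacement step, which entirely bypasses the need for commuting operators.

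A secondary concern: your plan to deduce $M_{m,\varepsilon}$ from $M_{0,\varepsilon}$ via the diagram automorphism $i\mapsto n+1-i$ of $\widehat{\mathfrak{sl}}_{n+1}$ is not automatic. That automorphism fixes vertex $0$ and swaps $1\leftrightarrow n$, so it conjugates $T_n^{\varepsilon}$ to $T_1^{\varepsilon}$, sending $\Psi_{\varepsilon}$ to a \emph{different} embedding of $U_q(\widehat{\mathfrak{sl}}_n)$ (the one built from $T_1$, with image in the subdiagram $\{0,2,3,\dots,n\}$ rather than $\{0,1,\dots,n-1\}$). To close the symmetry argument you would have to relate these two embeddings, which is not addressed. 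The paper instead proves the $M_{m,\varepsilon}$ case directly by the same Demazure-filtration machinery, twisting by $\breve{S}_{w_1}$ with $w_1=s_ns_{n-1}\cdots s_1$.
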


In the case where $\lambda=m\breve{\varpi}_i\;(i=1,\ldots,n)$, we prove that $M_{1,\varepsilon},\ldots, M_{m-1,\varepsilon}$ are also isomorphic to direct sums of extremal weight modules. This leads to the following result.

\begin{theorem}[=Corollary \ref{2in}+Corollary \ref{i1}+Corollary \ref{in}]
\textit{
We have
\[
  \Psi_{\varepsilon}^*V(m\breve{\varpi}_i)\cong\left\{
\begin{array}{ll}
\bigoplus_{p=0}^m \mathbb{Q}(q)[t_1^{\pm1},\ldots,t_p^{\pm1}]^{\mathfrak{S}_p}\otimes V((m-p)\varpi_1) & (i=1) \\
\bigoplus_{p=0}^mV(p\varpi_{i-1}+(m-p)\varpi_i) & (2\le p\le n-1)\\
\bigoplus_{p=0}^m \mathbb{Q}(q)[t_1^{\pm1},\ldots,t_{m-p}^{\pm1}]^{\mathfrak{S}_{m-p}}\otimes V(p\varpi_{n-1}) & (i=n)
\end{array}
\right..
\]
}
\end{theorem}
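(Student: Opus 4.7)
The plan is to start from Proposition \ref{dirsum2}, which supplies the decomposition $\Psi_{\varepsilon}^* V(m\breve{\varpi}_i) \cong \bigoplus_{p=0}^m M_{p,\varepsilon}$, and then identify each summand $M_{p,\varepsilon}$ separately. The extremal pieces $M_{0,\varepsilon}$ and $M_{m,\varepsilon}$ are already determined by Theorems \ref{M_0} and \ref{M_m}: when $\lambda = m\breve{\varpi}_i$ all but one of the multiplicities $m_1,\ldots,m_n$ vanish, so in the middle case $2 \le i \le n-1$ both symmetric Laurent polynomial factors become trivial and we obtain $M_{0,\varepsilon}\cong V(m\varpi_i)$ and $M_{m,\varepsilon}\cong V(m\varpi_{i-1})$, while in the boundary cases $i=1,n$ exactly one factor disappears and the other becomes a full symmetric Laurent polynomial ring. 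In all three cases this matches precisely the $p=0$ and $p=m$ terms of the claimed decompositions.

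For the intermediate summands $M_{p,\varepsilon}$ with $1 \le p \le m-1$ I would follow the same two-step template used for the extremal pieces. First, compute the graded character of $M_{p,\varepsilon} \cap V_e^-(m\breve{\varpi}_i)$ by extracting, from the Macdonald polynomial expression for $\gch V_e^-(m\breve{\varpi}_i)$ used in the proofs of Theorems \ref{M_0} and \ref{M_m}, the $q^{\tilde{h}}$-eigenspace of eigenvalue $q^{\langle \tilde{h},m\breve{\varpi}_i\rangle-(n+1)p}$. Because $\lambda$ is a multiple of a single level-zero fundamental weight, the relevant non-symmetric Macdonald polynomial $E_{m\breve{\varpi}_i}(x;q,0)$ should admit a sufficiently clean factorisation that the $q^{\tilde{h}}$-condition translates into a choice of $p$ out of $m$ tensor positions, in line with the restriction of the $\widehat{\mathfrak{sl}}_{n+1}$-fundamental representation $V(\breve{\varpi}_i)$ to $U_q(\widehat{\mathfrak{sl}}_n)$.

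Once this Demazure-level graded character is identified with $\gch V_e^-(p\varpi_{i-1}+(m-p)\varpi_i)$ in the middle case, or with its tensor product with a symmetric Laurent polynomial character in the boundary cases, the module isomorphism itself follows by the familiar lifting procedure. One constructs an explicit extremal vector of weight $p\varpi_{i-1}+(m-p)\varpi_i$ inside $M_{p,\varepsilon}$; the universal property of the extremal weight module then yields a $U_q(\widehat{\mathfrak{sl}}_n)$-homomorphism from $V(p\varpi_{i-1}+(m-p)\varpi_i)$ into $M_{p,\varepsilon}$, which is promoted to an isomorphism by combining the graded character identity at the Demazure level with the exhaustion of the extremal weight module by its Demazure submodules. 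In the boundary cases $i=1,n$ the additional symmetric Laurent polynomial factor is read off from the $q^{\tilde{h}}$-eigenvectors which are simultaneously killed by every $E_j$ and $F_j$ and which carry $\delta$-weights matching the stated number of variables.

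The hard part will be the first step, namely the explicit extraction of the $q^{\tilde{h}}$-eigencomponents of the Macdonald polynomial side and the recognition of the resulting formula as the graded character of a Demazure submodule of $V(p\varpi_{i-1}+(m-p)\varpi_i)$ (or its symmetric Laurent twist). This requires a Pieri- or Cauchy-type identity for $E_{m\breve{\varpi}_i}(x;q,0)$ adapted to the embedding $U_q(\widehat{\mathfrak{sl}}_n) \hookrightarrow U_q(\widehat{\mathfrak{sl}}_{n+1})$, and it is precisely here that the three cases $i=1$, $2 \le i \le n-1$, and $i=n$ genuinely diverge: the combinatorics of the eigenvalues of $\tilde{h}$ on the restricted fundamental representation are qualitatively different at the ends of the Dynkin diagram, which is why the three rows of the stated isomorphism take different shapes and must be established by separate corollaries.
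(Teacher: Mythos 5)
Your high-level skeleton matches the paper: reduce to the pieces $M_{p,\varepsilon}$ via Proposition~\ref{dirsum2}, recover $M_{0,\varepsilon}$ and $M_{m,\varepsilon}$ from Theorems~\ref{M_0} and \ref{M_m}, compute Demazure-level graded characters via the specialized Macdonald branching rule, and pass to the whole module by the exhaustion in Proposition~\ref{dem}. But your plan for the intermediate pieces $1\le p\le m-1$ has a genuine gap. You propose to exhibit an explicit $\widehat{\mathfrak{sl}}_n$-extremal vector of weight $p\varpi_{i-1}+(m-p)\varpi_i$ in $M_{p,\varepsilon}$ and map in from $V(p\varpi_{i-1}+(m-p)\varpi_i)$ by the universal property. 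No such vector is available cheaply: for $1\le p\le m-1$ the weight $p\varpi_{i-1}+(m-p)\varpi_i$ is never of the form $j^*(x(m\breve{\varpi}_i))$ for $x\in\breve{W}$ (the $\breve{W}_0$-orbit of $\breve{\varpi}_i$ only produces single-fundamental-type weights after applying $j^*$), so the supply of $\widehat{\mathfrak{sl}}_n$-extremal vectors coming from Proposition~\ref{ex} and the $\breve{S}_x u_{m\breve{\varpi}_i}$ is exhausted, and verifying extremality of any other candidate from scratch essentially presupposes the module structure you are trying to establish. The paper resolves this by going in the opposite direction: it constructs a $U_q(\widehat{\mathfrak{sl}}_n)$-linear map $\tau_{p,\varepsilon}^i$ \emph{out of} $M_{p,\varepsilon}$, landing in $V(\varpi_{i-1})^{\otimes p}\otimes V(\varpi_i)^{\otimes(m-p)}$ (or the appropriate Laurent-polynomial variant when $i\in\{1,n\}$). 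The well-definedness of $\tau$ depends on the filtration of $L_{p,\varepsilon}$ by $L_{\ge\mathfrak{i},\varepsilon}$ or $L_{\le\mathfrak{i},\varepsilon}$ and the associated quotient isomorphisms $\Xi_{\mathfrak{i},\varepsilon}$ (Propositions~\ref{Lsub} and \ref{quot}), which in turn rest on the explicit coproduct computations for $T_n^{\pm1}(\breve{E}_0)$, $T_n^{\pm1}(\breve{F}_0)$ and an $\varepsilon$-dependent choice of the lexicographic index $\mathfrak{i}$. Surjectivity of $\tau$ onto the image of $\Phi_{p\varpi_{i-1}+(m-p)\varpi_i}$ at the Demazure level is then shown by the divided-power identity of Lemma~\ref{t}, together with Corollary~\ref{BN} and (in the boundary cases) a Littlewood--Richardson induction (Lemmas~\ref{L-R1} and \ref{L-R2}). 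None of this machinery appears in your sketch, and it is precisely the part that makes the argument go through.

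Two smaller inaccuracies. The graded characters in Theorem~\ref{bq} and Proposition~\ref{Mac} are expressed in terms of the \emph{symmetric} Macdonald polynomial $P_\lambda(x;q^{-1},0)$ and its $\mathrm{GL}$-analogue $P^{\mathrm{GL}}_\Lambda$, not a nonsymmetric $E_\lambda$; the relevant identity is the horizontal-strip branching of Theorem~\ref{brMac} specialized at $t=0$. Also, in the boundary cases $i\in\{1,n\}$ the Laurent polynomial factor is not realized as a subspace of vectors killed by every $E_j$ and $F_j$ inside $M_{p,\varepsilon}$; it arises as an abstract tensor factor, and the paper detects it through the Schur polynomial vectors $s_{\Lambda}(t_1^{-1},\ldots)$ produced via Corollary~\ref{BN} and Lemma~\ref{t}.
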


The organization of this paper is as follows: In Section 2, we provide a review of quantized enveloping algebras, crystal bases, and extremal weight modules. In Section 3, we recall key results on level-zero extremal weight modules over quantum affine algebras and their Demazure submodules. In Section 4, we present an explicit embedding $\Psi_{\varepsilon}:U_q(\widehat{\mathfrak{sl}}_n) \to U_q(\widehat{\mathfrak{sl}}_{n+1})$ for $\varepsilon \in \{\pm1\}$ and study the structure of $\Psi_{\varepsilon}^*V(\breve{\varpi}_i)$ for $i = 1, \ldots, n$. We also establish a direct sum decomposition $\Psi_{\varepsilon}^* V(\lambda) \cong M_{0,\varepsilon} \oplus \cdots \oplus M_{m,\varepsilon}$ and prove that $M_{0,\varepsilon}$ and $M_{m,\varepsilon}$ are isomorphic to a tensor product of an extremal weight module over $U_q(\widehat{\mathfrak{sl}}_n)$ and a symmetric Laurent polynomial ring. In Section 5, we focus on the special case where $\lambda$ is a multiple of a level-zero fundamental weight and prove our main result: $\Psi_{\varepsilon}^* V(\lambda)$ is isomorphic to a direct sum of extremal weight modules.

\section{Quantized enveloping algebras and extremal weight modules}

\subsection{Quantized enveloping algebras}

We fix a root datum $(P, \Sigma, P^*, \Sigma^{\vee})$, where $P$ is the weight lattice, $P^*$ is the coweight lattice, $\Sigma=\{\alpha_i \mid i\in I\}\subset P$ is the set of simple roots indexed by $I$, and $\Sigma^{\vee}=\{\alpha_i^{\vee} \mid i\in I\}$ is the set of simple coroots. We denote by $\langle \cdot, \cdot \rangle : P\times P^*\to \mathbb{Z}$ the natural paring. We also fix a $\mathbb{Q}$-valued symmetric bilinear form $(\cdot, \cdot) : P\times P\to \mathbb{Q}$.

We assume that they satisfy the following conditions:

\begin{enumerate}
  \item $(\alpha_i, \alpha_i) > 0$ for all $i\in I$;
  \item $\langle \alpha_i^{\vee}, \lambda\rangle =\frac{2(\alpha_i, \lambda)}{(\alpha_i, \alpha_i)}$ for all $i\in I$ and $\lambda\in P$;
  \item $(\langle \alpha_i^{\vee}, \alpha_j\rangle)_{i,j\in I}$ is a symmetrizable generalized Cartan matrix. 
\end{enumerate}

We set $a_{ij}=\langle \alpha_i^{\vee}, \alpha_j\rangle$ for $i,j\in I$ and let $\mathfrak{g}$ be the Kac-Moody algebra over $\mathbb{Q}$ associated with $A=(a_{ij})_{i,j\in I}$. We denote the sets of positive roots, and negative roots by $\Delta, \Delta_+$, and $\Delta_-$, respectively.

Take an integer $D>0$ such that $\frac{(\alpha_i,\alpha_i)}{2}\in D^{-1}\mathbb{Z}$. Let $q$ be an indeterminate and set $q_s=q^{\frac{1}{D}}$. We set $q_i=q^{\frac{(\alpha_i,\alpha_i)}{2}}$. For an integer $m\ge 0$, we define $[m]_{q_i}=\frac{{q_i}^m-{q_i}^{-m}}{q_i-{q_i}^{-1}}$. Also, we set $[m]_{q_i}!=[m]_{q_i}[m-1]_{q_i}\cdots[1]_{q_i}$ for $m>0$ and $[0]_{q_i}!=1$. We define the quantized universal enveloping algebra $U_q(\mathfrak{g})$ of $\mathfrak{g}$ to be the associative algebra over $\mathbb{Q}(q_s)$ with $1$ generated by $E_i, F_i, q^h\;(i\in I, h\in D^{-1}P^*)$, with defining relations
\begin{gather*}
  q^0=1,\quad q^{h+h^{\prime}}=q^hq^{h^{\prime}}\quad(h,h^{\prime}\in D^{-1}P^*), \\
  q^hE_iq^{-h}=q^{\langle h,\alpha_i\rangle}E_i,\quad q^hF_iq^{-h}=q^{-\langle h,\alpha_i\rangle}F_i\quad (i\in I,h\in D^{-1}P^*), \\
  E_iF_j-F_jE_i=\delta_{ij}\frac{t_i-t_i^{-1}}{q_i-q_i^{-1}}\quad (i,j\in I), \\
  \sum_{s=0}^{1-a_{ij}}(-1)^sE_i^{(s)}E_jE_i^{(1-a_{ij}-s)}=\sum_{s=0}^{1-a_{ij}}(-1)^sF_i^{(s)}F_jF_i^{(1-a_{ij}-s)}=0\quad(i,j\in I,i\neq j)
\end{gather*}
where $t_i=q^{\frac{(\alpha_i,\alpha_i)}{2}\alpha_i^{\vee}},\;E_i^{(m)}=\frac{E_i^m}{[m]_{q_i}!}$, and $F_i^{(m)}=\frac{F_i^m}{[m]_{q_i}!}$ for $i\in I$.

We define the coproduct $\Delta$ on $U_q(\mathfrak{g})$ by
\begin{equation}
  \Delta(q^h)=q^h\otimes q^h,\quad\Delta(E_i)=E_i\otimes t_i^{-1}+1\otimes E_i,\quad\Delta(F_i)=F_i\otimes1+t_i\otimes F_i, \label{cop}
\end{equation}
where $h\in D^{-1}P^*$ and $i\in I$.

We define a $\mathbb{Q}$-algebra automorphism $-$, a $\mathbb{Q}(q_s)$-algebra automorphism $\vee$, and a $\mathbb{Q}$-algebra antiautomorphism $\Omega$ of $U_q(\mathfrak{g})$ by
\begin{gather*}
   \overline{E_i}=E_i,\quad\overline{F_i}=F_i,\quad\overline{q^h}=q^{-h},\quad\overline{q_s}={q_s^{-1}},\\
  E_i^{\vee}=F_i,\quad F_i^{\vee}=E_i,\quad (q^h)^{\vee}=q^{-h},\\
 \Omega(E_i)=F_i,\quad\Omega(F_i)=E_i\quad(i\in I),\quad\Omega(q^h)=q^{-h}\quad\Omega(q_s)=q_s^{-1},
\end{gather*}
where $i\in I, h\in D^{-1}P^*$.

We also define a $\mathbb{Q}$-algebra automorphism $\Phi$ of $U_q(\mathfrak{g})$ by $\vee\circ -$.

Let $U_q^+(\mathfrak{g})$ (resp. $U_q^-(\mathfrak{g})$) be the subalgebra of $U_q(\mathfrak{g})$ generated by $\{E_i \mid i\in I\}$ (resp. $\{F_i \mid i\in I\}$). Let $U_q^0(\mathfrak{g})$ be the subalgebra of $U_q(\mathfrak{g})$ generated by $q^h\;(h\in D^{-1}P^*)$. We have the triangular decomposition $U_q(\mathfrak{g})\cong U_q^+(\mathfrak{g})\otimes U_q^0(\mathfrak{g})\otimes U_q^-(\mathfrak{g})$.

A $U_q({\mathfrak{g}})$-module $M$ is called \textit{integrable} if it satisfies the following conditions:
\begin{enumerate}
  \item $M$ admits a \textit{weight space decomposition}
\[
M=\bigoplus_{\lambda \in P} M_{\lambda},\text{ where }M_{\lambda}=\{u\in M \mid q^hu=q^{\langle h, \lambda\rangle}u\; \text{ for all }\;\textstyle h\in D^{-1}P^*\};
\]
\item For any $u\in M$ and $i\in I$, there exists $N\geq 1$ such that $E_i^{(m)}u=F_i^{(m)}u=0$ for all $m\geq N.$
\end{enumerate}

If a $U_q(\mathfrak{g})$-module $M$ has a weight space decomposition $M=\bigoplus_{\lambda \in P} M_{\lambda}$ and $u\in M_{\lambda}$, we denote the weight of $u$ by $\wt(u)=\lambda$. If $M_1$ and $M_2$ are integrable $U_q(\mathfrak{g})$-modules, then $M_1\otimes M_2$ is also integrable.

 Let $W=\langle s_i \mid i\in I\rangle$ be the Weyl group, where $s_i(\lambda)=\lambda-\langle \alpha_i^{\vee}, \lambda\rangle\alpha_i\;(\lambda\in P)$ is the simple reflection for $\alpha_i$. For each $i\in I$ and for any integrable $U_q(\mathfrak{g})$-module $M$, there exists a $\mathbb{Q}(q_s)$-linear automorphism $T_i:M\to M$ (see \cite[Chapter 5]{L}, where it is denoted by $T_{i,1}^{\prime\prime}$). Also, for each $i\in I$, there exists an automorphism $T_i:U_q(\mathfrak{g})\to U_q(\mathfrak{g})$ such that for any integrable $U_q(\mathfrak{g})$-module $M$, we have 
 \[
 T_x(xu)=T_x(x)T_x(u)
 \]
 for all $x\in U_q(\mathfrak{g})$ and $u\in M$ (see \cite[Chapter 37]{L}, where it is denoted by $T_{i,1}^{\prime\prime}$). For $x\in W$, we take a reduced expression $s_{i_1}\cdots s_{i_p}$ of $x$ and define an automorphism $T_x:U_q(\mathfrak{g})\to U_q(\mathfrak{g})$ by
 \[
 T_x=T_{i_1}T_{i_2}\cdots T_{i_p}.
 \]
 This definition is independent of the choice of the reduced expression of $x$.

Let $M$ be an integrable $U_q(\mathfrak{g})$-module, and let $u \in M$ be a weight vector of weight $\lambda$. By \cite[Proposition 5.2.2]{L}, if $E_i u = 0$, then we have
\begin{gather}
  T_i(u)=(-q_i)^{\langle \alpha_i^{\vee}, \lambda \rangle}F_i^{(\langle \alpha_i^{\vee}, \lambda \rangle)}u,\quad T_i^{-1}(u)=F_i^{(\langle \alpha_i^{\vee}, \lambda \rangle)}u. \label{T1}
\end{gather}
Similarly, if $F_i u = 0$, then we have
\begin{gather}
  T_i(u)=E_i^{(-\langle \alpha_i^{\vee}, \lambda \rangle)}u,\quad T_i^{-1}(u)=(-q_i)^{-\langle \alpha_i^{\vee}, \lambda \rangle}E_i^{(-\langle \alpha_i^{\vee}, \lambda \rangle)}u. \label{T2}
\end{gather}

By \cite[Chapter 37]{L}, we have
\begin{gather*}
  T_i(q^h)=q^{s_i(h)}\quad(h\in D^{-1}P^*),\quad T_i(E_i)=-F_it_i,\quad T_i(F_i)=-t_i^{-1}E_i,\\
  T_i(E_j)=\sum_{s=0}^{-a_{ij}}(-1)^sq_i^{-s}E_i^{(-a_{ij}-s)}E_jE_i^{(s)}\quad(j\neq i),\\
  T_i(F_j)=\sum_{s=0}^{-a_{ij}}(-1)^sq_i^sF_i^{(s)}F_jF_i^{(-a_{ij}-s)}\quad(j\neq i),\\
  T_i^{-1}(q^h)=q^{s_i(h)}\quad(h\in D^{-1}P^*),\quad T_i^{-1}{E_i}=-t_i^{-1}F_i,\quad T_i^{-1}(F_i)=-E_it_i,\\
  T_i^{-1}(E_j)=\sum_{s=0}^{-a_{ij}}(-1)^{-a_{ij}-s}q_i^{a_{ij}+s}E_i^{(-a_{ij}-s)}E_jE_i^{(s)}\quad (j\neq i),\\
  T_i^{-1}(F_j)=\sum_{s=0}^{-a_{ij}}(-1)^{-a_{ij}-s}q_i^{-a_{ij}-s}F_i^{(s)}F_jF_i^{(-a_{ij}-s)}\quad (j\neq i)
\end{gather*}
for $i\in I$. By \cite[Section 1.3]{B}, we have $\Omega\circ T_i=T_i\circ \Omega$ and $\Phi\circ T_i=T_i^{-1}\circ \Phi$ for $i\in I$.

\subsection{Crystal bases}

If $M$ is an integrable $U_q(\mathfrak{g})$-module, then we have
\[
M=\bigoplus_{0\leq n\leq \langle h_i,\lambda \rangle} F_i^{(n)} (\Ker E_i\cap M_{\lambda})
\]
by the theory of integrable representation of $U_q(\mathfrak{sl}_2)$.

Then, we define the map $\tilde{e}_i,\;\tilde{f}_i : M\to M$ by
\[
\tilde{f}_i(u)=\sum_{n\geq 0}F_i^{(n+1)}u_n,\quad\tilde{e}_i(u)=\sum_{n>0}F_i^{(n-1)}u_n
\]
where $u=\sum_{n\geq 0}F_i^{(n)}u_n\in M\;(u_n\in \Ker E_i)$.

We define the subrings $\mathbb{A}_0, \mathbb{A}_{\infty}, \mathcal{A}$ of $\mathbb{Q}(q_s)$ as follows:

\[
  \mathbb{A}_0=\{f/g \mid f,g\in \mathbb{Q}[q_s], g(0)\neq 0\},\quad
  \mathbb{A}_{\infty}=\{f/g \mid f,g\in \mathbb{Q}[q_s^{-1}], g(0)\neq 0\}, \quad
  \mathcal{A}=\mathbb{Q}[q_s, q_s^{-1}].
\]

\begin{definition}
  Let $M$ be an integrable $U_q(\mathfrak{g})$-module. A pair $(\mathcal{L}, \mathcal{B})$ is called a \textit{crystal basis} of $M$ if
  \begin{enumerate}
    \item $\mathcal{L}$ is a free $\mathbb{A}_0$-submodule of $M$ such that $M\cong \mathbb{Q}(q_s)\otimes_{\mathbb{A}_0}\mathcal{L}$;
    \item $\mathcal{L}=\bigoplus_{\lambda\in P}\mathcal{L}_{\lambda}$, where $\mathcal{L}_{\lambda}=\mathcal{L}\cap M_{\lambda}$;
    \item $\tilde{e}_i\mathcal{L}\subset \mathcal{L},\;\tilde{f}_i\mathcal{L}\subset \mathcal{L}$ for all $i\in I$;
    \item $\mathcal{B}=\sqcup_{\lambda\in P}\mathcal{B}_{\lambda}$ is a $\mathbb{Q}$-basis of $\mathcal{L}/q\mathcal{L}\cong \mathbb{Q}\otimes_{\mathbb{A}_0}M$, where $\mathcal{B}_{\lambda}=(\mathcal{L}_{\lambda}/q_s\mathcal{L}_{\lambda})\cap \mathcal{B}$;
    \item the operators $\tilde{e}_i,\;\tilde{f}_i:\mathcal{L}/q\mathcal{L}\to \mathcal{L}/q\mathcal{L}$ induced by $\tilde{e}_i, \tilde{f}_i$ satisfy $\tilde{e}_i\mathcal{B}\subset\mathcal{B}\sqcup\{0\},\; \tilde{f}_i\mathcal{B}\subset\mathcal{B}\sqcup\{0\}$;
    \item for any $b, b^{\prime}\in \mathcal{B}$, we have $\tilde{e}_ib=b^{\prime}$ if and only if $\tilde{f}_ib^{\prime}=b$.
  \end{enumerate}
\end{definition}

If $b\in \mathcal{B}_{\lambda}$, then we write $\wt(b)=\lambda$. For $i\in I$, we define the maps \[\varepsilon_i,\;\varphi_i:\mathcal{B}\to \mathbb{Z}_{\geq0}
\]by
\[
\varepsilon_i(b)=\max\{n \mid \tilde{e}_i^nb\neq 0\}\quad\text{and}\quad\varphi_i(b)=\max\{n \mid \tilde{f}_i^nb\neq 0\}.
\]

By abstracting the combinatorial properties of crystal basis, we define \textit{crystals}.

\begin{definition}
  A crystal is a set $\mathcal{B}$ together with a collection of maps
  \[
    \wt:\mathcal{B}\to P,\quad
    \varepsilon_i,\;\varphi_i:\mathcal{B}\to \mathbb{Z}\sqcup\{-\infty\}\quad(i\in I),\quad
    \tilde{e}_i,\;\tilde{f}_i:\mathcal{B}\to\mathcal{B}\sqcup\{0\}\quad(i\in I)
  \]
  satisfying the  following properties:
  \begin{enumerate}
    \item for all $b\in \mathcal{B}$, we have $\varphi_i(b)=\varepsilon_i(b)+\langle \alpha_i^{\vee}, \wt(b)\rangle$;
    \item for all $b\in \mathcal{B}$ with $\tilde{e}_ib\in \mathcal{B}$, we have $\wt(\tilde{e}_ib)=\wt(b)+\alpha_i$;
    \item for all $b\in \mathcal{B}$ with $\tilde{f}_ib\in \mathcal{B}$, we have $\wt(\tilde{f}_ib)=\wt(b)-\alpha_i$;
    \item for any $b, b^{\prime}\in \mathcal{B}$, we have $\tilde{e}_ib=b^{\prime}$ if and only if $\tilde{f}_ib^{\prime}=b$;
    \item if $\varepsilon_i(b)=-\infty$, then $\tilde{e}_ib=\tilde{f}_ib=0$.
  \end{enumerate}
\end{definition}

\begin{definition}
  A morphism of crystal $\psi:\mathcal{B}_1\to \mathcal{B}_2$ is a map $\psi:\mathcal{B}_1\sqcup\{0\}\to \mathcal{B}_2\sqcup\{0\}$ such that
  \begin{enumerate}
    \item $\psi(0)=0$;
    \item for all $b\in B_1$, we have $\psi(\wt(b))=\wt(\psi(b)),\;\psi(\varepsilon_i(b))=\varepsilon_i(\psi(b)),\;\psi(\varphi(b))=\varphi(\psi(b))$;
    \item if $b\in B_1$ satisfies $\tilde{e}_ib\neq0$ and $\psi(b)\neq0$, then we have $\psi(\tilde{e}_ib)=\tilde{e}_i\psi(b)$;
    \item if $b\in B_1$ satisfies $\tilde{f}_ib\neq0$ and $\psi(b)\neq0$, then we have $\psi(\tilde{f}_ib)=\tilde{f}_i\psi(b)$.
  \end{enumerate}
\end{definition}

Let $\mathcal{C}(I, P)$ denote the category of crystals.

\subsection{Global bases}

Let $V$ be a vector space over $\mathbb{Q}(q_s)$. For a subring of $A$ of $\mathbb{Q}(q_s)$, a $A$-lattice is a $A$-submodule $M$ of $V$ such that $\mathbb{Q}(q_s)\otimes_{A}M\cong V$. Let $V_{\mathcal{A}}$ be a $\mathcal{A}$-lattice of $V$, $\mathcal{L}_0$ a $\mathbb{A}_0$-lattice, $\mathcal{L}_{\infty}$ a $\mathbb{A}_{\infty}$-lattice. Then we have the following lemma.

\begin{lemma}[{\cite[Lemma 2.1.1]{Kas1}}]
  \textit{Let $E=V_{\mathcal{A}}\cap \mathcal{L}_0\cap \mathcal{L}_{\infty}$. Then the following conditions are equivalent.
  \begin{enumerate}
    \item The canonical map $E \to \mathcal{L}_0/q_s\mathcal{L}_0$ is an isomorphism;
    \item The canonical map $E\to \mathcal{L}_{\infty}/q_s^{-1}\mathcal{L}_{\infty}$ is an isomorphism;
    \item The canonical maps $\mathcal{A}\otimes_{\mathbb{Q}}E\to V_{\mathcal{A}}, \;\mathbb{A}_0\otimes_{\mathbb{Q}}E\to \mathcal{L}_0, \;\mathbb{A}_{\infty}\otimes_{\mathbb{Q}}E\to \mathcal{L}_{\infty},\; \mathbb{Q}(q_s)\otimes_{\mathbb{Q}}E\to V$ are isomorphisms.\qed
    \end{enumerate}
    }
\end{lemma}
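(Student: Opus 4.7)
The plan is to argue $(3)\Rightarrow(1)$, $(3)\Rightarrow(2)$, and then $(1)\Rightarrow(3)$; the implication $(2)\Rightarrow(3)$ will then follow by the symmetry $q_s\leftrightarrow q_s^{-1}$, which swaps $\mathbb{A}_0$ with $\mathbb{A}_\infty$ while fixing $\mathcal{A}$ and $\mathbb{Q}$ inside $\mathbb{Q}(q_s)$. The two easy directions are pure base change: applying $-\otimes_{\mathbb{A}_0}(\mathbb{A}_0/q_s\mathbb{A}_0)$ to the isomorphism $\mathbb{A}_0\otimes_{\mathbb{Q}}E\xrightarrow{\sim}\mathcal{L}_0$ provided by (3), and using the identification $\mathbb{A}_0/q_s\mathbb{A}_0\cong\mathbb{Q}$, identifies the canonical arrow $E\to \mathcal{L}_0/q_s\mathcal{L}_0$ with the identity map on $E$; a symmetric computation with $\mathbb{A}_\infty$ gives (2).

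For the main implication $(1)\Rightarrow(3)$, I would first pick a $\mathbb{Q}$-basis $\{e_i\}_{i\in J}$ of $E$ whose image in $\mathcal{L}_0/q_s\mathcal{L}_0$ is a basis, as furnished by hypothesis. Since $\mathbb{A}_0$ is a discrete valuation ring with maximal ideal $q_s\mathbb{A}_0$ and residue field $\mathbb{Q}$, a Nakayama-type argument (applied weight-space by weight-space in the intended applications, where weight spaces are finite-dimensional) shows that $\{e_i\}$ is a free $\mathbb{A}_0$-basis of $\mathcal{L}_0$: surjectivity of $\bigoplus_i\mathbb{A}_0 e_i\to \mathcal{L}_0$ follows from Nakayama, and linear independence is obtained by normalizing any putative relation to have coefficients in $\mathbb{A}_0$ not all divisible by $q_s$ and then reducing modulo $q_s$. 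Inverting $q_s$ immediately yields that $\{e_i\}$ is a $\mathbb{Q}(q_s)$-basis of $V$.

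It remains to identify $\mathcal{L}_\infty$ and $V_\mathcal{A}$ as the $\mathbb{A}_\infty$- and $\mathcal{A}$-spans of $\{e_i\}$. The inclusions $\bigoplus_i\mathbb{A}_\infty e_i\subset \mathcal{L}_\infty$ and $\bigoplus_i\mathcal{A} e_i\subset V_\mathcal{A}$ are immediate from $e_i\in E$. For the reverse inclusions, I would expand an arbitrary $v\in \mathcal{L}_\infty$ (resp.\ $v\in V_\mathcal{A}$) in the $\mathbb{Q}(q_s)$-basis $\{e_i\}$ and argue that the coefficients already lie in $\mathbb{A}_\infty$ (resp.\ $\mathcal{A}$) by exploiting the basic identities $\mathbb{A}_0\cap\mathbb{A}_\infty=\mathbb{Q}$, $\mathcal{A}\cap\mathbb{A}_0=\mathbb{Q}[q_s]$, and $\mathcal{A}\cap\mathbb{A}_\infty=\mathbb{Q}[q_s^{-1}]$ inside $\mathbb{Q}(q_s)$, combined with the $\mathcal{L}_0$-description already obtained and a rank/valuation comparison of the three lattices. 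This last step — forcing three a priori independent lattices to share a common $\mathbb{Q}$-form out of the single compatibility encoded by (1) — is the main obstacle; everything else reduces to routine Nakayama and base-change manipulations.
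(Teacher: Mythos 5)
The paper does not give a proof of this lemma at all: it is imported verbatim from Kashiwara (\cite[Lemma 2.1.1]{Kas1}) and the \texttt{\textbackslash qed} at the end of the statement marks it as cited without proof. So there is no internal proof to compare against, and the real question is whether your sketch would stand on its own.

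It does not, and you say so yourself. The directions $(3)\Rightarrow(1)$ and $(3)\Rightarrow(2)$ are correct and are indeed just base change. For $(1)\Rightarrow(3)$, the part of your argument that produces from condition (1) a $\mathbb{Q}$-basis $\{e_i\}$ of $E$ that is simultaneously a free $\mathbb{A}_0$-basis of $\mathcal{L}_0$ and a $\mathbb{Q}(q_s)$-basis of $V$ is essentially right (though Nakayama as you invoke it needs either finite generation of $\mathcal{L}_0$ or a separate argument — the lemma as stated carries no finiteness hypothesis, and the ``weight-space by weight-space'' escape hatch is not available here because there are no weight spaces in this abstract setting). The genuine gap is the reverse inclusions $\mathcal{L}_\infty \subset \bigoplus_i \mathbb{A}_\infty e_i$ and $V_{\mathcal{A}} \subset \bigoplus_i \mathcal{A}\, e_i$. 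You explicitly flag this as ``the main obstacle'' and do not carry it out, and the strategy you gesture at (expand $v$ in the basis $\{e_i\}$ and control the coefficients via $\mathbb{A}_0\cap\mathbb{A}_\infty=\mathbb{Q}$, $\mathcal{A}\cap\mathbb{A}_0=\mathbb{Q}[q_s]$, $\mathcal{A}\cap\mathbb{A}_\infty=\mathbb{Q}[q_s^{-1}]$ plus a ``rank/valuation comparison'') is not an argument: nothing you have established prevents $\mathcal{L}_\infty$ from being strictly larger than $\bigoplus_i\mathbb{A}_\infty e_i$ a priori, and ruling that out is exactly the nontrivial content. The coupling that makes the proof work is that $e_i \in E \subset \mathcal{L}_\infty \cap V_{\mathcal{A}}$ together with condition (1) forces the three lattices to share the same $\mathbb{Q}$-form, and extracting this requires a concrete manipulation (in Kashiwara's proof one clears denominators by suitable powers of $q_s$, uses that such rescaled vectors then land in the intersection, and plays the two reductions mod $q_s$ and mod $q_s^{-1}$ against each other) that your proposal names but does not supply. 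As written, this is an outline with the crux missing, not a proof.
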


The triple $(\mathcal{L}_0, \mathcal{L}_{\infty}, V_{\mathcal{A}})$ is called \textit{balanced} if it satisfies the equivalent conditions above.

Let $M$ be an integrable $U_q(\mathfrak{g})$-module with a crystal base $(\mathcal{L}, \mathcal{B})$. Take an involution $-$ of $M$ such that $\overline{xu}=\overline{x}\,\overline{u}$ for all $x\in U_q(\mathfrak{g}), u\in M$. Let $M_{\mathcal{A}}$ be a $\mathcal{A}$-submodule of $M$ such that $\overline{M_{\mathcal{A}}}=M_{\mathcal{A}}$ and $u-\overline{u}\in (q_s-1)M_{\mathcal{A}}$ for all $u\in M_{\mathcal{A}}$. We say that $M$ has a \textit{global base} $(\mathcal{L}, \mathcal{B}, M_{\mathcal{A}}, -)$ if $(\mathcal{L}, \overline{\mathcal{L}}, M_{\mathcal{A}})$ is balanced.

Let $G$ be the inverse map of the canonical map $M_{\mathcal{A}}\cap \mathcal{L}\cap \overline{\mathcal{L}}\to  \mathcal{L}/q_s\mathcal{L}$. Then, $\{G(b) \mid b\in \mathcal{B}\}$ is a basis of $M$. It is called a \textit{global crystal basis} of $M$.

\subsection{Extremal weight representations}

\begin{definition}
  Let $M$ be an integrable $U_q(\mathfrak{g})$-module and let $i\in I$. A weight vector $u$ of weight $\lambda$ is called $i$-extremal if $E_iu=0$ or $F_iu=0$. In this case, we set $S_iu=F_i^{(\langle \alpha_i^{\vee}, \lambda \rangle)}u$ or $S_iu=E_i^{(-\langle \alpha_i^{\vee}, \lambda \rangle)}u$, respectively.
\end{definition}

\begin{definition}
  Keep the setting of the definition above. A weight vector $u$ is called \textit{extremal} if $S_{i_1}\cdots S_{i_p}u$ is $i$-extremal for all $i, i_1,\ldots, i_p\in I$.
\end{definition}

For $x=s_{i_1}\cdots s_{i_p}\in W$ and an extremal vector $u\in M$, we define $S_xu$ by setting
\[
S_xu=S_{i_1}\cdots S_{i_p}u.
\]

This is well-defined, i.e., $S_xu$ depends only on $x$ (\cite[Section 2.5]{N}).

The following lemmas are used later.

\begin{lemma}[{\label{st}\cite[Lemma 2.11]{N}}]
  \textit{Suppose that $M$ is an integrable $U_q(\mathfrak{g})$-module and $u\in M$ is an extremal vector of weight $\lambda$. Then we have
  \[
  S_xu=(-1)^{N_+^{\vee}}q^{-N_+}T_xu\quad (x\in W),
  \]
  where $\displaystyle N_+=\sum_{\alpha\in \Delta^+\cap x^{-1}(\Delta^-)}\max((\alpha,\lambda),0)$ and $\displaystyle N_+^{\vee}=\sum_{\alpha\in \Delta^+\cap x^{-1}(\Delta^-)}\max(\langle \alpha^{\vee},\lambda\rangle ,0)$. \qed}
\end{lemma}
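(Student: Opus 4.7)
The plan is to induct on $\ell(x)$. The base case $x = e$ is immediate, as $S_e u = T_e u = u$ and both $N_+, N_+^\vee$ are empty sums. For $\ell(x) \geq 1$, pick a reduced expression $x = y s_i$ with $\ell(y) = \ell(x) - 1$, so that $S_x u = S_y(S_i u)$ and $T_x u = T_y(T_i u)$.

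The key first computation is the comparison of $S_i u$ with $T_i u$ on the extremal vector $u$. Case analysis using \eqref{T1} and \eqref{T2} yields the uniform identity
\[
S_i u = (-1)^{\max(\langle \alpha_i^\vee, \lambda\rangle, 0)} \, q^{-\max((\alpha_i, \lambda), 0)} \, T_i u.
\]
Indeed, when $E_i u = 0$, $\mathfrak{sl}_2$-theory forces $\langle \alpha_i^\vee, \lambda\rangle \geq 0$, and \eqref{T1} reads $T_i u = (-q_i)^{\langle \alpha_i^\vee, \lambda\rangle} S_i u$ with $q_i^{\langle \alpha_i^\vee, \lambda\rangle} = q^{(\alpha_i, \lambda)}$; when $F_i u = 0$, \eqref{T2} gives $T_i u = S_i u$ outright and the $\max$-terms vanish. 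Since $T_i u$ is a nonzero scalar multiple of the extremal vector $S_i u$, it is itself extremal of weight $s_i(\lambda)$, so the inductive hypothesis applies to $y$ acting on $T_i u$, giving
\[
S_x u = (-1)^{\max(\langle \alpha_i^\vee, \lambda\rangle, 0) + N_+^\vee(y;\, s_i \lambda)} \, q^{-\max((\alpha_i, \lambda), 0) - N_+(y;\, s_i \lambda)} \, T_x u,
\]
where $N_+(y; \mu)$ and $N_+^\vee(y; \mu)$ denote the analogous sums over $\Delta^+ \cap y^{-1}(\Delta^-)$ at weight $\mu$.

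It then remains to show
\[
N_+(x; \lambda) = \max((\alpha_i, \lambda), 0) + N_+(y;\, s_i \lambda), \qquad N_+^\vee(x; \lambda) = \max(\langle \alpha_i^\vee, \lambda\rangle, 0) + N_+^\vee(y;\, s_i \lambda).
\]
This reduces to the standard bijection $\alpha \mapsto s_i(\alpha)$ between $(\Delta^+ \cap x^{-1}(\Delta^-)) \setminus \{\alpha_i\}$ and $\Delta^+ \cap y^{-1}(\Delta^-)$, which is valid because $\ell(y s_i) > \ell(y)$ forces $y(\alpha_i) \in \Delta^+$ (so $\alpha_i \in \Delta^+ \cap x^{-1}(\Delta^-)$) and $s_i$ permutes $\Delta^+ \setminus \{\alpha_i\}$; the identities $(s_i \beta, \lambda) = (\beta, s_i \lambda)$ and $\langle (s_i \beta)^\vee, \lambda\rangle = \langle \beta^\vee, s_i \lambda\rangle$ from $W$-invariance then match the summands term by term. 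The main obstacle is just the careful bookkeeping of signs and this root-system bijection; the only conceptual step is the comparison of $S_i u$ and $T_i u$ via \eqref{T1}--\eqref{T2}.
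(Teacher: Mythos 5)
Your argument is correct. Note, though, that the paper does not prove this lemma: it is stated with a \qed and is cited verbatim as Lemma 2.11 of [N] (Nakajima), so there is no in-paper proof to compare against. Your induction on $\ell(x)$, the case analysis of $S_iu$ versus $T_iu$ via \eqref{T1}--\eqref{T2} (using $q_i^{\langle\alpha_i^\vee,\lambda\rangle}=q^{(\alpha_i,\lambda)}$ and the fact that $(\alpha_i,\lambda)$ and $\langle\alpha_i^\vee,\lambda\rangle$ share a sign since $(\alpha_i,\alpha_i)>0$), the observation that $T_iu$ is a nonzero scalar multiple of the extremal vector $S_iu$ and hence itself extremal of weight $s_i\lambda$, and the inversion-set bijection $\alpha\mapsto s_i\alpha$ from $(\Delta^+\cap x^{-1}(\Delta^-))\setminus\{\alpha_i\}$ onto $\Delta^+\cap y^{-1}(\Delta^-)$ are all sound, and together they close the induction. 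This is the natural self-contained proof of the cited fact.
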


By (\ref{T1}) and (\ref{T2}), a weight vector $u\in M$ of weight $\lambda$ is extremal if and only if 
\[
\left\{
\begin{array}{ll}
E_iT_xu=0 & (\langle \alpha_i^{\vee},x\lambda\rangle\ge 0), \\
F_iT_xu=0 & (\langle \alpha_i^{\vee},x\lambda\rangle\le 0)
\end{array}
\right.
\]
for all $i\in I$ and $x\in W$.

\begin{lemma}\label{ext}
  \textit{Let $M_1$ and $M_2$ be integrable $U_q(\mathfrak{g})$-modules. Let $u_1\in M_1$ and $u_2\in M_2$ be extremal vectors with weights $\lambda_1$ and $\lambda_2$, respectively. Assume that $\lambda_1$ and $\lambda_2$ are in the same Weyl chamber; in other words,
  \[
  \langle \alpha_i^{\vee}, x\lambda_1\rangle \geq 0 \Leftrightarrow \langle \alpha_i^{\vee}, x\lambda_2 \rangle \geq 0\; \text{ for all } \; i\in I,\;x\in W.
  \]
  Then we have:
  \begin{enumerate}
    \item $T_x(u_1\otimes u_2)=T_x(u_1)\otimes T_x(u_2)$;
    \item $u_1\otimes u_2 \in M_1\otimes M_2$ is extremal;
    \item $S_x(u_1\otimes u_2)=S_x(u_1)\otimes S_x(u_2)$.
  \end{enumerate}
  }
\end{lemma}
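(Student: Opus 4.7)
The plan is to establish (1) by induction on $\ell(x)$, then read (2) off from the characterization of extremality stated just after Lemma \ref{st}, and finally deduce (3) from (1) together with Lemma \ref{st}.

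For (1), the base case $x=e$ is trivial. In the inductive step I would add one simple reflection at a time: assuming the identity for $x$ and fixing $i\in I$ with $\ell(s_i x)=\ell(x)+1$, set $v_j = T_x u_j$, so $\wt(v_j)=x\lambda_j$. Each $v_j$ is $i$-extremal (because $u_j$ is extremal), and the same-chamber hypothesis guarantees that $\langle \alpha_i^{\vee}, x\lambda_1\rangle$ and $\langle \alpha_i^{\vee}, x\lambda_2\rangle$ have matching sign. In the case where both $n_j:=\langle \alpha_i^{\vee}, x\lambda_j\rangle$ are $\ge 0$ (the other case is symmetric), $E_i v_1 = E_i v_2 = 0$ and hence $E_i(v_1\otimes v_2)=0$ by the coproduct (\ref{cop}); formula (\ref{T1}) then expresses $T_i v_j = (-q_i)^{n_j} F_i^{(n_j)} v_j$ and $T_i(v_1\otimes v_2) = (-q_i)^{n_1+n_2} F_i^{(n_1+n_2)}(v_1\otimes v_2)$. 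Expanding $\Delta(F_i^{(n_1+n_2)})$ on $v_1\otimes v_2$, all summands vanish except the one splitting as $(n_1,n_2)$ (since $F_i^{(k)}v_1=0$ for $k>n_1$ and $F_i^{(l)}v_2=0$ for $l>n_2$), and the $q_i$-power arising from the coproduct cancels the one produced by $t_i^{n_2}$ acting on $v_1$, leaving exactly $T_i v_1 \otimes T_i v_2$.

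For (2), I would apply the characterization noted right after Lemma \ref{st}: $u_1\otimes u_2$ is extremal iff, for every $i\in I$ and $x\in W$, $E_i T_x(u_1\otimes u_2)=0$ whenever $\langle \alpha_i^{\vee}, x(\lambda_1+\lambda_2)\rangle\ge 0$ and $F_i T_x(u_1\otimes u_2)=0$ whenever $\le 0$. Using (1) to rewrite $T_x(u_1\otimes u_2)=T_x u_1\otimes T_x u_2$, the same-chamber hypothesis forces both $\langle \alpha_i^{\vee}, x\lambda_j\rangle$ to have the sign of their sum, and extremality of $u_j$ then kills $E_i T_x u_j$ (respectively $F_i T_x u_j$) simultaneously; the coproduct of $E_i$ (resp.\ $F_i$) finishes the argument.

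For (3), I would invoke Lemma \ref{st} separately for $u_1$, $u_2$, and $u_1\otimes u_2$. For $\alpha\in \Delta^+\cap x^{-1}(\Delta^-)$ (all real positive roots), writing $\alpha=y^{-1}\alpha_i$ shows $(\alpha,\lambda_j)$ is a positive multiple of $\langle \alpha_i^{\vee}, y\lambda_j\rangle$, so the same-chamber hypothesis makes $(\alpha,\lambda_1)$ and $(\alpha,\lambda_2)$ share a sign. Consequently both $N_+$ and $N_+^{\vee}$ are additive in the weight, and (1) combined with Lemma \ref{st} yields (3). The main obstacle I expect is the $q_i$-bookkeeping in the coproduct computation for (1): I need the expansion of $\Delta(F_i^{(n_1+n_2)})$ to collapse to a single summand whose $q_i$-exponent, combined with the action of $t_i^{n_2}$ on $v_1$, cancels precisely without leaving any residual scalar correction.
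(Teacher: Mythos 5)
Your argument is correct, and parts (2) and (3) follow the same reasoning as the paper (the characterization after Lemma \ref{st} for (2), and additivity of $N_+$ and $N_+^{\vee}$ via Lemma \ref{st} for (3)). Part (1) is where you diverge. The paper invokes Lusztig's \cite[Proposition 5.3.4]{L}: it introduces the operator $L_i(v_1\otimes v_2)=\sum_{n\ge 0}(-1)^nq_i^{-n(n-1)/2}\bigl(\prod_{a=1}^n(q_i^a-q_i^{-a})\bigr)F_i^{(n)}v_1\otimes E_i^{(n)}v_2$, notes that the same-chamber hypothesis forces $F_i^{(n)}u_1\otimes E_i^{(n)}u_2=0$ for $n>0$ so that $L_i$ fixes $u_1\otimes u_2$, and then reads off $T_i(u_1\otimes u_2)=T_iu_1\otimes T_iu_2$ from that proposition. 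You instead compute directly: writing $v_j=T_xu_j$ with both $n_j=\langle\alpha_i^\vee,x\lambda_j\rangle\ge 0$, you expand $\Delta(F_i^{(n_1+n_2)})=\sum_k q_i^{-k(n_1+n_2-k)}F_i^{(k)}t_i^{n_1+n_2-k}\otimes F_i^{(n_1+n_2-k)}$, observe that only the $k=n_1$ term survives by $\mathfrak{sl}_2$-theory on each factor, and then the factor $q_i^{-n_1n_2}$ from the coproduct exactly cancels $t_i^{n_2}v_1=q_i^{n_1n_2}v_1$, leaving $F_i^{(n_1)}v_1\otimes F_i^{(n_2)}v_2$; applying $(\ref{T1})$ on the triple product gives $T_i(v_1\otimes v_2)=T_iv_1\otimes T_iv_2$. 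Both routes are valid. The paper's is shorter and offloads the bookkeeping to Lusztig's proposition; yours is more elementary and self-contained, which has some pedagogical value, at the cost of one divided-power coproduct computation that the paper avoids.
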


\begin{proof}
  First, we prove that $T_x(u_1\otimes u_2)=T_x(u_1)\otimes T_x(u_2)$. For $i\in I$, we define a linear map $L_i:M_1\otimes M_2\to M_1\otimes M_2$ by
  \[
  L_i(v_1\otimes v_2)=\sum_{n\ge 0}(-1)^nq_i^{-\frac{n(n-1)}{2}}\left(\prod_{a=1}^n(q_i^a-q_i^{-a})\right)F_i^{(n)}v_1\otimes E_i^{(n)}v_2.
  \]

  Since $u_1\in M_1$ and $u_2\in M_2$ are extremal vectors with weights in the same Weyl chamber, we have $F_i^{(n)}u_1\otimes E_i^{(n)}u_2=0$ for all $n>0$. It follows that $L_i(u_1\otimes u_2)=u_1\otimes u_2$. By \cite[Proposition 5.3.4]{L}, we get $T_i(u_1\otimes u_2)=T_i(u_1)\otimes T_i(u_2)$. Using induction on $\ell(x)$, we can then prove that $T_x(u_1\otimes u_2)=T_x(u_1)\otimes T_x(u_2)$ for all $x\in W$.

  Next, we prove that $u_1\otimes u_2 \in M_1\otimes M_2$ is extremal. If $\langle \alpha_i^{\vee},x(\lambda_1+\lambda_2)\rangle \ge 0$, then we have $\langle \alpha_i^{\vee},x(\lambda_1)\rangle \ge 0$ and $\langle \alpha_i^{\vee},x(\lambda_2)\rangle \ge 0$ since $\lambda_1$ and $\lambda_2$ are in the same Weyl chamber. Hence, we obtain $E_i(T_x(u_1\otimes u_2))=E_i(T_x(u_1)\otimes T_x(u_2))=0$. In case $\langle \alpha_i^{\vee},x(\lambda_1+\lambda_2)\rangle \le 0$, we can similarly show that $F_i(T_x(u_1\otimes u_2))=0$.

  Finally, we prove that $S_x(u_1\otimes u_2)=S_x(u_1)\otimes S_x(u_2)$. We set
  \begin{gather*}
    N_{1,+}=\sum_{\alpha\in \Delta^+\cap x^{-1}(\Delta^-)}\max((\alpha,\lambda_1),0),\quad N_{1,+}^{\vee}=\sum_{\alpha\in \Delta^+\cap x^{-1}(\Delta^-)}\max(\langle \alpha^{\vee},\lambda_1\rangle ,0),\\
    N_{2,+}=\sum_{\alpha\in \Delta^+\cap x^{-1}(\Delta^-)}\max((\alpha,\lambda_2),0),\quad N_{2,+}^{\vee}=\sum_{\alpha\in \Delta^+\cap x^{-1}(\Delta^-)}\max(\langle \alpha^{\vee},\lambda_2\rangle ,0).
  \end{gather*}
  Since $\lambda_1$ and $\lambda_2$ are in the same Weyl chamber, we have 
  \[
  (\alpha,\lambda_1)\geq 0 \Leftrightarrow (\alpha,\lambda_2)\geq 0\quad\text{and}\quad\langle\alpha^{\vee},\lambda_1\rangle\geq 0 \Leftrightarrow \langle\alpha^{\vee},\lambda_2\rangle\geq0.
  \]
  Hence, we obtain
\begin{align*}
    S_x(u_1\otimes u_2)&=(-1)^{N_{1,+}+N_{2,+}}q^{-(N_{1,+}^{\vee}+N_{1,+}^{\vee})}T_x(u_1\otimes u_2)\\
    &=(-1)^{N_{1,+}+N_{2,+}}q^{-(N_{1,+}^{\vee}+N_{2,+}^{\vee})}T_x(u_1)\otimes T_x(u_2)=S_x(u_1)\otimes S_x(u_2)
  \end{align*}
  by Lemma \ref{st}.
\end{proof}

For $\lambda\in P$, let us denote by $V(\lambda)$ the $U_q(\mathfrak{g})$-module generated by $u_{\lambda}$ with the defining relation that $u_{\lambda}$ is an extremal vector of weight $\lambda$ (denoted by $V^{\max}(\lambda)$ in \cite{Kas2}). By \cite[Proposition 8.2.2]{Kas2}, $V(\lambda)$ has a crystal basis $(\mathcal{L}(\lambda), \mathcal{B}(\lambda))$ together with a global crystal base $\{G(b) \mid b\in \mathcal{B}(\lambda)\}$. We denote by the same letter $u_{\lambda}$ the element of $\mathcal{B}(\lambda)$ corresponding to $u_{\lambda}\in V(\lambda)$.

  For $x\in W$ and $\lambda\in P$, $u_{\lambda} \mapsto S_{x^{-1}}u_{x\lambda}$ gives an isomorphism of $U_q(\mathfrak{g})$-modules:
\[
V(\lambda)\stackrel{\sim}{\longrightarrow} V(x\lambda).
\]

If $\lambda$ is dominant (resp. anti-dominant), then $V(\lambda)$ is the irreducible highest (resp. lowest) weight module with highest (resp. lowest) weight $\lambda$.

\section{Level-zero extremal weight modules over quantum affine algebras}

Keep the setting of the previous section.

\subsection{Level-zero extremal weight modules over quantum affine algebras}

In the remainder of this paper, we assume that $A=(a_{ij})_{i,j\in I}\;(I=\{0,1,\ldots, n\})$ is the generalized Cartan matrix of untwisted affine type where the numbering of the simple roots is the one given in \cite[Section 4.8]{Kac}. Let $\mathfrak{g}$ be the affine Lie algebra over $\mathbb{Q}$ associated with $A$, with Cartan subalgebra $\mathfrak{h}$. Fix $d\in\mathfrak{h}$ such that $\langle d,\alpha_j\rangle =\delta_{0j}$ for all $j\in I$. Let $c=\sum_{i\in I}a_i^{\vee}\alpha_i^{\vee}$ be the canonical central element and $\delta=\sum_{i\in I}a_i\alpha_i$ be the generator of imaginary roots where $a_i^{\vee}$ and $a_i$ are the numerical labels defined in \cite[Section 4.8 and Section 6.1]{Kac}. For $j\in I$, define the fundamental weight the $\Lambda_j\in \mathfrak{h}^*$ such that $\langle \alpha_i^{\vee},\Lambda_j\rangle=\delta_{ij}\;(i\in I)$ and $ \langle d,\Lambda_j\rangle=0$. We take an integral weight lattice $P$ as $P=\left(\bigoplus_{i\in I}\mathbb{Z}\Lambda_i\right)\oplus\mathbb{Z}\delta$ and set $Q=\bigoplus_{i\in I}\mathbb{Z}\alpha_i\subset P$. We set $P^*=\Hom_{\mathbb{Z}}(P,\mathbb{Z})=\left(\bigoplus_{i\in I}\mathbb{Z}\alpha_i^{\vee}\right)\oplus \mathbb{Z}d$. The invariant symmetric bilinear form on $\mathfrak{g}$ is assumed to be normalized such that $\langle c,\lambda\rangle=(\delta,\lambda)$ for all $\lambda\in \mathfrak{h}^*$. Then we have $(\alpha_i, \alpha_j)=a_i^{\vee}a_i^{-1}a_{ij}\;(i,j\in I)$.

We set $\mathfrak{h}_{\cl}^*=\mathfrak{h}^*/\mathbb{Q}\delta$ and let $\cl:\mathfrak{h}^*\to \mathfrak{h}_{\cl}^*$ be the projection. We also set $P_{\cl}=\cl(P)\subset \mathfrak{h}_{\cl}^*$. Then we have $P_{\cl}=\bigoplus_{i\in I}\mathbb{Z}\cl(\Lambda_i)$. Let $U_q^{\prime}(\mathfrak{g})$ denote the quantized universal enveloping algebra with $P_{\cl}$ as a weight lattice. Hence, $U_q^{\prime}(\mathfrak{g})$ is the subalgebra of $U_q(\mathfrak{g})$ generated by $E_i, F_i\;(i\in I), q^{h}\;(h\in D^{-1}(P_{\cl})^*)$.

Let $M$ be a $U_q^{\prime}(\mathfrak{g})$-module with the weight space decomposition $M=\bigoplus_{\lambda\in P_{\cl}} M_{\lambda}$. Then, we define the $U_q(\mathfrak{g})$-module $M_{\aff}$ with a weight space decomposition $M_{\aff}=\bigoplus_{\lambda\in P}(M_{\aff})_{\lambda}$ by \[
(M_{\aff})_{\lambda}=M_{\cl(\lambda)}.
\]
The action of $E_i, F_i\;(i\in I)$ is defined so that the canonical map $M_{\aff}\to M$ is $U_q^{\prime}(\mathfrak{g})$-linear. Let $z$ denote the $U_q^{\prime}(\mathfrak{g})$-linear automorphism of $M_{\aff}$ defined as
\[
(M_{\aff})_{\lambda}\stackrel{\cong}{\longrightarrow} M_{\cl(\lambda)}=M_{\cl(\lambda+\delta)}\stackrel{\cong}{\longrightarrow} (M_{\aff})_{\lambda+\delta}.
\]
For $a\in \mathbb{Q}(q)$, we define the $U_q^{\prime}(\mathfrak{g})$-module $M_a$ by
\[
M_a=M_{\aff}/(z-a)M_{\aff}.
\]

We set $I_0=I\setminus\{0\}$. Let $\mathfrak{g}_0$ be the simple Lie algebra with Cartan matrix $(a_{ij})_{i,j\in I_0}$. Set $\mathfrak{h}^{*0}=\{\lambda\in \mathfrak{h}^* \mid \langle \lambda, c \rangle =0\}$ and $P^0=P\cap \mathfrak{h}^{*0}=\left(\bigoplus_{i\in I_0}\mathbb{Z}\varpi_i\right)\oplus\mathbb{Z}\delta$. Here $\varpi_i=\Lambda_i-a_i^{\vee}\Lambda_0\in P$ is the level-zero fundamental weight. We identify $\cl(\mathfrak{h}^{*0})$ with the dual of the Cartan subalgebra of $\mathfrak{g}_0$. We regard $P_{\cl}^0=\cl(P^0)$ as the weight lattice of $\mathfrak{g}_0$. We set $W_0=\langle r_i \mid i\in I_0 \rangle$ and $Q_0^{\vee}=\bigoplus_{i\in I_0}\mathbb{Z}\alpha_i^{\vee}$.

For $\xi\in Q_0^{\vee}$, define $t_{\xi}\in W$ as
\[
t_{\xi}(\lambda)=\lambda+\langle\lambda,c\rangle\nu(\xi)-\left(\langle\lambda,\xi\rangle+\frac{1}{2}(\xi,\xi)\langle \lambda,c\rangle\right)\delta
\]
where $\nu:\mathfrak{h}\to\mathfrak{h}^*$ is defined by 
\[
\langle\nu(h_1),h_2\rangle=(h_1,h_2)\quad(h_1, h_2\in \mathfrak{h}).
\]

Then we have $W= W_0\ltimes\{t_{\xi} \mid \xi\in Q_0^{\vee}\}\cong W_0\ltimes Q_0^{\vee}$.

\begin{lemma}[{\label{wt}\cite[Proposition 5.8]{Kas3}}]
  \textit{If $x_1,x_2\in W$ satisfies $x_1\varpi_i=x_2\varpi_i$, we have $S_{x_1}u_{\varpi_i}=S_{x_2}u_{\varpi_i}$ in $V(\varpi_i)$.\qed}
\end{lemma}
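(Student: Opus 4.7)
The plan is to use the fact that, for a level-zero fundamental weight $\varpi_i$ of $U_q(\widehat{\mathfrak{sl}}_{n+1})$, each weight space $V(\varpi_i)_\mu$ with $\mu$ in the Weyl orbit $W\varpi_i$ is one-dimensional. Setting $\mu=x_1\varpi_i=x_2\varpi_i$, both $v_1:=S_{x_1}u_{\varpi_i}$ and $v_2:=S_{x_2}u_{\varpi_i}$ are (nonzero) extremal vectors of weight $\mu$, so $v_1=c\,v_2$ for some scalar $c\in\mathbb{Q}(q_s)^\times$, and the content of the lemma is that $c=1$.

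To set up the analysis, write $y=x_2^{-1}x_1$, which lies in the stabilizer $W_{\varpi_i}:=\{w\in W\mid w\varpi_i=\varpi_i\}$. Using $W=W_0\ltimes Q_0^\vee$ together with the formula $t_\xi\varpi_i=\varpi_i-\langle\varpi_i,\xi\rangle\delta$ (valid at level zero, since $\langle c,\varpi_i\rangle=0$), one identifies $W_{\varpi_i}=W_{0,i}\ltimes Q_{0,i}^\vee$, where $W_{0,i}=\langle s_j\mid j\in I_0\setminus\{i\}\rangle$ and $Q_{0,i}^\vee=\bigoplus_{j\in I_0\setminus\{i\}}\mathbb{Z}\alpha_j^\vee$.

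For the finite Weyl part, the reduction is clean: if $w_0\in W_{0,i}$ has reduced expression $s_{j_1}\cdots s_{j_p}$ with each $j_k\in I_0\setminus\{i\}$, then $\langle\alpha_{j_k}^\vee,\varpi_i\rangle=0$, and the extremality of $u_{\varpi_i}$ (which implies $j_k$-extremality and then, via the $U_q(\mathfrak{sl}_2)$-theory at weight $0$) gives $E_{j_k}u_{\varpi_i}=F_{j_k}u_{\varpi_i}=0$. Hence $S_{j_k}u_{\varpi_i}=u_{\varpi_i}$, and iterating shows that composing with any element of $W_{0,i}$ contributes nothing on either side.

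The main obstacle is the translation part $t_\xi$ for $\xi\in Q_{0,i}^\vee$, because reduced expressions for $t_\xi$ unavoidably involve $s_0$, and $s_0$ does not fix $\varpi_i$. I would handle this by invoking the global crystal basis of $V(\varpi_i)$: the vector $u_{\varpi_i}$ is a global basis element, and from the formula $S_xu=(-1)^{N_+^\vee}q^{-N_+}T_xu$ (Lemma \ref{st}) together with the compatibility of the braid operators $T_x$ with the bar involution and the $\mathcal{A}$-form $V(\varpi_i)_{\mathcal{A}}$, both $v_1$ and $v_2$ are bar-invariant lattice-integral extremal vectors of weight $\mu$. Since the global basis of $V(\varpi_i)$ contains a unique element at each weight $\mu\in W\varpi_i$ (by the one-dimensionality established above), the scalar $c$ must equal $1$, which concludes the proof. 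The most delicate bookkeeping is precisely this last identification of sign and $q$-power for the translation factor, and the cleanest route is through the global-basis characterization rather than a direct simple-reflection induction.
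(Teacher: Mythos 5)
The paper does not prove this lemma; it is cited verbatim from Kashiwara's paper (the \texttt{\textbackslash qed} sits inside the statement), so there is no in-paper proof to compare your attempt against. Your reconstruction via the global basis is a reasonable strategy at the correct level of generality (arbitrary quantum affine $\mathfrak{g}$), and the preliminary observations — one-dimensionality of $V(\varpi_i)_{\mu}$ for $\mu\in W\varpi_i$, the identification $W_{\varpi_i}=W_{0,i}\ltimes Q_{0,i}^{\vee}$, the triviality of $S_{w}$ for $w\in W_{0,i}$ — are all correct.

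However, the final deduction has a genuine gap. You establish that $v_1=S_{x_1}u_{\varpi_i}$ and $v_2=S_{x_2}u_{\varpi_i}$ are bar-invariant elements of $V_{\mathcal A}\cap\mathcal L\cap\overline{\mathcal L}$ of weight $\mu$, and that there is a unique global basis element $G(b)$ at weight $\mu$. From this it follows only that $v_1,v_2\in\mathbb{Q}\,G(b)$, i.e.\ $c\in\mathbb{Q}^{\times}$; the existence of a \emph{unique} $G(b)$ does not by itself force $c=1$. The missing ingredient is the crystal normalization: on an extremal vector $u$ with $E_ju=0$ the operator $S_j$ coincides with $\tilde f_j^{\langle\alpha_j^{\vee},\mathrm{wt}(u)\rangle}$ (and likewise $\tilde e_j$-powers when $F_ju=0$), so the residue of $S_xu_{\varpi_i}$ in $\mathcal L/q_s\mathcal L$ is obtained by applying Kashiwara operators to the crystal element $u_{\varpi_i}\in\mathcal B(\varpi_i)$. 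Since the result is nonzero (the balanced-triple isomorphism $V_{\mathcal A}\cap\mathcal L\cap\overline{\mathcal L}\to\mathcal L/q_s\mathcal L$ is injective) and $\mathcal B(\varpi_i)_{\mu}$ is a singleton, both residues equal the unique $b\in\mathcal B(\varpi_i)_{\mu}$ with coefficient $1$, whence $v_1=G(b)=v_2$. That sentence is what actually closes the argument, and it is absent from your write-up.

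Two smaller points. First, the phrase \emph{``compatibility of the braid operators $T_x$ with the bar involution''} is misleading: $T_x$ does not commute with $\bar{\ }$. What commutes with $\bar{\ }$ is $S_x$ itself, directly and for a cheaper reason — it is a composite of the bar-invariant divided powers $E_j^{(m)},F_j^{(m)}$ applied to the bar-invariant vector $u_{\varpi_i}$ — so invoking Lemma~\ref{st} here is unnecessary and points at the wrong mechanism. Second, the decomposition of $W_{\varpi_i}$ and the treatment of the finite part $W_{0,i}$ are ultimately discarded once you switch to the global-basis argument, which handles $x_1,x_2$ directly; that preamble can be removed without loss.
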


We denote $S_xu_{\varpi_i}\in V(\varpi_i)$ by $u_{x\varpi_i}$. There is no confusion with this notation by Lemma \ref{wt}.

By \cite[Section 5.3]{Kas2}, we there is a unique $U_q^{\prime}(\mathfrak{g})$-linear automorphism $z_i:V(\varpi_i)\to V(\varpi_i)$ which sends $u_{\varpi_i}$ to $u_{\varpi_i+\delta}$. We define the \textit{level-zero fundamental representation} $W(\varpi_i)$ by 
\[
W(\varpi_i)=V(\varpi_i)/(z_i-1)V(\varpi_i).
\]

\begin{theorem}[{\label{f}\cite[Theorem 5.17]{Kas3}, \cite[Lemma 4.3]{Kas4}}]
\textit{
  \begin{enumerate}
    \item $W(\varpi_i)$ is a finite dimensional $U_q^{\prime}(\mathfrak{g})$-module;
    \item $V(\varpi_i)$ is isomorphic to $W(\varpi_i)_{\aff}$;
    \item If $a_i^{\vee}=1$, then $W(\varpi_i)$ is an irreducible $U_q(\mathfrak{g}_0)$-module. Here $U_q(\mathfrak{g}_0)$ is the subalgebra of $U_q(\mathfrak{g})$ generated by $E_i, F_i, t_i\;(i\in I_0)$.\qed
  \end{enumerate}
  }
\end{theorem}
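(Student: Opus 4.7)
The plan is to prove the three assertions in order, relying on the structure of the extremal weight module $V(\varpi_i)$ and the existence of its crystal basis $\mathcal{B}(\varpi_i)$.

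For (1), I would first pin down the set of weights of $V(\varpi_i)$. Since $V(\varpi_i)$ is generated from $u_{\varpi_i}$ by the extremal action $S_x$ and by the Chevalley generators, every weight has the form $x\varpi_i-\sum_{j\in I}n_j\alpha_j$ with $x\in W$ and $n_j\ge 0$. Using $W=W_0\ltimes\{t_\xi\mid \xi\in Q_0^\vee\}$, the level-zero condition $\langle c,\varpi_i\rangle=0$, and the fact that $W$ fixes $\delta$, the explicit translation formula gives
\[
t_\xi(w\varpi_i)=w\varpi_i-\langle w\varpi_i,\xi\rangle\delta\qquad (w\in W_0,\ \xi\in Q_0^\vee),
\]
so $W\varpi_i\subset W_0\varpi_i+\mathbb{Z}\delta$. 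Applying $\cl$ yields the finite set $W_0\cl(\varpi_i)\subset P_\cl^0$, which is therefore the set of classical weights of $W(\varpi_i)$. Combined with the finite-dimensionality of each weight space of $V(\varpi_i)$, which one reads off from $\mathcal{B}(\varpi_i)$, this gives $\dim W(\varpi_i)<\infty$.

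For (2), the automorphism $z_i$ shifts weights by $\delta$, so $V(\varpi_i)$ decomposes according to classical weight, and for each $\mu\in P_\cl^0$ the subspace $\bigoplus_{\cl(\lambda)=\mu}V(\varpi_i)_\lambda$ is a free $\mathbb{Q}(q)[z_i^{\pm1}]$-module of rank $\dim W(\varpi_i)_\mu$. The canonical projection $V(\varpi_i)\to W(\varpi_i)$ identifies each $V(\varpi_i)_\lambda$ isomorphically with $W(\varpi_i)_{\cl(\lambda)}$; reassembling these identifications produces a $U_q(\mathfrak{g})$-linear isomorphism $V(\varpi_i)\cong W(\varpi_i)_{\aff}$ matching the two shift automorphisms $z_i$ and $z$.

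For (3), assume $a_i^\vee=1$. Then $\cl(\varpi_i)$ is a minuscule fundamental weight of $\mathfrak{g}_0$, so the integrable irreducible highest-weight $U_q(\mathfrak{g}_0)$-module $V_0(\cl(\varpi_i))$ has weight set $W_0\cl(\varpi_i)$, each weight appearing with multiplicity one. Letting $N$ be the $U_q(\mathfrak{g}_0)$-submodule of $W(\varpi_i)$ generated by the image of $u_{\varpi_i}$, a standard highest-weight argument produces a surjection $V_0(\cl(\varpi_i))\twoheadrightarrow N$, which is an isomorphism by irreducibility. Then (1) and the minuscule multiplicity bound imply $\dim W(\varpi_i)\le |W_0\cl(\varpi_i)|=\dim N$, forcing $N=W(\varpi_i)$.

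The main obstacle is the multiplicity control in (1): showing that each weight space of $V(\varpi_i)$ is finite-dimensional, equivalently that the $\delta$-shift orbits in $\mathcal{B}(\varpi_i)$ of fixed classical weight are finite. This is the step where the substantial input from Kashiwara's construction of the crystal basis of the level-zero extremal weight module is genuinely required; the remaining pieces are formal consequences once this bound is in hand.
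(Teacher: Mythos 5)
The paper does not prove this theorem; it is quoted from Kashiwara's works \cite{Kas3} and \cite{Kas4} with a terminal \qed, so there is no in-paper proof to compare against. Your proposal is therefore an independent reconstruction, and it has a concrete gap.

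In part (1) you assert that every weight of $V(\varpi_i)$ has the form $x\varpi_i-\sum_{j\in I}n_j\alpha_j$ with $x\in W$, $n_j\ge 0$, and conclude that the set of classical weights of $W(\varpi_i)$ is the finite set $W_0\cl(\varpi_i)$. The last step does not follow. The sum over $j$ includes $j=0$, and $\cl(\alpha_0)$ is a \emph{negative} integer combination of the classical simple roots $\cl(\alpha_j)$ for $j\in I_0$ (because $\cl(\delta)=0$ and $\delta=\sum_{j\in I}a_j\alpha_j$). Hence $\cl\bigl(\sum_{j\in I}n_j\alpha_j\bigr)$ with $n_j\ge 0$ is not confined to a cone: it ranges over all of $\cl(Q)$ as the $n_j$ vary, and the weight bound you invoke becomes vacuous after passing to $P_{\cl}$ — it yields no finiteness whatsoever. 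What one actually needs (and what Kashiwara proves) is the stronger statement that the weights of $V(\varpi_i)$ lie in the convex hull of $W\varpi_i$; projecting by $\cl$ then gives the convex hull of the finite set $W_0\cl(\varpi_i)$, a bounded region whose intersection with the relevant lattice is finite. Your argument neither cites nor proves this convex-hull bound, and it is a genuinely nontrivial consequence of the crystal structure of extremal weight modules. Part (3) inherits the same defect, since it also relies on the classical weight set of $W(\varpi_i)$ being contained in $W_0\cl(\varpi_i)$; for minuscule $a_i^\vee=1$ this does follow from the convex-hull bound, but not from the form $x\varpi_i-\sum_j n_j\alpha_j$. Your closing paragraph identifies the bottleneck as finite-dimensionality of the weight spaces of $V(\varpi_i)$; that is one nontrivial input, but the weight-support bound just described is the decisive one, and your text treats it as an easy consequence of cyclic generation when it is not. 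Part (2) is fine in outline once (1) is secured.
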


We set 
\[
 P_0=\bigoplus_{i\in I_0}\mathbb{Z}\varpi_i\subset P,\quad\textstyle P_{0,+}=\{\lambda=\sum_{i\in I_0}m_i\varpi_i\in P_0 \mid m_i\geq0\text{ for all }i\in I\}.
\]

Let $\lambda=m_i\varpi_i \in P_{0,+}$. We define a $U_q(\mathfrak{g})$-module $\widetilde{V}(\lambda)$ as
\[
\widetilde{V}(\lambda)=\bigotimes_{i\in I_0}V(\varpi_i)^{\otimes m_i}.
\]

Here we take any ordering of $I_0$ to define the tensor product.

Let $\tilde{u}_{\lambda}=\otimes_{i\in I_0}u_{\varpi_i}^{\otimes m_i}\in \widetilde{V}(\lambda)$. Then $\tilde{u}_{\lambda}$ is an extremal vector by Lemma \ref{ext}. So there is a unique $U_q(\mathfrak{g})$-linear morphism $\Phi_{\lambda}: V(\lambda)\to \widetilde{V}(\lambda)$ such that $\Phi_{\lambda}(u_{\lambda})=\tilde{u}_{\lambda}$.

For each $i\in I_0$ and $\nu=1,\ldots,\lambda_i$, let $z_{i,\nu}$ denote the $U_q^{\prime}(\mathfrak{g})$-linear automorphism of $\widetilde{V}(\lambda)$ obtained by the action of $z_i:V(\varpi_i)\to V(\varpi_i)$ on the $\nu$-th factor of $V(\varpi_i)^{\otimes m_i}$ in $\widetilde{V}(\lambda)$.

\begin{definition}\label{schur}
  For $\lambda=\sum_{i\in I_0}m_i\varpi_i\in P_{0,+}$, we define 
  \[
  \overline{\Par}(\lambda)=\{\mathbf{c}_0=(\rho^{(i)})_{i\in I_0} \mid \text{$\rho^{(i)}$ is a partition of length $\leq m_i$ for all $i\in I$}\}.
  \]

For $\mathbf{c}_0\in \overline{\Par}(\lambda)$, we define $S_{\mathbf{c}_0}\in U_q^+(\mathfrak{g})$ as in \cite[Section 3.1]{BN} and set $S_{\mathbf{c}_0}^-=\overline{S_{\mathbf{c}_0}^{\vee}}$.
\end{definition}

For $\mathbf{c}_0=(\rho^{(i)})_{i\in I_0}$, we set 
\[
s_{\mathbf{c}_0}(z_{i,\nu}^{-1})=\prod_{i\in I_0}s_{\rho^{(i)}}(z_{i,1}^{-1},\ldots,z_{i,m_i}^{-1})
\]
where $s_{\rho}(x_1,\ldots,x_m)$ denotes the Schur polynomial in the variables $x_1,\ldots,x_m$ corresponding to the partition $\rho$.

\begin{proposition}[{\cite[Proposition 4.10]{BN}}]
  \textit{Let $\mathbf{c}_0\in \overline{\Par(\lambda)}$. Then we have 
  \[
  \pushQED{\qed}
  \Phi_{\lambda}(S_{\mathbf{c}_0}^-u_{\lambda})=s_{\mathbf{c}_0}(z_{i,\nu}^{-1})\tilde{u}_{\lambda}.\qedhere
\popQED
  \]
  }
\end{proposition}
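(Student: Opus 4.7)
The plan is to reduce the Schur polynomial identity to its power sum counterpart and then verify the latter directly. Recall from \cite[Section 3.1]{BN} that $S_{\mathbf{c}_0}$ is obtained by evaluating $\prod_{i \in I_0} s_{\rho^{(i)}}$ at a family of imaginary root vectors $\{\tilde{P}_{i,k}\}_{k>0} \subset U_q^+(\mathfrak{g})$ coming from the Drinfeld presentation of $U_q(\mathfrak{g})$. Applying $\vee$ followed by the bar involution sends each $\tilde{P}_{i,k}$ to the corresponding negative imaginary root vector $\tilde{P}_{i,-k} \in U_q^-(\mathfrak{g})$, so that $S_{\mathbf{c}_0}^-$ is the same Schur polynomial now evaluated at the $\tilde{P}_{i,-k}$. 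Since every Schur polynomial is a universal polynomial in the power sums $p_k$ (via the Newton or Jacobi--Trudi identities), and since $\Phi_\lambda$ is $U_q(\mathfrak{g})$-linear, it is enough to prove
\[
  \Phi_\lambda\bigl(\tilde{P}_{i,-k}\, u_\lambda\bigr) \;=\; \Bigl(\sum_{\nu=1}^{m_i} z_{i,\nu}^{-k}\Bigr)\,\tilde{u}_\lambda \qquad (i \in I_0,\ k \geq 1).
\]

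To establish this power sum identity I would proceed in two steps. First, on a single fundamental module, I would check that $\tilde{P}_{j,-k}\, u_{\varpi_j} = z_j^{-k}\, u_{\varpi_j}$ while $\tilde{P}_{i,-k}\, u_{\varpi_j} = 0$ for $i \neq j$; this is a computation inside the Heisenberg-type subalgebra generated by $\{\tilde{P}_{i,\pm k}\}$, whose action on the highest-weight vector is pinned down by the isomorphism $V(\varpi_j) \cong W(\varpi_j)_{\aff}$ of Theorem \ref{f} together with the defining property $z_j u_{\varpi_j} = u_{\varpi_j+\delta}$. Second, I would distribute $\tilde{P}_{i,-k}$ across the tensor product $\tilde{u}_\lambda = \bigotimes_{j} u_{\varpi_j}^{\otimes m_j}$ via an iterated coproduct: the ``primitive'' summands $1 \otimes \cdots \otimes \tilde{P}_{i,-k} \otimes \cdots \otimes 1$ contribute $\sum_\nu z_{i,\nu}^{-k}\tilde{u}_\lambda$ by the single-factor computation, and all remaining terms in $\Delta^{(m-1)}(\tilde{P}_{i,-k})$ must be shown to annihilate $\tilde{u}_\lambda$.

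The main obstacle is this last vanishing. The standard coproduct on $U_q(\mathfrak{g})$ does not respect the Drinfeld triangular decomposition in a transparent way, so one needs the explicit expansion of $\Delta(\tilde{P}_{i,-k})$ in terms of real and imaginary Drinfeld generators, worked out by Beck and exploited throughout \cite{BN}. The correction terms are products involving positive real root vectors in the second tensor factor; since each $u_{\varpi_j}$ is extremal and all the $\varpi_j$ lie in the same Weyl chamber, repeated application of Lemma \ref{ext} together with the fact that $u_{\varpi_j}$ is a highest-weight vector for the horizontal subalgebra $U_q(\mathfrak{g}_0)$ shows that every such correction acts by zero on $\tilde{u}_\lambda$. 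Once this vanishing is in place the power sum identity follows, and the general Schur polynomial statement is obtained by applying the Newton / Jacobi--Trudi formulas to both sides.
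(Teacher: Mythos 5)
The paper does not prove this statement; it cites it directly from \cite[Proposition 4.10]{BN} (note the $\qed$ immediately after the display), so there is no in-paper proof to compare against. Your sketch therefore has to be judged on whether it would plausibly reconstruct Beck--Nakajima's argument.

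Your skeleton---single-factor computation, then distribute across $\tilde{u}_{\lambda}$ via the iterated coproduct, then argue the correction terms die on the extremal tensor vector---is the right one. But there is a concrete misidentification that matters. You treat the $\tilde{P}_{i,k}$ as ``power sum'' elements and propose to reduce the Schur identity to $\Phi_{\lambda}(\tilde{P}_{i,-k}u_{\lambda}) = \bigl(\sum_{\nu}z_{i,\nu}^{-k}\bigr)\tilde{u}_{\lambda}$. However, this very paper (in the proof of Proposition \ref{dem}, quoting \cite[Lemma 4.5 and Remark 4.1]{BN}) states that $\tilde{P}_{i,-m_i}$ is the vector corresponding to the \emph{elementary symmetric} polynomial, not the power sum; in BN's conventions the untilded $P_{i,k}$ are the power-sum-type generators and the tilded $\tilde{P}_{i,k}$ come from the exponentiated (elementary/complete symmetric) generating function. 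The correct single-generator identity is thus of the form $\Phi_{\lambda}(\tilde{P}_{i,-k}u_{\lambda}) = e_k(z_{i,\nu}^{-1})\tilde{u}_{\lambda}$ (or the $h_k$ version, depending on sign conventions), not the power-sum version you wrote.

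This is not merely cosmetic. Reducing Schur functions to power sums uses the expansion $s_{\rho} = \sum_{\sigma} z_{\sigma}^{-1}\chi^{\rho}(\sigma)\,p_{\sigma}$, which has rational denominators and, more seriously, presupposes that the $P_{i,k}$ and $\tilde{P}_{i,k}$ satisfy the \emph{classical} Newton identities---but in the quantum affine Heisenberg subalgebra they satisfy $q$-deformed Newton relations, so ``Schur in the $\tilde{P}$'s via Jacobi--Trudi'' and ``Schur in the $P$'s via the power-sum expansion'' are not a priori the same element. The cleaner route, which avoids both problems, is the dual Jacobi--Trudi expansion $s_{\rho} = \det(e_{\rho'_j-j+i})$: this is denominator-free and needs only the elementary-symmetric identity for the $\tilde{P}_{i,-k}$, which is what the $U_q(\mathfrak{g})$-linearity of $\Phi_{\lambda}$ and the commutativity of the imaginary subalgebra then bootstrap to the full Schur statement. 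Finally, the vanishing of the real-root correction terms in $\Delta^{(m-1)}(\tilde{P}_{i,-k})$ is the true technical content; you correctly flag it as the obstacle, but Lemma \ref{ext} by itself only gives extremality, and what is actually needed is the explicit Drinfeld-coproduct formula for $\tilde{P}_{i,-k}$ showing that the extra summands land in weight spaces killed by the highest-weight condition for $U_q(\mathfrak{g}_0)$---that detail has to be written out, not just asserted.
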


\begin{corollary}\label{BN}
  \textit{Let $\mathbf{c}_0\in \overline{\Par(\lambda)}$ and $\xi=\sum_{i\in I_0}k_i\alpha_i^{\vee}$. We have 
  \[
   \Phi_{\lambda}(S_{\mathbf{c}_0}^-S_{t_{\xi}}u_{\lambda})=s_{\mathbf{c}_0}(z_{i,\nu}^{-1})\prod_{i\in I_0}(z_{i,1}\cdots z_{i,m_i})^{-k_i}\tilde{u}_{\lambda}.
  \]
  }
\end{corollary}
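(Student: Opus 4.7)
The plan is to derive Corollary \ref{BN} from the immediately preceding Proposition by first computing $\Phi_{\lambda}(S_{t_{\xi}}u_{\lambda})$ explicitly in terms of the loop-rotation operators $z_{i,\nu}$ on $\widetilde{V}(\lambda)$, and then commuting $S_{\mathbf{c}_0}^{-}$ past these operators.

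First I would argue that
\[
\Phi_{\lambda}(S_{t_{\xi}}u_{\lambda})
=S_{t_{\xi}}\tilde{u}_{\lambda}
=\bigotimes_{i\in I_0}(S_{t_{\xi}}u_{\varpi_i})^{\otimes m_i}.
\]
The first equality uses the $U_q(\mathfrak{g})$-linearity of $\Phi_{\lambda}$ together with $\Phi_{\lambda}(u_{\lambda})=\tilde{u}_{\lambda}$, and the second follows by iterating Lemma \ref{ext}(3) over the tensor factors of $\tilde{u}_{\lambda}$. The iteration is legitimate because each $u_{\varpi_i}\;(i\in I_0)$ is an extremal vector with weight in the common Weyl chamber needed to declare $\tilde{u}_{\lambda}$ extremal in the first place.

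Next I would identify each factor $S_{t_{\xi}}u_{\varpi_i}\in V(\varpi_i)$ as a power of $z_i$. Because $t_{\xi}\varpi_i=\varpi_i-k_i\delta=t_{k_i\alpha_i^{\vee}}\varpi_i$, Lemma \ref{wt} gives $S_{t_{\xi}}u_{\varpi_i}=S_{t_{k_i\alpha_i^{\vee}}}u_{\varpi_i}$. Combining this with the defining property of $z_i$, namely $z_iu_{\varpi_i}=u_{\varpi_i+\delta}=S_{t_{-\alpha_i^{\vee}}}u_{\varpi_i}$, and iterating gives $S_{t_{\xi}}u_{\varpi_i}=z_i^{-k_i}u_{\varpi_i}$. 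Substituting this into the tensor product and recalling that $z_{i,\nu}$ acts as $z_i$ on the $\nu$-th copy of $V(\varpi_i)$ yields
\[
\Phi_{\lambda}(S_{t_{\xi}}u_{\lambda})=\prod_{i\in I_0}(z_{i,1}\cdots z_{i,m_i})^{-k_i}\,\tilde{u}_{\lambda}.
\]

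Finally, I would apply $S_{\mathbf{c}_0}^{-}$ to both sides. By $U_q(\mathfrak{g})$-linearity of $\Phi_{\lambda}$ and the fact that each $z_{i,\nu}$ is $U_q^{\prime}(\mathfrak{g})$-linear while $S_{\mathbf{c}_0}^{-}\in U_q^{-}(\mathfrak{g})\subset U_q^{\prime}(\mathfrak{g})$, these operators commute, reducing the computation to $S_{\mathbf{c}_0}^{-}\tilde{u}_{\lambda}$. The preceding Proposition identifies this with $s_{\mathbf{c}_0}(z_{i,\nu}^{-1})\tilde{u}_{\lambda}$, which yields the claimed formula. I do not anticipate a substantive obstacle; the only delicate step is the identification $S_{t_{\xi}}u_{\varpi_i}=z_i^{-k_i}u_{\varpi_i}$, where one must correctly match the definition of $z_i$ with the notational convention $u_{x\varpi_i}:=S_xu_{\varpi_i}$ sanctioned by Lemma \ref{wt}.
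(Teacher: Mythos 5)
Your proof is correct and follows essentially the same route as the paper's. The paper packages the key computation by constructing a $U_q^{\prime}(\mathfrak{g})$-linear automorphism of $\widetilde{V}(\lambda)$ sending $\tilde{u}_{\lambda}$ to $\prod_{i\in I_0}(z_{i,1}\cdots z_{i,m_i})^{-k_i}\tilde{u}_{\lambda}$ and asserting without further comment that this vector equals $S_{t_{\xi}}\tilde{u}_{\lambda}$; you supply the missing verification via Lemma \ref{ext}(3), Lemma \ref{wt}, and the identification $S_{t_{k_i\alpha_i^{\vee}}}u_{\varpi_i}=z_i^{-k_i}u_{\varpi_i}$, and then apply the same commutation of $S_{\mathbf{c}_0}^-\in U_q^-(\mathfrak{g})$ with the $U_q^{\prime}(\mathfrak{g})$-linear operators $z_{i,\nu}$.
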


\begin{proof}
For each $i\in I_0$, there is a $U_q^{\prime}(\mathfrak{g})$-linear automorphism of $V(\varpi_i)$ given by $u_{\varpi_i}\mapsto z_i^{-k_i}u_{\varpi_i}$. By considering the tensor product of these automorphisms, we obtain a $U_q^{\prime}(\mathfrak{g})$-linear automorphism of $\tilde{V}(\lambda)$ such that 
\[
\tilde{u}_{\lambda}\mapsto \prod_{i\in I_0}(z_{i,1}\cdots z_{i,m_i})^{-k_i}\tilde{u}_{\lambda}=S_{t_{\xi}}\tilde{u}_{\lambda}.
\] Since $S_{\mathbf{c}_0}\tilde{u}_{\lambda}=s_{\mathbf{c}_0}(z_{i,\nu}^{-1})\tilde{u}_{\lambda}$ is mapped to $s_{\mathbf{c}_0}(z_{i,\nu}^{-1})\prod_{i\in I_0}(z_{i,1}\cdots z_{i,m_i})^{-k_i}\tilde{u}_{\lambda}$ by this automorphism, we obtain the desired result.
\end{proof}

\begin{theorem}[{\label{inj}\cite[Corollary 4.15]{BN}}]
  \textit{The $U_q(\mathfrak{g})$-linear morphism $\Phi_{\lambda}:V(\lambda)\to \widetilde{V}(\lambda)$ is injective.\qed}
\end{theorem}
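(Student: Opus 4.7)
The plan is to exhibit a spanning family of $V(\lambda)$ whose images under $\Phi_{\lambda}$ are provably linearly independent in $\widetilde{V}(\lambda)$. Corollary \ref{BN} already identifies the images of the \emph{imaginary-plus-translational} part of $V(\lambda)$, so the remaining work is to enlarge this to a full spanning set by incorporating real root vectors and to verify linear independence of all resulting images.

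The first step would be to invoke the PBW-type structural result for level-zero extremal weight modules due to Beck-Nakajima: $V(\lambda)$ is spanned, as a $\mathbb{Q}(q_s)$-vector space, by vectors of the form $\mathcal{S}^{-}S_{\mathbf{c}_0}^{-}S_{t_{\xi}}\mathcal{S}^{+}u_{\lambda}$, where $\mathcal{S}^{\pm}$ are ordered monomials in the real positive/negative root vectors of the Drinfeld presentation, $\mathbf{c}_0\in\overline{\Par}(\lambda)$, and $\xi\in Q_0^{\vee}$. Since $\Phi_{\lambda}$ is $U_q(\mathfrak{g})$-linear, each such vector maps to
\[
\mathcal{S}^{-}\Bigl(s_{\mathbf{c}_0}(z_{i,\nu}^{-1})\prod_{i\in I_0}(z_{i,1}\cdots z_{i,m_i})^{-k_i}\Bigr)\mathcal{S}^{+}\tilde{u}_{\lambda},
\]
where $\mathcal{S}^{\pm}$ act on $\widetilde{V}(\lambda)=\bigotimes_{i\in I_0} V(\varpi_i)^{\otimes m_i}$ via the coproduct \eqref{cop}.

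Linear independence of these images would then be checked weight space by weight space. The Schur polynomials $\{s_{\mathbf{c}_0}\}$ are linearly independent in the symmetric ring $\bigotimes_i \mathbb{Q}(q)[z_{i,1}^{\pm 1},\ldots,z_{i,m_i}^{\pm 1}]^{\mathfrak{S}_{m_i}}$; the translational monomials distinguish different $\xi\in Q_0^{\vee}$; and the real-root contributions $\mathcal{S}^{\pm}\tilde{u}_{\lambda}$ can be separated by expanding via the coproduct and using the explicit finite-dimensional structure of each tensor factor $V(\varpi_i)$ recorded in Theorem \ref{f}, together with the standard tensor-product rule for crystal bases (so that distinct $(\mathcal{S}^{-},\mathcal{S}^{+})$ produce distinct leading terms in the crystal expansion).

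The main obstacle is the spanning statement in the first step: one must translate between the Chevalley generators and the Drinfeld root vectors, and verify that the Drinfeld real-root monomials, together with $S_{\mathbf{c}_0}^{-}$ and the Weyl translations $S_{t_{\xi}}$, suffice to reach every vector of $V(\lambda)$ starting from $u_{\lambda}$. This relies crucially on the loop-algebra realization of $U_q(\mathfrak{g})$ and on a careful analysis of how extremal vectors interact with the two PBW presentations, which is where the substantive technical work of Beck-Nakajima is concentrated. Once this spanning is granted, injectivity of $\Phi_{\lambda}$ reduces to the concrete linear-algebra verification outlined above.
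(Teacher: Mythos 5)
The paper offers no proof of this theorem: it is cited verbatim from \cite[Corollary 4.15]{BN} and closed with a \textit{qed}, so there is nothing internal to compare your outline against. That said, your proposed route is not the one Beck--Nakajima actually take. Their Corollary 4.15 follows from showing that $\Phi_{\lambda}$ is compatible with the global crystal bases of $V(\lambda)$ and $\widetilde{V}(\lambda)$: the crystal $\mathcal{B}(\lambda)$ embeds into the crystal of the tensor product, and the global-basis elements on both sides are related by an invertible triangular transition matrix, which immediately yields injectivity without any explicit PBW bookkeeping. Your sketch instead tries to exhibit a concrete spanning set $\mathcal{S}^{-}S_{\mathbf{c}_0}^{-}S_{t_{\xi}}\mathcal{S}^{+}u_{\lambda}$ and verify linear independence of the images directly, and the two obstacles you yourself flag are genuine: the placement of $S_{t_{\xi}}$ between $S_{\mathbf{c}_0}^{-}$ and $\mathcal{S}^{+}$ requires commutation relations that you have not stated, and the phrase ``distinct leading terms in the crystal expansion'' is not made precise enough to carry the linear-independence argument. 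You are right that the substantive technical content lives in \cite{BN}; but if one actually wanted to reproduce their proof, the cleaner path is through global-basis compatibility rather than through a hand-built PBW spanning family.
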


In the remainder of this paper, we fix the order of $I_0$ as $1<2<\cdots <n$ unless specifically stated otherwise.

\subsection{Semi-infinite LS path model for level-zero extremal weight modules}

In this subsection, we fix $\lambda=\sum_{i\in I_0}m_i\varpi_i\in P_{0,+}$ and set $J=\{i\in I_0 \mid m_i=0\}$.

Let $\Delta^{\re}$ denote the set of real roots. For $\alpha\in \Delta^{\re}$, let $s_{\alpha}$ denote the reflection with respect to $\alpha$.

We define
\begin{gather*}
  \Delta_J=\left(\bigoplus_{j\in J}\mathbb{Z}\alpha_j\right)\cap \Delta, \\
  (\Delta_J)_{\af}=\{\alpha+n\delta \mid \alpha\in \Delta_J, n\in \mathbb{Z}\}, \\
  (\Delta_J)_{\af}^+=(\Delta_J)_{\af}\cap \Delta_+, \\
  (W_J)_{\af}=\langle s_{\beta} \mid \beta\in (\Delta_J)_{\af}\rangle, \\
  (W^J)_{\af}=\{x\in W \mid x\beta>0 \text{ for all } \beta\in (\Delta_J)_{\af}^+\}.
\end{gather*}

For $x\in W$, there is a unique $x_1\in (W^J)_{\af}$ and a unique $x_2\in (W_J)_{\af}$ such that $x=x_1x_2$ by \cite[Lemma 10.6]{LS}. So we can define a map $\Pi^J:W\to (W^J)_{\af}$ by $x\mapsto x_1$ for $x=x_1x_2$ with $x_1\in (W^J)_{\af}$ and $x_2\in (W_J)_{\af}$.

 Let $\ell: W\to \mathbb{Z}$ denote the length function. We define a function $\mathrm{ht}:Q_0^{\vee}\to \mathbb{Z}$ by $\mathrm{ht}(\sum_{i\in I_0}k_i\alpha_i^{\vee})=\sum_{i\in I_0}k_i$.

We set $(W_0)_J=\langle s_j \mid j\in J\rangle$. Let $(W_0)^J$ denote the set of minimal coset representatives for $W_0/(W_0)_J$.

\begin{definition}
 An element $\xi\in Q_0^{\vee}$ is said to be $J$-adjusted if $\langle \xi,\gamma \rangle\in \{0,-1\}$ for all $\gamma\in  \Delta_J\cap \Delta_+$. Let $Q_0^{\vee, J\mathchar`-\ad}$ denote the set of $J$-adjusted elements.
\end{definition}

\begin{lemma}[{\cite[Lemma 2.3.5]{INS}\label{ad}}]
\textit{
\begin{enumerate}
  \item For each $\xi\in Q_0^{\vee}$, there exists a unique $\varphi_J(\xi)\in \bigoplus_{j\in J}\mathbb{Z}\alpha_j^{\vee}$ such that $\xi+\varphi_J(\xi)\in Q_0^{\vee, J\mathchar`-\ad}$. In particular, $\xi\in Q_0^{\vee, J\mathchar`-\ad}$ if and only if $\varphi_J(\xi)=0$;
  \item For each $\xi\in Q_0^{\vee}$, the element $\Pi^J(t_{\xi})\in (W^J)_{\af}$ is of the form $\Pi^J(t_{\xi})=z_{\xi}t_{\xi+\varphi_J(\xi)}$ for some $z_{\xi}\in (W_0)_J$;
  \item We have 
  \[
  \pushQED{\qed} 
  (W^J)_{\af}=\{wz_{\xi}t_{\xi} \mid w\in (W_0)^J,\;\xi\in Q_0^{\vee, J\mathchar`-\ad}\}.\qedhere
\popQED
  \]
\end{enumerate}
}
\end{lemma}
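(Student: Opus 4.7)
For part (1), the $J$-adjusted condition $\langle \xi + \eta, \gamma\rangle \in \{0,-1\}$ for all $\gamma \in \Delta_J \cap \Delta_+$ is a bounded-alcove condition on $\xi + \eta$ in the coweight space of the finite parabolic subsystem $\Delta_J$. I would establish existence by induction on the number of $\gamma \in \Delta_J\cap\Delta_+$ for which $\langle \xi,\gamma\rangle \notin \{0,-1\}$, adjusting $\xi$ by a carefully chosen $\alpha_j^\vee$ ($j \in J$) at each step so as to strictly decrease this count. For uniqueness, the difference $\zeta \in \bigoplus_{j \in J}\mathbb{Z}\alpha_j^\vee$ of two candidates satisfies $|\langle \zeta,\gamma\rangle| \le 1$ for all $\gamma \in \Delta_J\cap\Delta_+$. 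After conjugating by $(W_0)_J$ to make $\zeta$ dominant, this forces $\zeta$ to be a ``minuscule'' element of the coroot lattice of $\Delta_J$, and a standard rigidity argument (or a type-by-type check on $\Delta_J$) shows only $\zeta = 0$ satisfies this.

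For part (2), I would first reduce to the $J$-adjusted case. Since $-\varphi_J(\xi) \in \bigoplus_{j\in J}\mathbb{Z}\alpha_j^\vee$, the translation $t_{-\varphi_J(\xi)}$ lies in $(W_J)_{\af}$, so $\Pi^J(t_\xi) = \Pi^J(t_\eta)$ where $\eta = \xi + \varphi_J(\xi)$. I would then define $z_\xi \in (W_0)_J$ as the unique element whose inversion set in $\Delta_J \cap \Delta_+$ equals $\{\gamma \in \Delta_J \cap \Delta_+ \mid \langle \eta,\gamma\rangle = -1\}$. This set is biconvex: if $\gamma_1, \gamma_2$ were both in it with $\gamma_1 + \gamma_2 \in \Delta_J\cap\Delta_+$, then $\langle \eta,\gamma_1+\gamma_2\rangle = -2$ would contradict $J$-adjustedness, and the complementary closure is checked analogously. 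A direct computation on $\beta = \gamma + n\delta \in (\Delta_J)_{\af}^+$, splitting by the sign of $\gamma$ and by whether $n - \langle\eta,\gamma\rangle$ vanishes, verifies that $z_\xi t_\eta \beta \in \Delta_+$; minimality then identifies $z_\xi t_\eta$ with $\Pi^J(t_\eta)$.

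For part (3), the inclusion $\supset$ follows from the positivity computation of part (2) together with the characterization $(W_0)^J = \{w \in W_0 \mid w(\Delta_J\cap\Delta_+) \subset \Delta_+\}$, which ensures that left-multiplication by $w$ preserves positivity. For the converse, any $x \in (W^J)_{\af}$ decomposes uniquely as $x = u\,t_\zeta$ with $u \in W_0$, $\zeta \in Q_0^\vee$, via $W = W_0 \ltimes Q_0^\vee$. Testing $x$ against $\gamma \in \Delta_J\cap\Delta_+$ forces $\langle \zeta,\gamma\rangle \ge -1$, and testing against $-\gamma + \delta \in (\Delta_J)_{\af}^+$ forces $\langle \zeta,\gamma\rangle \le 0$, so $\zeta \in Q_0^{\vee, J\mathchar`-\ad}$. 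The remaining positivity constraints translate to: $u\gamma \in \Delta_+$ when $\langle \zeta,\gamma\rangle = 0$ and $u\gamma \in -\Delta_+$ when $\langle \zeta,\gamma\rangle = -1$. Setting $w = u\,z_\zeta^{-1}$ and verifying $w(\Delta_J\cap\Delta_+) \subset \Delta_+$ by splitting on the sign of $z_\zeta^{-1}\gamma \in \Delta_J$ yields $w \in (W_0)^J$ and $x = w z_\zeta t_\zeta$; uniqueness then follows from the direct-product structure of $W$ combined with parts (1) and (2). The main obstacle will be the uniqueness argument in part (1), which relies on the nontrivial rigidity property that the only element of $\bigoplus_{j \in J}\mathbb{Z}\alpha_j^\vee$ pairing to $\{-1,0,1\}$ with every positive root of $\Delta_J$ is $0$.
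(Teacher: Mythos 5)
The paper cites this lemma from \cite[Lemma 2.3.5]{INS} and does not prove it internally, so there is no in-paper argument to compare against; evaluating your sketch on its own merits, the existence argument in part (1) has a genuine gap. Adjusting $\xi$ by a single multiple of some $\alpha_j^\vee$ need not strictly decrease the number of $\gamma\in\Delta_J\cap\Delta_+$ with $\langle\xi,\gamma\rangle\notin\{0,-1\}$, so your proposed induction can stall. Concretely, suppose $\Delta_J$ has a component of type $A_2$ and $(\langle\xi,\alpha_1\rangle,\langle\xi,\alpha_2\rangle,\langle\xi,\alpha_1+\alpha_2\rangle)=(5,3,8)$. One checks that every single adjustment $\xi\mapsto\xi+c\alpha_j^\vee$ brings the bad-root count from $3$ down to exactly $2$, and from each resulting $2$-bad state (for instance $(-1,6,5)$) no further single-$j$ shift gets the count below $2$, even though the unique $J$-adjusted translate $(-1,0,-1)$ does exist and requires the combined shift $-5\alpha_1^\vee-4\alpha_2^\vee$. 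You would need either to adjust by a general element of $\bigoplus_{j\in J}\mathbb{Z}\alpha_j^\vee$ in one stroke, or to find a different monovariant than the raw count.

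A cleaner route to existence is the alcove picture: $J$-adjustedness is precisely the condition that (the relevant projection of) $\xi$ lie in the closed antidominant base alcove for the affine Weyl group $(W_0)_J\ltimes\bigoplus_{j\in J}\mathbb{Z}\alpha_j^\vee$ of each component of $\Delta_J$, and one verifies directly that translates by $\bigoplus_{j\in J}\mathbb{Z}\alpha_j^\vee$ of the lattice points in that closed alcove tile the relevant coset exactly once. The uniqueness half is exactly your minuscule-rigidity observation: a nonzero element of $\bigoplus_{j\in J}\mathbb{Z}\alpha_j^\vee$ pairing to $\{-1,0,1\}$ with all of $\Delta_J\cap\Delta_+$ would, after conjugating into the dominant chamber by $(W_0)_J$, be a minuscule coweight lying in the coroot lattice of $\Delta_J$, hence zero. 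This part of your argument is correct. Your sketches for parts (2) and (3) — constructing $z_\xi$ from the biconvex inversion set $\{\gamma\in\Delta_J\cap\Delta_+\mid\langle\xi+\varphi_J(\xi),\gamma\rangle=-1\}$ and then running the positivity/minimality comparison for $\Pi^J$ — follow the expected route and look sound in outline, though the positivity bookkeeping (splitting by sign of $\gamma$ and value of $n-\langle\eta,\gamma\rangle$) would need to be written out in full to constitute a complete proof.
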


\begin{definition}
  Let $x=wt_{\xi}\in W\;(w\in W_0,\;\xi\in Q_0^{\vee})$. We define the semi-infinite length $\ell^{\frac{\infty}{2}}(x)$ of $x$ by
  \[
  \ell^{\frac{\infty}{2}}(x)=\ell(w)+2\mathrm{ht}(\xi).
  \]
\end{definition}

\begin{definition}
\begin{enumerate}
  \item Let $\SiB^J$ be the $\Delta_+$-labeled directed graph with vertex set $W^J$ and edges of the form $x\xrightarrow{\beta}r_{\beta}x$, where $x\in W^J$ and $\beta\in \Delta_+$ satisfying $r_{\beta}x\in W^J$ and $\ell^{\frac{\infty}{2}}(r_{\beta}x)=\ell^{\frac{\infty}{2}}(x)+1$;
  \item The semi-infinite Bruhat order $\prec$ on $W^J$ is defined as follows:
  \[
  x\prec y \Leftrightarrow \text{there is a directed path from $x$ to $y$ in $\SiB^J$}.
  \]
\end{enumerate}
\end{definition}

\begin{definition}\label{1}
For $a\in \mathbb{Q}$ with $0<a\le 1$, we define $\SiB(\lambda; a)$ as the subgraph of $\SiB^J$ with the same vertex set but having only the edges of the form $x\xrightarrow{\beta}r_{\beta}x$ with $a\langle x\lambda,\beta^{\vee}\rangle\in \mathbb{Z}$. 
\end{definition}

\begin{remark}
  We have $\SiB(\lambda;1)=\SiB^J$ in the setting of Definition \ref{1}. 
\end{remark}

\begin{definition}
  The pair $(\bm{x};\bm{a})$ of a sequence $\bm{x}=(x_1,\ldots,x_s)$ in $W^J$ satisfying $x_1\succeq\cdots\succeq x_s$ and a sequence $\bm{a}=(a_0,a_1,\ldots,a_s)$ of rational numbers satisfying $0=a_0<a_1<\cdots<a_s$ is called a semi-infinite Lakshmibai-Seshadri path (semi-infinite LS path for short) if there exists a directed path from $x_{u+1}$ to $x_u$ in $\SiB(\lambda;a_u)$ for $u=1,\ldots,s-1$. Let $\mathbb{B}^{\frac{\infty}{2}}(\lambda)$ denote the set of all semi-infinite LS paths. Let $\pi_e$ denote the semi-infinite LS path $(e;0,1)$.
\end{definition}

We equip $\mathbb{B}^{\frac{\infty}{2}}(\lambda)$ with a crystal structure as in \cite[Section 3.1]{INS}.

\begin{theorem}
  \textit{There is an isomorphism $\Psi_{\lambda}:\mathcal{B}(\lambda)\stackrel{\sim}{\longrightarrow} \mathbb{B}^{\frac{\infty}{2}}(\lambda)$ in the category $\mathcal{C}(I, P)$, which sends $u_{\lambda}$ to $\pi_e$.\qed}
\end{theorem}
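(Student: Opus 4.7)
The plan is to first construct the strict crystal morphism $\Psi_\lambda$ by invoking the universal property of the extremal-weight crystal, then prove bijectivity by reducing to the fundamental case via the embedding $\Phi_\lambda$ of Theorem \ref{inj}. First I would verify that $\pi_e=(e;0,1)\in\mathbb{B}^{\frac{\infty}{2}}(\lambda)$ is an extremal element of weight $\lambda$: using the INS crystal operators, the Weyl-group orbit $\{S_x\pi_e\mid x\in W\}$ consists of the constant paths $(x;0,1)$, each of which is readily checked to be $i$-extremal for every $i\in I$ by reading off $\varepsilon_i$ and $\varphi_i$ from the defining sequences. Since $\mathcal{B}(\lambda)$ is universal among pointed objects in $\mathcal{C}(I,P)$ whose distinguished vertex is extremal of weight $\lambda$, sending $u_\lambda\mapsto\pi_e$ extends uniquely to a strict morphism $\Psi_\lambda:\mathcal{B}(\lambda)\to\mathbb{B}^{\frac{\infty}{2}}(\lambda)$.

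Next I would prove bijectivity in two stages. First, handle the fundamental case $\lambda=\varpi_i$: here $\mathbb{B}^{\frac{\infty}{2}}(\varpi_i)$ admits an explicit enumeration that can be matched with the crystal basis of the level-zero fundamental module $V(\varpi_i)$ from Theorem \ref{f} by comparing graded characters and tracking the Weyl-group translations $S_{t_\xi}$. Second, I would construct a strict embedding of crystals
\[
\iota:\mathbb{B}^{\frac{\infty}{2}}(\lambda)\hookrightarrow\mathbb{B}^{\frac{\infty}{2}}(\varpi_1)^{\otimes m_1}\otimes\cdots\otimes\mathbb{B}^{\frac{\infty}{2}}(\varpi_n)^{\otimes m_n}
\]
via an explicit concatenation-of-paths recipe, compatible under the fundamental-case isomorphism with the image of $\Phi_\lambda$ in the crystal basis of $\widetilde{V}(\lambda)$. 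Combined with Theorem \ref{inj}, this identifies $\Psi_\lambda(\mathcal{B}(\lambda))$ with the image of $\iota$, so bijectivity of $\Psi_\lambda$ follows.

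The hard part is the concatenation step: the data $(\bm{x};\bm{a})$ of a semi-infinite LS path involves chain conditions in the truncated graphs $\SiB(\lambda;a)$, and these depend on $\lambda$ through the pairings $a\langle x\lambda,\beta^\vee\rangle$, so concatenation is not formally automatic and a naive juxtaposition need not land in the correct path model. One must carefully analyze the interplay between the $J$-adjustment mechanism of Lemma \ref{ad}, the coset decomposition $W=W_0\ltimes Q_0^\vee$, and the semi-infinite Bruhat order; concretely, using the factorization $\Pi^J(t_\xi)=z_\xi t_{\xi+\varphi_J(\xi)}$ one rewrites each path in a canonical form, verifies that concatenation is well-defined up to $(W_J)_{\af}$-equivalence, and then compares graded characters on both sides using Corollary \ref{BN} to close the argument.
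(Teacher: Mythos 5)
This theorem is cited by the paper from Ishii--Naito--Sagaki \cite{INS} (the terminal $\qed$ marks a quoted result, consistent with the surrounding lemmas), so there is no in-paper proof to compare against; what follows compares your sketch with the published argument and points out gaps.

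The central gap is the claim that ``$\mathcal{B}(\lambda)$ is universal among pointed objects in $\mathcal{C}(I,P)$ whose distinguished vertex is extremal of weight $\lambda$.'' This is false as stated, and it is the step on which the whole construction rests. The category $\mathcal{C}(I,P)$ contains arbitrary abstract crystals, and there is no free-object or representability result that would extend $u_\lambda\mapsto\pi_e$ to a strict morphism merely because the target is extremal of the right weight. The universal property that does exist is at the \emph{module} level: $V(\lambda)$ maps to any integrable module with an extremal vector of weight $\lambda$, and that map induces a crystal morphism. To invoke it here you would first need to know that $\mathbb{B}^{\frac{\infty}{2}}(\lambda)$ arises as the crystal basis of some integrable $U_q(\mathfrak{g})$-module in which $\pi_e$ tracks an extremal global basis vector -- but that is essentially what the theorem asserts, so the argument is circular. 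Kashiwara's crystal-level results about extremal vectors generating copies of $\mathcal{B}(\lambda)$ require the ambient crystal to be \emph{normal} (or \emph{regular}) in a precise technical sense; verifying this for $\mathbb{B}^{\frac{\infty}{2}}(\lambda)$ is a substantial piece of \cite{INS} and cannot be elided.

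The concatenation step you flag as ``the hard part'' is worse than not-automatic: it does not obviously exist. In Littelmann's theory, concatenation of LS paths for $\lambda_1$ and $\lambda_2$ models the tensor product $\mathcal{B}(\lambda_1)\otimes\mathcal{B}(\lambda_2)$, but a semi-infinite LS path for $\lambda$ imposes chain conditions in $\SiB(\lambda;a)$ whose integrality constraints $a\langle x\lambda,\beta^\vee\rangle\in\mathbb{Z}$ cannot be reduced to the separate constraints for $\lambda_1,\lambda_2$, and the index sets $(W^J)_{\af}$ change with $J=\{i:m_i=0\}$. No map of the form you describe is constructed in \cite{INS}; they instead argue through a comparison with quantum LS paths and Beck--Nakajima's description of $\mathcal{B}(\lambda)$, together with an analysis of Demazure-type subcrystals, which sidesteps concatenation entirely. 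Your final ``compare graded characters using Corollary \ref{BN}'' cannot close the gap either, because a graded-character identity between $\mathbb{B}^{\frac{\infty}{2}}(\lambda)$ and $\mathcal{B}(\lambda)$ does not by itself produce or upgrade a crystal morphism; you would already need the (unjustified) morphism $\Psi_\lambda$ in hand, and moreover $\mathcal{B}(\lambda)$ is connected while the character argument alone would not rule out the image being a proper subcrystal with the same graded character in certain degrees. As it stands, neither the existence of $\Psi_\lambda$ nor its bijectivity is actually established by the proposed outline.
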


\subsection{Characterization of Demazure submodules in terms of semi-infinite LS paths}\label{dema}

For $\lambda\in P$ and $x\in W$, we define the $U_q^-(\mathfrak{g})$-module $V_x^-(\lambda)$ by
\[
V_x^-(\lambda)=U_q^-(\mathfrak{g})S_xu_{\lambda}\subset V(\lambda).
\]

By \cite[Section 4.1]{NS}, $V_x^-(\lambda)$ is compatible with the global basis of $V(\lambda)$; namely, 
\[
V_x^-(\lambda)=\bigoplus_{b\in \mathcal{B}_x^-(\lambda)}\mathbb{Q}(q_s)G(b)\subset V(\lambda)=\bigoplus_{b\in \mathcal{B}(\lambda)}\mathbb{Q}(q_s)G(b)
\]
for some subset $\mathcal{B}_x^-(\lambda)$ of $\mathcal{B}(\lambda)$.

In the remainder of this subsection, we fix $\lambda=\sum_{i\in I_0}m_i\varpi_i\in P_{0,+}$ and set $J=\{i\in I_0 \mid m_i=0\}$.

\begin{lemma}[{\cite[Lemma 4.1.3]{NS}\label{eq}}]
  \textit{For $x,y\in W$, 
  \[
  \pushQED{\qed} 
  V_x^-(\lambda)=V_y^-(\lambda) \Leftrightarrow \mathcal{B}_x^-(\lambda)=\mathcal{B}_y^-(\lambda) \Leftrightarrow x^{-1}y\in (W_J)_{\af}.\qedhere
\popQED
  \]
  }
\end{lemma}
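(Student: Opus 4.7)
The first equivalence is immediate from the sentence preceding the lemma: since $V_z^-(\lambda)=\bigoplus_{b\in \mathcal{B}_z^-(\lambda)}\mathbb{Q}(q_s)G(b)$ and the global basis elements are linearly independent, two such submodules coincide precisely when their indexing subsets of $\mathcal{B}(\lambda)$ do. So the real content is the second equivalence.

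My plan is to pass to the combinatorial side via the crystal isomorphism $\Psi_\lambda:\mathcal{B}(\lambda)\xrightarrow{\sim}\mathbb{B}^{\frac{\infty}{2}}(\lambda)$ and to establish a Demazure-type description of $\Psi_\lambda(\mathcal{B}_x^-(\lambda))$ purely in terms of semi-infinite LS paths. Concretely, I would aim to show that $\Psi_\lambda(\mathcal{B}_x^-(\lambda))$ consists exactly of those LS paths $(\bm{x};\bm{a})=(x_1,\ldots,x_s;a_0,\ldots,a_s)$ whose final direction $x_s$ satisfies $x_s\succeq \Pi^J(x)$ in the semi-infinite Bruhat order on $(W^J)_{\af}$. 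This is a Demazure-type crystal theorem; I would prove it by induction on $\ell(x)$, tracking how the Kashiwara operators $\tilde e_i,\tilde f_i$ on $\mathbb{B}^{\frac{\infty}{2}}(\lambda)$ interact with the edges of $\SiB(\lambda;a)$ and how $S_x u_\lambda$ propagates from the base case $\pi_e$ corresponding to $x=e$.

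Granting this description, both directions of the second equivalence reduce to the elementary group-theoretic fact that $x^{-1}y\in (W_J)_{\af}$ if and only if $\Pi^J(x)=\Pi^J(y)$, which I would verify using the uniqueness of the decomposition $x=x_1x_2$ with $x_1\in (W^J)_{\af}$ and $x_2\in (W_J)_{\af}$ from \cite[Lemma 10.6]{LS}: writing $x=x_1x_2$ and $y=y_1y_2$, the product $x^{-1}y=x_2^{-1}x_1^{-1}y_1y_2$ lies in $(W_J)_{\af}$ iff $x_1=y_1$, i.e.\ iff $\Pi^J(x)=\Pi^J(y)$. The hard part will be the Demazure characterization itself: showing that the threshold condition $x_s\succeq \Pi^J(x)$ is preserved by the Kashiwara operators and that $S_x u_\lambda$ corresponds under $\Psi_\lambda$ to an LS path whose final direction is exactly $\Pi^J(x)$. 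This requires a careful analysis of the graphs $\SiB(\lambda;a)$ and their interplay with the semi-infinite length function, and is the substantive input from \cite{NS}.
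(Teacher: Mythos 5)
The scaffolding is sound. The first equivalence is indeed immediate from the global-basis compatibility statement, and, granting the Demazure crystal characterization $\Psi_\lambda(\mathcal{B}_x^-(\lambda))=\mathbb{B}^{\frac{\infty}{2}}_{\succeq\Pi^J(x)}(\lambda)$, the separating step (that $\Pi^J(x)\neq\Pi^J(y)$ is detected by a straight path) together with the group-theoretic reduction via uniqueness in \cite[Lemma 10.6]{LS} correctly finishes the second equivalence. Note also that the paper itself offers no proof of this lemma --- it is quoted verbatim from \cite[Lemma 4.1.3]{NS} --- so there is no argument in the text for yours to be compared against, only your proposal to be assessed on its own terms.

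Assessed on its own terms, there is a genuine gap at the center. The Demazure characterization you invoke is precisely \cite[Theorem 4.2.1]{NS}, which the paper quotes immediately after this lemma; it is a substantially harder result, and ``prove it by induction on $\ell(x)$, tracking the Kashiwara operators on $\SiB(\lambda;a)$'' is a one-line gesture at a proof that occupies a full section of \cite{NS}. You are routing a small lemma through a much bigger theorem, and in the wrong logical order: $\mathbb{B}^{\frac{\infty}{2}}_{\succeq x}(\lambda)$ is defined only for $x\in(W^J)_{\af}$, so to even state the Demazure theorem for arbitrary $x\in W$ one must already know that $\mathcal{B}_x^-(\lambda)$ depends only on $\Pi^J(x)$, i.e.\ the present lemma; in \cite{NS} the lemma comes first and is proved without the LS-path model. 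One further caution, in case you look for a shortcut: the implication ``$x\lambda=y\lambda\Rightarrow x^{-1}y\in(W_J)_{\af}$'' is false in general. For $\lambda=\varpi_1+\varpi_2$ over $\widehat{\mathfrak{sl}}_3$ one has $J=\emptyset$ and $(W_J)_{\af}=\{e\}$, yet $t_{\alpha_1^\vee-\alpha_2^\vee}$ fixes $\lambda$, and the lemma then forces $S_{t_{\alpha_1^\vee-\alpha_2^\vee}}u_\lambda\neq u_\lambda$ even though the weights coincide. So the analogue of Lemma~\ref{wt} fails for non-fundamental $\lambda$, and the lemma cannot be reduced to a stabilizer computation; it genuinely requires the finer input of \cite{NS}.
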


For $\eta=(x_1,\ldots,x_s;a_0,a_1,\ldots,a_s)\in \mathbb{B}^{\frac{\infty}{2}}(\lambda)$, we set $\kappa(\eta)=x_s$. For each $x\in (W^J)_{\af}$, we set
\[
\mathbb{B}_{\succeq x}^{\frac{\infty}{2}}(\lambda)=\{\eta\in \mathbb{B}^{\frac{\infty}{2}}(\lambda) \mid \kappa(\eta)\succeq x\}.
\]

\begin{theorem}[{\cite[Theorem 4.2.1]{NS}}]
  \textit{For $x\in W$, we have
  \[
  \pushQED{\qed} 
  \Psi_{\lambda}(\mathcal{B}_x^-(\lambda))=\mathbb{B}_{\succeq x}^{\frac{\infty}{2}}(\lambda).\qedhere
\popQED
  \]
  }
\end{theorem}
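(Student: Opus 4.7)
The plan is to follow the strategy of \cite{NS}: reduce to $x \in (W^J)_{\af}$, identify the image of the Demazure generator under $\Psi_{\lambda}$, and then establish the two inclusions by a combinatorial analysis of the root operators on semi-infinite LS paths.

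\emph{Step 1: Reduction.} By Lemma \ref{eq}, the submodule $V_x^-(\lambda)$, and hence $\mathcal{B}_x^-(\lambda)$, depends only on the coset $x(W_J)_{\af}$, equivalently on $\Pi^J(x) \in (W^J)_{\af}$. The set $\mathbb{B}_{\succeq x}^{\frac{\infty}{2}}(\lambda)$ is defined via the semi-infinite Bruhat order on $(W^J)_{\af}$, so it too depends only on $\Pi^J(x)$. Hence we may assume $x \in (W^J)_{\af}$.

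\emph{Step 2: Image of the generator.} Since $V_x^-(\lambda) = U_q^-(\mathfrak{g}) S_x u_\lambda$, the crystal $\mathcal{B}_x^-(\lambda)$ is the smallest $\tilde{f}_i$-stable subset of $\mathcal{B}(\lambda)$ containing the class $[S_x u_\lambda]$. The first task is to identify $\Psi_{\lambda}([S_x u_\lambda]) = \pi_x$, where $\pi_x := (x; 0, 1)$. Since $\Psi_{\lambda}(u_\lambda) = \pi_e$, applying the combinatorial $S_x$-action (induced by the Kashiwara operators on extremal elements) to both sides and proceeding along a reduced expression of $x$ yields this identification. Consequently, $\Psi_{\lambda}(\mathcal{B}_x^-(\lambda))$ equals the smallest $\tilde{f}_i$-stable subset of $\mathbb{B}^{\frac{\infty}{2}}(\lambda)$ containing $\pi_x$.

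\emph{Step 3: The two inclusions.} For $\Psi_{\lambda}(\mathcal{B}_x^-(\lambda)) \subseteq \mathbb{B}_{\succeq x}^{\frac{\infty}{2}}(\lambda)$, since $\pi_x \in \mathbb{B}_{\succeq x}^{\frac{\infty}{2}}(\lambda)$, it suffices to prove that $\mathbb{B}_{\succeq x}^{\frac{\infty}{2}}(\lambda)$ is $\tilde{f}_i$-stable for every $i \in I$. The root-operator rule on semi-infinite LS paths shows that $\kappa(\tilde{f}_i\eta)$ equals either $\kappa(\eta)$ or the other endpoint of an edge of $\SiB^J$ labeled by an affine real root of the form $\alpha_i + k\delta$, oriented in the direction $\succ$; in either case $\kappa(\tilde{f}_i\eta) \succeq \kappa(\eta) \succeq x$. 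For the reverse inclusion, I would induct on the pair $(s,\; \ell^{\frac{\infty}{2}}(\kappa(\eta)) - \ell^{\frac{\infty}{2}}(x))$, where $s$ is the number of directions of $\eta = (x_1, \ldots, x_s; a_0, \ldots, a_s)$: one either decreases $s$ by applying a $\tilde{e}_i$ at a break point, or in the base case $\eta = \pi_y$ with $y \succeq x$, one realizes a chain $x = z_0 \prec z_1 \prec \cdots \prec z_r = y$ of semi-infinite Bruhat covers by a corresponding sequence of $\tilde{f}_i$'s starting from $\pi_x$.

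\emph{Main obstacle.} The delicate point is the $\tilde{f}_i$-stability of $\mathbb{B}_{\succeq x}^{\frac{\infty}{2}}(\lambda)$ in Step 3: one must translate the integer-point structure of the piecewise-linear $i$-th height function of $\eta$ into covering relations in $\SiB^J$, and in particular match the affine real root $\alpha_i + k\delta$ produced by the operator to a covering edge oriented in the correct direction in the semi-infinite Bruhat order. This case-by-case analysis at the break points of the path is the technical heart of \cite{NS}.
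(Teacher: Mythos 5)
This theorem is imported by the present paper directly from \cite[Theorem 4.2.1]{NS}; the paper gives no proof of its own (the statement is marked \verb|\qed| with the citation), so there is no in-paper argument against which to compare your proposal. Read on its own merits, the sketch has the right overall shape but rests on two unsupported claims that are, in fact, the substance of what Naito--Sagaki prove.

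First, in Step~2 you write that since $V_x^-(\lambda)=U_q^-(\mathfrak{g})S_xu_\lambda$, the crystal $\mathcal{B}_x^-(\lambda)$ is \emph{the smallest} $\tilde f_i$-stable subset of $\mathcal{B}(\lambda)$ containing $[S_xu_\lambda]$. The $\tilde f_i$-stability of $\mathcal{B}_x^-(\lambda)$ does follow from compatibility with the global basis, but the ``smallest'' half --- that every $b\in\mathcal{B}_x^-(\lambda)$ is reached from $[S_xu_\lambda]$ by $\tilde f_i$'s --- is a Demazure-crystal theorem (Kashiwara's theorem in the highest-weight setting, and a nontrivial level-zero analogue here). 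It is not a consequence of the definition of $V_x^-(\lambda)$, and treating it as such essentially presupposes the result being proved: once you combine it with $\tilde f_i$-stability of $\mathbb{B}^{\frac{\infty}{2}}_{\succeq x}(\lambda)$, the inclusion $\Psi_\lambda(\mathcal{B}_x^-(\lambda))\subseteq\mathbb{B}^{\frac{\infty}{2}}_{\succeq x}(\lambda)$ is immediate and the whole difficulty has been moved into this hidden hypothesis. You would either need to prove the level-zero Demazure-crystal generation statement separately, or replace Step~2 with an argument (e.g.\ a graded-character comparison against $\mathbb{B}^{\frac{\infty}{2}}_{\succeq x}(\lambda)$, or a direct global-basis argument) that does not assume it.

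Second, the reverse inclusion in Step~3 is not a routine induction. Covering edges $x\xrightarrow{\beta}r_\beta x$ in $\SiB^J$ are labelled by \emph{arbitrary} positive real roots $\beta$, not only by affine simple roots, whereas the crystal operators $\tilde f_i$ are available only for $i\in I$. Thus a single cover $z_j\prec z_{j+1}$ in your chain $x=z_0\prec\cdots\prec z_r=y$ cannot in general be realised by one $\tilde f_i^{\max}$; factoring $r_{\beta_j}$ into simple reflections produces intermediate directions that need not lie between $z_j$ and $z_{j+1}$ in $\SiB^J$, nor satisfy $\kappa\succeq x$. Likewise, in the inductive step it is not clear that applying $\tilde e_i$ at a break point both decreases $s$ and keeps $\kappa(\cdot)\succeq x$; the monovariant $(s,\ell^{\frac\infty2}(\kappa(\eta))-\ell^{\frac\infty2}(x))$ is not obviously decreasing under the moves you describe. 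The first inclusion (stability of $\mathbb{B}^{\frac{\infty}{2}}_{\succeq x}(\lambda)$ under $\tilde f_i$), which you correctly single out as delicate, is plausible and is indeed the combinatorial core; but the reverse inclusion needs a comparably careful treatment, and as sketched it does not close.

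Overall: the strategy (identify the Demazure generator as $\pi_x$, then work combinatorially with semi-infinite LS paths) is the right idea and is presumably in the spirit of \cite{NS}, but as written the proposal assumes the level-zero Demazure-crystal generation theorem in Step~2 and leaves a real gap in the base case of the reverse inclusion in Step~3.
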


\begin{lemma}\label{txi}
  \textit{Let $\xi\in Q_0^{\vee}$. Then we have an isomorphism
  \[
  \Upsilon:V(\lambda)\stackrel{\cong}{\longrightarrow} V(\lambda)\otimes V(-\langle\xi,\lambda\rangle\delta)
  \]
  which maps $u_{\lambda}$ to $S_{t_{-\xi}}(u_{\lambda}\otimes u_{-\langle\xi,\lambda\rangle\delta})$.
  }
\end{lemma}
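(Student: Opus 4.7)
The plan is to construct $\Upsilon$ as the composition
\[
V(\lambda)\xrightarrow{\kappa}V(\lambda-k\delta)\xrightarrow{\alpha}V(\lambda)\otimes V(-k\delta),
\]
where $k=\langle\xi,\lambda\rangle$, $\kappa$ is Kashiwara's isomorphism $V(\mu)\xrightarrow{\sim}V(t_\xi\mu)$ applied at $\mu=\lambda$ (and $t_\xi\lambda=\lambda-k\delta$ since $\lambda$ is level zero, so that $\kappa(u_\lambda)=S_{t_{-\xi}}u_{\lambda-k\delta}$), and $\alpha$ is obtained from the universal property of $V(\lambda-k\delta)$. Since $\langle\alpha_i^{\vee},\delta\rangle=0$ for every $i\in I$, the weight $-k\delta$ is $W$-invariant, so $V(-k\delta)$ is one-dimensional: the Chevalley generators $E_i,F_i$ annihilate $u_{-k\delta}$ and $t_i^{\pm 1}$ act on it as the identity.

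The key step is to verify that $u_\lambda\otimes u_{-k\delta}$ is extremal of weight $\lambda-k\delta$ in $V(\lambda)\otimes V(-k\delta)$. Lemma \ref{ext} does not literally apply here (the weights $\lambda$ and $-k\delta$ need not lie in the same Weyl chamber), but its proof adapts verbatim: because $E_i^{(n)}u_{-k\delta}=0$ for all $n\ge 1$, the operator $L_i$ defined in that proof satisfies $L_i(v\otimes u_{-k\delta})=v\otimes u_{-k\delta}$, so \cite[Proposition 5.3.4]{L} gives $T_i(v\otimes u_{-k\delta})=T_iv\otimes u_{-k\delta}$, and induction along a reduced expression shows $T_x(u_\lambda\otimes u_{-k\delta})=T_xu_\lambda\otimes u_{-k\delta}$ for every $x\in W$. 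Combining this with the extremality of $u_\lambda$, the identity $\langle\alpha_i^{\vee},x(\lambda-k\delta)\rangle=\langle\alpha_i^{\vee},x\lambda\rangle$, and formulas (\ref{T1}) and (\ref{T2}), the extremality of $u_\lambda\otimes u_{-k\delta}$ follows. The universal property of $V(\lambda-k\delta)$ then produces a $U_q(\mathfrak{g})$-linear map $\alpha$ with $\alpha(u_{\lambda-k\delta})=u_\lambda\otimes u_{-k\delta}$; since $\alpha$ is $U_q(\mathfrak{g})$-linear and weight-preserving, it commutes with $S_{t_{-\xi}}$ on extremal vectors of the appropriate weight, so $\Upsilon=\alpha\circ\kappa$ sends $u_\lambda$ to $S_{t_{-\xi}}(u_\lambda\otimes u_{-k\delta})$ as required.

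It remains to show $\alpha$ is an isomorphism. Surjectivity is immediate since $u_\lambda\otimes u_{-k\delta}$ generates $V(\lambda)\otimes V(-k\delta)$ as a $U_q(\mathfrak{g})$-module (because $V(-k\delta)$ is one-dimensional). For injectivity, the same extremality argument applied with the roles of $\lambda$ and $\lambda-k\delta$ (and of $\xi$ and $-\xi$) exchanged yields a $U_q(\mathfrak{g})$-linear map $\beta:V(\lambda)\to V(\lambda-k\delta)\otimes V(k\delta)$ with $\beta(u_\lambda)=u_{\lambda-k\delta}\otimes u_{k\delta}$. The composite
\[
V(\lambda-k\delta)\xrightarrow{\alpha}V(\lambda)\otimes V(-k\delta)\xrightarrow{\beta\otimes\mathrm{id}}V(\lambda-k\delta)\otimes V(k\delta)\otimes V(-k\delta)\xrightarrow{\sim}V(\lambda-k\delta),
\]
where the last arrow uses the canonical isomorphisms $V(k\delta)\otimes V(-k\delta)\cong V(0)\cong\mathbb{Q}(q_s)$, sends $u_{\lambda-k\delta}$ to itself and hence equals the identity. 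Thus $\alpha$ has a left inverse and is an isomorphism. The principal obstacle in this plan is the extremality verification in the second paragraph, since Lemma \ref{ext} is not directly applicable to the pair $(\lambda,-k\delta)$.
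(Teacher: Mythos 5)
Your proof is correct and follows essentially the same route as the paper's: construct $\Upsilon$ from the universal property of $V(\cdot)$ using extremality of $u_\lambda\otimes u_{-k\delta}$, then exploit the one-dimensional modules $V(\pm k\delta)$ and the cancellation $V(k\delta)\otimes V(-k\delta)\cong V(0)$ to produce the inverse. You are correct that Lemma~\ref{ext} does not literally apply (and your adaptation using $E_i^{(n)}u_{-k\delta}=0$ is the right fix); the paper uses the same underlying fact implicitly when asserting that $\Upsilon$, $\Phi$ exist and that $\Omega$ is an isomorphism, but builds a two-sided inverse $\Omega^{-1}\circ(\Phi\otimes\id)$ directly rather than factoring $\Upsilon=\alpha\circ\kappa$ and exhibiting a left inverse of $\alpha$ plus surjectivity, which is a cosmetic difference.
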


\begin{proof}
  We have $U_q(\mathfrak{g})$-linear morphisms 
  \[
    \Upsilon:V(\lambda)\to V(\lambda)\otimes V(-\langle\xi,\lambda\rangle\delta),\quad
    \Phi:V(\lambda)\to V(\lambda)\otimes V(\langle\xi,\lambda\rangle\delta)
  \]
  such that $\Upsilon(u_{\lambda})=S_{t_{-\xi}}(u_{\lambda}\otimes u_{-\langle\xi,\lambda\rangle\delta})=(S_{t_{-\xi}}u_{\lambda})\otimes u_{-\langle\xi,\lambda\rangle\delta},\; \Phi(u_{\lambda})=S_{t_{\xi}}(u_{\lambda}\otimes u_{\langle\xi,\lambda\rangle\delta})=(S_{t_{\xi}}u_{\lambda})\otimes u_{\langle\xi,\lambda\rangle\delta}$. Also, we have an $U_q(\mathfrak{g})$-linear isomorphism 
  \[
  \Omega:V(\lambda)\stackrel{\cong}{\longrightarrow} V(\lambda)\otimes V(\langle\xi,\lambda\rangle) \otimes V(-\langle\xi,\lambda\rangle\delta)
  \]
  such that $\Omega(u_{\lambda})= u_{\lambda}\otimes u_{\langle\xi,\lambda\rangle\delta}\otimes u_{-\langle\xi,\lambda\rangle\delta}$. Then we have
  \[
    (\Omega^{-1}\circ(\Phi\otimes \id)\circ\Upsilon)(u_{\lambda})=(\Omega^{-1}\circ(\Phi\otimes \id))((S_{t_{-\xi}}u_{\lambda})\otimes u_{-\langle\xi,\lambda\rangle\delta})
    =\Omega^{-1}(u_{\lambda}\otimes u_{\langle\xi,\lambda\rangle\delta}\otimes u_{-\langle\xi,\lambda\rangle\delta})=u_{\lambda}
  \]
  and 
  \[
    (\Upsilon\circ\Omega^{-1}\circ(\Phi\otimes \id))(u_{\lambda}\otimes u_{-\langle\xi,\lambda\rangle\delta})=(\Upsilon\circ\Omega^{-1})((S_{t_{\xi}}u_{\lambda})\otimes u_{\langle\xi,\lambda\rangle\delta}\otimes u_{-\langle\xi,\lambda\rangle\delta})
    =\Upsilon(S_{t_{\xi}}u_{\lambda})=u_{\lambda}\otimes u_{-\langle \xi,\lambda\rangle\delta},
  \]
  which implies that $\Omega^{-1}\circ(\Phi\otimes \id)$ is the inverse map of $\Upsilon$.
\end{proof}

\begin{proposition}\label{dem}
\textit{
  \begin{enumerate}
    \item Let $\xi=\sum_{i\in I_0}k_i\alpha_i^{\vee}\in Q_0^{\vee}$ and $\zeta=\sum_{i\in I_0}l_i\alpha_i^{\vee}\in Q_0^{\vee}$. If $k_i\ge l_i$ for all $i\in I_0$, then
    \[
    V_{t_{\xi}}^-(\lambda) \subset V_{t_{\zeta}}^-(\lambda);
    \]
    \item We have
    \[
    V(\lambda)=\bigcup_{\xi\in \bigoplus_{i\in I_0\setminus J}\mathbb{Z}\alpha_i^{\vee}}V_{t_{\xi}}^-(\lambda).
    \]
  \end{enumerate}
  }
\end{proposition}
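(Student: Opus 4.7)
The strategy is to translate both statements into claims about the semi-infinite Bruhat order on $(W^J)_{\af}$ via the semi-infinite LS path model and the compatibility of Demazure submodules with the global basis. Recall that $V_x^-(\lambda)=\bigoplus_{b\in\mathcal{B}_x^-(\lambda)}\mathbb{Q}(q_s)G(b)$ and that $\Psi_{\lambda}$ identifies $\mathcal{B}_x^-(\lambda)$ with $\mathbb{B}^{\frac{\infty}{2}}_{\succeq x}(\lambda)$. Thus an inclusion $V_x^-(\lambda)\subset V_y^-(\lambda)$ follows from $\Pi^J(x)\succeq \Pi^J(y)$ in the semi-infinite Bruhat order, and by Lemma \ref{eq} the Demazure submodule depends only on the class $\Pi^J(x)\in(W^J)_{\af}$. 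Both parts will therefore be deduced from combinatorial statements about the elements $\Pi^J(t_\xi)$.

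For part (1), I would argue by induction on $\sum_i (k_i-l_i)$ and reduce to the case $\xi=\zeta+\alpha_i^{\vee}$ for a single $i\in I_0$. If $i\in J$, then $t_{\alpha_i^{\vee}}$ belongs to the affine Weyl group of the Levi attached to $J$, hence lies in $(W_J)_{\af}$, so $\Pi^J(t_\xi)=\Pi^J(t_\zeta)$ and the inclusion becomes an equality. If $i\in I_0\setminus J$, I would use Lemma \ref{ad} to write $\Pi^J(t_\zeta)=z_\zeta t_{\zeta+\varphi_J(\zeta)}$ and $\Pi^J(t_\xi)=z_\xi t_{\xi+\varphi_J(\xi)}$ and construct an explicit directed path in $\SiB^J$ from $\Pi^J(t_\zeta)$ to $\Pi^J(t_\xi)$ consisting of reflections by real roots of the form $\pm\alpha_i+k\delta$ for suitable $k\in\mathbb{Z}$. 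The total contribution to the semi-infinite length should match $\ell^{\frac{\infty}{2}}(\Pi^J(t_\xi))-\ell^{\frac{\infty}{2}}(\Pi^J(t_\zeta))$, which equals $2$ corrected by $\ell(z_\xi)-\ell(z_\zeta)$.

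For part (2), the inclusion $\supset$ is immediate. For $\subset$, given $b\in\mathcal{B}(\lambda)$ set $\eta=\Psi_\lambda(b)$ and $x=\kappa(\eta)\in(W^J)_{\af}$. By Lemma \ref{ad}(3), $x=wz_\mu t_\mu$ for some $w\in(W_0)^J$ and $\mu\in Q_0^{\vee, J\mathchar`-\ad}$. Decompose $\mu=\mu'+\mu_J$ with $\mu'\in\bigoplus_{i\in I_0\setminus J}\mathbb{Z}\alpha_i^{\vee}$ and $\mu_J\in\bigoplus_{j\in J}\mathbb{Z}\alpha_j^{\vee}$; since $t_{\mu_J}\in(W_J)_{\af}$, one obtains $\Pi^J(t_{\mu'})=\Pi^J(t_\mu)=z_\mu t_\mu$. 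Taking $\xi=\mu'$, I would verify $x=wz_\mu t_\mu\succeq z_\mu t_\mu=\Pi^J(t_\xi)$ by picking a reduced expression $w=s_{i_1}\cdots s_{i_p}$ in $W_0$ (with $w\in(W_0)^J$, so $\ell(wz_\mu)=\ell(w)+\ell(z_\mu)$) and checking that left multiplication by each $s_{i_k}$ produces an edge of $\SiB^J$ increasing the semi-infinite length by $1$. This is the semi-infinite lift of the classical Bruhat order on $(W_0)^J$. Together with part (1), this yields $b\in\mathcal{B}_{t_\xi}^-(\lambda)$, hence $G(b)\in V_{t_\xi}^-(\lambda)$, proving the desired exhaustion.

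The principal obstacle is the explicit construction of directed paths in $\SiB^J$ for the inductive step of part (1) when $i\in I_0\setminus J$: one must carefully select the sequence of real-root reflections, respect the $J$-adjusted condition mediated by $\varphi_J$, and check at each stage that the intermediate element remains in $(W^J)_{\af}$ while the semi-infinite length increments by exactly one. The analogous chain-building $wz_\mu t_\mu\succeq z_\mu t_\mu$ in part (2) has the same flavor but is simpler, since it essentially translates the classical Bruhat covering relations in $(W_0)^J$ to the affine setting.
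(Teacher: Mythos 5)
Your treatment of part (2) is essentially the same as the paper's: identify $\kappa(\eta)=wz_\zeta t_\zeta$ via Lemma \ref{ad}, and build a descending chain in $\SiB^J$ by deleting letters of a reduced word for $w\in(W_0)^J$. That matches the paper's argument, and your additional remark about passing from a $J$-adjusted $\mu$ to $\mu'\in\bigoplus_{i\in I_0\setminus J}\mathbb{Z}\alpha_i^\vee$ via Lemma \ref{eq} fills in a step the paper leaves implicit.

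For part (1), however, your proposal diverges from the paper and, as it stands, has a real gap. The paper does \emph{not} argue combinatorially in $\SiB^J$. Instead, after reducing to $\xi=\zeta+\alpha_i^\vee$, it treats the base case $\zeta=0$ algebraically: by \cite[Lemma 4.5]{BN} one has $S_{t_{\alpha_i^\vee}}u_\lambda=\tilde{P}_{i,-m_i}u_\lambda$ with $\tilde{P}_{i,-m_i}\in U_q^-(\mathfrak{g})$, which immediately gives $S_{t_{\alpha_i^\vee}}u_\lambda\in V_e^-(\lambda)$, hence $V_{t_{\alpha_i^\vee}}^-(\lambda)\subset V_e^-(\lambda)$. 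The general $\zeta$ is then reduced to this base case via the tensor isomorphism $\Upsilon:V(\lambda)\stackrel{\sim}{\to}V(\lambda)\otimes V(-\langle\zeta,\lambda\rangle\delta)$ of Lemma \ref{txi}, which carries $V_{t_\zeta}^-(\lambda)$ to $V_e^-(\lambda)\otimes u_{-\langle\zeta,\lambda\rangle\delta}$ and $V_{t_\xi}^-(\lambda)$ to $V_{t_{\alpha_i^\vee}}^-(\lambda)\otimes u_{-\langle\zeta,\lambda\rangle\delta}$. Your proposed route — reducing to a covering relation and then \textquotedblleft constructing an explicit directed path in $\SiB^J$ consisting of reflections by real roots of the form $\pm\alpha_i+k\delta$\textquotedblright\ — is exactly the statement $\Pi^J(t_\zeta)\preceq\Pi^J(t_\xi)$, which is the content of the proposition in disguise. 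You acknowledge this as the \textquotedblleft principal obstacle\textquotedblright\ and do not carry it out; constructing such a chain is genuinely nontrivial (one needs to verify, at each step, membership in $(W^J)_{\af}$ and the unit increase of $\ell^{\frac{\infty}{2}}$ in the presence of the $z_\xi$ and $\varphi_J$ corrections), and the remark that \textquotedblleft the total contribution \ldots should match\textquotedblright\ is not an argument. So while your case $i\in J$ observation ($t_{\alpha_i^\vee}\in(W_J)_{\af}$ gives equality) is correct and reasonable, the main case $i\in I_0\setminus J$ is unresolved. The paper's algebraic shortcut via the Beck--Nakajima formula is precisely what lets one avoid this combinatorial work, and without something equivalent your proof of (1) is incomplete.
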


\begin{proof}

We first prove the first assertion. It suffices to prove the statement for the case where $\xi=\zeta+\alpha_i^{\vee}$ for some $i\in I_0$. We first handle the case $\zeta=0$. Here, it is sufficient to show that $S_{t_{\alpha_i}^{\vee}}u_{\lambda}\in V_e^-(m\varpi_i)$. By \cite[Lemma 4.5]{BN}, we have $S_{t_{\alpha_i}^{\vee}}u_{\lambda}=\tilde{P}_{i, -m_i}u_{\lambda}$ where $\tilde{P}_{i, -m_i}\in U_q^-(\mathfrak{g})$ is the vector corresponding to the `elementary symmetric polynomial' defined in \cite[Remark 4.1]{BN}. Hence, we have $S_{t_{\alpha_i}^{\vee}}u_{\lambda}\in V_e^-(\lambda)$.

 Next, we consider the general case $\zeta=\sum_{i\in I_0}l_i\alpha_i^{\vee}\in Q_0^{\vee}$. By Lemma \ref{txi}, we have the isomorphism of $U_q(\mathfrak{g})$-modules
  \[
  \textstyle \Upsilon : V(\lambda)\stackrel{\sim}{\longrightarrow} V(\lambda)\otimes V(-\langle\zeta,\lambda\rangle\delta)
  \]
  such that $\Upsilon(u_{\lambda})=S_{t_{-\zeta}}(u_{\lambda}\otimes u_{-\langle\zeta,\lambda\rangle\delta})$. Since $\Upsilon(S_{t_{\zeta}}u_{\lambda})=u_{\lambda}\otimes u_{-\langle\zeta,\lambda\rangle\delta}$ and $\Upsilon(S_{t_{\xi}}u_{\lambda})=(S_{t_{\alpha_i^{\vee}}}u_{\lambda})\otimes u_{-\langle\zeta,\lambda\rangle\delta}$, we have $\Upsilon(V_{\zeta}^-(\lambda))=V_e^-(\lambda)\otimes u_{-\langle\zeta,\lambda\rangle\delta}, \Upsilon(V_{\xi}^-(\lambda))=V_{t_{\alpha_i^{\vee}}}^-(\lambda)\otimes u_{-\langle\zeta,\lambda\rangle\delta}$. Hence, we have $V_{\xi}^-(\lambda)\subset V_{\zeta}^-(\lambda)$.

  Next, we prove the second assertion. Take $\eta\in \mathbb{B}^{\frac{\infty}{2}}(\lambda)$. Then it suffices to show that $\eta\in \mathbb{B}_{\succeq z_{\xi}t_{\xi}}^{\frac{\infty}{2}}(\lambda)$ for some $\xi\in Q_0^{\vee, J\mathchar`-\ad}$. Let $x=\kappa(\eta)$. Since $x\in (W^J)_{\af}$, we can write $x=wz_{\zeta}t_{\zeta}$ for some $w\in (W_0)^J$ and $\zeta\in Q_0^{\vee, J\mathchar`-\ad}$ by Lemma \ref{ad}. Take a reduced expression $s_{i_1}\cdots s_{i_p}$ of $w$. Then $s_{i_a}\cdots s_{i_p}\in (W_0)^J$ for all $1\leq a\leq p$ and we have the following path in $\SiB^J$:
    \[
    x=s_{i_1}\cdots s_{i_p}z_{\zeta}t_{\zeta}\succeq s_{i_2}\cdots s_{i_p}z_{\zeta}t_{\zeta} \succeq \cdots \succeq s_{i_p}z_{\zeta}t_{\zeta} \succeq z_{\zeta}t_{\zeta}.
    \]
    Hence, we have $\eta\in \mathbb{B}_{\succeq z_{\zeta}t_{\zeta}}^{\frac{\infty}{2}}(\lambda)$, as desired.
\end{proof}

\subsection{Graded character of the Demazure submodule $V_e^-(\lambda)$}

Let $M=\bigoplus_{\lambda\in P}M_{\lambda}$ be an integrable $U_q(\mathfrak{g})$-module and $N$ be an $U_q^-(\mathfrak{g})$-submodule of $M$ such that $N=\bigoplus_{\lambda\in P}(N\cap M_{\lambda})$. Then we define the graded character $\gch N$ of $N$ by the formal sum
\[
\gch N=\sum_{\mu\in P_0, k\in \mathbb{Z}}(\dim (N\cap M_{\nu+k\delta}) )x^{\nu}q^k.
\]

Let $\lambda=\sum_{i\in I_0}m_i\varpi_i\in P_{0,+}$. Let $P_{\lambda}(x;q,t)\in \mathbb{Q}(q,t)[P_0]$ denotes the symmetric Macdonald polynomial where we regard $P_0$ as the weight lattice of $\mathfrak{g}_0$ (see \cite[Chapter 5]{M2} for the definition of the symmetric Macdonald polynomials).

\begin{theorem}[{\cite[Theorem 6.1.1]{NS}\label{bq}}]
 \textit{Keep the setting above. The graded character $\gch V_e^-(\lambda)$ of $V_e^-(\lambda)$ is expressed as
 \[
 \pushQED{\qed}
 \gch V_e^-(\lambda)=\left(\prod_{i\in I_0}\prod_{r=1}^{m_i}(1-q^{-r})\right)^{-1}P_{\lambda}(x;q^{-1},0)\in \mathbb{Z}[P_0](\!(q^{-1})\!).\qedhere
\popQED
 \]
 }
\end{theorem}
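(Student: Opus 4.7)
The plan is to exploit the semi-infinite LS path model of the previous subsection to enumerate a basis of $V_e^-(\lambda)$ and then match the resulting combinatorial sum against a Ram-Yip type formula for the symmetric Macdonald polynomial $P_\lambda(x;q^{-1},0)$. First, since $V_e^-(\lambda)$ is compatible with the global crystal basis, Theorem 4.2.1 of the previous subsection immediately yields
\[
\gch V_e^-(\lambda)=\sum_{\eta\in\mathbb{B}_{\succeq e}^{\frac{\infty}{2}}(\lambda)}x^{\cl(\wt(\eta))}\,q^{\mathrm{deg}(\eta)},
\]
where $\mathrm{deg}(\eta)$ records the coefficient of $\delta$ in $\wt(\eta)$. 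This reduces the theorem to a purely combinatorial identity on the crystal $\mathbb{B}^{\frac{\infty}{2}}(\lambda)$.

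Next, I would stratify this sum according to the endpoint $\kappa(\eta)\in(W^J)_{\af}$. By Lemma \ref{ad}, every element of $(W^J)_{\af}$ has the form $wz_\xi t_\xi$ with $w\in(W_0)^J$ and $\xi\in Q_0^{\vee,J\mathchar`-\ad}$, so the sum splits into a finite-direction part running over $(W_0)^J$ and an imaginary-direction part running over the $J$-adjusted cone. The imaginary contribution records translations $t_\xi$; matching it against Corollary \ref{BN}, which identifies $S_{t_\xi}u_\lambda$ with products of symmetric monomials in the commuting variables $z_{i,1}\cdots z_{i,m_i}$, the unbounded $\xi$-directions should telescope into the factor $\prod_{i\in I_0}\prod_{r=1}^{m_i}(1-q^{-r})^{-1}$, which is exactly the Hilbert series in $q^{-1}$ of the invariant ring $\mathbb{Q}(q)[t_1^{\pm1},\ldots,t_{m_i}^{\pm1}]^{\mathfrak{S}_{m_i}}$ attached to the elementary symmetric data of each $\varpi_i$.

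After extracting this overall factor, the remaining finite sum over $w\in(W_0)^J$ and the combinatorial data of a directed path in $\SiB(\lambda;a)$ should be identified term by term with the Orr-Shimozono / Ram-Yip expression for $P_\lambda(x;q^{-1},0)$ as a sum over quantum alcove walks in the parabolic quantum Bruhat graph of the finite type $\mathfrak{g}_0$. I would carry out this identification by using the description of covering relations in $\SiB^J$ together with the well-known edge-by-edge correspondence between semi-infinite Bruhat edges rooted at $e$ and edges of the (parabolic) quantum Bruhat graph, then assembling the resulting generating function into the Macdonald polynomial at $t=0$ via its specialization formula.

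The main obstacle will be the clean bookkeeping of $q^{-1}$-weights: the Ram-Yip formula attaches a prescribed power of $q$ to each "down" step in the quantum Bruhat graph, while each semi-infinite LS path carries rational turning points $0=a_0<\cdots<a_s=1$ whose interaction with the $\delta$-grading is governed by Definition \ref{1}. One must verify that summing the $q$-contributions over all admissible choices of the $a_u$ reproduces precisely the Ram-Yip weights, and that Lemma \ref{eq} rules out overcounting modulo $(W_J)_{\af}$. Once this dictionary is pinned down, the geometric-series factor harvested from the $J$-adjusted cone combines with the finite sum to give the claimed identity in $\mathbb{Z}[P_0](\!(q^{-1})\!)$.
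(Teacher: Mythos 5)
The paper does not prove this statement: it is quoted as Theorem 6.1.1 of Naito--Sagaki \cite{NS} and used as known input. There is therefore no in-paper proof for your attempt to be compared against, and what follows is an assessment of your sketch on its own terms.

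Your global strategy --- read $\gch V_e^-(\lambda)$ off the crystal $\mathbb{B}^{\frac{\infty}{2}}_{\succeq e}(\lambda)$ and match against an Orr--Shimozono / Ram--Yip expansion of $P_\lambda(x;q^{-1},0)$ --- is indeed the route taken in \cite{NS}. The specific middle step, however, does not work. You attribute the factor $\prod_{i\in I_0}\prod_{r=1}^{m_i}(1-q^{-r})^{-1}$ to the translation parts $t_\xi$ appearing in $\kappa(\eta)=wz_\xi t_\xi$. But for $\xi=\sum_i k_i\alpha_i^\vee$, the extremal vector $S_{t_\xi}u_\lambda$ has weight $\lambda-\left(\sum_i k_im_i\right)\delta$, so summing over translations in the relevant cone produces only $\prod_{i:m_i>0}(1-q^{-m_i})^{-1}$, which is strictly smaller than the required factor. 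The correct generating function is $\sum_{\mathbf{c}_0\in\overline{\Par}(\lambda)}q^{-|\mathbf{c}_0|}=\prod_{i\in I_0}\prod_{r=1}^{m_i}(1-q^{-r})^{-1}$, indexed by the imaginary PBW data $\mathbf{c}_0=(\rho^{(i)})_i$ with $\ell(\rho^{(i)})\le m_i$, i.e.\ by the vectors $S_{\mathbf{c}_0}^-u_\lambda$, of which the pure translates $S_{t_\xi}u_\lambda$ are only a proper subset. Beyond the miscount, the imaginary degree of a semi-infinite LS path does not decouple from the $(W_0)^J$-direction: the admissibility constraint $a_u\langle x_u\lambda,\beta^\vee\rangle\in\mathbb{Z}$ entangles the turning points $a_u$ with both the finite and the translation parts of each $x_u$, so there is no product decomposition from which a geometric series can be peeled off before the matching begins. (A smaller slip: $\mathbb{Q}(q)[t_1^{\pm1},\ldots,t_{m_i}^{\pm1}]^{\mathfrak{S}_{m_i}}$ has no convergent Hilbert series in $q^{-1}$; the object with Hilbert series $\prod_{r=1}^{m_i}(1-q^{-r})^{-1}$ is the polynomial subring $\mathbb{Q}(q)[t_1^{-1},\ldots,t_{m_i}^{-1}]^{\mathfrak{S}_{m_i}}$.) What you defer to bookkeeping --- the term-by-term comparison of semi-infinite LS path contributions against quantum-alcove-walk weights --- is the entire substance of the Naito--Sagaki argument, and your proposed factorization would need to be replaced by that comparison rather than precede it.
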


\begin{corollary}\label{br}
  \textit{Keep the setting above. Let $\xi\in Q_0^{\vee}$. The graded character $\gch V_{t_{\xi}}^-(\lambda)$ of $V_{t_{\xi}}^-(\lambda)$ is expressed as
  \[
  \gch V_{t_{\xi}}^-(\lambda)=q^{-\langle\xi,\lambda\rangle}\left(\prod_{i\in I_0}\prod_{r=1}^{m_i}(1-q^{-r})\right)^{-1}P_{\lambda}(x;q^{-1},0).
  \]
  }
\end{corollary}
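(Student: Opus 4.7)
The plan is to reduce Corollary \ref{br} to Theorem \ref{bq} via the isomorphism furnished by Lemma \ref{txi}, following exactly the twist-by-a-$\delta$-shift trick already used in the proof of Proposition \ref{dem}.

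First I would invoke Lemma \ref{txi} with the given $\xi \in Q_0^{\vee}$ to produce a $U_q(\mathfrak{g})$-linear isomorphism
\[
\Upsilon : V(\lambda) \stackrel{\sim}{\longrightarrow} V(\lambda) \otimes V(-\langle \xi, \lambda \rangle \delta)
\]
sending $u_{\lambda}$ to $(S_{t_{-\xi}} u_{\lambda}) \otimes u_{-\langle \xi, \lambda \rangle \delta}$. Arguing exactly as in the proof of Proposition \ref{dem}, applying $\Upsilon$ to $S_{t_{\xi}} u_{\lambda}$ then gives $\Upsilon(S_{t_{\xi}} u_{\lambda}) = u_{\lambda} \otimes u_{-\langle \xi, \lambda \rangle \delta}$.

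Next, I would show that $\Upsilon$ carries $V_{t_{\xi}}^-(\lambda)$ onto $V_e^-(\lambda) \otimes u_{-\langle \xi, \lambda \rangle \delta}$. Since $\delta$ is $W$-invariant with $\langle \alpha_i^{\vee}, \delta \rangle = 0$ for every $i \in I$, the vector $u_{-\langle \xi, \lambda \rangle \delta}$ is annihilated by each $E_i$, $F_i$ and fixed by every $t_i$, so $V(-\langle \xi, \lambda \rangle \delta)$ is one-dimensional. Iterating $\Delta(F_i) = F_i \otimes 1 + t_i \otimes F_i$ then yields $X(u_{\lambda} \otimes u_{-\langle \xi, \lambda \rangle \delta}) = (X u_{\lambda}) \otimes u_{-\langle \xi, \lambda \rangle \delta}$ for every $X \in U_q^-(\mathfrak{g})$, which gives the desired equality $\Upsilon(V_{t_{\xi}}^-(\lambda)) = V_e^-(\lambda) \otimes u_{-\langle \xi, \lambda \rangle \delta}$.

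Finally, I would compare graded characters. Because $u_{-\langle \xi, \lambda \rangle \delta}$ carries weight $-\langle \xi, \lambda \rangle \delta$, tensoring with it identifies $(V_e^-(\lambda))_{\nu + (k + \langle \xi, \lambda \rangle)\delta}$ with $(V_{t_{\xi}}^-(\lambda))_{\nu + k \delta}$ for every $\nu \in P_0$ and $k \in \mathbb{Z}$, and comparing coefficients of $x^{\nu} q^k$ yields $\gch V_{t_{\xi}}^-(\lambda) = q^{-\langle \xi, \lambda \rangle} \gch V_e^-(\lambda)$; substituting Theorem \ref{bq} then produces the claimed formula. I do not anticipate any real obstacle here: the entire argument is a bookkeeping computation about how the one-dimensional twist $V(-\langle \xi, \lambda \rangle \delta)$ interacts with the $U_q^-(\mathfrak{g})$-action and with the $\delta$-grading, and the only non-trivial input is the closed-form expression for $\gch V_e^-(\lambda)$ supplied by Theorem \ref{bq}.
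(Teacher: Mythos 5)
Your argument is correct and coincides with the paper's own proof of Corollary~\ref{br}, which likewise invokes Lemma~\ref{txi} and then reads off the $q^{-\langle\xi,\lambda\rangle}$ shift from Theorem~\ref{bq}; you have merely made the intermediate bookkeeping (that $\Upsilon$ carries $V_{t_\xi}^-(\lambda)$ onto $V_e^-(\lambda)\otimes u_{-\langle\xi,\lambda\rangle\delta}$ and the resulting $\delta$-degree shift) explicit.
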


\begin{proof}
  We have the isomorphism of $U_q(\mathfrak{g})$-modules
  \[
  V(\lambda)\stackrel{\cong}{\longrightarrow} V(\lambda)\otimes V(-\langle\xi,\lambda\rangle\delta)
  \]
that maps $u_{\lambda}$ to $(S_{-t_{\xi}}u_{\lambda})\otimes u_{-\langle \xi,\lambda\rangle\delta}$ by Lemma \ref{txi}. The assertion then follows directly from Theorem \ref{bq}.
\end{proof}

In the remainder of this chapter, we assume that $\mathfrak{g}_0$ is of type $A$. We refer basics on partitions and Young diagram to \cite{M1}.

Let $\mathcal{P}$ denote the set of all partitions. For $\Lambda \in \mathcal{P}$, the length of $\Lambda$ is denoted by $\ell(\Lambda)$ and the size of $\Lambda$ is denoted by $|\Lambda|$. For $k \in \mathbb{Z}_{\geq 0}$, we define $\mathcal{P}_k = \{ \Lambda \in \mathcal{P} \mid \ell(\Lambda) \leq k \}$. Let $X_{n+1} = \bigoplus_{i=1}^{n+1} \mathbb{Z} \varepsilon_i$ denote the weight lattice of $\mathop{GL}_{n+1}$. Let $\pi_{n+1}: X_{n+1} \to X_{n+1} / \mathbb{Z}(\varepsilon_1 + \cdots + \varepsilon_{n+1})$ denote the canonical projection. We identify $X_{n+1} / \mathbb{Z}(\varepsilon_1 + \cdots + \varepsilon_{n+1})$ with $P_0$ via $\pi_{n+1}(\varepsilon_1 + \cdots + \varepsilon_i) \mapsto \varpi_i$. A $\mathop{GL}_{n+1}$-weight $\Lambda=\sum_{i=1}^{n+1}\lambda_i\varepsilon_i$ is said to be \textit{polynomial} if $\lambda_i\ge 0$ for all $i=1,\ldots,n+1$. A $\mathop{GL}_{n+1}$-weight $\Lambda=\sum_{i=1}^{n+1}\lambda_i\varepsilon_i$ is said to be \textit{dominant} if $\lambda_1\ge\cdots\ge\lambda_{n+1}$. We identify $\mathcal{P}_{n+1}$ with the set of all dominant polynomial weights in $X_{n+1}$ via $(\lambda_1,\lambda_2,\ldots)\mapsto \sum_{i=1}^{n+1}\lambda_i\varepsilon_i$.

  There exists a `$\mathop{GL}_{n+1}$-version' of the symmetric Macdonald polynomial $P_{\Lambda}^{\mathop{GL}}(x;q,t)\in \mathbb{Q}(q,t)[X_{n+1}]$ for a dominant polynomial weight $\Lambda\in X_{n+1}$ (see \cite[Chapter V\hspace{-1pt}I]{M1} for the definition). We define a $\mathbb{Q}(q,t)$-linear map $\Pi_{n+1}:\mathbb{Q}(q,t)[X_{n+1}]\to \mathbb{Q}(q,t)[P_0]$ by $\Pi_{n+1}(x^{\Lambda})=x^{\pi_{n+1}(\Lambda)}$. Then, we have
  \[
  \Pi_{n+1}(P_{\Lambda}^{\mathop{GL}}(x;q,t))=P_{\pi_{n+1}(\Lambda)}(x;q,t).
  \]

  By setting $x^{\varepsilon_i}=x_i$, $P_{\Lambda}^{\mathop{GL}}(x;q,t)\in \mathbb{Q}(q,t)[X_{n+1}]$ can also be written as $P_{\Lambda}^{\mathop{GL}}(x_1,\ldots,x_{n+1};q,t)$. This is a homogeneous polynomial of degree $|\Lambda|$.

\subsection{Branching rules for Macdonald polynomials}

For partitions $\Lambda=(\lambda_1,\lambda_2,\ldots)$ and $M=(\mu_1,\mu_2,\ldots)$, we write $\Lambda\supset M$ to mean that the Young diagram of $\Lambda$ contains the Young diagram of $M$, i.e. that $\lambda_i\ge\mu_i$ for all $i\ge1$. The skew diagram of a skew shape $\Lambda/M$ is the set-theoretic difference of the Young diagrams of $\Lambda$ and $M$. We say that $\Lambda/M$ is a \emph{horizontal strip} if $\lambda_i \geq \mu_i \geq \lambda_{i+1}$ for all $i \geq 1$.

\begin{theorem}[{\cite[Section V\hspace{-1pt}I-7]{M1}\label{brMac}}]
\textit{Let $\Lambda\in \mathcal{P}_n$. For a partition $M=(\mu_1,\ldots,\mu_n)$ such that $\Lambda/M$ is horizontal strip, we define $\psi_{\Lambda/M}$ by
\[
\psi_{\Lambda/M}=\prod_{1\le i\le j\le \ell(\mu)}\frac{f(q^{\mu_i-\mu_j}t^{j-i})f(q^{\lambda_i-\lambda_{j+1}}t^{j-i})}{f(q^{\lambda_i-\mu_j}t^{j-i})f(q^{\mu_i-\lambda_{j+1}}t^{j-i})},
\]
where $f(a)=\frac{(at)_{\infty}}{(aq)_{\infty}}$ with $(a)_{\infty}=\prod_{i\ge0}(1-aq^i)$. Then, we have
  \[
  \pushQED{\qed} 
  P_{\Lambda}^{\mathop{GL}}(x_1,\ldots,x_n,x_{n+1};q,t)=\sum_{\substack{M\in \mathcal{P}_n\\ \Lambda/M:\text{horizontal strip}}} x_{n+1}^{|\Lambda-M|}\psi_{\Lambda/M}P_M^{\mathop{GL}}(x_1,\ldots,x_n;q,t).\qedhere
\popQED
  \]
  }
\end{theorem}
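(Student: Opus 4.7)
The plan is to reduce the branching formula to a single-variable Pieri-type evaluation for skew Macdonald polynomials. First, I would introduce the skew Macdonald polynomials $P_{\Lambda/M}(y;q,t)$ via the coproduct identity
\[
P_{\Lambda}^{\mathop{GL}}(x,y;q,t)=\sum_{M\subseteq \Lambda}P_{\Lambda/M}(y;q,t)\,P_{M}^{\mathop{GL}}(x;q,t),
\]
which holds in the ring of symmetric functions in two disjoint alphabets $x$ and $y$ and is a formal consequence of the duality between the basis $\{P_\Lambda\}$ and its dual basis $\{Q_\Lambda\}$ under the $(q,t)$-inner product on symmetric functions. Specialising $y$ to the single variable $x_{n+1}$ and $x$ to $(x_1,\ldots,x_n)$, the theorem reduces to proving the one-variable evaluation $P_{\Lambda/M}(y;q,t)=\psi_{\Lambda/M}\,y^{|\Lambda|-|M|}$ when $\Lambda/M$ is a horizontal strip, and $P_{\Lambda/M}(y;q,t)=0$ otherwise.

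Second, I would prove this single-variable evaluation. In a single variable $y$, any homogeneous symmetric function of degree $r=|\Lambda|-|M|$ is a scalar multiple of $y^{r}$, so by the defining adjointness $\langle P_{\Lambda/M},Q_N\rangle_{q,t}=\langle P_\Lambda,Q_M Q_N\rangle_{q,t}$, it suffices to pair against $N=(r)$. The problem thereby reduces to the Pieri-type expansion
\[
Q_{(r)}P_M=\sum_{\substack{\Lambda:\,|\Lambda/M|=r\\ \Lambda/M\text{ horizontal strip}}}\psi_{\Lambda/M}\,P_\Lambda.
\]
The restriction of the sum to horizontal strips follows from the monomial-triangularity of the $P_\Lambda$ with respect to dominance order, combined with the observation that the monomials occurring in $Q_{(r)}P_M(y,x_1,\ldots,x_n)$, when expanded in the variable $y$, correspond to adding at most one box per column of the Young diagram of $M$.

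The main obstacle is matching the coefficient $\psi_{\Lambda/M}$ arising from the Pieri computation with the explicit product formula given in the statement. My approach would be to extract $\psi_{\Lambda/M}$ from $\langle Q_{(r)}P_M,P_\Lambda\rangle_{q,t}/\langle P_\Lambda,P_\Lambda\rangle_{q,t}$ and then apply the product formula for the norm $\langle P_\Lambda,P_\Lambda\rangle_{q,t}^{-1}=b_\Lambda(q,t)$ over cells of the Young diagram of $\Lambda$. The ratio $b_\Lambda(q,t)/b_M(q,t)$, combined with the contribution from $Q_{(r)}$, can then be telescoped across pairs $(i,j)$ with $1\le i\le j\le \ell(\mu)$ and rearranged into the claimed product of ratios of the functions $f(q^{a}t^{b})=(q^{a}t^{b+1})_\infty/(q^{a+1}t^{b})_\infty$. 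This combinatorial bookkeeping is the technical core of the argument; it is classical and carried out in detail in \cite[Chapter V\hspace{-1pt}I, Sections 6--7]{M1}, so I would appeal to that reference for the routine manipulations once the reduction above is in place.
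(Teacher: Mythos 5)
The paper does not supply a proof of this theorem; it is quoted directly from Macdonald's book \cite[Chapter V\hspace{-1pt}I, Section 7]{M1}, which is precisely the source whose argument you are reconstructing. Your plan — the skew expansion $P_\Lambda(x,y)=\sum_M P_{\Lambda/M}(y)P_M(x)$, specialisation of $y$ to a single variable, and reduction of the one-variable evaluation of $P_{\Lambda/M}$ to a Pieri rule by duality — is exactly Macdonald's route, so the approach matches the cited proof.

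There is, however, a slip in the Pieri rule you invoke. You wrote $Q_{(r)}P_M=\sum_\Lambda\psi_{\Lambda/M}P_\Lambda$, but in Macdonald's notation the coefficient of $P_\Lambda$ in $g_r P_M$ (where $g_r=Q_{(r)}$) is $\varphi_{\Lambda/M}$, not $\psi_{\Lambda/M}$; the Pieri rule producing $\psi$ is $g_r Q_M=\sum_\Lambda\psi_{\Lambda/M}Q_\Lambda$. Your own duality step actually leads to the latter: in a single variable, $P_{\Lambda/M}(y)=\langle P_{\Lambda/M},g_r\rangle\,y^r=\langle P_\Lambda,Q_M g_r\rangle\,y^r$, and expanding $Q_M g_r$ in the $Q$-basis yields the coefficient $\psi_{\Lambda/M}$ directly, with no norm correction required. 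If one instead uses $g_r P_M=\sum\varphi_{\Lambda/M}P_\Lambda$, the conversion $\varphi_{\Lambda/M}=(b_\Lambda/b_M)\,\psi_{\Lambda/M}$ with $b_\nu=\langle P_\nu,P_\nu\rangle^{-1}$ reintroduces exactly the $b$-ratio you mention in the final paragraph, so your last step would in effect undo the mislabeling — but as written the intermediate Pieri formula is incorrect, and the cleanest path (and Macdonald's) is to expand $Q_M g_r$ in the $Q$-basis from the start.
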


When $q=t$, we have $P_{\Lambda}^{\mathop{GL}}(x_1,\ldots,x_n,x_n;q,t)=s_{\Lambda}(x_1,\ldots,x_n)$ by \cite[Section V\hspace{-1pt}I-4]{M1} for $n\in \mathbb{Z}_{\ge0}$. Hence, we obtain the following.

\begin{corollary}\label{schbr}
   \textit{\[
   \pushQED{\qed} 
  s_{\Lambda}(x_1,\ldots,x_n,x_{n+1})=\sum_{\substack{M\in \mathcal{P}_n\\ \Lambda/M:\text{horizontal strip}}} x_{n+1}^{|\Lambda-M|}s_M(x_1,\ldots,x_n).\qedhere
\popQED
  \]
  }
\end{corollary}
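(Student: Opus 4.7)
The plan is to derive the corollary as a direct specialization of Theorem \ref{brMac} at $t=q$. The paragraph immediately preceding the corollary quotes from \cite[Section VI-4]{M1} the fact that $P_\Lambda^{\mathop{GL}}(x_1,\ldots,x_k;q,q)=s_\Lambda(x_1,\ldots,x_k)$ for every $k\ge 0$. Applying this on both sides of the identity in Theorem \ref{brMac} takes care of the polynomials themselves, so the only thing to verify is that every coefficient $\psi_{\Lambda/M}$ becomes $1$ upon setting $t=q$.

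This is immediate from the definition. By construction, $f(a)=(at)_{\infty}/(aq)_{\infty}$ satisfies $f(a)\big|_{t=q}=1$ identically as a formal power series in $a$. Each of the four factors appearing in the product
\[
\psi_{\Lambda/M}=\prod_{1\le i\le j\le \ell(\mu)}\frac{f(q^{\mu_i-\mu_j}t^{j-i})\,f(q^{\lambda_i-\lambda_{j+1}}t^{j-i})}{f(q^{\lambda_i-\mu_j}t^{j-i})\,f(q^{\mu_i-\lambda_{j+1}}t^{j-i})}
\]
is of the form $f(a)$ for some monomial $a$, so each factor becomes $1$ at $t=q$, and therefore $\psi_{\Lambda/M}\big|_{t=q}=1$ for every horizontal strip $\Lambda/M$. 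Substituting $t=q$ in the identity of Theorem \ref{brMac} then produces the claimed Pieri-type branching rule for Schur polynomials.

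There is no essential obstacle; the corollary is a one-line specialization, merely recording that the classical Pieri branching of Schur polynomials is recovered from its Macdonald analogue at $t=q$.
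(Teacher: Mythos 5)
Your proposal is correct and takes essentially the same approach as the paper: the paper derives the corollary precisely by specializing Theorem \ref{brMac} at $q=t$, citing the identity $P_{\Lambda}^{\mathop{GL}}(x;q,q)=s_{\Lambda}(x)$ from Macdonald. You spell out the one extra detail the paper leaves implicit, namely that each factor $f(a)=(at)_{\infty}/(aq)_{\infty}$ reduces to $1$ at $t=q$ so that $\psi_{\Lambda/M}\big|_{t=q}=1$.
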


\section{The $U_q(\widehat{\mathfrak{sl}}_n)$-module $\Psi_{\varepsilon}^*V(\lambda)$}

Keep the setting of the previous section.

\subsection{Embedding among $U_q(\widehat{\mathfrak{sl}}_n)$}

We assume that $A=(a_{ij})_{i,j\in I}$ is of type $A_k^{(1)}$ (i.e., $\mathfrak{g}=\widehat{\mathfrak{sl}}_{k+1}$) for $k\in \mathbb{Z}_{>0}$ hereafter. The integer $D>0$ in the definition of the quantum enveloping algebra is assumed to be $D=1$.

In this subsection, we present an explicit embedding $\Phi_{\varepsilon}: U_q(\widehat{\mathfrak{sl}}_n) \hookrightarrow U_q(\widehat{\mathfrak{sl}}_{n+1})$ for $\varepsilon \in \{\pm1\}$. To avoid confusion, we will use the notation ` $\breve{}$ ' for objects related to $\widehat{\mathfrak{sl}}_{n+1}$ as follows.

Let $\breve{I} = \{0, 1, \ldots, n\}$, and let $\breve{E}_i$ and $\breve{F}_i$ (for $i \in \breve{I}$) denote the Chevalley generators of $U_q(\widehat{\mathfrak{sl}}_{n+1})$. We denote the Cartan subalgebra of $\widehat{\mathfrak{sl}}_{n+1}$ by $\breve{\mathfrak{h}}$ and its dual by $\breve{\mathfrak{h}}^*$. The simple coroots are given by $\breve{\alpha}_i^{\vee}$ (for $i \in \breve{I}$), and the simple roots by $\breve{\alpha}_i$ (for $i \in \breve{I}$). The degree operator is denoted by $\breve{d}$, and let $\breve{\delta} = \sum_{i=0}^n \breve{\alpha}_i$ denote the generator of the imaginary roots.

Let $\breve{P}$ denote the weight lattice of $\widehat{\mathfrak{sl}}_{n+1}$, $\breve{Q}\subset \breve{P}$ the root lattice. The coweight lattice is denoted by $\breve{P}^*$. The fundamental weights are $\breve{\Lambda}_i$ (for $i \in \breve{I}$), and the level-zero fundamental weights are $\breve{\varpi}_i$ (for $i \in \breve{I}_0 = \breve{I} \setminus \{0\}$). Define $\breve{P}_0 = \bigoplus_{i \in \breve{I}_0} \mathbb{Z} \breve{\varpi}_i \subset \breve{P}$ and $\breve{P}_{0,+}=\{\lambda=\sum_{i\in \breve{I}_0}m_i\breve{\varpi}_i\in \breve{P}_0 \mid m_i\ge 0 \;\text{for all} \;i\in \breve{I}_0\}$. We denote the Weyl group by $\breve{W}$. The operator acting on the extremal vectors of an integrable $U_q(\widehat{\mathfrak{sl}}_{n+1})$-module, defined in Section 2.4, is denoted by $\breve{S}_x$ (for $x \in \breve{W}$). For $\lambda\in \breve{P}_{0,+}$ and $\mathbf{c}_0\in\overline{\Par}(\lambda)$, let $\breve{S}_{\mathbf{c}_0}^-$ denote the element of $U_q^-(\widehat{\mathfrak{sl}}_{n+1})$ defined in Definition \ref{schur}. We set $\breve{Q}_0^{\vee}=\bigoplus_{i=1}^n\mathbb{Z}\breve{\alpha}_i^{\vee}$.

For objects related to $\widehat{\mathfrak{sl}}_n$, we do not use the notation ` $\breve{}$ '.

Define the $\mathbb{Q}$-linear map $j:\mathfrak{h}\to \breve{\mathfrak{h}}$ by
\[
  j(\alpha_0^{\vee})=s_n(\breve{\alpha}_0^{\vee})=\breve{\alpha}_0^{\vee}+\breve{\alpha}_n^{\vee},\quad j(\alpha_i^{\vee})=\breve{\alpha}_i^{\vee}\quad (i\in I_0=I\setminus\{0\}),\quad j(d)=\breve{d}.
\]

The map $j$ induces the $\mathbb{Q}$-linear map $j^*:\breve{\mathfrak{h}}^*\to \mathfrak{h}^*$ such that
\[
  j^*(\breve{\Lambda}_0)=\Lambda_0,\quad j^*(\breve{\Lambda}_i)=\Lambda_i\quad(i\in I_0),\quad
  j^*(\breve{\Lambda}_n)=\Lambda_0,\quad j^*(\breve{\delta})=\delta.
\]

We define a $\mathbb{Q}(q,t)$-linear map $\Theta:\mathbb{Q}(q,t)[\breve{P}_0]\to \mathbb{Q}(q,t)[P_0]$ by $\Theta(x^{\nu})=x^{j^*(\nu)}$.

Define the $\mathbb{Z}$-linear map $\gamma:Q\to \breve{Q}$ by
\[
\gamma(\alpha_0)=s_n(\breve{\alpha}_0)=\breve{\alpha}_0+\breve{\alpha}_n,\quad\gamma(\alpha_i)=\breve{\alpha}_i\quad(i\in I_0).
\]

\begin{lemma}\label{2}
  \textit{We have $\langle h, \zeta\rangle =\langle j(h), \gamma(\zeta) \rangle$ for all $h\in \mathfrak{h}, \zeta\in Q$.}
\end{lemma}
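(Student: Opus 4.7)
The plan is to reduce to a finite, case-by-case check on basis vectors. Both sides of the desired equation are $\mathbb{Q}$-linear in $h$ and $\mathbb{Z}$-linear in $\zeta$, so it suffices to verify that $\langle h,\zeta\rangle=\langle j(h),\gamma(\zeta)\rangle$ when $h$ runs through the basis $\{\alpha_i^{\vee}\mid i\in I\}\cup\{d\}$ of $\mathfrak{h}$ and $\zeta$ runs through the basis $\{\alpha_j\mid j\in I\}$ of $Q$. Here we use that $\mathfrak{g}=\widehat{\mathfrak{sl}}_n$, so $I=\{0,1,\ldots,n-1\}$, while $\breve{I}=\{0,1,\ldots,n\}$ corresponds to $\widehat{\mathfrak{sl}}_{n+1}$.

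For the case $h=d$, I will simply note that $j(d)=\breve{d}$, so $\langle \breve{d},\gamma(\alpha_j)\rangle$ is the coefficient of $\breve{\alpha}_0$ in $\gamma(\alpha_j)$. By the definition of $\gamma$, this coefficient is $1$ for $j=0$ and $0$ otherwise, matching $\langle d,\alpha_j\rangle=\delta_{0j}$. For the cases with $i,j\in I_0$, both sides equal the Cartan matrix entry $a_{ij}$ because $j(\alpha_i^{\vee})=\breve{\alpha}_i^{\vee}$ and $\gamma(\alpha_j)=\breve{\alpha}_j$, and the Cartan matrices of $A_{n-1}^{(1)}$ and $A_n^{(1)}$ agree on indices in $\{1,\ldots,n-1\}$ (no wraparound is involved).

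The remaining cases are those involving the index $0$, where the twist built into $j$ and $\gamma$ matters. For $h=\alpha_0^{\vee}$ and $\zeta=\alpha_j$ with $j\in I_0$, the right-hand side becomes $\langle\breve{\alpha}_0^{\vee}+\breve{\alpha}_n^{\vee},\breve{\alpha}_j\rangle=\breve{a}_{0j}+\breve{a}_{nj}$; this yields $-1$ for $j=1$ (from $\breve{a}_{01}$), $-1$ for $j=n-1$ (from $\breve{a}_{n,n-1}$), and $0$ otherwise, matching $a_{0j}$ for $A_{n-1}^{(1)}$. The symmetric case $h=\alpha_i^{\vee}$ with $i\in I_0$, $\zeta=\alpha_0$ is identical by the same bookkeeping. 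Finally, for $h=\alpha_0^{\vee}$ and $\zeta=\alpha_0$ the right-hand side is $\breve{a}_{00}+\breve{a}_{0n}+\breve{a}_{n0}+\breve{a}_{nn}=2-1-1+2=2=a_{00}$.

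There is really no conceptual obstacle here: the lemma is a direct consequence of the fact that conjugating the affine node $\alpha_0^{\vee}$ of $\widehat{\mathfrak{sl}}_n$ by $s_n$ inside $\widehat{\mathfrak{sl}}_{n+1}$ produces exactly the simple coroot sum $\breve{\alpha}_0^{\vee}+\breve{\alpha}_n^{\vee}$ (and dually for $\alpha_0$), and this is tailored precisely so that the pairings of $A_{n-1}^{(1)}$ are recovered from those of $A_n^{(1)}$. The only mild subtlety is making sure the wraparound entries $a_{0,n-1}=a_{n-1,0}=-1$ of $A_{n-1}^{(1)}$, which do not come from a single entry of $A_n^{(1)}$, are correctly reproduced by the sum $\breve{a}_{0,n-1}+\breve{a}_{n,n-1}$ (respectively $\breve{a}_{n-1,0}+\breve{a}_{n-1,n}$); this is where the use of $s_n$ in the definition of $j(\alpha_0^{\vee})$ pays off.
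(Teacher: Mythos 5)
Your proof is correct and uses essentially the same method as the paper: reduce by bilinearity to a finite check on basis vectors of $\mathfrak{h}$ and $Q$, then verify the resulting pairings using the Cartan matrix entries of $A_{n-1}^{(1)}$ and $A_n^{(1)}$. The paper keeps $h$ as a general linear combination and exploits $s_n$-invariance of the pairing, whereas you expand the pairing directly against the simple coroot sum $\breve{\alpha}_0^\vee+\breve{\alpha}_n^\vee$, but these are cosmetic variants of the same computation.
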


\begin{proof}
 Let $i\in I$ and let $h=\sum_{j=0}^{n-1}k_j\alpha_j^{\vee}+l d \in \mathfrak{h}$ be an arbitrary element. We show that $\langle h, \alpha_i\rangle =\langle j(h), \gamma(\alpha_i) \rangle$.

 First, assume $i=0$. We have
 
 \[
  \langle h, \alpha_0 \rangle
  =\left\langle \sum_{j=0}^{n-1}k_j\alpha_j^{\vee}+l d, \alpha_0 \right\rangle
  =2k_0-k_1-k_{n-1}+l
\]
and
\begin{align*}
  \langle j(h), \gamma(\alpha_0) \rangle
  &=\left\langle k_0s_n(\breve{\alpha}_0^{\vee})+\sum_{j=1}^{n-1}k_j\breve{\alpha}_j^{\vee}+l\breve{d}, s_n(\breve{\alpha}_0) \right\rangle=\left\langle \sum_{j=0}^{n-2}k_j\breve{\alpha}_j^{\vee}+k_{n-1}s_n(\breve{\alpha}_{n-1}^{\vee})+l \breve{d}, \breve{\alpha}_0 \right\rangle\\
  &=\left\langle \sum_{j=0}^{n-1}k_j\breve{\alpha}_j^{\vee}+k_{n-1}\breve{\alpha}_n^{\vee}+l \breve{d}, \breve{\alpha}_0 \right\rangle=2k_0-k_1-k_{n-1}+l.
\end{align*}

Thus, we obtain $\langle h, \alpha_0\rangle=\langle j(h), \gamma(\alpha_0)\rangle$.

Next, assume $1\leq i\leq n-2$. We have

\[
  \langle h, \alpha_i \rangle
  =\left\langle \sum_{j=0}^{n-1}k_j\alpha_j^{\vee}+l d, \alpha_i \right\rangle
  =2k_i-k_{i-1}-k_{i+1}
\]
and
\begin{align*}
  \langle j(h), \gamma(\alpha_i) \rangle
  &=\left\langle k_0s_n(\breve{\alpha}_0^{\vee})+\sum_{j=1}^{n-1}k_j\breve{\alpha}_j^{\vee}+l \breve{d}, \breve{\alpha}_i \right\rangle\\
  &=\left\langle \sum_{j=0}^{n-1}k_j\breve{\alpha}_j^{\vee}+k_0\breve{\alpha}_n^{\vee}+l\breve{d}, \breve{\alpha}_i \right\rangle=2k_i-k_{i-1}-k_{i+1}.
\end{align*}
Thus, we obtain $\langle h, \alpha_i\rangle=\langle j(h), \gamma(\alpha_i)\rangle$.

Finally, assume $i=n-1$. We have
\[
  \langle h, \alpha_{n-1} \rangle
  =\left\langle \sum_{j=0}^{n-1}k_j\alpha_j^{\vee}+l d, \alpha_{n-1} \right\rangle
  =2k_{n-1}-k_{n-2}-k_0
\]
and
\begin{align*}
  \langle j(h), \gamma(\alpha_{n-1}) \rangle
  &=\left\langle k_0s_n(\breve{\alpha}_0^{\vee})+\sum_{j=1}^{n-1}k_j\breve{\alpha}_j^{\vee}+l \breve{d}, \breve{\alpha}_{n-1} \right\rangle\\
  &=\left\langle \sum_{j=0}^{n-1}k_j\breve{\alpha}_j^{\vee}+k_0\breve{\alpha}_n^{\vee}+l \breve{d}, \breve{\alpha}_{n-1} \right\rangle=2k_{n-1}-k_{n-2}-k_0.
\end{align*}
Thus, we obtain $\langle h, \alpha_{n-1}\rangle=\langle j(h), \gamma(\alpha_{n-1})\rangle$. These calculations complete the proof.
\end{proof}

\begin{proposition}
  \textit{There is a homomorphism of algebras $\Psi_{\varepsilon}: U_q(\widehat{\mathfrak{sl}}_n) \to U_q(\widehat{\mathfrak{sl}}_{n+1})$ such that
\begin{gather*}
    \Psi_{\varepsilon}(E_0)=T_n^{\varepsilon}(\breve{E}_0),\quad\Psi_{\varepsilon}(F_0)=T_n^{\varepsilon}(\breve{F}_0), \\
  \Psi_{\varepsilon}(E_i)=\breve{E}_i,\quad\Psi_{\varepsilon}(F_i)=\breve{F}_i\quad(i=1,\ldots,n-1), \quad
  \Psi_{\varepsilon}(q^h)=q^{j(h)}\quad(h\in P^*).
\end{gather*}}
\end{proposition}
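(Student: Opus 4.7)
The plan is to verify that the proposed images of the generators $E_i, F_i, q^h$ of $U_q(\widehat{\mathfrak{sl}}_n)$ satisfy all the defining relations inside $U_q(\widehat{\mathfrak{sl}}_{n+1})$. The core tool is that $T_n$ is an algebra automorphism of $U_q(\widehat{\mathfrak{sl}}_{n+1})$, so any identity in $U_q(\widehat{\mathfrak{sl}}_{n+1})$ involving $\breve{E}_0, \breve{F}_0$ transports to the analogous identity for $T_n^\varepsilon(\breve{E}_0), T_n^\varepsilon(\breve{F}_0)$ after applying $T_n^\varepsilon$ throughout.

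The Cartan--Cartan relations are immediate from the $\mathbb{Q}$-linearity of $j$. For the Cartan--Chevalley relations, I first observe that $\Psi_\varepsilon(E_i)$ is a weight vector in $U_q(\widehat{\mathfrak{sl}}_{n+1})$ of weight $\gamma(\alpha_i)$: for $i\geq 1$ this is immediate, and for $i=0$, $T_n^\varepsilon(\breve{E}_0)$ has weight $s_n(\breve{\alpha}_0)=\breve{\alpha}_0+\breve{\alpha}_n=\gamma(\alpha_0)$. Lemma \ref{2} then yields
\[
\Psi_\varepsilon(q^h)\Psi_\varepsilon(E_i)\Psi_\varepsilon(q^{-h}) = q^{\langle j(h),\gamma(\alpha_i)\rangle}\Psi_\varepsilon(E_i) = q^{\langle h,\alpha_i\rangle}\Psi_\varepsilon(E_i),
\]
and analogously for $F_i$. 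For the relation $[E_i,F_j]=\delta_{ij}(t_i-t_i^{-1})/(q-q^{-1})$, the cases $i,j\in\{1,\ldots,n-1\}$ are inherited directly, since $\Psi_\varepsilon(t_i)=q^{\breve{\alpha}_i^\vee}=\breve{t}_i$. The case $i=j=0$ follows by applying $T_n^\varepsilon$ to the corresponding relation in $U_q(\widehat{\mathfrak{sl}}_{n+1})$, noting that $T_n^\varepsilon(\breve{t}_0)=q^{s_n(\breve{\alpha}_0^\vee)}=q^{j(\alpha_0^\vee)}=\Psi_\varepsilon(t_0)$. For the mixed cases $(0,j)$ and $(j,0)$ with $j\geq 1$, we apply $T_n^{-\varepsilon}$ and reduce to $[\breve{E}_0,T_n^{-\varepsilon}(\breve{F}_j)]$; since $T_n^{-\varepsilon}(\breve{F}_j)$ lies in the subalgebra generated by $\breve{F}_j,\breve{F}_n$, and $\breve{E}_0$ commutes with every $\breve{F}_k$ for $k\neq 0$, the commutator vanishes.

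The delicate step is the quantum Serre relations. Pairs $(i,j)$ both in $\{1,\ldots,n-1\}$ are inherited trivially, since the restriction of the Cartan matrix of $\widehat{\mathfrak{sl}}_{n+1}$ to these indices coincides with that of $\widehat{\mathfrak{sl}}_n$ (both of type $A_{n-1}$). For pairs $(0,j)$ with $j\in\{2,\ldots,n-2\}$, applying $T_n^{-\varepsilon}$ reduces the relation to $[\breve{E}_0,\breve{E}_j]=0$ in $\widehat{\mathfrak{sl}}_{n+1}$, which holds since $0$ and $j$ are non-adjacent in the $\widehat{\mathfrak{sl}}_{n+1}$-Dynkin diagram. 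The pair $(0,1)$ reduces via $T_n^{-\varepsilon}$ to the cubic Serre relation for $(\breve{E}_0,\breve{E}_1)$ in $U_q(\widehat{\mathfrak{sl}}_{n+1})$. The $F$-Serre relations are treated by entirely parallel arguments.

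The main obstacle is the pair $(0,n-1)$: here $a_{0,n-1}=-1$ in $\widehat{\mathfrak{sl}}_n$ but $a_{0,n-1}=0$ in $\widehat{\mathfrak{sl}}_{n+1}$, so the needed cubic Serre relation for $\Psi_\varepsilon(E_0),\breve{E}_{n-1}$ is not directly inherited. I plan to handle this by using the explicit formula for $T_n^{-\varepsilon}(\breve{E}_{n-1})$ as a $q$-commutator of $\breve{E}_{n-1}$ and $\breve{E}_n$, combined with $[\breve{E}_0,\breve{E}_{n-1}]=0$, which holds for $n\geq 3$. This commutativity lets us move $\breve{E}_{n-1}$ freely past every power of $\breve{E}_0$, so that the sum $\sum_{s=0}^2(-1)^s\breve{E}_0^{(s)}T_n^{-\varepsilon}(\breve{E}_{n-1})\breve{E}_0^{(2-s)}$ collapses into left- and right-multiples of $\sum_{s=0}^2(-1)^s\breve{E}_0^{(s)}\breve{E}_n\breve{E}_0^{(2-s)}$, which vanishes by the Serre relation for the adjacent pair $(\breve{E}_0,\breve{E}_n)$ in $U_q(\widehat{\mathfrak{sl}}_{n+1})$. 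The case $n=2$ requires a separate, slightly more intricate direct expansion that uses the full cubic Serre relation between $\breve{E}_0$ and $\breve{E}_1$ inside $U_q(\widehat{\mathfrak{sl}}_3)$ in place of the vanishing commutator.
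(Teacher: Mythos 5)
Your proof is correct for $n\ge 3$, but it is a genuinely different route from the paper's. For the quantum Serre relations — the hard part — the paper does not carry out a direct verification: it invokes \cite[Theorem 2.1.1]{Li} to get the Serre relations for $\Psi_1$, and then transfers the case $\varepsilon=-1$ by applying the anti-automorphism $\Phi=\vee\circ-$ (using $\Phi\circ T_n=T_n^{-1}\circ\Phi$, $\Phi(\breve E_i)=\breve F_i$, $\Phi(\breve F_i)=\breve E_i$), rather than re-verifying the identities. Your proposal instead aims to verify all Serre relations by hand, treating $\varepsilon=1$ and $\varepsilon=-1$ in parallel, and you correctly isolate the one genuinely nontrivial adjacency mismatch: the pair $\{0,n-1\}$, which is adjacent in $\widehat{\mathfrak{sl}}_n$ but not in $\widehat{\mathfrak{sl}}_{n+1}$. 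Your resolution — that for $n\ge 3$ one has $[\breve E_0,\breve E_{n-1}]=0$, so $\breve E_{n-1}$ slides past the $\breve E_0^{(s)}$ factors and the whole alternating sum collapses onto the $(\breve E_0,\breve E_n)$ (respectively $(\breve E_{n-1},\breve E_n)$) Serre relation — is sound for both the $E_0$-outer and the $E_{n-1}$-outer versions, and the argument is mirrored cleanly on the $F$-side. The payoff of your approach is self-containedness: one does not have to import Li's theorem, and the mechanism behind the embedding becomes visible. The cost is the $n=2$ degenerate case, where the relevant Serre relations are quartic ($a_{01}=-2$ in type $A_1^{(1)}$), the commutation $[\breve E_0,\breve E_1]=0$ fails in $\widehat{\mathfrak{sl}}_3$, and the collapse you use is unavailable; you acknowledge this but do not carry it out, so your proof as written has a gap exactly where the paper's citation-based argument silently covers it. If you intend a fully self-contained proof, the $n=2$ computation must be done; otherwise you should at least restrict to $n\ge 3$ or cite Li for $n=2$. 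One minor presentational remark: for the commutation relations you phrase things as applying $T_n^{-\varepsilon}$ throughout, but for the $E_{n-1}$-outer Serre relation it is cleaner (and what your argument actually needs) to expand $T_n^{\varepsilon}(\breve E_0)$ in place and commute $\breve E_0$ past the $\breve E_{n-1}^{(s)}$ factors directly, rather than conjugating and dealing with powers of $T_n^{-\varepsilon}(\breve E_{n-1})$.
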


\begin{proof}
  It suffices to prove the following equalities in $U_q(\widehat{\mathfrak{sl}}_{n+1})$:
  \begin{gather}
  q^{j(0)}=1,\quad q^{j(h+h^{\prime})}=q^{j(h)}q^{j(h^{\prime})}\quad(h,h^{\prime}\in P^*), \label{e1}\\
  q^{j(h)}\Psi_{\varepsilon}(E_i)q^{-j(h)}=q^{\langle h,\alpha_i\rangle}\Psi_{\varepsilon}(E_i),\quad q^{j(h)}\Psi_{\varepsilon}(F_i)q^{-j(h)}=q^{-\langle h,\alpha_i\rangle}\Psi_{\varepsilon}(F_i)\quad (i\in I,h\in P^*), \label{e2}\\
  \Psi_{\varepsilon}(E_i)\Psi_{\varepsilon}(F_j)-\Psi_{\varepsilon}(F_j)\Psi_{\varepsilon}(E_i)=\delta_{ij}\frac{q^{j(\alpha_i^{\vee})}-q^{-j(\alpha_i^{\vee})}}{q-q^{-1}}\quad(i,j\in I), \label{e3}\\
  \sum_{s=0}^{1-a_{ij}}(-1)^s\Psi_{\varepsilon}(E_i^{(s)})\Psi_{\varepsilon}(E_j)\Psi_{\varepsilon}(E_i^{(1-a_{ij}-s)})=0 \quad(i,j\in I,\quad i\neq j), \label{e4}\\
  \sum_{s=0}^{1-a_{ij}}(-1)^s\Psi_{\varepsilon}(F_i^{(s)})\Psi_{\varepsilon}(F_j)\Psi_{\varepsilon}(F_i^{(1-a_{ij}-s)})=0 \quad(i,j\in I,\quad i\neq j). \label{e5}
\end{gather}

It is easy to show the equation (\ref{e1}).

Next, we deduce the equation (\ref{e2}). Observe that the weight of $\Psi_{\varepsilon}(E_i)$ (resp. $\Psi_{\varepsilon}(F_i)$) is $\gamma(\breve{\alpha}_i)$ (resp. $-\gamma(\breve{\alpha}_i)$) for each $i\in I$. Thus, we have
\begin{align*}
  q^{j(h)}\Psi_{\varepsilon}(E_i)q^{-j(h)}=q^{\langle j(h),\gamma(\alpha_i)\rangle}\Psi_{\varepsilon}(E_i)=q^{\langle h,\alpha_i\rangle}\Psi_{\varepsilon}(E_i),\\
  q^{j(h)}\Psi_{\varepsilon}(F_i)q^{-j(h)}=q^{-\langle j(h),\gamma(\alpha_i)\rangle}\Psi_{\varepsilon}(F_i)=q^{-\langle h,\alpha_i\rangle}\Psi_{\varepsilon}(F_i)
\end{align*}
by Lemma \ref{2}, which proves the required result.

Next, we prove the equation (\ref{e3}). This relation holds trivially for $i,j\neq0$. Since $T_n(\breve{E}_0)=\breve{E}_n\breve{E}_0-q^{-1}\breve{E}_0\breve{E}_n$ and $T_n(\breve{F_0})=\breve{F}_0\breve{F}_n-q\breve{F}_n\breve{F}_0$, we have $T_n(\breve{E}_0)F_k=F_kT_n(\breve{E}_0)$ and $T_n(F_0)E_k=E_kT_n(F_0)$ for $k\in I, k\neq 0$. Therefore, it suffices to prove the case where $i=j=0$. Since $T_n^{\varepsilon}$ is an $\mathbb{Q}(q)$-algebra automorphism, we get
\[
  T_n^{\varepsilon}(\breve{E}_0)T_n^{\varepsilon}(\breve{F}_0)-T_n^{\varepsilon}(\breve{F}_0)T_n^{\varepsilon}(\breve{E}_0)
  =T_n^{\varepsilon}(\breve{E}_0\breve{F}_0-\breve{F}_0\breve{E}_0)
  =\frac{q^{j(\alpha_0^{\vee})}-q^{-j(\alpha_0^{\vee})}}{q-q^{-1}}.
\]
This gives the desired result for the case $i=j=0$.

Finally, we prove the equations $(\ref{e4})$ and $(\ref{e5})$. Since $T_n(\breve{E}_0)=\breve{E}_n\breve{E}_0-q^{-1}\breve{E}_0\breve{E}_n$ and $T_n(\breve{F_0})=\breve{F}_0\breve{F}_n-q\breve{F}_n\breve{F}_0$, we have
\begin{gather}
  \sum_{s=0}^{1-a_{ij}}(-1)^s\Psi_1(E_i^{(s)})\Psi_1(E_j)\Psi_1(E_i^{(1-a_{ij}-s)})=0, \label{s1}\\
  \sum_{s=0}^{1-a_{ij}}(-1)^s\Psi_1(F_i^{(s)})\Psi_1(F_j)\Psi_1(F_i^{(1-a_{ij}-s)})=0 \label{s2}
\end{gather}
by \cite[Theorem 2.1.1]{Li}. Observe that $\Phi(T_n(E_0))=T_n^{-1}(F_0)$ and $\Phi(T_n(F_0))=T_n^{-1}(E_0)$. By applying $\Phi$ to both sides of (\ref{s1}) and (\ref{s2}), we obtain
\begin{gather*}
  \sum_{s=0}^{1-a_{ij}}(-1)^s\Psi_{-1}(F_i^{(s)})\Psi_{-1}(F_j)\Psi_{-1}(F_i^{(1-a_{ij}-s)})=0,\\
  \sum_{s=0}^{1-a_{ij}}(-1)^s\Psi_{-1}(E_i^{(s)})\Psi_{-1}(E_j)\Psi_{-1}(E_i^{(1-a_{ij}-s)})=0.
\end{gather*}
 These complete the proof.
\end{proof}

Let $\Psi_{\varepsilon}^{\pm}$ denote the restriction of $\Psi_{\varepsilon}$ to $U_q^{\pm}(\widehat{\mathfrak{sl}}_n)$.

\begin{proposition}
  \textit{The map $\Psi_{\varepsilon}$ is injective.}
\end{proposition}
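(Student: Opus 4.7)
The plan is to exploit the triangular decomposition $U_q(\widehat{\mathfrak{sl}}_n)\cong U_q^-(\widehat{\mathfrak{sl}}_n)\otimes U_q^0(\widehat{\mathfrak{sl}}_n)\otimes U_q^+(\widehat{\mathfrak{sl}}_n)$ and reduce injectivity of $\Psi_\varepsilon$ to that of the three restrictions $\Psi_\varepsilon^+,\Psi_\varepsilon^0,\Psi_\varepsilon^-$. The key observation is that $\Psi_\varepsilon^+$ takes values in $U_q^+(\widehat{\mathfrak{sl}}_{n+1})$: from the Lusztig formulas recalled in Section 2.1, $T_n(\breve E_0)=\breve E_n\breve E_0-q^{-1}\breve E_0\breve E_n$ and $T_n^{-1}(\breve E_0)=\breve E_0\breve E_n-q^{-1}\breve E_n\breve E_0$ are polynomials in $\breve E_0$ and $\breve E_n$, hence lie in $U_q^+(\widehat{\mathfrak{sl}}_{n+1})$; the symmetric statement gives $T_n^\varepsilon(\breve F_0)\in U_q^-(\widehat{\mathfrak{sl}}_{n+1})$. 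Combined with $\Psi_\varepsilon^0(q^h)=q^{j(h)}\in U_q^0(\widehat{\mathfrak{sl}}_{n+1})$ and the triangular decomposition of $U_q(\widehat{\mathfrak{sl}}_{n+1})$, this yields the reduction.

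The Cartan piece is immediate: $j:P^*\to\breve P^*$ sends the basis $\alpha_0^\vee,\alpha_1^\vee,\ldots,\alpha_{n-1}^\vee,d$ to the linearly independent family $\breve\alpha_0^\vee+\breve\alpha_n^\vee,\breve\alpha_1^\vee,\ldots,\breve\alpha_{n-1}^\vee,\breve d$, so $\Psi_\varepsilon^0$ is injective.

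For the positive part, the generators $T_n^\varepsilon(\breve E_0),\breve E_1,\ldots,\breve E_{n-1}$ satisfy the $q$-Serre relations of $\widehat{\mathfrak{sl}}_n$ (as verified in the proof that $\Psi_\varepsilon$ is a well-defined homomorphism), so $\Psi_\varepsilon^+$ surjects onto its image, and it remains to rule out extra relations. I would prove this by a specialization argument: both $U_q^+(\widehat{\mathfrak{sl}}_n)_\alpha$ and its image inside $U_q^+(\widehat{\mathfrak{sl}}_{n+1})_{\gamma(\alpha)}$ are finite-dimensional. Putting Lusztig's integral form on both algebras, which $\Psi_\varepsilon^+$ preserves because $T_n^\varepsilon$ does, and specializing at $q=1$ reduces the problem to the classical inclusion of universal enveloping algebras of positive nilradicals determined by the sub-root system $\{\gamma(\alpha_i)\mid i\in I\}\subset\breve\Delta^+$. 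By Lemma~\ref{2} and the equality $\gamma(\alpha_i)^\vee=j(\alpha_i^\vee)$, this sub-root system has the Cartan matrix of $\widehat{\mathfrak{sl}}_n$, so it generates a Kac-Moody subalgebra of $\widehat{\mathfrak{sl}}_{n+1}$ isomorphic to $\widehat{\mathfrak{sl}}_n$ and the classical embedding is injective. A standard flat-deformation argument then lifts this injectivity back to generic $q$. The negative part is symmetric, or follows from the positive part via the antiautomorphism $\Omega$ using $\Omega\circ T_i=T_i\circ\Omega$.

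The main obstacle is the flat-deformation step: it requires checking that $\Psi_\varepsilon^+$ preserves Lusztig's integral form and matching the weight-space dimensions via Kostant partition functions. An alternative that bypasses explicit specialization would be to choose a convex ordering on $\breve\Delta^+$ refining the sub-root system $\{\gamma(\alpha_i)\}$ and identify the image of $\Psi_\varepsilon^+$ with the span of the Lusztig PBW monomials supported on this subsystem, which would give an explicit basis and hence injectivity immediately.
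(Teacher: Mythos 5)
Your overall strategy coincides with the paper's: both reduce to the triangular decomposition $U_q\cong U_q^+\otimes U_q^0\otimes U_q^-$, observe that $\Psi_\varepsilon$ respects it (because $T_n^{\pm1}(\breve E_0)\in U_q^+(\widehat{\mathfrak{sl}}_{n+1})$, $T_n^{\pm1}(\breve F_0)\in U_q^-(\widehat{\mathfrak{sl}}_{n+1})$, and $q^{j(h)}\in U_q^0$), and handle the Cartan factor by noting that $j$ sends a basis of $P^*$ to a linearly independent set. The divergence is in how injectivity of the nilpotent halves is obtained: the paper simply cites Li's Theorem~2.1.1 (the reference \cite{Li}) for $\Psi_1^\pm$ and then transfers to $\varepsilon=-1$ via $\Phi$ and $\Omega$, exactly as you do for the $\pm$ symmetry. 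You instead try to reprove the nilpotent-part injectivity from scratch.

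Your from-scratch argument has a genuine gap, and you actually misplace where it lies. The flat-deformation step is not the obstacle: once one knows that the relevant weight spaces of Lusztig's integral forms are free $\mathbb{Z}[q,q^{-1}]$-modules of finite rank and that $\Psi_\varepsilon^+$ preserves them, injectivity at $q=1$ lifts to $\mathbb{Q}(q)$ by the usual primitive-vector argument. The real obstacle is the sentence ``\ldots so it generates a Kac-Moody subalgebra of $\widehat{\mathfrak{sl}}_{n+1}$ isomorphic to $\widehat{\mathfrak{sl}}_n$ and the classical embedding is injective.'' A set of positive real roots whose pairing matrix equals the $\widehat{\mathfrak{sl}}_n$ Cartan matrix a priori only gives a surjection from the derived algebra of $\widehat{\mathfrak{sl}}_n$ (via Serre's presentation) onto the subalgebra they generate; that this surjection has trivial kernel --- equivalently, that $e_{\breve\alpha_0+\breve\alpha_n},e_{\breve\alpha_1},\ldots,e_{\breve\alpha_{n-1}}$ generate a free copy of $U(\widehat{\mathfrak{sl}}_n^+)$ inside $U(\widehat{\mathfrak{sl}}_{n+1}^+)$ --- is precisely the nontrivial input (classically this is Naito's theorem on regular subalgebras of symmetrizable Kac-Moody algebras; in the quantum setting it is the content of \cite{Li}). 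Since this is exactly what the proposition is trying to establish, invoking it without proof or citation is circular. Your PBW-basis alternative has the same issue in disguise: constructing a convex order on the infinitely many positive real and imaginary roots of $\widehat{\mathfrak{sl}}_{n+1}$ for which the image roots form a PBW-compatible sub-block would again amount to a proof of the regular-subalgebra theorem.
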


\begin{proof}
 By \cite[Theorem 2.1.1]{Li}, $\Psi_1^{\pm}$ are injective. Since $\Psi_{-1}=\Omega\circ\Phi\circ\Psi_{1}$ on $U_q^{\pm}(\widehat{\mathfrak{sl}}_n)$, $\Psi_{-1}$ also gives injections $U_q^+(\widehat{\mathfrak{sl}}_n)\hookrightarrow U_q^+(\widehat{\mathfrak{sl}}_{n+1})$ and $U_q^-(\widehat{\mathfrak{sl}}_n)\hookrightarrow U_q^-(\widehat{\mathfrak{sl}}_{n+1})$.  Additionally, the restriction of $\Psi_{\varepsilon}$ induces an injection $U_q^0(\widehat{\mathfrak{sl}}_n)\hookrightarrow U_q^0(\widehat{\mathfrak{sl}}_{n+1})$. Therefore, for each $\varepsilon=\pm1$, the map 
  \[
  U_q^+(\widehat{\mathfrak{sl}}_n)\otimes U_q^0(\widehat{\mathfrak{sl}}_n)\otimes U_q^-(\widehat{\mathfrak{sl}}_n)\to U_q^+(\widehat{\mathfrak{sl}}_{n+1})\otimes U_q^0(\widehat{\mathfrak{sl}}_{n+1})\otimes U_q^-(\widehat{\mathfrak{sl}}_{n+1})
  \]
 obtained by these injections is also injective, which proves the desired result.
\end{proof}

Note that the restriction of the map $\Psi_{\varepsilon}$ to $U_q^{\prime}(\widehat{\mathfrak{sl}}_n)$ induces the injective algebra homomorphism
\[
\Psi_{\varepsilon}^{\prime}:U_q^{\prime}(\widehat{\mathfrak{sl}}_n)\to U_q^{\prime}(\widehat{\mathfrak{sl}}_{n+1})
\]
and the restriction to $U_q^-(\widehat{\mathfrak{sl}}_n)$ induces the injective algebra homomorphism
\[
\Psi_{\varepsilon}^-:U_q^-(\widehat{\mathfrak{sl}}_n)\to U_q^-(\widehat{\mathfrak{sl}}_{n+1}).
\]

\subsection{Branching rule for $V(\breve{\varpi}_i)$}\label{m1sec}

For a ring homomorphism $f:A\to B$ and a $B$-module $M$, let $f^*M$ denote the $A$-module whose underlying set is $M$, with the action defined by $x\cdot u=f(x)\cdot u$ for $x\in A$ and $u\in M$. In this subsection, we study the structure of the $U_q(\widehat{\mathfrak{sl}}_n)$-module $\Psi_{\varepsilon}^*V(\breve{\varpi}_i)$.

\begin{proposition}
\textit{Let $M=\bigoplus_{\lambda\in \breve{P}}M_{\lambda}$ be an integrable $U_q(\widehat{\mathfrak{sl}}_{n+1})$-module. Then $\Psi_{\varepsilon}^*M$ is also integrable as a $U_q(\widehat{\mathfrak{sl}}_n)$-module.}
\end{proposition}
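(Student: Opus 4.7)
The plan is to verify directly the two conditions in the definition of integrability for $\Psi_\varepsilon^* M$. The weight space decomposition will be obtained by pushing forward the given $U_q(\widehat{\mathfrak{sl}}_{n+1})$-weight decomposition of $M$ along the transpose $j^* : \breve{P} \to P$ defined by $\langle h, j^*(\nu) \rangle = \langle j(h), \nu \rangle$. A direct computation from the definition of $j$ produces $j^*(\breve{\Lambda}_0) = j^*(\breve{\Lambda}_n) = \Lambda_0$, $j^*(\breve{\Lambda}_i) = \Lambda_i$ for $i \in I_0$, and $j^*(\breve{\delta}) = \delta$, so in particular $j^*(\breve{P}) \subset P$. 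If $u \in M_\nu$ for some $\nu \in \breve{P}$, then $\Psi_\varepsilon(q^h) u = q^{j(h)} u = q^{\langle h, j^*(\nu)\rangle} u$ for every $h \in P^*$, so $u$ lies in the $j^*(\nu)$-weight space of $\Psi_\varepsilon^* M$; regrouping by the fibers of $j^*$ then yields the decomposition $\Psi_\varepsilon^* M = \bigoplus_{\mu \in P} \bigl(\bigoplus_{\nu \in (j^*)^{-1}(\mu)} M_\nu\bigr)$.

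For the local finiteness condition, fix $u \in M$ and $i \in I = \{0, 1, \ldots, n-1\}$. When $i \in \{1, \ldots, n-1\}$, the identities $\Psi_\varepsilon(E_i) = \breve{E}_i$ and $\Psi_\varepsilon(F_i) = \breve{F}_i$ reduce the statement immediately to the integrability of $M$ as a $U_q(\widehat{\mathfrak{sl}}_{n+1})$-module. The only substantive case is $i = 0$, where $\Psi_\varepsilon(E_0) = T_n^\varepsilon(\breve{E}_0)$ and $\Psi_\varepsilon(F_0) = T_n^\varepsilon(\breve{F}_0)$. The idea is to exploit the intertwining property $T_n(xv) = T_n(x) T_n(v)$ recalled in Section~2.1, which holds simultaneously for the algebra automorphism $T_n$ of $U_q(\widehat{\mathfrak{sl}}_{n+1})$ and its action on $M$. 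This relation rewrites as $T_n^\varepsilon(x) v = T_n^\varepsilon\bigl(x T_n^{-\varepsilon}(v)\bigr)$, and hence
\[
\Psi_\varepsilon(E_0)^{(m)} u = T_n^\varepsilon\bigl(\breve{E}_0^{(m)}\bigr) u = T_n^\varepsilon\bigl(\breve{E}_0^{(m)} T_n^{-\varepsilon}(u)\bigr),
\]
with the analogous formula for $F_0$. Applying the integrability of $M$ to the single vector $T_n^{-\varepsilon}(u)$ furnishes an integer $N$ such that both $\breve{E}_0^{(m)} T_n^{-\varepsilon}(u)$ and $\breve{F}_0^{(m)} T_n^{-\varepsilon}(u)$ vanish whenever $m \geq N$, yielding the required local nilpotency of $\Psi_\varepsilon(E_0)$ and $\Psi_\varepsilon(F_0)$ on $u$.

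There is no real obstacle in this argument; the $i = 0$ step is the only one requiring thought, and even there the automorphism trick of passing to the twisted vector $T_n^{-\varepsilon}(u)$ reduces the question to the original integrability hypothesis. Everything else is routine bookkeeping with the explicit formulas for $\Psi_\varepsilon$ and $j$.
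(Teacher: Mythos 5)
Your proof is correct and follows essentially the same route as the paper's: for the weight space decomposition you transport weights along $j^*$, and for local nilpotency the only nontrivial index $i=0$ is handled by the intertwining $T_n^{\varepsilon}(\breve{E}_0^{(m)})u = T_n^{\varepsilon}(\breve{E}_0^{(m)}T_n^{-\varepsilon}(u))$, reducing to integrability of $M$ at the vector $T_n^{-\varepsilon}(u)$. The only cosmetic difference is that you explicitly regroup the weight spaces by the fibers of $j^*$, which is a slightly more careful way of stating what the paper writes as $\bigoplus_{\lambda\in\breve{P}}M_{j^*(\lambda)}$.
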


\begin{proof}
  For $\lambda\in \breve{P}$ and $u\in M_{\lambda}$, we have
\[
\Psi_{\varepsilon}(q^h)u=q^{j(h)}u=q^{\langle j(h), \lambda\rangle} u=q^{\langle h, j^*(\lambda)\rangle} u
\]
 for all $h\in P^*$.
 Hence, $\Psi_{\varepsilon}^*M$ has the weight space decomposition $\Psi_{\varepsilon}^*M=\bigoplus_{\lambda\in \breve{P}}M_{j^*(\lambda)}$.

Next, we take $u\in \Psi_{\varepsilon}^*M$ and prove that $E_i^{(m)}u=F_i^{(m)}u=0$ for sufficiently large $m$. For $i\in I_0$ this follows immediately from the integrability of $M$. For $i=0$, we have 
\[
E_0^{(m)}u=T_n^{\varepsilon}\left(\breve{E}_0^{(m)}\right)u=T_n^{\varepsilon}\left(\breve{E}_0^{(m)}T_n^{-\varepsilon}(u)\right)
\]
and
\[
F_0^{(m)}u=T_n^{\varepsilon}\left(\breve{F}_0^{(m)}\right)u=T_n^{\varepsilon}\left(\breve{F}_0^{(m)}T_n^{-\varepsilon}(u)\right).
\] Thus, by the integrability of $M$, we conclude that $E_0^{(m)}u = F_0^{(m)}u = 0$ for sufficiently large $m$.
\end{proof}

Let $M$ be an $U_q(\widehat{\mathfrak{sl}}_{n+1})$-module and let $u\in M$ be a weight vector. To avoid confusion, if $u$ is an extremal vector as an element of $M$, we say that $u$ is $\widehat{\mathfrak{sl}}_{n+1}$-extremal. Similarly, if $u$ is an extremal vector as an element of $\Psi_{\varepsilon}^*M$, we say $u$ is $\widehat{\mathfrak{sl}}_n$-extremal.

\begin{proposition}\label{ex}
  \textit{Let $M$ be an integrable $U_q(\widehat{\mathfrak{sl}}_{n+1})$-module and let $u\in M$ be a $\widehat{\mathfrak{sl}}_{n+1}$-extremal vector of weight $\lambda\in \breve{P}$. Then, $u\in \Psi_{\varepsilon}^*M$ is a $\widehat{\mathfrak{sl}}_n$-extremal vector of weight $j^*(\lambda)$.}
\end{proposition}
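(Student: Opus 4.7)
First, the weight of $u$ in $\Psi_{\varepsilon}^{*}M$ is $j^{*}(\lambda)$: for $h \in P^{*}$, $\Psi_{\varepsilon}(q^{h})u = q^{j(h)}u = q^{\langle j(h),\lambda\rangle}u = q^{\langle h, j^{*}(\lambda)\rangle}u$. My plan for the extremality claim is to verify the definition directly: for every sequence $i_{1},\ldots,i_{p} \in I = \{0,\ldots,n-1\}$, the vector $S_{i_{1}}\cdots S_{i_{p}}u$ in $\Psi_{\varepsilon}^{*}M$ is $i$-extremal for all $i \in I$.

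The first key ingredient is a transfer lemma: every $\widehat{\mathfrak{sl}}_{n+1}$-extremal vector $\breve{v} \in M$ is $i$-extremal in $\Psi_{\varepsilon}^{*}M$ for each $i \in I$. For $i \in I_{0}$ this is immediate because $\Psi_{\varepsilon}(E_{i}) = \breve{E}_{i}$ and $\Psi_{\varepsilon}(F_{i}) = \breve{F}_{i}$. For $i = 0$, write $\Psi_{\varepsilon}(E_{0})\breve{v} = \breve{T}_{n}^{\varepsilon}(\breve{E}_{0}\breve{T}_{n}^{-\varepsilon}(\breve{v}))$; since $\breve{T}_{n}^{-\varepsilon}(\breve{v})$ is again $\widehat{\mathfrak{sl}}_{n+1}$-extremal, it is annihilated by either $\breve{E}_{0}$ or $\breve{F}_{0}$, yielding $\Psi_{\varepsilon}(E_{0})\breve{v} = 0$ or $\Psi_{\varepsilon}(F_{0})\breve{v} = 0$.

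In view of the transfer lemma it suffices to show inductively on $p$ that $S_{i_{1}}\cdots S_{i_{p}}u$ is a nonzero scalar multiple of some product $\breve{S}_{\breve{j}_{1}}\cdots\breve{S}_{\breve{j}_{q}}u$ with $\breve{j}_{1},\ldots,\breve{j}_{q} \in \breve{I}$, since every such product is $\widehat{\mathfrak{sl}}_{n+1}$-extremal. Writing $v = c\,\breve{S}_{\breve{j}_{1}}\cdots\breve{S}_{\breve{j}_{q}}u$ and letting $\mu$ denote its $\breve{P}$-weight in $M$, the inductive step splits into cases. If $i_{p+1} \in I_{0}$, then $\Psi_{\varepsilon}(E_{i_{p+1}})$ and $\Psi_{\varepsilon}(F_{i_{p+1}})$ equal $\breve{E}_{i_{p+1}}$ and $\breve{F}_{i_{p+1}}$, and $\langle \alpha_{i_{p+1}}^{\vee},j^{*}(\nu)\rangle = \langle \breve{\alpha}_{i_{p+1}}^{\vee},\nu\rangle$ for all $\nu \in \breve{P}$, so $S_{i_{p+1}}v = c\,\breve{S}_{i_{p+1}}\breve{S}_{\breve{j}_{1}}\cdots\breve{S}_{\breve{j}_{q}}u$. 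If $i_{p+1} = 0$, assume $\Psi_{\varepsilon}(E_{0})v = 0$ (the other case is symmetric) and set $k = \langle \alpha_{0}^{\vee},j^{*}(\mu)\rangle = \langle \breve{\alpha}_{0}^{\vee},\breve{s}_{n}\mu\rangle$; then
\[
S_{0}v = \Psi_{\varepsilon}(F_{0})^{(k)}v = \breve{T}_{n}^{\varepsilon}\bigl(\breve{F}_{0}^{(k)}\breve{T}_{n}^{-\varepsilon}(v)\bigr).
\]
Since $v$ is a scalar multiple of an $\widehat{\mathfrak{sl}}_{n+1}$-extremal vector, formulas \eqref{T1}--\eqref{T2} show $\breve{T}_{n}^{-\varepsilon}(v)$ is a nonzero scalar times $\breve{S}_{n}v$; hence $\breve{F}_{0}^{(k)}\breve{T}_{n}^{-\varepsilon}(v)$ is a scalar multiple of $\breve{S}_{0}\breve{S}_{n}v$, which is again $\widehat{\mathfrak{sl}}_{n+1}$-extremal, and applying \eqref{T1}--\eqref{T2} once more gives that $\breve{T}_{n}^{\varepsilon}$ of this is a scalar multiple of $\breve{S}_{n}\breve{S}_{0}\breve{S}_{n}v = \breve{S}_{n}\breve{S}_{0}\breve{S}_{n}\breve{S}_{\breve{j}_{1}}\cdots\breve{S}_{\breve{j}_{q}}u$, as required.

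Combining the transfer lemma with the induction yields $\widehat{\mathfrak{sl}}_{n}$-extremality of $u$. The main obstacle is the $i_{p+1} = 0$ inductive step: one must carefully track the scalars arising from \eqref{T1}--\eqref{T2} and verify that each intermediate vector is $\widehat{\mathfrak{sl}}_{n+1}$-extremal so that each subsequent application of an $\breve{S}$-operator is defined.
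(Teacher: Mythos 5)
Your proposal is correct and follows essentially the same route as the paper's own proof: you first establish the ``transfer'' observation that $\widehat{\mathfrak{sl}}_{n+1}$-extremal vectors remain $i$-extremal in $\Psi_{\varepsilon}^{*}M$ (for $i=0$ via conjugation by $T_n^{\varepsilon}$), and then show that each $S_i$ applied to an $\widehat{\mathfrak{sl}}_{n+1}$-extremal vector yields, up to a power of $-q$, the vector $\breve{S}_n\breve{S}_0\breve{S}_n$ applied to it, which is again $\widehat{\mathfrak{sl}}_{n+1}$-extremal; the paper packages this as two lemmas rather than an explicit induction, but the content is the same.
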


\begin{proof}
  It suffices to show the following two properties:
  \begin{enumerate}
  \item If $v\in M$ is a $\widehat{\mathfrak{sl}}_{n+1}$-extremal vector of weight $\mu\in \breve{P}$, then $v$ is $i$-extremal for all $i\in I$ as an element of $\Psi_{\varepsilon}^*M$;
    \item For all $\widehat{\mathfrak{sl}}_{n+1}$-extremal vectors $v\in M$ of weight $\mu\in \breve{P}$, 
    \[
    S_iv=\left\{
\begin{array}{ll}
F_i^{(\langle \alpha_i^{\vee},j^*(\mu)\rangle)}v & (\langle \alpha_i^{\vee},j^*(\mu)\rangle\ge 0) \\
E_i^{(-\langle \alpha_i^{\vee},j^*(\mu)\rangle)}v & (\langle \alpha_i^{\vee},j^*(\mu)\rangle\le 0)
\end{array}
\right.
\]
is also a $\widehat{\mathfrak{sl}}_{n+1}$-extremal vector.
  \end{enumerate}

Let us start by proving the first assertion. If $i\neq 0$, the assertion is clear since $\langle \alpha_i^{\vee}, j^*(\mu)\rangle=\langle \breve{\alpha}_i^{\vee}, \mu\rangle$, and $\Phi_{\varepsilon}(E_i)=\breve{E}_i$. Now, let us assume that $i=0$. Then we have $\langle \alpha_0^{\vee}, j^*(\mu)\rangle=\langle j(\alpha_0^{\vee}), \mu\rangle=\langle \breve{\alpha}_0^{\vee}, s_n(\mu)\rangle$. If $\langle \alpha_0^{\vee}, j^*(\mu)\rangle=\langle \breve{\alpha}_0^{\vee}, s_n(\mu)\rangle\ge 0$, then we have 
\[
E_0v=T_n^{\varepsilon}(E_0)(v)=T_n^{\varepsilon}(\breve{E}_0T_n^{-\varepsilon}(v))=0
\]
since $T_n^{-\varepsilon}(v)$ is a $\widehat{\mathfrak{sl}}_{n+1}$-extremal vector of weight $s_n(\mu)$. Similarly, we can deduce that $F_0v=0$ when $\langle \alpha_0^{\vee}, j^*(\mu)\rangle=\langle \breve{\alpha}_0^{\vee}, s_n(\mu)\rangle\le 0$.

Next, we prove the second assertion. If $i\neq 0$, then using $\langle \alpha_i^{\vee},j^*(\mu)\rangle=\langle \breve{\alpha}_i^{\vee}, \mu\rangle, \Psi_{\varepsilon}(E_i)=\breve{E}_i$, and $\Psi_{\varepsilon}(F_i)=\breve{F}_i$, we have
\begin{align*}
  S_iv&=\left\{
\begin{array}{ll}
F_i^{(\langle \alpha_i^{\vee},j^*(\mu)\rangle}v & (\langle \alpha_i^{\vee},j^*(\mu)\rangle\ge 0) \\
E_i^{(-\langle \alpha_i^{\vee},j^*(\mu)\rangle)}v & (\langle \alpha_i^{\vee},j^*(\mu)\rangle\le 0)
\end{array}
\right.\\
&=\left\{
\begin{array}{ll}
\breve{F}_i^{(\langle \breve{\alpha}_i^{\vee},\mu)\rangle}v & (\langle \alpha_i^{\vee},j^*(\mu)\rangle\ge 0) \\
\breve{E}_i^{(-\langle \breve{\alpha}_i^{\vee},\mu)\rangle)}v & (\langle \alpha_i^{\vee},j^*(\mu)\rangle\le 0)
\end{array}
\right.\\
&=\breve{S}_iv,
\end{align*}
which is a $\widehat{\mathfrak{sl}}_{n+1}$-extremal vector. Now, let us assume that $i=0$. If $\langle \alpha_0^{\vee}, j^*(\mu)\rangle=\langle \breve{\alpha}_0^{\vee}, s_n(\mu)\rangle\ge 0$, we have
\begin{align*}
  F_0^{(\langle \alpha_0^{\vee}, j^*(\mu) \rangle)}v
  &=T_n^{\varepsilon}\left(\breve{F}_0^{(\langle \alpha_0^{\vee}, j^*(\mu) \rangle)}\right)(v)=T_n^{\varepsilon}\left(\breve{F}_0^{(\langle \alpha_0^{\vee}, j^*(\mu)\rangle)}T_n^{-\varepsilon}(v)\right)\\
  &=T_n^{\varepsilon}\left(\breve{F}_0^{(\langle \breve{\alpha}_0^{\vee}, s_n(\mu)\rangle)}T_n^{-\varepsilon}(v)\right)=T_n^{\varepsilon}\left(\breve{S}_0T_n^{-\varepsilon}(v)\right).
\end{align*}
By (\ref{T1}) and (\ref{T2}), $F_0^{(\langle \alpha_0^{\vee}, j^*(\mu) \rangle)}v=(-q)^a\breve{S}_n\breve{S}_0\breve{S}_nv$ for some $a\in\mathbb{Z}$.  Hence, $F_0^{(\langle \alpha_0^{\vee}, j^*(\mu) \rangle)}v$ is a $\widehat{\mathfrak{sl}}_{n+1}$-extremal vector, as desired. In the case $\langle \alpha_0^{\vee}, j^*(\mu)\rangle\le 0$, we can show that $E_0^{(-\langle \alpha_0^{\vee},j^*(\mu)\rangle)}v$ is a $\widehat{\mathfrak{sl}}_{n+1}$-extremal vector in the same manner. These complete the proof.
\end{proof}

\begin{lemma}
  \textit{There is a group homomorphism $\omega:W\to \breve{W}$ such that
  \[
  \omega(s_i)=s_i\quad(i\in I_0),\quad\omega(s_0)=s_ns_0s_n=s_{\breve{\alpha}_0+\breve{\alpha}_n}.
  \]}
\end{lemma}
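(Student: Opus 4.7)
The plan is to verify that the assignment $\omega(s_i) = s_i$ for $i \in I_0$ and $\omega(s_0) = \sigma := s_n s_0 s_n$ respects every defining Coxeter relation of $W$ (of type $A_{n-1}^{(1)}$). For $n \ge 3$ these relations are $s_i^2 = 1$, $(s_i s_j)^3 = 1$ for $\{i,j\}$ adjacent in the $A_{n-1}^{(1)}$-cycle (including the two pairs $\{0,1\}$ and $\{0, n-1\}$), and $(s_i s_j)^2 = 1$ for non-adjacent pairs; for $n = 2$ there are only the involution relations. Relations involving solely $s_1, \ldots, s_{n-1}$ are inherited verbatim, since the Dynkin subdiagram indexed by $\{1, \ldots, n-1\}$ is the same in both $I$ and $\breve{I}$. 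Thus, all the work concerns the relations involving $s_0$.

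First I would dispose of the easy checks. Immediately $\sigma^2 = s_n s_0^2 s_n = 1$. For $2 \le i \le n-2$ (vacuous if $n = 3$), both $s_0$ and $s_n$ commute with $s_i$ in $\breve{W}$ since $0, i, n$ are pairwise non-adjacent in $\breve{I}$, so $\sigma$ commutes with $s_i$, yielding $(\sigma s_i)^2 = 1$. For $i = 1$ (requiring $n \ge 3$), $s_n$ commutes with $s_1$ in $\breve{W}$, so $\sigma s_1 = s_n (s_0 s_1) s_n$; then $(\sigma s_1)^3 = s_n (s_0 s_1)^3 s_n = 1$ by the braid relation $(s_0 s_1)^3 = 1$ in $\breve{W}$. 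For $n = 2$ no further relations remain.

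The main obstacle is the braid identity $\sigma s_{n-1} \sigma = s_{n-1} \sigma s_{n-1}$ for $n \ge 3$ (equivalent to $(\sigma s_{n-1})^3 = 1$), since $s_n$ does not commute with $s_{n-1}$. My plan is to expand $\sigma s_{n-1} \sigma = s_n s_0 s_n s_{n-1} s_n s_0 s_n$ and reduce it step by step using three identities in $\breve{W}$: the braid relations $s_n s_{n-1} s_n = s_{n-1} s_n s_{n-1}$ and $s_0 s_n s_0 = s_n s_0 s_n$, together with the commutation $s_0 s_{n-1} = s_{n-1} s_0$ (valid since $0$ and $n-1$ are non-adjacent in $\breve{I}$ when $n \ge 3$). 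Concretely, apply the first braid relation to the middle three letters, commute $s_0$ past $s_{n-1}$ on both sides so that a block $s_0 s_n s_0$ appears in the center, convert this block to $s_n s_0 s_n$, and finish with one more application of the braid relation on $s_n s_{n-1} s_n$; the result collapses to $s_{n-1} \sigma s_{n-1}$. As a conceptual sanity check, $\sigma$ is the reflection in the real root $\gamma := s_n(\breve{\alpha}_0) = \breve{\alpha}_0 + \breve{\alpha}_n$, and in the simply-laced setting $(\gamma, \breve{\alpha}_{n-1}) = (\breve{\alpha}_0, \breve{\alpha}_{n-1}) + (\breve{\alpha}_n, \breve{\alpha}_{n-1}) = 0 + (-1) = -1$, forcing $\sigma s_{n-1}$ to have order $3$; the same inner-product calculation confirms the orders predicted by $\omega$ in every other case. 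Having verified all defining relations, $\omega$ extends uniquely to the desired group homomorphism $W \to \breve{W}$.
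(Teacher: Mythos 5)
Your proof is correct, and it takes a genuinely different route from the paper's. The paper verifies the braid relations by invoking the conjugation formula $w s_{\alpha} w^{-1} = s_{w(\alpha)}$ and showing that both $\omega(s_1)\omega(s_0)\omega(s_1)$ and $\omega(s_0)\omega(s_1)\omega(s_0)$ equal the reflection in the real root $\breve{\alpha}_0 + \breve{\alpha}_1 + \breve{\alpha}_n$ (and notes the $\{0,n-1\}$ case is symmetric). You instead carry out an elementary word-rewriting argument inside $\breve{W}$: for $\{0,1\}$ you conjugate the $\breve{W}$-braid relation $(s_0 s_1)^3=1$ by $s_n$ (which commutes with $s_1$ when $n\ge 3$), and for $\{0,n-1\}$ you reduce $s_n s_0 s_n\, s_{n-1}\, s_n s_0 s_n$ to $s_{n-1}\, s_n s_0 s_n\, s_{n-1}$ through a sequence of braid moves on $\{n-1,n\}$ and $\{0,n\}$ together with the commutation $s_0 s_{n-1}=s_{n-1}s_0$. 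What the paper's method buys is brevity and a conceptual explanation of where the answer comes from (both sides are literally the same reflection); what yours buys is a purely Coxeter-theoretic verification that avoids any appeal to the root picture. You are also more thorough about the low-rank boundary ($n=2$, where no braid relation is present, and $n=3$, where the commutation checks are vacuous) and you even include the paper's conceptual argument as your closing sanity check. One small slip in the prose: the final collapse of $s_n s_{n-1} s_n\, s_0\, s_n s_{n-1} s_n$ to $s_{n-1}\sigma s_{n-1}$ requires applying the braid relation $s_n s_{n-1} s_n = s_{n-1} s_n s_{n-1}$ at \emph{both} ends and then commuting $s_0$ past $s_{n-1}$ once more, rather than "one more application" as you phrase it; the underlying computation is correct, only the count of moves is off.
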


\begin{proof}
  The only non-trivial relations are $\omega(s_1)\omega(s_0)\omega(s_1)=\omega(s_0)\omega(s_1)\omega(s_0)$ and $\omega(s_{n-1})\omega(s_0)\omega(s_{n-1})=\omega(s_0)\omega(s_{n-1})\omega(s_0)$ when $n>2$. We have
\begin{gather*}
  \omega(s_1)\omega(s_0)\omega(s_1)=s_1s_{\breve{\alpha}_0+\breve{\alpha}_n}s_1=s_{\breve{\alpha}_0+\breve{\alpha}_1+\breve{\alpha}_n},\\
  \omega(s_0)\omega(s_1)\omega(s_0)=s_{\breve{\alpha}_0+\breve{\alpha}_n}s_1s_{\breve{\alpha}_0+\breve{\alpha}_n}=s_{\breve{\alpha}_0+\breve{\alpha}_1+\breve{\alpha}_n}.
\end{gather*}

  Thus, we obtain
  $\omega(s_1)\omega(s_0)\omega(s_1)=\omega(s_0)\omega(s_1)\omega(s_0)$ as required. The equation $\omega(s_{n-1})\omega(s_0)\omega(s_{n-1})=\omega(s_0)\omega(s_{n-1})\omega(s_0)$ can be proved in a similar manner.
\end{proof}

Let $M$ be an integrable $U_q(\widehat{\mathfrak{sl}}_{n+1})$-module and let $u\in M$ be a $\widehat{\mathfrak{sl}}_{n+1}$-extremal vector. By the proof of Lemma \ref{ex}, for all $i\in I$, we have $S_iu=(-q)^a\breve{S}_{\omega(s_i)}u$ for some $a\in \mathbb{Z}$. Hence, we have the following lemma.

\begin{lemma}\label{wact}
  \textit{Let $M$ be an integrable $U_q(\widehat{\mathfrak{sl}}_{n+1})$-module. For a $\widehat{\mathfrak{sl}}_{n+1}$-extremal vector $u$ of weight $\mu$ and $x\in W$, we have $S_xu=(-q)^a\breve{S}_{\omega(x)}u$ for some $a\in \mathbb{Z}$. \qed}
\end{lemma}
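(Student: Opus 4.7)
The plan is to proceed by induction on the length $\ell(x)$ of $x$ in the Weyl group $W$ of $\widehat{\mathfrak{sl}}_n$. The base case of simple reflections is essentially already in hand: the remark immediately preceding the statement of the lemma records that $S_iu=(-q)^{a_i}\breve{S}_{\omega(s_i)}u$ for some $a_i\in\mathbb{Z}$ whenever $u$ is $\widehat{\mathfrak{sl}}_{n+1}$-extremal, which is extracted from the proof of Proposition \ref{ex} (distinguishing $i\neq 0$, where $\omega(s_i)=s_i$, from $i=0$, where $\omega(s_0)=s_ns_0s_n$ and the $T_n^{\varepsilon}$-twist produces a factor of $(-q)^a$).

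For the inductive step, I fix a reduced expression $x=s_{i_1}s_{i_2}\cdots s_{i_p}$ and write $x=s_{i_1}x'$ with $x'=s_{i_2}\cdots s_{i_p}$, so by the very definition of $S_x$ we have $S_xu=S_{i_1}(S_{x'}u)$. The inductive hypothesis gives $S_{x'}u=(-q)^{a'}\breve{S}_{\omega(x')}u$ for some $a'\in\mathbb{Z}$. The crucial point is that the vector $v:=\breve{S}_{\omega(x')}u$ is again $\widehat{\mathfrak{sl}}_{n+1}$-extremal (since the $\breve{S}$-operators preserve $\widehat{\mathfrak{sl}}_{n+1}$-extremality), and hence also $\widehat{\mathfrak{sl}}_n$-extremal by Proposition \ref{ex}, so the base case applies to $v$ and yields $S_{i_1}v=(-q)^{a_1}\breve{S}_{\omega(s_{i_1})}v$ for some $a_1\in\mathbb{Z}$.

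Combining these, I obtain
\[
S_xu=(-q)^{a'+a_1}\,\breve{S}_{\omega(s_{i_1})}\breve{S}_{\omega(x')}u.
\]
The last ingredient is to collapse $\breve{S}_{\omega(s_{i_1})}\breve{S}_{\omega(x')}u$ into $\breve{S}_{\omega(x)}u$: since $\omega:W\to\breve{W}$ is a group homomorphism, $\omega(s_{i_1})\omega(x')=\omega(x)$, and since the $\breve{S}$-action on an $\widehat{\mathfrak{sl}}_{n+1}$-extremal vector depends only on the underlying element of $\breve{W}$ (not on the chosen factorization into simple reflections), the two compose to $\breve{S}_{\omega(x)}u$. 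Setting $a=a'+a_1$ gives the claim.

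No part of this is genuinely difficult; the only subtle point worth flagging is that $\omega(s_0)=s_ns_0s_n$ is not a simple reflection of $\breve{W}$, so one must be careful to invoke the well-definedness of $\breve{S}_y$ for general $y\in\breve{W}$ (applied to an extremal vector) rather than tacitly treat $\breve{S}_{\omega(s_i)}$ as a single Kashiwara operator. Once that is acknowledged, the argument is a clean induction driven entirely by the homomorphism property of $\omega$ and the preservation of extremality under $\breve{S}$.
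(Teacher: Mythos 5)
Your proof is correct and matches the paper's approach: the paper establishes the single-reflection case $S_iu=(-q)^a\breve{S}_{\omega(s_i)}u$ in the proof of Proposition \ref{ex} and then states the lemma with the induction left implicit, which you make explicit. The points you flag—extremality is preserved under $\breve{S}$, $\breve{S}_y$ is well-defined on extremal vectors for arbitrary $y\in\breve{W}$, and $\omega$ is a group homomorphism—are precisely what justifies collapsing $\breve{S}_{\omega(s_{i_1})}\breve{S}_{\omega(x')}$ into $\breve{S}_{\omega(x)}$, so there is no gap.
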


\begin{lemma}\label{tal}
  \textit{For $i\in I_0$, we have $\omega(t_{\alpha_i^{\vee}})=t_{\breve{\alpha}_i^{\vee}}$.}
\end{lemma}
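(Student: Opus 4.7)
The plan is to proceed in two steps: first show that $\omega(t_{\alpha_i^{\vee}})$ lies in the translation lattice $\breve{Q}_0^{\vee} \subset \breve{W}$, and then identify the translation by matching weights via Lemma \ref{wact}.

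For the first step, I consider the projections $\pi_W : W \twoheadrightarrow W_0$ and $\pi_{\breve{W}} : \breve{W} \twoheadrightarrow \breve{W}_0$ (whose kernels are $Q_0^{\vee}$ and $\breve{Q}_0^{\vee}$, respectively), together with the natural inclusion $\iota : W_0 \hookrightarrow \breve{W}_0$ sending $s_i \mapsto s_i$ for $i \in I_0$. I verify that the two group homomorphisms $\pi_{\breve{W}} \circ \omega$ and $\iota \circ \pi_W$ from $W$ to $\breve{W}_0$ agree on the Coxeter generators. They trivially agree on $s_i$ for $i \in I_0$, while for $s_0$ the right-hand side gives $\iota(s_{\theta})$ with $\theta = \alpha_1 + \cdots + \alpha_{n-1}$, and the left-hand side gives $\pi_{\breve{W}}(s_n s_0 s_n) = s_n s_{\breve{\theta}} s_n = s_{s_n(\breve{\theta})} = s_{\breve{\alpha}_1 + \cdots + \breve{\alpha}_{n-1}}$, where $\breve{\theta} = \breve{\alpha}_1 + \cdots + \breve{\alpha}_n$. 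These match, so $\pi_{\breve{W}} \circ \omega = \iota \circ \pi_W$, and hence $\omega$ restricts to a map $Q_0^{\vee} \to \breve{Q}_0^{\vee}$; in particular $\omega(t_{\alpha_i^{\vee}}) = t_{\eta}$ for some $\eta \in \breve{Q}_0^{\vee}$.

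For the second step, I extract from Lemma \ref{wact} the weight identity
\[
j^*(\omega(x)\mu) = x \cdot j^*(\mu) \qquad (x \in W,\; \mu \in \breve{P}),
\]
by noting that $S_x u_{\mu}$ has $\widehat{\mathfrak{sl}}_n$-weight $x \cdot j^*(\mu)$ in $\Psi_{\varepsilon}^{*} V(\mu)$ (since $u_{\mu}$ has $\widehat{\mathfrak{sl}}_n$-weight $j^*(\mu)$ by Proposition \ref{ex}), whereas by Lemma \ref{wact} it equals $(-q)^a \breve{S}_{\omega(x)} u_{\mu}$, a vector of $\widehat{\mathfrak{sl}}_{n+1}$-weight $\omega(x)\mu$ and therefore of $\widehat{\mathfrak{sl}}_n$-weight $j^*(\omega(x)\mu)$. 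Taking $x = t_{\alpha_i^{\vee}}$ and $\mu = \breve{\varpi}_j$ for each $j = 1, \ldots, n$, using $j^*(\breve{\varpi}_j) = \varpi_j$ for $j \in I_0$ and $j^*(\breve{\varpi}_n) = 0$, the equation $j^*(t_{\eta}(\breve{\varpi}_j)) = t_{\alpha_i^{\vee}}(j^*(\breve{\varpi}_j))$, compared in the coefficient of $\delta$, yields $\langle \breve{\varpi}_j, \eta \rangle = \delta_{ij}$ for every $j$, pinning down $\eta = \breve{\alpha}_i^{\vee}$.

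The main obstacle is the weight identity in the second step: one must keep in mind that $u_{\mu}$ carries two different weights on the $\widehat{\mathfrak{sl}}_{n+1}$-side and the $\widehat{\mathfrak{sl}}_n$-side (related by $j^*$), and that the scalar $(-q)^a$ from Lemma \ref{wact} is irrelevant for the weight comparison. The remaining pieces — notably the calculation $s_n(\breve{\theta}) = \breve{\alpha}_1 + \cdots + \breve{\alpha}_{n-1}$ and the values of $j^*$ on the level-zero fundamental weights — are routine consequences of the definition of $j$.
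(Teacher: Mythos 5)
Your proof is correct, and it takes a genuinely different route from the paper. The paper works entirely inside the Weyl groups: it writes $t_{\theta^\vee}=r_0 r_\theta$, computes $\omega(t_{\theta^\vee})=r_n t_{\breve\theta^\vee} r_n=t_{\breve\alpha_1^\vee+\cdots+\breve\alpha_{n-1}^\vee}$ by expanding $r_\theta$ as a word in the $r_j$'s, and then conjugates by an element $w\in W_0$ with $w(\theta^\vee)=\alpha_i^\vee$. You instead split the claim into a "lattice" part and an "identification" part. Your first step — that $\pi_{\breve W}\circ\omega=\iota\circ\pi_W$ by checking the single nontrivial generator $s_0$ via $\pi_{\breve W}(s_ns_0s_n)=s_{s_n(\breve\theta)}=\iota(s_\theta)$ — is a cleaner way to see that $\omega$ carries the translation lattice $Q_0^\vee$ into $\breve Q_0^\vee$, and it replaces the paper's explicit word manipulation. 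Your second step replaces the paper's $W_0$-conjugation argument with a representation-theoretic weight comparison: the equivariance identity $j^*(\omega(x)\mu)=x\cdot j^*(\mu)$ extracted from Lemma~\ref{wact} (together with Proposition~\ref{ex}), evaluated on $\mu=\breve\varpi_j$ and read off in the $\delta$-coefficient, pins down $\eta=\breve\alpha_i^\vee$. The trade-off is that the paper's proof is shorter and purely combinatorial, while yours imports Lemma~\ref{wact} (which is safe, since it appears before Lemma~\ref{tal} and does not rely on it) but in return yields the intertwining identity $j^*\circ\omega(x)=x\circ j^*$ as a useful by-product; note that this identity could also be verified directly from the definitions of $j$ and $\omega$ without invoking extremal vectors, which would keep your argument self-contained.
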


\begin{proof}
  Let $\theta, \breve{\theta}$ be the highest root of $\mathfrak{sl}_n, \mathfrak{sl}_{n+1}$, respectively. Then we have $r_0r_{\theta}=t_{\theta^{\vee}}$ (in $W$) and $r_0r_{\breve{\theta}}=t_{\breve{\theta}^{\vee}}$ (in $\breve{W}$). Hence, we obtain
  \begin{align*}
    \omega(t_{\theta^{\vee}})&=\omega(r_0r_{\theta})=\omega(r_0)\omega(r_1r_2\cdots r_{n-2}r_{n-1}r_{n-2}\cdots r_1)\\
    &=r_nr_0r_nr_1r_2\cdots r_{n-2}r_{n-1}r_{n-2}\cdots r_1=r_nr_0r_nr_{\breve{\alpha}_1+\cdots+\breve{\alpha}_{n-1}}\\
    &=r_nr_0r_{\breve{\theta}}r_n=r_nt_{\breve{\theta}^{\vee}}r_n=t_{\breve{\alpha}_1^{\vee}+\cdots+\breve{\alpha}_{n-1}^{\vee}}.
  \end{align*}
  We take $w\in W_0$ such that $\alpha_i^{\vee}=w(\theta^{\vee})$. Then we have $\breve{\alpha}_i^{\vee}=\omega(w)(\breve{\alpha}_1^{\vee}+\cdots+\breve{\alpha}_{n-1}^{\vee})$. Hence,
  \begin{align*}
    \omega(t_{\alpha_i^{\vee}})&=\omega(wt_{\theta^{\vee}}w^{-1})=\omega(w)\omega(t_{\theta}^{\vee})\omega(w^{-1})\\
    &=\omega(w)t_{\breve{\alpha}_1^{\vee}+\cdots+\breve{\alpha}_{n-1}^{\vee}}\omega(w^{-1})=t_{\breve{\alpha}_i^{\vee}},
  \end{align*}
  as desired.
\end{proof}

We set
\begin{equation}
  w_i=s_ns_{n-1}\cdots s_i\in \breve{W}. \label{wi}
\end{equation}

We define $a_{i,\varepsilon}\in \mathbb{Z}\;(2\le i\le n)$ and $b_{i,\varepsilon}\in \mathbb{Z}\;(1\le i\le n-1)$ as follows.

For $2\le i\le n$, the element $\breve{S}_{w_i}u_{\breve{\varpi}_i}\in \Psi_{\varepsilon}^*V(\breve{\varpi}_i)$ is $\widehat{\mathfrak{sl}}_n$-extremal. Hence, by Lemmas \ref{wact} and \ref{tal}, we can define $a_{i,\varepsilon}\in \mathbb{Z}$ by
\[
S_{t_{\alpha_{i-1}^{\vee}}}\breve{S}_{w_i}u_{\breve{\varpi}_i}=(-q)^{a_{i,\varepsilon}}\breve{S}_{t_{\breve{\alpha}_{i-1}^{\vee}}}\breve{S}_{w_i}u_{\breve{\varpi}_i}\left(=(-q)^{a_{i,\varepsilon}}\breve{S}_{w_i}\breve{S}_{t_{\breve{\alpha}_i^{\vee}}}u_{\breve{\varpi}_i}\right).
\]

For $1\le i\le n-1$, the element $u_{\breve{\varpi}_i}\in \Psi_{\varepsilon}^*V(\breve{\varpi}_i)$ is $\widehat{\mathfrak{sl}}_n$-extremal. Hence, by Lemmas \ref{wact} and \ref{tal}, we can define $b_{i,\varepsilon}\in \mathbb{Z}$ by
\[
S_{t_{\alpha_i^{\vee}}}u_{\breve{\varpi}_i}=(-q)^{b_{i,\varepsilon}}\breve{S}_{t_{\breve{\alpha}_i^{\vee}}}u_{\breve{\varpi}_i}.
\]

\begin{lemma}\label{tk}
\textit{We have:
  \begin{enumerate}
    \item For $2\le i\le n$ and $k\in \mathbb{Z}$, we have 
    \[
    S_{t_{k\alpha_{i-1}^{\vee}}}\breve{S}_{w_i}u_{\breve{\varpi}_i}=(-q)^{ka_{i,\varepsilon}}\breve{S}_{w_i}\breve{S}_{kt_{\breve{\alpha}_i^{\vee}}}u_{\breve{\varpi}_i};
    \]
    \item For $1\le i\le n-1$ and $k\in \mathbb{Z}$, we have
    \[
    S_{t_{k\alpha_i^{\vee}}}u_{\breve{\varpi}_i}=(-q)^{kb_{i,\varepsilon}}\breve{S}_{t_{k\breve{\alpha}_i^{\vee}}}u_{\breve{\varpi}_i}.
    \]
  \end{enumerate}
  }
\end{lemma}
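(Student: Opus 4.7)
The plan is to prove (2) by induction on $k \in \mathbb{Z}$, and then derive (1) by the same argument with $u_{\breve{\varpi}_i}$ replaced by the extremal vector $\breve{S}_{w_i}u_{\breve{\varpi}_i}$. Corollary \ref{BN} (applied with the trivial partition tuple $\mathbf{c}_0$ and $\xi = k\breve{\alpha}_i^\vee$) gives $\breve{S}_{t_{k\breve{\alpha}_i^\vee}}u_{\breve{\varpi}_i} = z_i^{-k}u_{\breve{\varpi}_i}$, so by Lemmas \ref{wact} and \ref{tal} the statement of (2) reduces to showing $S_{t_{k\alpha_i^\vee}}u_{\breve{\varpi}_i} = (-q)^{F(k)}z_i^{-k}u_{\breve{\varpi}_i}$ with $F(k) = kb_{i,\varepsilon}$.

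The crucial ingredient is that $z_i$ commutes with $S_x$ for every $x \in W$ on $\Psi_\varepsilon^*V(\breve{\varpi}_i)$. Since $z_i$ is $U_q'(\widehat{\mathfrak{sl}}_{n+1})$-linear, it commutes with $\Psi_\varepsilon(E_j)$ and $\Psi_\varepsilon(F_j)$ for every $j \in I$. Because $\langle \breve{\alpha}_j^\vee, \breve{\delta}\rangle = 0$ for every simple coroot in untwisted affine type, the weight-dependent coefficients appearing in the explicit formulas for the $T_x$ on weight vectors are unchanged under a $\delta$-shift, so $T_x$ commutes with $z_i$ as module operators. By Lemma \ref{st} this transfers to $S_x$ on extremal vectors, as $(\alpha,\delta) = \langle\alpha^\vee,\delta\rangle = 0$ for real roots makes the exponents $N_+$ and $N_+^\vee$ invariant under $\delta$-shifts.

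For $k \geq 0$, the length identity $\ell(t_{(k+1)\alpha_i^\vee}) = \ell(t_{\alpha_i^\vee}) + \ell(t_{k\alpha_i^\vee})$ (immediate from $\ell(t_\xi) = \sum_{\alpha\in\Delta_+ \cap \mathfrak{h}^{*0}}|\langle\alpha,\xi\rangle|$, since $\xi = k\alpha_i^\vee$ stays in one chamber) lets us concatenate reduced expressions, so that $S_{t_{(k+1)\alpha_i^\vee}}u_{\breve{\varpi}_i} = S_{t_{\alpha_i^\vee}}(S_{t_{k\alpha_i^\vee}}u_{\breve{\varpi}_i})$. Plugging in the inductive hypothesis, commuting $z_i^{-k}$ past $S_{t_{\alpha_i^\vee}}$, and applying the defining identity $S_{t_{\alpha_i^\vee}}u_{\breve{\varpi}_i} = (-q)^{b_{i,\varepsilon}}z_i^{-1}u_{\breve{\varpi}_i}$ yields $F(k+1) = (k+1)b_{i,\varepsilon}$. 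The analogous identity $\ell(t_{(k-1)\alpha_i^\vee}) = \ell(t_{-\alpha_i^\vee}) + \ell(t_{k\alpha_i^\vee})$ for $k \leq 0$ drives the descent once the $k = -1$ case is in hand.

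The main obstacle is the $k = -1$ base case. The plan is to derive it from the identity $S_{t_{\alpha_i^\vee}}S_{t_{-\alpha_i^\vee}}u_{\breve{\varpi}_i} = u_{\breve{\varpi}_i}$, which holds on extremal vectors because $S_j^2 = \id$ on such vectors in integrable $U_q(\mathfrak{sl}_2)$-modules and the $S_j$ satisfy braid relations, so $S_x$ acting on an extremal vector can be evaluated via any expression for $x$ in $W$, reduced or not. Writing the unknown as $S_{t_{-\alpha_i^\vee}}u_{\breve{\varpi}_i} = (-q)^{F(-1)}z_i u_{\breve{\varpi}_i}$ and applying $S_{t_{\alpha_i^\vee}}$ (using the $z_i$-commutation and the $k = 1$ identity) forces $(-q)^{F(-1) + b_{i,\varepsilon}}u_{\breve{\varpi}_i} = u_{\breve{\varpi}_i}$, so $F(-1) = -b_{i,\varepsilon}$. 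Part (1) follows by the same three-step induction, after observing via Lemma \ref{wt} that $\breve{S}_{t_{k\breve{\alpha}_{i-1}^\vee}}\breve{S}_{w_i}u_{\breve{\varpi}_i} = \breve{S}_{w_i}\breve{S}_{t_{k\breve{\alpha}_i^\vee}}u_{\breve{\varpi}_i} = z_i^{-k}\breve{S}_{w_i}u_{\breve{\varpi}_i}$, where the last equality uses that $z_i$ commutes with $\breve{S}_{w_i}$ by the same argument given for the $\widehat{\mathfrak{sl}}_n$-side.
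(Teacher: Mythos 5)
Your proof is correct. Let me compare it with the paper's argument.

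Both arguments iterate a one-step recursion $S_{t_{\alpha_i^\vee}}\breve{S}_{t_{k\breve{\alpha}_i^\vee}}u_{\breve{\varpi}_i}=(-q)^{b_{i,\varepsilon}}\breve{S}_{t_{(k+1)\breve{\alpha}_i^\vee}}u_{\breve{\varpi}_i}$ (and its analogue for $a_{i,\varepsilon}$), but they establish the recursion by different means. The paper applies the $U_q(\widehat{\mathfrak{sl}}_{n+1})$-linear isomorphism $\Upsilon:V(\breve{\varpi}_i)\stackrel{\cong}{\longrightarrow}V(\breve{\varpi}_i)\otimes V(-k\breve{\delta})$ from Lemma~\ref{txi}, which sends $\breve{S}_{t_{k\breve{\alpha}_i^\vee}}u_{\breve{\varpi}_i}$ to $u_{\breve{\varpi}_i}\otimes u_{-k\breve{\delta}}$, and reads the identity off from the $k=0$ case; since the resulting identity holds for every $k\in\mathbb{Z}$ at once, the induction can be run in either direction without a separate base case. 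You instead prove directly that $z_i$ commutes with the operators $T_x$ and $S_x$ coming from the $\widehat{\mathfrak{sl}}_n$-Weyl group action (because $z_i$ commutes with $\Psi_{\varepsilon}(E_j),\Psi_{\varepsilon}(F_j)$, and the weight-dependent coefficients, including $N_+,N_+^\vee$ in Lemma~\ref{st}, are invariant under a $\delta$-shift since $(\alpha,\delta)=\langle\alpha^\vee,\delta\rangle=0$ for real $\alpha$), which yields the same recursion. Conceptually the two tools encode the same fact (the $\delta$-shift is $W$-equivariant), but yours is the more hands-on version and avoids invoking Lemma~\ref{txi}. The price is that you handle the negative-$k$ direction by a separate argument at $k=-1$; note that once you grant that the $S_j$ generate a group action on extremal vectors (as you use for the $k=-1$ case), the length-additivity remarks are superfluous, since $S_{xy}u=S_xS_yu$ holds for arbitrary $x,y\in W$ on extremal $u$ and the backward induction is immediate. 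Also, in the concluding remark on part (1), the equality $\breve{S}_{t_{k\breve{\alpha}_{i-1}^\vee}}\breve{S}_{w_i}u_{\breve{\varpi}_i}=\breve{S}_{w_i}\breve{S}_{t_{k\breve{\alpha}_i^\vee}}u_{\breve{\varpi}_i}$ is indeed an application of Lemma~\ref{wt}, exactly as the paper itself notes when defining $a_{i,\varepsilon}$.
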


\begin{proof}

For $k\in \mathbb{Z}$, there is a $U_q(\widehat{\mathfrak{sl}}_{n+1})$-linear isomorphism
\[
\Upsilon:V(\breve{\varpi}_i)\stackrel{\cong}{\longrightarrow}V(\breve{\varpi}_i)\otimes V\left(-k\breve{\delta}\right)
\]
such that $u_{\breve{\varpi}_i}\mapsto \left(\breve{S}_{t_{-k\breve{\alpha}_i^{\vee}}}u_{\breve{\varpi}_i}\right)\otimes u_{-k\breve{\delta}}$ by Lemma \ref{txi}. We have
\[
 \Upsilon\left(S_{t_{\alpha_{i-1}^{\vee}}}\breve{S}_{w_i}\breve{S}_{t_{k\breve{\alpha}_i^{\vee}}}u_{\breve{\varpi}_i}\right)
  =\left(S_{t_{\alpha_{i-1}^{\vee}}}\breve{S}_{w_i}u_{\breve{\varpi}_i}\right)\otimes u_{-k\breve{\delta}}=\left((-q)^{a_{i,\varepsilon}}\breve{S}_{w_i}\breve{S}_{t_{\breve{\alpha}_i^{\vee}}}u_{\breve{\varpi}_i}\right)\otimes u_{-k\breve{\delta}}
\]
and
\[
\Upsilon\left(\breve{S}_{w_i}\breve{S}_{t_{(k+1)\breve{\alpha}_i^{\vee}}}u_{\breve{\varpi}_i}\right)=\left(\breve{S}_{w_i}\breve{S}_{t_{\breve{\alpha}_i^{\vee}}}u_{\breve{\varpi}_i}\right)\otimes u_{-k\breve{\delta}}.
\]
Thus, we obtain
\begin{equation}
  S_{t_{\alpha_{i-1}^{\vee}}}\breve{S}_{w_i}\breve{S}_{t_{k\breve{\alpha}_i^{\vee}}}u_{\breve{\varpi}_i}=(-q)^{a_{i,\varepsilon}}\breve{S}_{w_i}\breve{S}_{t_{(k+1)\breve{\alpha}_i^{\vee}}}u_{\breve{\varpi}_i}.\label{eq_k1}
\end{equation}

Similarly, we have
\[
\Upsilon\left(S_{t_{\alpha_i^{\vee}}}\breve{S}_{t_{k\breve{\alpha}_{i-1}^{\vee}}}u_{\breve{\varpi}_i}\right)=S_{t_{\alpha_i^{\vee}}}u_{\breve{\varpi}_i}
=(-q)^{b_{i,\varepsilon}}\breve{S}_{t_{\breve{\alpha}_i^{\vee}}}u_{\breve{\varpi}_i}
\]
and
\[
\Upsilon\left(\breve{S}_{t_{(k+1)\breve{\alpha}_i^{\vee}}}u_{\breve{\varpi}_i}\right)=\breve{S}_{t_{\breve{\alpha}_i^{\vee}}}u_{\breve{\varpi}_i}.
\]
Therefore, we obtain 
\begin{equation}
  S_{t_{\alpha_i^{\vee}}}\breve{S}_{t_{k\breve{\alpha}_{i-1}^{\vee}}}u_{\breve{\varpi}_i}=(-q)^{b_{i,\varepsilon}}\breve{S}_{t_{(k+1)\breve{\alpha}_i^{\vee}}}u_{\breve{\varpi}_i}.\label{eq_k2}
\end{equation}

Using (\ref{eq_k1}) and (\ref{eq_k2}) inductively, we obtain the desired result.
\end{proof}

We define the $U_q(\widehat{\mathfrak{sl}}_n)$-modules $N_{i,1}, N_{i,2}$ by
\[
N_{i,1}=\left\{
\begin{array}{ll}
\bigoplus_{k \in \mathbb{Z}} V(k \delta) & (i=1) \\
V(\varpi_{i-1}) & (2\le i \le n)
\end{array}
\right.,
\]

\[
N_{i,2}=\left\{
\begin{array}{ll}
V(\varpi_i) & (1\le i \le n-1) \\
\bigoplus_{k \in \mathbb{Z}} V(k \delta) & (i=n)
\end{array}
\right..
\]

We define the $U_q(\widehat{\mathfrak{sl}}_n)$-homomorphisms $\psi_{1,\varepsilon}^i:N_{i,1}\to \Psi_{\varepsilon}^*V(\breve{\varpi}_i)\;(1\le i\le n, \varepsilon\in\{\pm1\})$ as follows:
\begin{itemize}
  \item For $i=1$, the $U_q(\widehat{\mathfrak{sl}}_n)$-homomorphism $\psi_{1,\varepsilon}^1:N_{1,1}\to \Psi_{\varepsilon}^*V(\breve{\varpi}_1)$ is defined by $u_{k\delta}\mapsto u_{-\breve{\varpi}_n+k\breve{\delta}}=\breve{S}_{w_1t_{-k\breve{\alpha}_1^{\vee}}}u_{\breve{\varpi}_1}$;
  \item For $2\le i\le n$, the $U_q(\widehat{\mathfrak{sl}}_n)$-homomorphism $\psi_{1,\varepsilon}^i:N_{i,1}\to \Psi_{\varepsilon}^*V(\breve{\varpi}_i)$ is defined by $u_{\varpi_{i-1}}\mapsto u_{\breve{\varpi}_{i-1}-\breve{\varpi}_n}=\breve{S}_{w_i}u_{\breve{\varpi}_i}$.
\end{itemize}

We also define the $U_q(\widehat{\mathfrak{sl}}_n)$-homomorphisms $\psi_{2,\varepsilon}^i:N_{i,2}\to \Psi_{\varepsilon}^*V(\breve{\varpi}_i)\;(1\le i\le n, \varepsilon\in\{\pm1\})$ as follows:
\begin{itemize}
  \item For $1\le i\le n-1$, the $U_q(\widehat{\mathfrak{sl}}_n)$-homomorphism $\psi_{2,\varepsilon}^i:N_{i,2}\to \Psi_{\varepsilon}^*V(\breve{\varpi}_i)$ is defined by $u_{\varpi_i}\mapsto u_{\breve{\varpi}_i}$;
  \item For $i=n$, the $U_q(\widehat{\mathfrak{sl}}_n)$-homomorphism $\psi_{2,\varepsilon}^n:N_{n,2}\to \Psi_{\varepsilon}^*V(\breve{\varpi}_i)$ is defined by $u_{k\delta}\mapsto  u_{\breve{\varpi}_n+k\breve{\delta}}=\breve{S}_{t_{-k\breve{\alpha}_n^{\vee}}}u_{\breve{\varpi}_n}$.
\end{itemize}

For $2\le i\le n$ and $k\in \mathbb{Z}$, we have
\begin{equation}\label{eq:psi1}
  \begin{aligned}
    \psi_{1,\varepsilon}^i(u_{\varpi_{i-1}+k\delta})&=\psi_{1,\varepsilon}^i(S_{t_{-k\alpha_{i-1}^{\vee}}}u_{\varpi_{i-1}})=S_{t_{-k\alpha_{i-1}^{\vee}}}\breve{S}_{w_i}u_{\breve{\varpi}_i}\\
    &=(-q)^{-ka_{i,\varepsilon}}\breve{S}_{w_i}\breve{S}_{t_{-k\breve{\alpha}_i^{\vee}}}u_{\breve{\varpi}_i}=(-q)^{-ka_{i,\varepsilon}}\breve{S}_{w_i}u_{\breve{\varpi}_i+k\breve{\delta}}
  \end{aligned}
\end{equation}
by Lemma \ref{tk}.

Also, for $1\le i\le n-1$, we have
\begin{equation}\label{eq:psi2}
  \begin{aligned}
    \psi_{2,\varepsilon}^i(u_{\varpi_i+k\delta})&=\psi_{2,\varepsilon}^i(S_{t_{-k\alpha_i^{\vee}}}u_{\varpi_i})=S_{t_{-k\alpha_i^{\vee}}}u_{\breve{\varpi}_i}\\
    &=(-q)^{-kb_{i,\varepsilon}}\breve{S}_{t_{-k\breve{\alpha}_i^{\vee}}}u_{\breve{\varpi}_i}=(-q)^{-kb_{i,\varepsilon}}u_{\breve{\varpi}_i+k\breve{\delta}}
  \end{aligned}
\end{equation}
 by Lemma \ref{tk}.

\begin{lemma}\label{brfin}
  \textit{Let $\Lambda\in \mathcal{P}_n$ and set $\lambda=\pi_{n+1}(\Lambda)$. We denote the restriction of $V(\lambda)$ as a $U_q(\mathfrak{sl}_{n+1})$-module to $U_q(\mathfrak{sl}_n)$ by $V(\lambda)|_{U_q(\mathfrak{sl}_n)}$. Then $V(\lambda)|_{U_q(\mathfrak{sl}_n)}$ decomposes as
  \[
  V(\lambda)|_{U_q(\mathfrak{sl}_n)}\cong\bigoplus_{\substack{M\in \mathcal{P}_n \\ \Lambda/M:\text{horizontal strip}}}V(\pi_n(M)).
  \]}
\end{lemma}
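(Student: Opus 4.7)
The plan is to compute the $U_q(\mathfrak{sl}_n)$-character of $V(\lambda)|_{U_q(\mathfrak{sl}_n)}$, match it against the sum of characters on the right-hand side via Corollary \ref{schbr}, and then invoke complete reducibility. Since $\Lambda\in\mathcal{P}_n$, the weight $\lambda=\pi_{n+1}(\Lambda)$ is a dominant $\mathfrak{sl}_{n+1}$-weight, so $V(\lambda)$ is the finite-dimensional irreducible $U_q(\mathfrak{sl}_{n+1})$-module of highest weight $\lambda$; its $\mathfrak{sl}_{n+1}$-character equals $\Pi_{n+1}(s_\Lambda(x_1,\ldots,x_{n+1}))$.

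Next, I describe how the character transforms under restriction. The restriction of weights from $\mathfrak{sl}_{n+1}$ to $\mathfrak{sl}_n$ (pairing only with $\alpha_1^{\vee},\ldots,\alpha_{n-1}^{\vee}$) sends $\varpi_i\mapsto\varpi_i^{(n)}$ for $i\le n-1$ and $\varpi_n\mapsto 0$. Tracing through the identifications $\pi_{n+1}(\varepsilon_i)=\varpi_i-\varpi_{i-1}$ and $\pi_n(\varepsilon_i)=\varpi_i^{(n)}-\varpi_{i-1}^{(n)}$ (with $\varpi_0=\varpi_{n+1}=0$ and $\varpi_n^{(n)}=0$), this restriction corresponds, at the level of $\mathop{GL}$-Schur-polynomial variables, to the substitution $x_{n+1}=1$, with $x_1,\ldots,x_n$ playing the role of the $GL_n$-variables. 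Specializing Corollary \ref{schbr} at $x_{n+1}=1$ then yields
\[
s_\Lambda(x_1,\ldots,x_n,1)=\sum_{\substack{M\in\mathcal{P}_n\\ \Lambda/M:\text{horizontal strip}}} s_M(x_1,\ldots,x_n).
\]

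Since $\Pi_n(s_M(x_1,\ldots,x_n))$ is the character of the irreducible $U_q(\mathfrak{sl}_n)$-module $V(\pi_n(M))$, the above identity can be rewritten as $\ch(V(\lambda)|_{U_q(\mathfrak{sl}_n)})=\sum_M \ch(V(\pi_n(M)))$. Complete reducibility of finite-dimensional integrable $U_q(\mathfrak{sl}_n)$-modules then promotes this character identity to the claimed isomorphism. The only substantive step is the bookkeeping in the second paragraph: one must carefully track the two projections $\Pi_{n+1}$ and $\Pi_n$ together with the restriction of weights from $\mathfrak{sl}_{n+1}$ to $\mathfrak{sl}_n$, and check that the net effect on Schur polynomials is exactly the specialization $x_{n+1}=1$. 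Once this is confirmed, the result is the quantum analog of the classical Gelfand--Tsetlin branching rule for $\mathop{GL}_{n+1}\downarrow\mathop{GL}_n$, which here follows immediately from the Pieri-type formula of Corollary \ref{schbr}.
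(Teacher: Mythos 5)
Your proposal is correct and follows essentially the same route as the paper: both start from the Schur-polynomial character of $V(\lambda)$, apply Corollary~\ref{schbr}, push the result down to the $\mathfrak{sl}_n$-weight lattice (the paper via the linear map $\Theta\circ\Pi_{n+1}$, you via the equivalent observation that this amounts to setting $x_{n+1}=1$ before applying $\Pi_n$), and then invoke complete reducibility to upgrade the character identity to a module decomposition.
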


\begin{proof}
  By \cite[Corollary 6.2.3]{L}, the module $V(\lambda)|_{U_q(\mathfrak{sl}_n)}$ decomposes into a direct sum of irreducible highest weight modules over $U_q(\mathfrak{sl}_n)$. Furthermore, by \cite[Theorem 3.4.6]{HK}, the character of $V(\lambda)$ is given by $\Pi_{n+1}(s_{\Lambda}(x_1, \ldots, x_{n+1}))$, which can be expressed as  
\[
\sum_{\substack{M \in \mathcal{P}_n \\ \Lambda / M : \text{horizontal strip}}} x_{n+1}^{|\Lambda - M|} s_M(x_1, \ldots, x_n)
\]  
by Corollary \ref{schbr}.
Since $j^*(\pi_{n+1}(\varepsilon_{n+1})) = 0$ and the restriction of $j^* \circ \pi_{n+1}$ to $X_n$ coincides with $\pi_n$, we have $\Theta\circ\Pi_{n+1}\left(x_{n+1}^{|\Lambda - M|} s_M(x_1, \ldots, x_n)\right)=\Pi_n(s_M(x_1, \ldots, x_n)))$. Hence, the character of $V(\lambda)|_{U_q(\mathfrak{sl}_n)}$ is  
\[
\sum_{\substack{M \in \mathcal{P}_n \\ \Lambda / M : \text{horizontal strip}}} \Pi_n(s_M(x_1, \ldots, x_n)),
\]  
which establishes the desired result.
\end{proof}

\begin{proposition}\label{b1}
\textit{For $1\le i\le n$, $\psi_{1,\varepsilon}^i\oplus\psi_{2,\varepsilon}^i:N_{i,1}\oplus N_{i,2}\to \Psi_{\varepsilon}^*V(\breve{\varpi}_i)$ is an isomorphism of $U_q(\widehat{\mathfrak{sl}}_n)$-modules.}
\end{proposition}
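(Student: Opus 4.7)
My approach to proving Proposition~\ref{b1} has three steps: well-definedness of the component maps, a direct sum decomposition via a classical-weight invariant, and identification of each summand.

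For well-definedness, each defining image is of the form $\breve{S}_y u_{\breve{\varpi}_i}$ for some $y\in\breve{W}$, hence is $\widehat{\mathfrak{sl}}_{n+1}$-extremal. By Proposition~\ref{ex} it is $\widehat{\mathfrak{sl}}_n$-extremal in $\Psi_{\varepsilon}^* V(\breve{\varpi}_i)$ with weight $j^*(y\breve{\varpi}_i)$, and the identities $j^*(\breve{\varpi}_k)=\varpi_k$ for $1\le k\le n-1$, $j^*(\breve{\varpi}_n)=0$, and $j^*(\breve{\delta})=\delta$ confirm that these weights agree with those of the generators of $N_{i,1}$ and $N_{i,2}$. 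The universal property of extremal weight modules then produces the required $U_q(\widehat{\mathfrak{sl}}_n)$-homomorphisms.

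For the direct sum decomposition, I consider the invariant $\chi(\nu)=\mathbf{1}_{n+1\in J(\nu)}$ on $\cl$-weights of $V(\breve{\varpi}_i)\cong W(\breve{\varpi}_i)_{\mathrm{aff}}$ (Theorem~\ref{f}), where $J(\nu)\subset\{1,\ldots,n+1\}$ is the size-$i$ subset with $\cl(\nu)=\sum_{j\in J(\nu)}\varepsilon_j$. This is preserved by $\Psi_{\varepsilon}(E_k),\Psi_{\varepsilon}(F_k)$ for $k=1,\ldots,n-1$ (whose weights $\pm\breve{\alpha}_k=\pm(\varepsilon_k-\varepsilon_{k+1})$ have no $\varepsilon_{n+1}$-component) and by $\Psi_{\varepsilon}(E_0)=T_n^{\varepsilon}(\breve{E}_0)$, $\Psi_{\varepsilon}(F_0)=T_n^{\varepsilon}(\breve{F}_0)$ (weights $\pm(\breve{\alpha}_0+\breve{\alpha}_n)=\pm(\varepsilon_n-\varepsilon_1+\breve{\delta})$, also lacking an $\varepsilon_{n+1}$-contribution). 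Hence $\Psi_{\varepsilon}^*V(\breve{\varpi}_i)=V_0\oplus V_1$ as $U_q(\widehat{\mathfrak{sl}}_n)$-modules, where $V_a$ is the $\chi=a$ subspace; since $u_{\breve{\varpi}_i}\in V_0$ and $\breve{S}_{w_i}u_{\breve{\varpi}_i}\in V_1$ (the latter as $w_i\breve{\varpi}_i$ corresponds to $\{1,\ldots,i-1,n+1\}$), the images of $\psi_{2,\varepsilon}^i$ and $\psi_{1,\varepsilon}^i$ land in $V_0$ and $V_1$ respectively, making the sum direct.

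For identification of the summands, I combine two facts: (a) under the affinization $V(\breve{\varpi}_i)=W(\breve{\varpi}_i)\otimes\mathbb{Q}(q)[z^{\pm1}]$ (Theorem~\ref{f}) and the classical branching $W(\breve{\varpi}_i)|_{U_q(\mathfrak{sl}_n)}=W(\varpi_i)\oplus W(\varpi_{i-1})$ (Lemma~\ref{brfin}, with $W(\varpi_0)=W(\varpi_n)=\mathbb{Q}(q)$), one finds $V_0\cong W(\varpi_i)\otimes\mathbb{Q}(q)[z^{\pm1}]$ and $V_1\cong W(\varpi_{i-1})\otimes\mathbb{Q}(q)[z^{\pm1}]$; (b) a direct computation of $T_n^{\varepsilon}(\breve{E}_0)$ and $T_n^{\varepsilon}(\breve{F}_0)$ on standard basis vectors verifies that these act on each $V_a$ as the affine Chevalley generators of $W(\varpi_k)_{\mathrm{aff}}\cong V(\varpi_k)$ when $1\le k\le n-1$, and annihilate the trivial factor $\mathbb{Q}(q)$ when $k\in\{0,n\}$, yielding instead the $\bigoplus_k V(k\delta)$ structure. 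Combined with a graded-character comparison via Corollary~\ref{br} applied to the Demazure union of Proposition~\ref{dem} and the Macdonald branching Theorem~\ref{brMac} for the column partition $(1^i)$---whose only horizontal strips in $\mathcal{P}_n$ are $(1^{i-1})$ and $(1^i)$---this identifies $V_0\cong N_{i,2}$ and $V_1\cong N_{i,1}$ as $U_q(\widehat{\mathfrak{sl}}_n)$-modules, completing the proof.

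The main obstacle is the explicit verification of the $T_n^{\varepsilon}(\breve{E}_0)$ and $T_n^{\varepsilon}(\breve{F}_0)$ action on $W(\breve{\varpi}_i)\otimes\mathbb{Q}(q)[z^{\pm1}]$, particularly on the boundary basis vectors where the action twists between the two $\chi$-eigenspaces and introduces $z$-shifts that must match the affine structure of $V(\varpi_k)=W(\varpi_k)_{\mathrm{aff}}$; once this is done, the identification of each $V_a$ with the corresponding $N_{i,\cdot}$ and the character match are routine.
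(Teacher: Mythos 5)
Your approach is essentially sound, but step~(b) of your identification stage---the ``direct computation of $T_n^{\varepsilon}(\breve{E}_0)$ and $T_n^{\varepsilon}(\breve{F}_0)$ on standard basis vectors,'' which you flag as the main obstacle---is both uncarried-out and unnecessary, and this is the one gap in your proposal. Recall that in your well-definedness step you already produced $\psi_{1,\varepsilon}^i$ and $\psi_{2,\varepsilon}^i$ as $U_q(\widehat{\mathfrak{sl}}_n)$-module homomorphisms via the universal property of extremal weight modules. Since a $U_q(\widehat{\mathfrak{sl}}_n)$-linear map that happens to be a set-theoretic bijection is automatically a $U_q(\widehat{\mathfrak{sl}}_n)$-isomorphism, you only need to check bijectivity, and for that your part~(a) already suffices: both source and target decompose over $U_q(\mathfrak{sl}_n)$ into $\delta$-graded irreducible pieces, the map respects the $\delta$-grading (being weight-preserving), and by Lemma~\ref{brfin} (for the column $(1^i)$) together with the explicit images in~(\ref{eq:psi1}) and~(\ref{eq:psi2}) the map is nonzero, hence an isomorphism, on each graded piece. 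There is thus nothing left to verify about the action of $\Psi_{\varepsilon}(E_0), \Psi_{\varepsilon}(F_0)$; in particular the phrase ``this identifies $V_0\cong N_{i,2}$ and $V_1\cong N_{i,1}$ as $U_q(\widehat{\mathfrak{sl}}_n)$-modules'' follows already from linearity plus bijectivity, with no extra affine-structure check needed.

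Comparing with the paper: the paper's proof of Proposition~\ref{b1} skips your $\chi$-decomposition entirely. It simply writes $V(\breve{\varpi}_i)=\bigoplus_{k\in\mathbb{Z}} U_q(\mathfrak{sl}_{n+1})u_{\breve{\varpi}_i+k\breve{\delta}}$ and $V(\varpi_j)=\bigoplus_{k\in\mathbb{Z}}U_q(\mathfrak{sl}_n)u_{\varpi_j+k\delta}$ using Theorem~\ref{f}, and then checks that the restriction of $\psi_{1,\varepsilon}^i\oplus\psi_{2,\varepsilon}^i$ to each $\delta$-graded piece is a $U_q(\mathfrak{sl}_n)$-module isomorphism by the classical branching rule Lemma~\ref{brfin}; since the map is already $U_q(\widehat{\mathfrak{sl}}_n)$-linear, bijectivity on each piece yields the result. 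Your index-set invariant $\chi(\nu)=\mathbf{1}_{n+1\in J(\nu)}$ is nevertheless a genuinely useful observation---it is exactly the $q^{\tilde{h}}$-eigenspace decomposition the paper develops later in Lemma~\ref{L1L2} (with $L_{1,\varepsilon}^i, L_{2,\varepsilon}^i$ the eigenspaces for $q^{i-(n+1)}$ and $q^i$)---and it anticipates the direct sum structure of $\Psi_{\varepsilon}^*V(\lambda)$ for general $\lambda$. So your approach buys a more conceptual organization that the paper defers to the next subsection, at the cost of an extra invariant that is not actually needed for this particular proposition. Dropping step~(b) and arguing bijectivity on each $\delta$-graded piece as above closes the gap.
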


\begin{proof}
For $1\le i\le n-1$, we have 
\[
V(\varpi_i)=\bigoplus_{k\in \mathbb{Z}}U_q(\mathfrak{sl}_n)u_{\varpi_i+k\delta}
\]
as $U_q(\mathfrak{sl}_n)$-module, where each $U_q(\mathfrak{sl}_n)u_{\varpi_i+k\delta}$ is the irreducible highest weight representation of highest weight $\cl(\varpi_i)$ by Theorem \ref{f}. Similarly, for $1\le i\le n$, we have 
\[
V(\breve{\varpi}_i)=\bigoplus_{k\in \mathbb{Z}}U_q(\mathfrak{sl}_{n+1})u_{\breve{\varpi}_i+k\breve{\delta}}
\]
as $U_q(\mathfrak{sl}_{n+1})$-module, where each $U_q(\mathfrak{sl}_{n+1})u_{\breve{\varpi}_i+k\breve{\delta}}$ is the irreducible highest weight representation of highest weight $\cl(\breve{\varpi}_i)$ by Theorem \ref{f}.

First, we deduce that $\psi_{1,\varepsilon}^i\oplus\psi_{2,\varepsilon}^i:N_{i,1}\oplus N_{i,2}\to \Psi_{\varepsilon}^*V(\breve{\varpi}_i)$ is an isomorphism in the case $i=1$. By (\ref{eq:psi2}) and the branching rule for the finite dimensional $U_q(\mathfrak{sl}_{n+1})$-module $U_q(\mathfrak{sl}_{n+1})u_{\breve{\varpi}_1+k\breve{\delta}}$ from $U_q(\mathfrak{sl}_{n+1})$ to $U_q(\mathfrak{sl}_n)$ (see Lemma \ref{brfin}), the restriction of $\psi_{1,\varepsilon}^1\oplus\psi_{2,\varepsilon}^1:N_{1,1}\oplus N_{1,2}\to \Psi_{\varepsilon}^*V(\breve{\varpi}_1)$ to $V(k\delta)\oplus U_q(\mathfrak{sl}_n)u_{\varpi_1+k\delta}$ gives an isomorphism of $U_q(\mathfrak{sl}_n)$-modules
\[
V(k\delta)\oplus U_q(\mathfrak{sl}_n)u_{\varpi_1+k\delta}\stackrel{\cong}{\longrightarrow} U_q(\mathfrak{sl}_{n+1})u_{\breve{\varpi}_1+k\breve{\delta}}.
\]
Thus, $\psi_{1,\varepsilon}^1\oplus\psi_{2,\varepsilon}^1:N_{1,1}\oplus N_{1,2}\to \Psi_{\varepsilon}^*V(\breve{\varpi}_1)$ is an isomorphism of $U_q(\widehat{\mathfrak{sl}}_n)$-modules.

Next, we deduce that $\psi_{1,\varepsilon}^i\oplus\psi_{2,\varepsilon}^i:N_{i,1}\oplus N_{i,2}\to \Psi_{\varepsilon}^*V(\breve{\varpi}_i)$ is an isomorphism in the case $2\le i\le n-1$.  By (\ref{eq:psi1}), (\ref{eq:psi2}), and the branching rule for the finite dimensional $U_q(\mathfrak{sl}_{n+1})$-module $U_q(\mathfrak{sl}_{n+1})u_{\breve{\varpi}_i+k\breve{\delta}}$ from $U_q(\mathfrak{sl}_{n+1})$ to $U_q(\mathfrak{sl}_n)$ (see Lemma \ref{brfin}), the restriction of $\psi_{1,\varepsilon}^i\oplus\psi_{2,\varepsilon}^i:N_{1,1}\oplus N_{1,2}\to \Psi_{\varepsilon}^*V(\breve{\varpi}_i)$ to $U_q(\mathfrak{sl}_n)u_{\varpi_{i-1}+k\delta}\oplus U_q(\mathfrak{sl}_n)u_{\varpi_i+k\delta}$ yields an isomorphism of $U_q(\mathfrak{sl}_n)$-modules
\[
U_q(\mathfrak{sl}_n)u_{\varpi_{i-1}+k\delta}\oplus U_q(\mathfrak{sl}_n)u_{\varpi_i+k\delta}\stackrel{\cong}{\longrightarrow} U_q(\mathfrak{sl}_{n+1})u_{\breve{\varpi}_i+k\breve{\delta}}.
\]
Thus, $\psi_{1,\varepsilon}^i\oplus\psi_{2,\varepsilon}^i:N_{1,1}\oplus N_{1,2}\to \Psi_{\varepsilon}^*V(\breve{\varpi}_i)$ is an isomorphism of $U_q(\widehat{\mathfrak{sl}}_n)$-modules.

Finally, we deduce that $\psi_{1,\varepsilon}^i\oplus\psi_{2,\varepsilon}^i:N_{i,1}\oplus N_{i,2}\to \Psi_{\varepsilon}^*V(\breve{\varpi}_i)$ is an isomorphism in the case $i=n$.  By (\ref{eq:psi1}) and the branching rule for the finite dimensional $U_q(\mathfrak{sl}_{n+1})$-module $U_q(\mathfrak{sl}_{n+1})u_{\breve{\varpi}_n+k\breve{\delta}}$ from $U_q(\mathfrak{sl}_{n+1})$ to $U_q(\mathfrak{sl}_n)$ (see Lemma \ref{brfin}), the restriction of $\psi_{1,\varepsilon}^n\oplus\psi_{2,\varepsilon}^n:N_{1,1}\oplus N_{1,2}\to \Psi_{\varepsilon}^*V(\breve{\varpi}_n)$ to $U_q(\mathfrak{sl}_n)u_{\varpi_{n-1}+k\delta}\oplus V(k\delta)$ gives an isomorphism of $U_q(\mathfrak{sl}_n)$-modules
\[
U_q(\mathfrak{sl}_n)u_{\varpi_{n-1}+k\delta}\oplus V(k\delta)\stackrel{\cong}{\longrightarrow} U_q(\mathfrak{sl}_{n+1})u_{\breve{\varpi}_n+k\breve{\delta}}.
\]
Thus, $\psi_{1,\varepsilon}^n\oplus\psi_{2,\varepsilon}^n:N_{1,1}\oplus N_{1,2}\to \Psi_{\varepsilon}^*V(\breve{\varpi}_n)$ is an isomorphism of $U_q(\widehat{\mathfrak{sl}}_n)$-modules.
\end{proof}

\begin{corollary}
  \textit{For $1\le i\le n$, there exist an isomorphism of $U_q^{\prime}(\widehat{\mathfrak{sl}}_n)$-modules
  \[
(\Psi_{\varepsilon}^{\prime})^*W(\breve{\varpi}_i)\cong\left\{
\begin{array}{ll}
V(0)\oplus W(\varpi_1)_{(-q)^{-b_{1,\varepsilon}}} & (i=1) \\
W(\varpi_{i-1})_{(-q)^{-a_{i,\varepsilon}}}\oplus W(\varpi_i)_{(-q)^{-b_{i,\varepsilon}}} & (2\le i \le n-1) \\
W(\varpi_{n-1})_{(-q)^{-a_{n,\varepsilon}}}\oplus V(0) & (i=n)
\end{array}
\right..
\]
}
\end{corollary}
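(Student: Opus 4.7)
My plan is to derive the corollary from Proposition \ref{b1} by analyzing how the shifting automorphism $z_i : V(\breve{\varpi}_i) \to V(\breve{\varpi}_i)$ pulls back through the isomorphism $\psi_{1,\varepsilon}^i \oplus \psi_{2,\varepsilon}^i$, and then quotienting by $z_i - 1$ on each summand. Since $W(\breve{\varpi}_i) = V(\breve{\varpi}_i)/(z_i-1)V(\breve{\varpi}_i)$ and $z_i$ is $U_q^{\prime}(\widehat{\mathfrak{sl}}_{n+1})$-linear (hence $U_q^{\prime}(\widehat{\mathfrak{sl}}_n)$-linear via $\Psi_{\varepsilon}^{\prime}$), the induced action of $z_i$ on $N_{i,1} \oplus N_{i,2}$ preserves each summand and commutes with the $U_q^{\prime}(\widehat{\mathfrak{sl}}_n)$-action, so passing to the quotient is well-defined and distributes over the direct sum.

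The main computation is identifying the pulled-back action of $z_i$ on each summand. For $2 \le i \le n$, applying $z_i$ to $\breve{S}_{w_i} u_{\breve{\varpi}_i}$ yields $\breve{S}_{w_i} u_{\breve{\varpi}_i + \breve{\delta}}$ by $U_q^{\prime}(\widehat{\mathfrak{sl}}_{n+1})$-linearity, and (\ref{eq:psi1}) identifies this element with $(-q)^{a_{i,\varepsilon}}\psi_{1,\varepsilon}^i(u_{\varpi_{i-1}+\delta})$; thus on $N_{i,1} = V(\varpi_{i-1})$ the operator $z_i$ corresponds to $(-q)^{a_{i,\varepsilon}} z$, where $z$ is the standard shifting automorphism on $V(\varpi_{i-1})$. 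Symmetrically, using (\ref{eq:psi2}), for $1 \le i \le n-1$ the operator $z_i$ corresponds on $N_{i,2} = V(\varpi_i)$ to $(-q)^{b_{i,\varepsilon}} z$. Consequently, the quotient of $V(\varpi_{i-1})$ (resp.\ $V(\varpi_i)$) by $(-q)^{a_{i,\varepsilon}}z - 1 = (-q)^{a_{i,\varepsilon}}(z - (-q)^{-a_{i,\varepsilon}})$ (resp.\ the analogous expression with $b_{i,\varepsilon}$) is by definition $W(\varpi_{i-1})_{(-q)^{-a_{i,\varepsilon}}}$ (resp.\ $W(\varpi_i)_{(-q)^{-b_{i,\varepsilon}}}$).

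For the boundary cases $i=1$ and $i=n$ one summand is $\bigoplus_{k \in \mathbb{Z}} V(k\delta)$. Here I would verify that under $\psi_{1,\varepsilon}^1$ (resp.\ $\psi_{2,\varepsilon}^n$) the operator $z_i$ pulls back to the shift $u_{k\delta} \mapsto u_{(k+1)\delta}$ across the direct summands, using the fact that $\breve{S}_{w_1 t_{-k\breve{\alpha}_1^{\vee}}} u_{\breve{\varpi}_1} = u_{-\breve{\varpi}_n + k\breve{\delta}}$ is sent by $z_1$ to $u_{-\breve{\varpi}_n + (k+1)\breve{\delta}}$ (and similarly for $z_n$). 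Quotienting $\bigoplus_{k \in \mathbb{Z}} V(k\delta)$ by the relation $u_{(k+1)\delta} = u_{k\delta}$ identifies all summands and produces $V(0)$ as a $U_q^{\prime}(\widehat{\mathfrak{sl}}_n)$-module. Assembling these pieces in the three cases $i=1$, $2 \le i \le n-1$, and $i=n$ yields the stated isomorphism.

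The main obstacle will be a careful bookkeeping of the twisting scalars $(-q)^{a_{i,\varepsilon}}$ and $(-q)^{b_{i,\varepsilon}}$ when translating the action of $z_i$ through $\psi_{1,\varepsilon}^i$ and $\psi_{2,\varepsilon}^i$; once these are correctly matched with the definition $W(\varpi_j)_a = V(\varpi_j)/(z - a)V(\varpi_j)$, the argument reduces to a clean application of Proposition \ref{b1} together with (\ref{eq:psi1}) and (\ref{eq:psi2}).
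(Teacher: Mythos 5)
Your proposal is correct and matches the paper's intended argument: the paper simply asserts that the corollary "follows immediately from (\ref{eq:psi1}), (\ref{eq:psi2}), and Proposition \ref{b1}," and you have spelled out exactly how the $U_q^{\prime}$-linear automorphism $z_i$ pulls back through $\psi_{1,\varepsilon}^i\oplus\psi_{2,\varepsilon}^i$ to $(-q)^{a_{i,\varepsilon}}z$ and $(-q)^{b_{i,\varepsilon}}z$ on the finite-type summands (and to the shift $u_{k\delta}\mapsto u_{(k+1)\delta}$ on $\bigoplus_k V(k\delta)$), so that quotienting by $z_i-1$ yields the stated twisted modules and $V(0)$.
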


\begin{proof}
This result follows immediately from (\ref{eq:psi1}), (\ref{eq:psi2}), and Proposition \ref{b1}.
\end{proof}

Let $\mathbb{Q}(q)[t_1^{\pm1},\ldots,t_m^{\pm1}]$ be the ring of Laurent polynomials. We equip $\mathbb{Q}(q)[t_1^{\pm1},\ldots,t_m^{\pm1}]$ with the $U_q(\widehat{\mathfrak{sl}}_n)$-module structure as follows:
\begin{itemize}
  \item $E_i$ and $F_i$ act trivially;
  \item the monomial $t_1^{k_1}\cdots t_l^{k_m}\;(k_1\ldots,k_m \in \mathbb{Z})$ has weight $(k_1+\cdots+k_l)\delta$.
\end{itemize}

The $m$-th symmetric group $\mathfrak{S}_m$ acts on $\mathbb{Q}(q)[t_1^{\pm1},\ldots,t_m^{\pm1}]$ and on $\mathbb{Q}(q)[t_1^{-1},\ldots,t_m^{-1}](\subset\mathbb{Q}(q)[t_1^{\pm1},\ldots,t_m^{\pm1}])$ by permuting variables. The set of fixed points by these actions are denoted by $\mathbb{Q}(q)[t_1^{\pm1},\ldots,t_m^{\pm1}]^{\mathfrak{S}_m}$ and $\mathbb{Q}(q)[t_1^{-1},\ldots,t_m^{-1}]^{\mathfrak{S}_m}$, respectively.

We identify $\mathbb{Q}(q)[t^{\pm1}]$ with $\bigoplus_{k\in \mathbb{Z}}V(k\delta)$ via the map $t^k\mapsto u_{k\delta}$.

Then Proposition \ref{b1} can also be expressed as follows.

 \[
\Psi_{\varepsilon}^*V(\breve{\varpi}_i)\cong\left\{
\begin{array}{ll}
\mathbb{Q}(q)[t^{\pm1}]\oplus V(\varpi_1) & (i=1) \\
V(\varpi_{i-1})\oplus V(\varpi_i) & (2\le i \le n-1) \\
\mathbb{Q}(q)[t^{\pm1}]\oplus V(\varpi_{n-1}) & (i=n)
\end{array}
\right..
\]

\subsection{A direct sum decomposition of $\Psi_{\varepsilon}^*V(\lambda)$}

In this subsection, we fix $\lambda=\sum_{i\in \breve{I}_0}m_i\breve{\varpi}_i\in \breve{P}_{0,+}$.

We set $L_{1,\varepsilon}^i=\psi_{1,\varepsilon}^i(N_{i,1})$ and $L_{2,\varepsilon}^i=\psi_{2,\varepsilon}^i(N_{i,2})$. Let $m=-\langle\breve{\alpha}_0^{\vee},\lambda\rangle=\sum_{i\in \breve{I}_0}m_i$, and we define the map $\chi_{\lambda}:\{1,\ldots,m\}\to \{1,\ldots,n\}$ so that 
\[
m_1+\cdots+m_{\chi_{\lambda}(j)-1}+1\le j\le m_1+\cdots+m_{\chi_{\lambda}(j)}
\]
for each $j\in \{1,\ldots,m\}$.

Define $\mathcal{I}=\{1,2\}^m$. For each $\mathfrak{i}=(i_1,\ldots,i_m)\in \mathcal{I}$, let $L_{\mathfrak{i},\varepsilon}$ denote the $U_q(\mathfrak{sl}_n)$-submodule of $\Psi_{\varepsilon}^*\left(\bigotimes_{i\in \breve{I}_0}V(\breve{\varpi}_i)^{\otimes m_i}\right)$ defined by 
\[
L_{\mathfrak{i},\varepsilon}=\bigotimes_{j=1}^m L_{i_j,\varepsilon}^{\chi_{\lambda}(j)}.
\]

Note that $L_{\mathfrak{i},\varepsilon}$ is \textit{not} a $U_q(\widehat{\mathfrak{sl}}_n)$-module in general.

Then we have a direct sum decomposition
\begin{equation}
  \Psi_{\varepsilon}^*\left(\bigotimes_{i\in \breve{I}_0}V(\breve{\varpi}_i)^{\otimes m_i}\right)=\bigoplus_{\mathfrak{i}\in \mathcal{I}}L_{\mathfrak{i},\varepsilon} \label{Liep}
\end{equation}
as a $U_q(\mathfrak{sl}_n)$-module.

For $0\le p\le m$, we define $\mathcal{I}_p$ by
\[
\mathcal{I}_p=\{\mathfrak{i}=(i_1,\ldots,i_m)\in \mathcal{I} \mid p=\#\{j \mid i_j=1\}\}.
\]

Let $L_{p,\varepsilon}$ denote the $U_q(\mathfrak{sl}_n)$-submodule of $\Psi_{\varepsilon}^*\left(\bigotimes_{i\in \breve{I}_0}V(\breve{\varpi}_i)^{\otimes m_i}\right)$ defined by 
\[
L_{p,\varepsilon}=\bigoplus_{\mathfrak{i}\in \mathcal{I}_p} L_{\mathfrak{i},\varepsilon}.
\]

Then we have a direct sum decomposition
\begin{equation}
  \Psi_{\varepsilon}^*\left(\bigotimes_{i\in \breve{I}_0}V(\breve{\varpi}_i)^{\otimes m_i}\right)=\bigoplus_{p=0}^mL_{p,\varepsilon} \label{Lpep}
\end{equation}
as a $U_q(\mathfrak{sl}_n)$-module by (\ref{Liep}).

We define $\tilde{h}\in \breve{P}^*$ by 
\[
\tilde{h}=\sum_{i=1}^n i\breve{\alpha}_i^{\vee}.
\]

We have
\begin{equation}
 \langle \tilde{h}, \breve{\alpha}_i^{\vee}\rangle=\left\{
\begin{array}{ll}
-(n+1) & (i=0) \\
0 & (1\le i\le n-1) \\
n+1 & (i=n)
\end{array}
\right.. \label{tildeh}
\end{equation}

\begin{lemma}\label{qhcom}
 \textit{The element $q^{\tilde{h}}\in U_q(\widehat{\mathfrak{sl}}_{n+1})$ commutes with all elements in $\Psi_{\varepsilon}(U_q(\widehat{\mathfrak{sl}}_n))$.}
\end{lemma}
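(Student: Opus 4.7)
The plan is to check that $q^{\tilde h}$ commutes with each of the algebra generators of $\Psi_{\varepsilon}(U_q(\widehat{\mathfrak{sl}}_n))$, namely with $\Psi_{\varepsilon}(q^h)$ for $h \in P^*$, with $\Psi_{\varepsilon}(E_i), \Psi_{\varepsilon}(F_i)$ for $i=1,\ldots,n-1$, and with $\Psi_{\varepsilon}(E_0)=T_n^{\varepsilon}(\breve{E}_0)$, $\Psi_{\varepsilon}(F_0)=T_n^{\varepsilon}(\breve{F}_0)$. The first case is trivial since both elements lie in $U_q^0(\widehat{\mathfrak{sl}}_{n+1})$. For the second case, $\Psi_{\varepsilon}(E_i)=\breve E_i$ and $\Psi_{\varepsilon}(F_i)=\breve F_i$ have weights $\pm \breve\alpha_i$ in $U_q(\widehat{\mathfrak{sl}}_{n+1})$, and $\langle \tilde h,\breve\alpha_i\rangle=0$ for $1\le i\le n-1$ by (\ref{tildeh}), so the defining commutation relations give
\[
q^{\tilde h}\breve E_i q^{-\tilde h}=q^{\langle \tilde h,\breve\alpha_i\rangle}\breve E_i=\breve E_i,\qquad q^{\tilde h}\breve F_i q^{-\tilde h}=q^{-\langle \tilde h,\breve\alpha_i\rangle}\breve F_i=\breve F_i.
\]

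The main case is $T_n^{\varepsilon}(\breve E_0)$ and $T_n^{\varepsilon}(\breve F_0)$. The key observation is that the braid operator $T_n$ (and its inverse $T_n^{-1}$) sends the weight-$\mu$ space of $U_q(\widehat{\mathfrak{sl}}_{n+1})$ to the weight-$s_n(\mu)$ space, which is clear from the explicit formulas for $T_n^{\pm 1}(E_j), T_n^{\pm 1}(F_j)$ recalled in Section~2.1 (each term $\breve E_n^{(-a_{nj}-s)}\breve E_j \breve E_n^{(s)}$ has weight $-a_{nj}\breve\alpha_n+\breve\alpha_j=s_n(\breve\alpha_j)$, and similarly for $F$). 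In particular, $T_n^{\varepsilon}(\breve E_0)$ has weight
\[
s_n(\breve\alpha_0)=\breve\alpha_0+\breve\alpha_n,
\]
and $T_n^{\varepsilon}(\breve F_0)$ has weight $-(\breve\alpha_0+\breve\alpha_n)$. Using (\ref{tildeh}), we compute
\[
\langle \tilde h,\breve\alpha_0+\breve\alpha_n\rangle=-(n+1)+(n+1)=0,
\]
which yields $q^{\tilde h}T_n^{\varepsilon}(\breve E_0)q^{-\tilde h}=T_n^{\varepsilon}(\breve E_0)$ and likewise for $T_n^{\varepsilon}(\breve F_0)$.

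Since $\Psi_{\varepsilon}(U_q(\widehat{\mathfrak{sl}}_n))$ is generated as an algebra by the elements checked above, and commutation with $q^{\tilde h}$ is preserved under products, $q^{\tilde h}$ commutes with every element of $\Psi_{\varepsilon}(U_q(\widehat{\mathfrak{sl}}_n))$. The only step requiring any thought is the weight computation for $T_n^{\varepsilon}(\breve E_0)$; once that is recorded, everything else is a direct application of the defining relations and (\ref{tildeh}).
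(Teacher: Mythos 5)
Your proof is correct and takes essentially the same approach as the paper: both reduce the lemma to the observation that $\Psi_{\varepsilon}$ sends weight-$\zeta$ elements of $U_q(\widehat{\mathfrak{sl}}_n)$ to weight-$\gamma(\zeta)$ elements of $U_q(\widehat{\mathfrak{sl}}_{n+1})$ (with $\gamma(\alpha_0)=\breve\alpha_0+\breve\alpha_n$ and $\gamma(\alpha_i)=\breve\alpha_i$ otherwise), and that $\langle\tilde h,\gamma(\zeta)\rangle=0$ by (\ref{tildeh}). The paper states this for a general weight element in one stroke, while you verify it generator-by-generator and appeal to multiplicativity of the commutation condition — a superficial reorganization of the same computation, though your explicit justification of why $T_n^{\varepsilon}$ shifts weight by $s_n$ is a helpful detail the paper leaves implicit.
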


\begin{proof}
 Let $x \in U_q(\widehat{\mathfrak{sl}}_n)$ be an element with weight $\zeta = \sum_{i \in I_0} k_i \alpha_i \in Q$. By (\ref{tildeh}), it follows that 
 \[
 \langle \tilde{h}, \gamma(\zeta)\rangle =0.
 \]
 
 Since the weight of $\Psi_{\varepsilon}(x)$ is $\gamma(\zeta)$, we have
  \[
  q^{\tilde{h}}\Psi_{\varepsilon}(x)q^{-\tilde{h}}=\Psi_{\varepsilon}(x).
  \]
These complete the proof.
\end{proof}

\begin{lemma}\label{L1L2}
  \textit{Let $1\le i\le n$. As a $U_q(\widehat{\mathfrak{sl}}_n)$-submodule of $\Psi_{\varepsilon}^*V(\breve{\varpi}_i)$, we have
  \[
  L_{1,\varepsilon}^i=\{u\in \Psi_{\varepsilon}^*V(\breve{\varpi}_i) \mid q^{\tilde{h}}u=q^{i-(n+1)}u\},\quad
  L_{2,\varepsilon}^i=\{u\in \Psi_{\varepsilon}^*V(\breve{\varpi}_i) \mid q^{\tilde{h}}u=q^iu\}.
  \]
  }
\end{lemma}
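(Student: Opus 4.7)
The plan is to compute the $q^{\tilde h}$-eigenvalue on the distinguished generators of $L_{1,\varepsilon}^i$ and $L_{2,\varepsilon}^i$, observe that these eigenvalues are preserved under the $U_q(\widehat{\mathfrak{sl}}_n)$-action on $\Psi_{\varepsilon}^* V(\breve{\varpi}_i)$, and then use the direct sum decomposition of Proposition \ref{b1} to promote the resulting inclusions to equalities.

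First I would record the basic pairings $\langle \tilde h, \breve{\varpi}_j\rangle = j$ for $1 \le j \le n$ (using $\breve{\varpi}_j = \breve{\Lambda}_j - \breve{\Lambda}_0$ and $\langle \breve{\alpha}_k^{\vee}, \breve{\Lambda}_l\rangle = \delta_{kl}$, so that $\langle \breve{\alpha}_k^{\vee}, \breve{\varpi}_j\rangle = \delta_{kj}$ for $k \ge 1$) as well as $\langle \tilde h, \breve{\delta}\rangle = 0$ (since $\langle \breve{\alpha}_k^{\vee}, \breve{\delta}\rangle = 0$ for every $k$). Applied to the four distinguished generators coming from the definitions of $\psi_{1,\varepsilon}^i$ and $\psi_{2,\varepsilon}^i$, this yields: $u_{-\breve{\varpi}_n + k\breve{\delta}} = \psi_{1,\varepsilon}^1(u_{k\delta})$ has eigenvalue $q^{-n} = q^{1-(n+1)}$; $u_{\breve{\varpi}_{i-1} - \breve{\varpi}_n} = \psi_{1,\varepsilon}^i(u_{\varpi_{i-1}})$ (for $2 \le i \le n$) has eigenvalue $q^{(i-1)-n} = q^{i-(n+1)}$; $u_{\breve{\varpi}_i} = \psi_{2,\varepsilon}^i(u_{\varpi_i})$ (for $1 \le i \le n-1$) has eigenvalue $q^i$; and $u_{\breve{\varpi}_n + k\breve{\delta}} = \psi_{2,\varepsilon}^n(u_{k\delta})$ has eigenvalue $q^n$.

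By Lemma \ref{qhcom}, the operator $q^{\tilde h}$ commutes with every element of $\Psi_\varepsilon(U_q(\widehat{\mathfrak{sl}}_n))$, so each $q^{\tilde h}$-eigenspace of $\Psi_\varepsilon^* V(\breve{\varpi}_i)$ is automatically a $U_q(\widehat{\mathfrak{sl}}_n)$-submodule. Since $L_{1,\varepsilon}^i = \psi_{1,\varepsilon}^i(N_{i,1})$ is $U_q(\widehat{\mathfrak{sl}}_n)$-generated by the distinguished elements listed above (a single generator when $2 \le i \le n$, and a $\mathbb{Z}$-family of generators when $i=1$, all having the same $q^{\tilde h}$-eigenvalue $q^{i-(n+1)}$), we obtain $L_{1,\varepsilon}^i \subset \{u \mid q^{\tilde h}u = q^{i-(n+1)} u\}$; an identical argument gives $L_{2,\varepsilon}^i \subset \{u \mid q^{\tilde h} u = q^i u\}$.

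Finally, to upgrade these inclusions to equalities I would invoke Proposition \ref{b1}: since $\Psi_\varepsilon^* V(\breve{\varpi}_i) = L_{1,\varepsilon}^i \oplus L_{2,\varepsilon}^i$ and the two eigenvalues $q^{i-(n+1)}$ and $q^i$ are distinct, the candidate eigenspaces on the right-hand sides of the two claimed equalities intersect trivially; together with the two one-sided inclusions just established and the fact that their sum already exhausts all of $\Psi_\varepsilon^* V(\breve{\varpi}_i)$, neither inclusion can be proper. The whole argument is essentially bookkeeping once Lemma \ref{qhcom} and Proposition \ref{b1} are available, so I expect no real obstacle; the only care needed is to handle the degenerate $i=1$ and $i=n$ cases (where one factor is a family $\bigoplus_k V(k\delta)$ rather than a single extremal weight module) by noting that $\langle \tilde h, \breve{\delta}\rangle = 0$ so the extra $k\breve{\delta}$-shift does not disturb the eigenvalue.
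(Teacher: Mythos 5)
Your proof is correct and takes essentially the same route as the paper's: compute the $q^{\tilde h}$-eigenvalues on the images of the extremal generators, use Lemma \ref{qhcom} to conclude that each eigenspace is a $U_q(\widehat{\mathfrak{sl}}_n)$-submodule (so the one-sided inclusions follow), and then appeal to the direct sum decomposition of Proposition \ref{b1} together with distinctness of the eigenvalues to upgrade to equalities. Your write-up is somewhat more explicit than the paper's (the paper leaves the final upgrade step implicit and handles the three cases $i=1$, $2\le i\le n-1$, $i=n$ separately, whereas you give a uniform eigenvalue formula), but the underlying argument is identical.
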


\begin{proof}
 Note that by Lemma \ref{qhcom}, $\{u\in \Psi_{\varepsilon}^*V(\breve{\varpi}_i) \mid q^{\tilde{h}}u=q^{i-(n+1)}u\}$ and $\{u\in \Psi_{\varepsilon}^*V(\breve{\varpi}_i) \mid q^{\tilde{h}}u=q^iu\}$ are $U_q(\widehat{\mathfrak{sl}}_n)$-submodules of $\Psi_{\varepsilon}^*V(\breve{\varpi}_i)$.

First, consider the case $i=1$. For $k\in \mathbb{Z}$, we have
\[
q^{\tilde{h}}u_{-\breve{\varpi}_n+k\breve{\delta}}=q^{-n}u_{-\breve{\varpi}_n+k\delta},\quad q^{\tilde{h}}u_{\breve{\varpi}_1}=qu_{\breve{\varpi}_1}.
\]
As a result,
\[
L_{1,\varepsilon}^1\subset\{u\in \Psi_{\varepsilon}^*V(\breve{\varpi}_1) \mid q^{\tilde{h}}u=q^{-n}u\},\quad
L_{2,\varepsilon}^1\subset\{u\in \Psi_{\varepsilon}^*V(\breve{\varpi}_1) \mid q^{\tilde{h}}u=qu\}.
\]
By Proposition \ref{b1}, it follows that $\Psi_{\varepsilon}^*V(\breve{\varpi}_1)=L_{1,\varepsilon}^1\oplus L_{2,\varepsilon}^1$, confirming the desired result for $i=1$.

Next, consider the case $2 \leq i \leq n-1$. Here, we have
\[
q^{\tilde{h}}u_{\breve{\varpi}_{i-1}-\breve{\varpi}_n}=q^{i-(n+1)}u_{\breve{\varpi}_{i-1}-\breve{\varpi}_n},\quad q^{\tilde{h}}u_{\breve{\varpi}_i}=q^iu_{\breve{\varpi}_i}.
\]
Consequently,
\[
L_{1,\varepsilon}^i\subset\{u\in \Psi_{\varepsilon}^*V(\breve{\varpi}_i) \mid q^{\tilde{h}}u=q^{i-(n+1)}u\},\quad
L_{2,\varepsilon}^i\subset\{u\in \Psi_{\varepsilon}^*V(\breve{\varpi}_i) \mid q^{\tilde{h}}u=q^iu\}.
\]
Applying Proposition \ref{b1}, we deduce the decomposition $\Psi_{\varepsilon}^*V(\breve{\varpi}_i)=L_{1,\varepsilon}^i\oplus L_{2,\varepsilon}^i$, establishing the claim for $2 \leq i \leq n-1$.

Finally, consider the case $i=n$. For $k\in \mathbb{Z}$, we have
\[
q^{\tilde{h}}u_{\breve{\varpi}_{n-1}-\breve{\varpi}_n}=q^{-1}u_{\breve{\varpi}_{n-1}-\breve{\varpi}_n},\quad q^{\tilde{h}}u_{\breve{\varpi}_n}=q^nu_{\breve{\varpi}_n+k\breve{\delta}}.
\]
Thus,
\[
L_{1,\varepsilon}^n\subset\{u\in \Psi_{\varepsilon}^*V(\breve{\varpi}_n) \mid q^{\tilde{h}}u=q^{-1}u\},\quad
L_{2,\varepsilon}^n\subset\{u\in \Psi_{\varepsilon}^*V(\breve{\varpi}_n) \mid q^{\tilde{h}}u=q^nu\}.
\]
Proposition \ref{b1} then gives the decomposition $\Psi_{\varepsilon}^*V(\breve{\varpi}_n)=L_{1,\varepsilon}^n\oplus L_{2,\varepsilon}^n$, completing the proof for $i=n$.
\end{proof}

\begin{lemma}\label{0nact}
  \textit{Let $1\le i\le n$. The images of $L_{1,\varepsilon}^i, L_{2,\varepsilon}^i\subset V(\breve{\varpi}_i)$ by the actions of $\breve{F}_n$, $\breve{F}_0$, $\breve{E}_n$, and $\breve{E}_0$ satisfy the following relations:
  \[\left\{
\begin{array}{ll}
\breve{F}_n(L_{1,\varepsilon}^i)=0, &\breve{F}_n(L_{2,\varepsilon}^i)\subset L_{1,\varepsilon}^i, \\
\breve{F}_0(L_{1,\varepsilon}^i)\subset L_{2,\varepsilon}^i, & \breve{F}_0(L_{2,\varepsilon}^i)=0, \\
\breve{E}_n(L_{1,\varepsilon}^i)\subset L_{2,\varepsilon}^i, & \breve{E}_n(L_{2,\varepsilon}^i)=0, \\
\breve{E}_0(L_{1,\varepsilon}^i)=0, & \breve{E}_0(L_{2,\varepsilon}^i)\subset L_{1,\varepsilon}^i.
\end{array}
\right.
\]}
\end{lemma}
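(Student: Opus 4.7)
The plan is to re-interpret $L_{1,\varepsilon}^i$ and $L_{2,\varepsilon}^i$ via Lemma~\ref{L1L2} as the two $q^{\tilde h}$-eigenspaces of $V(\breve\varpi_i)$, and then reduce the lemma to a purely weight-theoretic check on how $\breve E_0,\breve F_0,\breve E_n,\breve F_n$ shift these $q^{\tilde h}$-eigenvalues.

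First, from Lemma~\ref{L1L2} the submodules $L_{1,\varepsilon}^i$ and $L_{2,\varepsilon}^i$ are precisely the $q^{i-(n+1)}$-eigenspace and the $q^i$-eigenspace of $q^{\tilde h}$ on $V(\breve\varpi_i)$. Since Proposition~\ref{b1} yields $\Psi_\varepsilon^* V(\breve\varpi_i) = L_{1,\varepsilon}^i \oplus L_{2,\varepsilon}^i$, the only $q^{\tilde h}$-eigenvalues appearing on $V(\breve\varpi_i)$ are $q^i$ and $q^{i-(n+1)}$. This is the key input that will make ``vanishing'' half of the eight claims free.

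Next, the defining relations of $U_q(\widehat{\mathfrak{sl}}_{n+1})$ give $q^{\tilde h}\breve E_j q^{-\tilde h} = q^{\langle\tilde h,\breve\alpha_j\rangle}\breve E_j$ and $q^{\tilde h}\breve F_j q^{-\tilde h} = q^{-\langle\tilde h,\breve\alpha_j\rangle}\breve F_j$. Combining these with~(\ref{tildeh}) for $j \in \{0,n\}$, the operators $\breve E_0$, $\breve F_0$, $\breve E_n$, $\breve F_n$ multiply the $q^{\tilde h}$-eigenvalue of any weight vector by $q^{-(n+1)}$, $q^{n+1}$, $q^{n+1}$, $q^{-(n+1)}$ respectively.

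Finally, I will go through the eight cases one by one: for each generator $X$ and each $k\in\{1,2\}$, I compute the shifted eigenvalue of $X(L_{k,\varepsilon}^i)$ and compare it with the admissible set $\{q^i,q^{i-(n+1)}\}$. Four cases produce a shifted eigenvalue still in this set, yielding the corresponding inclusion into the opposite summand (for example, $\breve F_n$ maps $L_{2,\varepsilon}^i$ into $L_{1,\varepsilon}^i$ because $q^i \cdot q^{-(n+1)} = q^{i-(n+1)}$). The other four produce $q^{i+(n+1)}$ or $q^{i-2(n+1)}$, neither of which appears as an eigenvalue, so the image is forced to vanish. No substantive obstacle arises; once Lemma~\ref{L1L2} has been invoked, the entire argument reduces to tracking $q^{\tilde h}$-eigenvalues and matching them against the two-element admissible set.
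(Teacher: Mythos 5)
Your proposal is correct and follows essentially the same route as the paper: the paper's own proof is precisely ``Combining Lemma~\ref{L1L2} and~(\ref{tildeh}), the assertion follows immediately,'' which compresses exactly the eigenvalue-shift bookkeeping you spell out. Your additional observation that the direct sum decomposition from Proposition~\ref{b1} forces the only admissible $q^{\tilde h}$-eigenvalues to be $q^{i}$ and $q^{i-(n+1)}$ (so that out-of-range shifts annihilate) is the implicit content of the paper's appeal to Lemma~\ref{L1L2}, and your eight case checks are all consistent.
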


\begin{proof}
Combining Lemma \ref{L1L2} and (\ref{tildeh}), the assertion follows immediately.
\end{proof}

By Lemma \ref{L1L2}, we have
  \begin{equation}
    L_{p,\varepsilon}=\left\{u\in \Psi_{\varepsilon}^*\left(\bigotimes_{i\in \breve{I}_0}V(\breve{\varpi}_i)^{\otimes m_i}\right)\quad \middle| \quad q^{\tilde{h}}u=q^{\langle \tilde{h},\lambda\rangle-p(n+1)}u\right\}\label{Lpeps}
  \end{equation}
  for $0\le p\le m$, since $\sum_{i=1}^n im_i=\langle\tilde{h},\lambda\rangle$. By combining Lemma \ref{qhcom}, (\ref{Lpep}), and (\ref{Lpeps}), we obtain the following.

\begin{lemma}\label{dirsum1}
\textit{
  For $0\le p\le m$, $L_{p,\varepsilon}$ is a $U_q(\widehat{\mathfrak{sl}}_n)$-submodule of $\Psi_{\varepsilon}^*\left(\bigotimes_{i\in \breve{I}_0}V(\breve{\varpi}_i)^{\otimes m_i}\right)$. Moreover, we have a direct sum decomposition
  \[
  \Psi_{\varepsilon}^*\left(\bigotimes_{i\in \breve{I}_0}V(\breve{\varpi}_i)^{\otimes m_i}\right)= L_{0,\varepsilon}\oplus \cdots \oplus L_{m,\varepsilon}
  \]
  as a $U_q(\widehat{\mathfrak{sl}}_n)$-module.
  \qed}
\end{lemma}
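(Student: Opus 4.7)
The plan is to obtain the lemma as an immediate consequence of the eigenspace characterization (\ref{Lpeps}) together with the commutativity statement Lemma~\ref{qhcom}. The key observation is that (\ref{Lpeps}) exhibits $L_{p,\varepsilon}$ as the eigenspace of the operator $q^{\tilde{h}}$ acting on $\Psi_{\varepsilon}^*\bigl(\bigotimes_{i\in \breve{I}_0}V(\breve{\varpi}_i)^{\otimes m_i}\bigr)$ with eigenvalue $q^{\langle\tilde{h},\lambda\rangle-p(n+1)}$, and these eigenvalues are distinct for $p=0,1,\dots,m$.

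First, I would verify that each $L_{p,\varepsilon}$ is a $U_q(\widehat{\mathfrak{sl}}_n)$-submodule. By Lemma~\ref{qhcom}, the element $q^{\tilde{h}}$ commutes with every element of $\Psi_{\varepsilon}(U_q(\widehat{\mathfrak{sl}}_n))$; hence any eigenspace of $q^{\tilde{h}}$ on a $\Psi_{\varepsilon}^*$-pulled-back module is stable under the $U_q(\widehat{\mathfrak{sl}}_n)$-action, which is precisely the required submodule property. Next, for the direct sum decomposition, the decomposition (\ref{Lpep}) already writes the ambient module as $\bigoplus_{p=0}^{m} L_{p,\varepsilon}$ as a $U_q(\mathfrak{sl}_n)$-module; the underlying vector-space direct sum carries over verbatim, and combined with the submodule property just established, it upgrades to a $U_q(\widehat{\mathfrak{sl}}_n)$-module decomposition.

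I do not expect any substantive obstacle here: the bulk of the work has already been carried out in setting up the characterization (\ref{Lpeps}) (which rests on Lemma~\ref{L1L2} and the definition of $L_{p,\varepsilon}$) and in proving Lemma~\ref{qhcom}. The present statement is essentially the standard principle that a semisimple operator commuting with an algebra action yields an eigenspace decomposition into submodules, specialized to the operator $q^{\tilde{h}}$ and the subalgebra $\Psi_{\varepsilon}(U_q(\widehat{\mathfrak{sl}}_n))$.
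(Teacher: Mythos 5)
Your proposal is correct and is essentially the paper's own argument: the paper proves the lemma precisely by combining Lemma~\ref{qhcom}, the vector-space (and $U_q(\mathfrak{sl}_n)$-module) decomposition (\ref{Lpep}), and the eigenspace characterization (\ref{Lpeps}). Nothing substantive differs.
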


For $p\in \mathbb{Z}$, we set
\[
M_{p,\varepsilon}=\{u\in \Psi_{\varepsilon}^*V(\lambda) \mid q^{\tilde{h}}u=q^{\langle \tilde{h},\lambda \rangle-(n+1)p}u\}.
\]
Note that each $M_{p,\varepsilon}$ is a $U_q(\widehat{\mathfrak{sl}}_n)$-submodule of $\Psi_{\varepsilon}V(\lambda)$ by Lemma \ref{qhcom}.

\begin{lemma}\label{dirsum1.5}
\textit{We have:
  \begin{enumerate}
    \item For a homogeneous element $x\in U_q(\widehat{\mathfrak{sl}}_{n+1})$ of weight $\sum_{i\in \breve{I}}k_i\breve{\alpha}_i$, we have $xu_{\lambda}\in M_{k_0-k_n,\varepsilon}$;
    \item There is a direct sum decomposition 
\[
\Psi_{\varepsilon}^*V(\lambda)=\bigoplus_{p\in \mathbb{Z}} M_{p,\varepsilon}
\]
as a $U_q(\widehat{\mathfrak{sl}}_n)$-module.
  \end{enumerate}
  }
\end{lemma}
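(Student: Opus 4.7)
The plan is to reduce both parts to one direct computation of the pairings $\langle \tilde{h}, \breve{\alpha}_i\rangle$ for $i\in \breve{I}$. Using $\tilde{h}=\sum_{j=1}^n j\breve{\alpha}_j^\vee$ together with the entries of the affine Cartan matrix of type $A_n^{(1)}$ (in which $\breve{\alpha}_0$ is the unique simple root adjacent to both $\breve{\alpha}_1$ and $\breve{\alpha}_n$), a direct calculation yields
\[
\langle \tilde{h},\breve{\alpha}_0\rangle=-(n+1),\quad \langle \tilde{h},\breve{\alpha}_i\rangle=0\;\;(1\le i\le n-1),\quad \langle \tilde{h},\breve{\alpha}_n\rangle=n+1.
\]
Namely, the $\breve{\alpha}_0$-value comes from the two nonzero contributions $j=1$ and $j=n$, each giving $-1$; the interior values vanish by the telescoping $(i-1)(-1)+i(2)+(i+1)(-1)=0$; and the $\breve{\alpha}_n$-value equals $(n-1)(-1)+n(2)=n+1$, since $\tilde{h}$ has no $\breve{\alpha}_0^\vee$-component.

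Granted this, for part (1) I would argue as follows. If $x\in U_q(\widehat{\mathfrak{sl}}_{n+1})$ is homogeneous of weight $\sum_i k_i\breve{\alpha}_i$, then $xu_\lambda$ has weight $\lambda+\sum_i k_i\breve{\alpha}_i$, and the pairings above give
\[
\langle \tilde{h},\wt(xu_\lambda)\rangle=\langle \tilde{h},\lambda\rangle-(n+1)(k_0-k_n).
\]
Hence $q^{\tilde{h}}(xu_\lambda)=q^{\langle\tilde{h},\lambda\rangle-(n+1)(k_0-k_n)}(xu_\lambda)$, which is precisely the condition defining membership in $M_{k_0-k_n,\varepsilon}$.

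For part (2), since $V(\lambda)=U_q(\widehat{\mathfrak{sl}}_{n+1})u_\lambda$ by definition and $U_q(\widehat{\mathfrak{sl}}_{n+1})$ is $\breve{Q}$-graded via the triangular decomposition, any vector in $V(\lambda)$ is a finite sum of vectors of the form $xu_\lambda$ with $x$ homogeneous; by part (1) such a vector lies in $\sum_{p\in \mathbb{Z}}M_{p,\varepsilon}$, so the sum already exhausts $\Psi_\varepsilon^*V(\lambda)$. Directness is then automatic, because the $M_{p,\varepsilon}$ are the eigenspaces of the single $\mathbb{Q}(q)$-linear operator $q^{\tilde{h}}$ attached to the pairwise distinct eigenvalues $q^{\langle\tilde{h},\lambda\rangle-(n+1)p}$ as $p$ ranges over $\mathbb{Z}$. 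There is no genuine obstacle in the argument; the only substantive step is the Cartan-matrix pairing performed in the first paragraph, and everything else is essentially bookkeeping.
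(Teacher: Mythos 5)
Your proposal is correct and takes essentially the same route as the paper: both rest entirely on the computation of $\langle\tilde h,\breve\alpha_i\rangle$ recorded in (\ref{tildeh}), from which part (1) follows by reading off the $q^{\tilde h}$-eigenvalue of $xu_\lambda$, and part (2) by the weight-space (eigenspace) decomposition of $V(\lambda)$. The paper's proof is simply terser, citing (\ref{tildeh}) and then asserting both claims; your write-up supplies the same bookkeeping explicitly.
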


\begin{proof}
  The first assertion is a direct consequence of (\ref{tildeh}), and the second follows immediately from the first.
\end{proof}

\begin{proposition}\label{dirsum2}
\textit{We have:
\begin{enumerate}
  \item If $0\le p\le m$, then $\Phi_{\lambda}(M_{p,\varepsilon})\subset L_{p,\varepsilon}$; otherwise, $\Phi_{\lambda}(M_{p,\varepsilon})=0$;
  \item There is a direct sum decomposition
  \[
  \Psi_{\varepsilon}^* V(\lambda)= M_{0,\varepsilon}\oplus\cdots\oplus M_{m,\varepsilon}
  \]
  as a $U_q(\widehat{\mathfrak{sl}}_n)$-module. Moreover, for $0\le p\le m$, a weight vector $u\in \Psi_{\varepsilon}^* V(\lambda)$ is contained in $M_{p,\varepsilon}$ if and only if the weight of $u$ is the form $\lambda+\sum_{i\in \breve{I}}k_i\breve{\alpha}_i$ with $k_0=k_n+p$. 
\end{enumerate}
}
\end{proposition}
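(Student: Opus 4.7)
The strategy is to exploit the injective $U_q(\widehat{\mathfrak{sl}}_{n+1})$-linear map $\Phi_{\lambda}:V(\lambda)\hookrightarrow \widetilde{V}(\lambda)$ from Theorem \ref{inj}, together with the already-established eigenspace decomposition on the target. Since $q^{\tilde{h}}\in U_q(\widehat{\mathfrak{sl}}_{n+1})$, the map $\Phi_{\lambda}$ intertwines the $q^{\tilde{h}}$-actions on source and target. The two subspaces $M_{p,\varepsilon}\subset \Psi_{\varepsilon}^{*}V(\lambda)$ and $L_{p,\varepsilon}\subset \Psi_{\varepsilon}^{*}\widetilde{V}(\lambda)$ are by definition (see (\ref{Lpeps})) the $q^{\langle\tilde{h},\lambda\rangle-(n+1)p}$-eigenspaces of $q^{\tilde{h}}$ in the respective modules, so the inclusion $\Phi_{\lambda}(M_{p,\varepsilon})\subset L_{p,\varepsilon}$ follows automatically from equivariance.

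To establish part (1), I would first observe that Lemma \ref{dirsum1} yields a complete eigenspace decomposition $\widetilde{V}(\lambda)=L_{0,\varepsilon}\oplus\cdots\oplus L_{m,\varepsilon}$ for $q^{\tilde{h}}$. Consequently, the only eigenvalues of $q^{\tilde{h}}$ occurring on $\widetilde{V}(\lambda)$ are $q^{\langle\tilde{h},\lambda\rangle-(n+1)p}$ for $0\le p\le m$; eigenspaces for other values vanish. Combined with the intertwining property, this gives $\Phi_{\lambda}(M_{p,\varepsilon})\subset L_{p,\varepsilon}$ when $0\le p\le m$, and $\Phi_{\lambda}(M_{p,\varepsilon})=0$ otherwise. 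By the injectivity of $\Phi_{\lambda}$ (Theorem \ref{inj}), the latter in fact forces $M_{p,\varepsilon}=0$ for $p\notin\{0,\ldots,m\}$.

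For part (2), Lemma \ref{dirsum1.5} already provides the direct sum decomposition $\Psi_{\varepsilon}^{*}V(\lambda)=\bigoplus_{p\in\mathbb{Z}}M_{p,\varepsilon}$ as a $U_q(\widehat{\mathfrak{sl}}_n)$-module. Combining this with the vanishing $M_{p,\varepsilon}=0$ for $p<0$ or $p>m$ obtained in part (1) yields $\Psi_{\varepsilon}^{*}V(\lambda)=M_{0,\varepsilon}\oplus\cdots\oplus M_{m,\varepsilon}$. For the weight characterization, let $u\in \Psi_{\varepsilon}^{*}V(\lambda)$ be a weight vector of weight $\lambda+\sum_{i\in \breve{I}}k_i\breve{\alpha}_i$. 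Using (\ref{tildeh}), which gives $\langle \tilde{h},\breve{\alpha}_0\rangle=-(n+1)$, $\langle \tilde{h},\breve{\alpha}_n\rangle=n+1$, and $\langle \tilde{h},\breve{\alpha}_i\rangle=0$ for $1\le i\le n-1$, a direct computation shows $q^{\tilde{h}}u=q^{\langle\tilde{h},\lambda\rangle+(n+1)(k_n-k_0)}u$. This equals $q^{\langle\tilde{h},\lambda\rangle-(n+1)p}$ precisely when $k_0=k_n+p$, establishing the membership criterion.

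The proof is essentially a packaging of the earlier lemmas, so no substantial obstacle is expected; the only delicate point is to note that Lemma \ref{dirsum1} provides \emph{all} $q^{\tilde{h}}$-eigenspaces of $\widetilde{V}(\lambda)$ (not merely some of them), which is what forces $M_{p,\varepsilon}$ to vanish outside the range $[0,m]$ via injectivity of $\Phi_{\lambda}$.
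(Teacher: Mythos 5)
Your proof is correct and follows essentially the same route as the paper: both arguments hinge on the $q^{\tilde{h}}$-equivariance of $\Phi_{\lambda}$, the completeness of the eigenspace decomposition $\widetilde{V}(\lambda)=L_{0,\varepsilon}\oplus\cdots\oplus L_{m,\varepsilon}$ from Lemma \ref{dirsum1}, the injectivity of $\Phi_{\lambda}$ from Theorem \ref{inj}, and Lemma \ref{dirsum1.5} for the $\mathbb{Z}$-indexed decomposition of the source. The paper phrases the argument in terms of homogeneous elements $x u_{\lambda}$ and an explicit weight computation via (\ref{tildeh}), while you speak directly of eigenspace intertwining, but the substance is identical.
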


\begin{proof}
 Let $x \in U_q(\widehat{\mathfrak{sl}}_{n+1})$ be a homogeneous element of weight $\sum_{i \in \breve{I}} k_i \breve{\alpha}_i$. From (\ref{tildeh}), it follows that
\[
q^{\tilde{h}}\Phi_{\lambda}(xu_{\lambda}) = q^{\langle\tilde{h}, \lambda\rangle - (k_0 - k_n)(n+1)}\Phi_{\lambda}(xu_{\lambda}).
\]
Therefore, by (\ref{Lpeps}), we conclude that $\Phi_{\lambda}(xu_{\lambda}) \in L_{k_0-k_n,\varepsilon}$ if $0\le k_0-k_n\le m$; otherwise $\Phi_{\lambda}(xu_{\lambda}) = 0$. This proves the first assertion. The second assertion follows directly from the first and Lemma \ref{dirsum1.5}.
\end{proof}

We define the lexicographic order $<$ on $\mathcal{I}_p$ as follows.

\begin{definition}
  For $\mathfrak{i} = (i_1, \ldots, i_m)$ and $\mathfrak{j} = (j_1, \ldots, j_m)$ in $\mathcal{I}_p$, we write $\mathfrak{i} < \mathfrak{j}$ if there exists an integer $1 \leq k \leq m$ such that $i_l = j_l$ for all $l < k$ and $i_k < j_k$. Note that the relation $<$ defines a total order on $\mathcal{I}_p$.
\end{definition}

For $\mathfrak{i}\in \mathcal{I}_p$, we define the $U_q(\mathfrak{sl}_n)$-submodules $L_{\le\mathfrak{i},\varepsilon}$ and $L_{\ge \mathfrak{i},\varepsilon}$ of $L_{p,\varepsilon}$ by
\[
L_{\le\mathfrak{i},\varepsilon}=\bigoplus_{\mathfrak{j}\in \mathcal{I}_p, \;{\mathfrak{j}\le\mathfrak{i}}}L_{\mathfrak{j},\varepsilon}\quad\text{and}\quad L_{\ge\mathfrak{i},\varepsilon}=\bigoplus_{\mathfrak{j}\in \mathcal{I}_p,\;{\mathfrak{j}\ge\mathfrak{i}}}L_{\mathfrak{j},\varepsilon}.
\]

\begin{proposition}\label{Lsub}
  \textit{Let $\mathfrak{i}\in \mathcal{I}_p$.
  \begin{enumerate}
    \item If $\varepsilon=1$, then $L_{\ge\mathfrak{i},\varepsilon}$ is a $U_q(\widehat{\mathfrak{sl}}_n)$-submodule of $L_{p,\varepsilon}$;
    \item If $\varepsilon=-1$, then $L_{\le\mathfrak{i},\varepsilon}$ is a $U_q(\widehat{\mathfrak{sl}}_n)$-submodule of $L_{p,\varepsilon}$.
  \end{enumerate}
  }
\end{proposition}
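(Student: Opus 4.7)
The plan is to verify, for each Chevalley generator of $U_q(\widehat{\mathfrak{sl}}_n)$, that its image under $\Psi_{\varepsilon}$ preserves the claimed subspace. The elements $\Psi_{\varepsilon}(q^h) = q^{j(h)}$ act diagonally on weight spaces, so they stabilize each $L_{\mathfrak{j},\varepsilon}$. For $i \in \{1, \ldots, n-1\}$ we have $\Psi_{\varepsilon}(E_i) = \breve{E}_i$, $\Psi_{\varepsilon}(F_i) = \breve{F}_i$, and $\Psi_{\varepsilon}(t_i^{\pm1}) = \breve{t}_i^{\pm 1}$, so on $\Psi_{\varepsilon}^{*}\bigotimes_j V(\breve{\varpi}_{\chi_{\lambda}(j)})$ these generators act via the iterated $U_q(\widehat{\mathfrak{sl}}_{n+1})$-coproduct. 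Since each $L_b^k \subset \Psi_{\varepsilon}^{*} V(\breve{\varpi}_k)$ is already a $U_q(\widehat{\mathfrak{sl}}_n)$-submodule by Proposition \ref{b1} (hence stable under $\breve{E}_i$, $\breve{F}_i$) and is weight-graded (hence stable under $\breve{t}_i^{\pm 1}$), every summand of the coproduct stays inside $L_{\mathfrak{j},\varepsilon}$. Thus each single $L_{\mathfrak{j},\varepsilon}$ is stable under these generators, and so is any direct sum of them.

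The real content lies in $\Psi_{\varepsilon}(E_0) = T_n^{\varepsilon}(\breve{E}_0)$ and $\Psi_{\varepsilon}(F_0) = T_n^{\varepsilon}(\breve{F}_0)$. I will use the explicit expressions $T_n(\breve{E}_0) = \breve{E}_n \breve{E}_0 - q^{-1} \breve{E}_0 \breve{E}_n$, $T_n(\breve{F}_0) = \breve{F}_0 \breve{F}_n - q \breve{F}_n \breve{F}_0$, together with $T_n^{-1}(\breve{E}_0) = \breve{E}_0 \breve{E}_n - q^{-1} \breve{E}_n \breve{E}_0$ and $T_n^{-1}(\breve{F}_0) = \breve{F}_n \breve{F}_0 - q \breve{F}_0 \breve{F}_n$, and expand the action on $\vec{v} = v_1 \otimes \cdots \otimes v_m \in L_{\mathfrak{j},\varepsilon}$ by writing $\breve{E}_0(\vec{v}) = \sum_a \breve{E}_0^{(a)}(\vec{v})$ and $\breve{E}_n(\vec{v}) = \sum_b \breve{E}_n^{(b)}(\vec{v})$, where $\breve{E}_i^{(c)}$ denotes the summand of the iterated coproduct applying $\breve{E}_i$ at position $c$ and $\breve{t}_i^{-1}$ at positions to the right (and symmetrically for $\breve{F}_i^{(c)}$ with $\breve{t}_i$ at positions to the left). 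By Lemma \ref{0nact} applied factor by factor, $\breve{E}_0$ kills $L_1^k$ and maps $L_2^k \to L_1^k$, while $\breve{E}_n$ kills $L_2^k$ and maps $L_1^k \to L_2^k$; hence only pairs $(a,b)$ with $j_a = 2$ and $j_b = 1$ can contribute, and such a contribution flips the entries of $\mathfrak{j}$ to $1$ at position $a$ and $2$ at position $b$. The diagonal $a=b$ part assembles into $T_n^{\varepsilon}(\breve{E}_0)$ (resp.\ $T_n^{\varepsilon}(\breve{F}_0)$) acting on the $a$-th factor, which preserves $L_{\mathfrak{j},\varepsilon}$ by the single-factor case.

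The off-diagonal cancellations rest on the four commutation relations $\breve{E}_0 \breve{t}_n^{-1} = q^{-1} \breve{t}_n^{-1} \breve{E}_0$, $\breve{E}_n \breve{t}_0^{-1} = q^{-1} \breve{t}_0^{-1} \breve{E}_n$, $\breve{F}_0 \breve{t}_n = q^{-1} \breve{t}_n \breve{F}_0$, $\breve{F}_n \breve{t}_0 = q^{-1} \breve{t}_0 \breve{F}_n$, all forced by the Cartan-matrix entries $a_{0n} = a_{n0} = -1$. For $\varepsilon = 1$, a direct computation shows that in $\breve{E}_n^{(b)} \breve{E}_0^{(a)} - q^{-1} \breve{E}_0^{(a)} \breve{E}_n^{(b)}$ the pairs with $b > a$ cancel identically, while those with $b < a$ survive with the nonzero scalar $1 - q^{-2}$; the surviving off-diagonal term flips the earlier position $b < a$ from entry $1$ to $2$ and the later position $a$ from $2$ to $1$, which strictly increases the lexicographic index. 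The parallel calculation for $T_n(\breve{F}_0)$ uses the $\breve{F}$-relations and yields the same lex-increasing behaviour, so $\Psi_1(E_0)$ and $\Psi_1(F_0)$ both send $L_{\mathfrak{j},1}$ into $L_{\mathfrak{j},1} \oplus \bigoplus_{\mathfrak{j}' > \mathfrak{j}} L_{\mathfrak{j}',1}$, establishing (1).

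For $\varepsilon = -1$, the same bookkeeping applied to $T_n^{-1}(\breve{E}_0)$ and $T_n^{-1}(\breve{F}_0)$ interchanges the roles of $a$ and $b$: the $b < a$ off-diagonal contributions now vanish and only the $b > a$ ones survive, flipping the earlier position $a$ from $2$ to $1$ and the later position $b$ from $1$ to $2$, which strictly decreases the lex index. This gives (2). The only genuine obstacle is the careful tracking of the various $\breve{t}_0^{\pm 1}$ and $\breve{t}_n^{\pm 1}$ factors produced when expanding $\breve{E}_0^{(a)}$ and $\breve{E}_n^{(b)}$ (or their $\breve{F}$-analogues) in opposite orders; every cancellation ultimately reduces to the four commutation relations listed above.
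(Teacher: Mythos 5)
Your proposal is correct and follows essentially the same strategy as the paper: expand $T_n^{\varepsilon}(\breve{E}_0)$ and $T_n^{\varepsilon}(\breve{F}_0)$ via the iterated coproduct, use Lemma \ref{0nact} factor by factor to pin down which positions can contribute, and exploit the commutation relations forced by $a_{0n}=a_{n0}=-1$ to kill one side of the off-diagonal sum while showing the surviving terms strictly increase (for $\varepsilon=1$) or decrease (for $\varepsilon=-1$) the lexicographic index. The only cosmetic discrepancy is your coefficient $1-q^{-2}$ versus the paper's $q-q^{-1}$, which is just a matter of which of $\breve{E}_0^{(a)}\breve{E}_n^{(b)}$ and $\breve{E}_n^{(b)}\breve{E}_0^{(a)}$ you keep as the reference term.
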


\begin{proof}
  First, we consider the case $\varepsilon=1$. We take $\mathfrak{j}=(j_1,\ldots,j_m)\in \mathcal{I}_p$ and $u=\otimes_{j=1}^m u_j\in L_{\mathfrak{j},1}$. We have
    \begin{align*}
      T_n(\breve{F}_0)(u)=&(\breve{F}_0\breve{F}_n-q\breve{F}_n\breve{F}_0)(\otimes_{j=1}^m u_j) \\
      =&\sum_{k=1}^m q^{\sum_{a=1}^{k-1}\langle \breve{\alpha}_0^{\vee}+\breve{\alpha}_n^{\vee}, \wt(u_a)\rangle}u_1\otimes \cdots \otimes u_{k-1}\otimes T_n(\breve{F}_0)u_k\otimes u_{k+1}\otimes \cdots u_m \\
      &+\sum_{k<l, j_k=1, j_l=2} q^{\sum_{a=1}^{k-1}\langle \breve{\alpha}_0^{\vee}, \wt(u_a)\rangle+\sum_{b=1}^{l-1}\langle \breve{\alpha}_n^{\vee}, \wt(u_b)\rangle}(1-q^{1-\langle\breve{\alpha}_n^{\vee},\breve{\alpha}_0\rangle})u^{\prime} \\
      &+\sum_{k<l, j_k=2, j_l=1} q^{\sum_{a=1}^{k-1}\langle \breve{\alpha}_n^{\vee}, \wt(u_a)\rangle+\sum_{b=1}^{l-1}\langle \breve{\alpha}_0^{\vee}, \wt(u_b)\rangle}(q^{-\langle \breve{\alpha}_0^{\vee},\breve{\alpha}_n\rangle}-q)u^{\prime\prime}\\
      =&\sum_{k=1}^m q^{\sum_{a=1}^{k-1}\langle \breve{\alpha}_0^{\vee}+\breve{\alpha}_n^{\vee}, \wt(u_a)\rangle}u_1\otimes \cdots \otimes u_{k-1}\otimes T_n(\breve{F}_0)u_k\otimes u_{k+1}\otimes \cdots u_m \\
      &+\sum_{k<l, j_k=1, j_l=2} q^{\sum_{a=1}^{k-1}\langle \breve{\alpha}_0^{\vee}, \wt(u_a)\rangle+\sum_{b=1}^{l-1}\langle \breve{\alpha}_n^{\vee}, \wt(u_b)\rangle}(1-q^{1-\langle\breve{\alpha}_n^{\vee},\breve{\alpha}_0\rangle})u^{\prime},
    \end{align*}
    where 
    \begin{align*}
      u^{\prime}=u_1\otimes\cdots u_{k-1}\otimes \breve{F}_0u_k\otimes u_{k+1}\otimes \cdots\otimes u_{l-1}\otimes\breve{F}_n u_l\otimes u_{l+1}\otimes \cdots\otimes u_m,\\
      u^{\prime\prime}=u_1\otimes\cdots u_{k-1}\otimes \breve{F}_nu_k\otimes u_{k+1}\otimes \cdots\otimes u_{l-1}\otimes\breve{F}_0 u_l\otimes u_{l+1}\otimes \cdots\otimes u_m.
    \end{align*}
   
Here, the first equality follows from $T_n(\breve{F}_0)=\breve{F}_0\breve{F}_n-q\breve{F}_n\breve{F}_0$ and the last equality follows from $\langle \breve{\alpha}_0^{\vee},\breve{\alpha}_n\rangle=-1$.
    
    Also, we have
    \begin{align*}
      T_n(\breve{E}_0)(u)=&(\breve{E}_n\breve{E}_0-q^{-1}\breve{E}_0\breve{E}_n)(\otimes_{j=1}^m u_j) \\
      =&\sum_{k=1}^m q^{-\sum_{a=k+1}^m\langle \breve{\alpha}_0^{\vee}+\breve{\alpha}_n^{\vee}, \wt(u_a)\rangle}u_1\otimes \cdots \otimes u_{k-1}\otimes T_n(\breve{E}_0)u_k\otimes u_{k+1}\otimes \cdots u_m \\
      &+\sum_{k<l, j_k=1, j_l=2} q^{\sum_{a=k+1}^m\langle \breve{\alpha}_n^{\vee}, \wt(u_a)\rangle+\sum_{b=l+1}^m\langle \breve{\alpha}_0^{\vee}, \wt(u_b)\rangle}(q^{-\langle \breve{\alpha}_n^{\vee},\breve{\alpha}_0\rangle}-q^{-1})v^{\prime} \\
      &+\sum_{k<l, j_k=2, j_l=1} q^{\sum_{a=k+1}^m\langle \breve{\alpha}_0^{\vee}, \wt(u_a)\rangle+\sum_{b=l+1}^m\langle \breve{\alpha}_n^{\vee}, \wt(u_b)\rangle}(1-q^{-1}q^{-\langle \breve{\alpha}_0^{\vee},\breve{\alpha}_n\rangle})v^{\prime\prime}\\
      =&\sum_{k=1}^m q^{-\sum_{a=k+1}^m\langle \breve{\alpha}_0^{\vee}+\breve{\alpha}_n^{\vee}, \wt(u_a)\rangle}u_1\otimes \cdots \otimes u_{k-1}\otimes T_n(\breve{E}_0)u_k\otimes u_{k+1}\otimes \cdots u_m \\
      &+\sum_{k<l, j_k=1, j_l=2} q^{\sum_{a=k+1}^m\langle \breve{\alpha}_n^{\vee}, \wt(u_a)\rangle+\sum_{b=l+1}^m\langle \breve{\alpha}_0^{\vee}, \wt(u_b)\rangle}(q^{-\langle \breve{\alpha}_n^{\vee},\breve{\alpha}_0\rangle}-q^{-1})v^{\prime},
    \end{align*}
    where 
    \begin{align*}
      v^{\prime}=u_1\otimes\cdots u_{k-1}\otimes \breve{E}_nu_k\otimes u_{k+1}\otimes \cdots\otimes u_{l-1}\otimes\breve{E}_0 u_l\otimes u_{l+1}\otimes \cdots\otimes u_m,\\
      v^{\prime\prime}=u_1\otimes\cdots u_{k-1}\otimes \breve{E}_0u_k\otimes u_{k+1}\otimes \cdots\otimes u_{l-1}\otimes\breve{E}_n u_l\otimes u_{l+1}\otimes \cdots\otimes u_m.
    \end{align*}

Here, the first equality follows from $T_n(\breve{E}_0)=\breve{E}_n\breve{E}_0-q^{-1}\breve{E}_0\breve{E}_n$ and the last equality follows from $\langle \breve{\alpha}_0^{\vee},\breve{\alpha}_n\rangle=-1$.

    For each $1\le k< l\le m$ with $j_k=1, j_l=2$, we have
    \[
    \mathfrak{j}<(j_1,\ldots,j_{k-1},2,j_{k+1},\ldots,j_{l-1},1,j_{l+1},\ldots,j_m).
    \]
    Hence, we have $T_n(\breve{F}_0)(u), T_n(\breve{E}_0)(u)\in L_{\ge\mathfrak{j},1}$ by Lemma \ref{0nact}. Therefore, $L_{\ge\mathfrak{i},1}$ is a $U_q(\widehat{\mathfrak{sl}}_n)$-submodule of $L_{p,1}$.

    Next, we consider the case $\varepsilon=-1$. We take an $\mathfrak{j}=(j_1,\ldots,j_m)\in \mathcal{I}_p$ and $u=\otimes_{j=1}^m u_j\in L_{\mathfrak{j},-1}$. We have
    \begin{align*}
      T_n^{-1}(\breve{F}_0)(u)=&(-q\breve{F}_0\breve{F}_n+\breve{F}_n\breve{F}_0)(\otimes_{j=1}^m u_j) \\
      =&\sum_{k=1}^m q^{\sum_{a=1}^{k-1}\langle \breve{\alpha}_0^{\vee}+\breve{\alpha}_n^{\vee}, \wt(u_a)\rangle}u_1\otimes \cdots \otimes u_{k-1}\otimes T_n^{-1}(\breve{F}_0)u_k\otimes u_{k+1}\otimes \cdots u_m \\
      &+\sum_{k<l, j_k=1, j_l=2} q^{\sum_{a=1}^{k-1}\langle \breve{\alpha}_0^{\vee}, \wt(u_a)\rangle+\sum_{b=1}^{l-1}\langle \breve{\alpha}_n^{\vee}, \wt(u_b)\rangle}(-q+q^{-\langle\breve{\alpha}_n^{\vee},\breve{\alpha}_0\rangle})u^{\prime} \\
      &+\sum_{k<l, j_k=2, j_l=1} q^{\sum_{a=1}^{k-1}\langle \breve{\alpha}_n^{\vee}, \wt(u_a)\rangle+\sum_{b=1}^{l-1}\langle \breve{\alpha}_0^{\vee}, \wt(u_b)\rangle}(-qq^{-\langle \breve{\alpha}_0^{\vee},\breve{\alpha}_n\rangle}+1)u^{\prime\prime}\\
      =&\sum_{k=1}^m q^{\sum_{a=1}^{k-1}\langle \breve{\alpha}_0^{\vee}+\breve{\alpha}_n^{\vee}, \wt(u_a)\rangle}u_1\otimes \cdots \otimes u_{k-1}\otimes T_n^{-1}(\breve{F}_0)u_k\otimes u_{k+1}\otimes \cdots u_m \\
      &+\sum_{k<l, j_k=2, j_l=1} q^{\sum_{a=1}^{k-1}\langle \breve{\alpha}_n^{\vee}, \wt(u_a)\rangle+\sum_{b=1}^{l-1}\langle \breve{\alpha}_0^{\vee}, \wt(u_b)\rangle}(-qq^{-\langle \breve{\alpha}_0^{\vee},\breve{\alpha}_n\rangle}+1)u^{\prime\prime},
    \end{align*}
    where 
    \begin{align*}
      u^{\prime}=u_1\otimes\cdots u_{k-1}\otimes \breve{F}_0u_k\otimes u_{k+1}\otimes \cdots\otimes u_{l-1}\otimes\breve{F}_n u_l\otimes u_{l+1}\otimes \cdots\otimes u_m,\\
      u^{\prime\prime}=u_1\otimes\cdots u_{k-1}\otimes \breve{F}_nu_k\otimes u_{k+1}\otimes \cdots\otimes u_{l-1}\otimes\breve{F}_0 u_l\otimes u_{l+1}\otimes \cdots\otimes u_m.
    \end{align*}

Here, the first equality follows from $T_n^{-1}(\breve{F}_0)=-q\breve{F}_0\breve{F}_n+\breve{F}_n\breve{F}_0$ and the last equality follows from $\langle \breve{\alpha}_0^{\vee},\breve{\alpha}_n\rangle=-1$.

    Also, we have
    \begin{align*}
      T_n^{-1}(\breve{E}_0)(u)=&(-q^{-1}\breve{E}_n\breve{E}_0+\breve{E}_0\breve{E}_n)(\otimes_{j=1}^m u_j) \\
      =&\sum_{k=1}^m q^{-\sum_{a=k+1}^m\langle \breve{\alpha}_0^{\vee}+\breve{\alpha}_n^{\vee}, \wt(u_a)\rangle}u_1\otimes \cdots \otimes u_{k-1}\otimes T_n^{-1}(\breve{E}_0)u_k\otimes u_{k+1}\otimes \cdots u_m \\
      &+\sum_{k<l, j_k=1, j_l=2} q^{\sum_{a=k+1}^m\langle \breve{\alpha}_n^{\vee}, \wt(u_a)\rangle+\sum_{b=l+1}^m\langle \breve{\alpha}_0^{\vee}, \wt(u_b)\rangle}(-q^{-1}q^{-\langle \breve{\alpha}_n^{\vee},\breve{\alpha}_0\rangle}+1)v^{\prime} \\
      &+\sum_{k<l, j_k=2, j_l=1} q^{\sum_{a=k+1}^m\langle \breve{\alpha}_0^{\vee}, \wt(u_a)\rangle+\sum_{b=l+1}^m\langle \breve{\alpha}_n^{\vee}, \wt(u_b)\rangle}(-q^{-1}+q^{-\langle \breve{\alpha}_0^{\vee},\breve{\alpha}_n\rangle})v^{\prime\prime}\\
      =&\sum_{k=1}^m q^{-\sum_{a=k+1}^m\langle \breve{\alpha}_0^{\vee}+\breve{\alpha}_n^{\vee}, \wt(u_a)\rangle}u_1\otimes \cdots \otimes u_{k-1}\otimes T_n^{-1}(\breve{E}_0)u_k\otimes u_{k+1}\otimes \cdots u_m \\
      &+\sum_{k<l, j_k=2, j_l=1} q^{\sum_{a=k+1}^m\langle \breve{\alpha}_0^{\vee}, \wt(u_a)\rangle+\sum_{b=l+1}^m\langle \breve{\alpha}_n^{\vee}, \wt(u_b)\rangle}(-q^{-1}+q^{-\langle \breve{\alpha}_0^{\vee},\breve{\alpha}_n\rangle})v^{\prime\prime},
    \end{align*}
    where 
    \begin{align*}
      v^{\prime}=u_1\otimes\cdots u_{k-1}\otimes \breve{E}_nu_k\otimes u_{k+1}\otimes \cdots\otimes u_{l-1}\otimes\breve{E}_0 u_l\otimes u_{l+1}\otimes \cdots\otimes u_m,\\
      v^{\prime\prime}=u_1\otimes\cdots u_{k-1}\otimes \breve{E}_0u_k\otimes u_{k+1}\otimes \cdots\otimes u_{l-1}\otimes\breve{E}_n u_l\otimes u_{l+1}\otimes \cdots\otimes u_m.
    \end{align*}

Here, the first equality follows from $T_n^{-1}(\breve{E}_0)=-q^{-1}\breve{E}_n\breve{E}_0+\breve{E}_0\breve{E}_n$ and the last equality follows from $\langle \breve{\alpha}_0^{\vee},\breve{\alpha}_n\rangle=-1$.

    For each $1\le k< l\le m$ with $j_k=2, j_l=1$, we have
    \[
    \mathfrak{j}>(j_1,\ldots,j_{k-1},1,j_{k+1},\ldots,j_{l-1},2,j_{l+1},\ldots,j_m).
    \]
    Hence, we have $T_n^{-1}(\breve{F}_0)(u), T_n^{-1}(\breve{E}_0)(u)\in L_{\le\mathfrak{j},-1}$ by Lemma \ref{0nact}. Therefore, $L_{\le\mathfrak{i},-1}$ is a $U_q(\widehat{\mathfrak{sl}}_n)$-submodule of $L_{p,-1}$.
\end{proof}

Let $\mathfrak{i}, \mathfrak{j}\in \mathcal{I}_p$. If $\mathfrak{i}<\mathfrak{j}$ and there is no $\mathfrak{k}$ such that $\mathfrak{i}<\mathfrak{k}<\mathfrak{j}$, then we write $\mathfrak{j}=\mathfrak{i}_+, \mathfrak{i}=\mathfrak{j}_-$.

For $\mathfrak{i}\in \mathcal{I}_p$, we define the $U_q(\widehat{\mathfrak{sl}}_n)$-module $\mathcal{L}_{\mathfrak{i},\varepsilon}$ as follows.
\begin{itemize}
  \item For $\varepsilon=1$, we define 
  \[
  \mathcal{L}_{\mathfrak{i},1}=\left\{
\begin{array}{ll}
L_{\ge\mathfrak{i},1}(=L_{\mathfrak{i},1}) & (\mathfrak{i}\;\text{is maximum}) \\
L_{\ge\mathfrak{i},1}/L_{\ge\mathfrak{i}_+,1} & (\text{otherwise})
\end{array}
\right.;
  \]
  \item For $\varepsilon=-1$, we define 
  \[
  \mathcal{L}_{\mathfrak{i},-1}=\left\{
\begin{array}{ll}
L_{\le\mathfrak{i},-1}(=L_{\mathfrak{i},-1}) & (\mathfrak{i}\;\text{is minimum}) \\
L_{\le\mathfrak{i},-1}/L_{\le\mathfrak{i}_-,-1} & (\text{otherwise})
\end{array}
\right..
  \]
\end{itemize}

The canonical projections $L_{\ge \mathfrak{i},1}\to \mathcal{L}_{\mathfrak{i},1}$ and $L_{\le \mathfrak{i},-1}\to \mathcal{L}_{\mathfrak{i},-1}$ are denoted by $p_{\mathfrak{i},1}$ and $p_{\mathfrak{i},-1}$, respectively. Note that 
\[
\bigotimes_{j=1}^m N_{\chi_{\lambda}(j),i_j}\xrightarrow{\otimes_{j=1}^m \psi_{i_j,\varepsilon}^{\chi_{\lambda}(j)}}\bigotimes_{j=1}^m L_{i_j,\varepsilon}^{\chi_{\lambda}(j)}=L_{\mathfrak{i},\varepsilon}
\]
is an isomorphism of $U_q(\mathfrak{sl}_n)$-modules. From this, it follows that if $\varepsilon=1$, then the composition 
\[
\Xi_{\mathfrak{i},1}:\bigotimes_{j=1}^mN_{\chi_{\lambda}(j),i_j}\xrightarrow{\otimes_{j=1}^m \psi_{i_j,1}^{\chi_{\lambda}(j)}}L_{\mathfrak{i},1}\hookrightarrow L_{\ge\mathfrak{i},1}\xrightarrow{p_{\mathfrak{i},1}}\mathcal{L}_{\mathfrak{i},1}
\]
is an isomorphism of $U_q(\mathfrak{sl}_n)$-modules. Similarly, if $\varepsilon=-1$, then the composition
\[
\Xi_{\mathfrak{i},-1}:\bigotimes_{j=1}^mN_{\chi_{\lambda}(j),i_j}\xrightarrow{\otimes_{j=1}^m \psi_{i_j,-1}^{\chi_{\lambda}(j)}}L_{\mathfrak{i},-1}\hookrightarrow L_{\le\mathfrak{i},-1}\xrightarrow{p_{\mathfrak{i},-1}}\mathcal{L}_{\mathfrak{i},-1}
\]
is an isomorphism of $U_q(\mathfrak{sl}_n)$-modules.

\begin{proposition}\label{quot}
\textit{Let $\mathfrak{i}=(i_1,\ldots,i_m)\in \mathcal{I}_p$.
\begin{enumerate}
  \item For $\varepsilon=1$, the $U_q(\mathfrak{sl}_n)$-isomorphism
  \[
  \Xi_{\mathfrak{i},1}:\bigotimes_{j=1}^m N_{\chi_{\lambda}(j),i_j}\stackrel{\cong}{\longrightarrow}\mathcal{L}_{\mathfrak{i},1}
  \]
  is $U_q(\widehat{\mathfrak{sl}}_n)$-linear;
  \item For $\varepsilon=-1$, the $U_q(\mathfrak{sl}_n)$-isomorphism
  \[
  \Xi_{\mathfrak{i},-1}:\bigotimes_{j=1}^m N_{\chi_{\lambda}(j),i_j}\stackrel{\cong}{\longrightarrow}\mathcal{L}_{\mathfrak{i},-1}
  \]
  is $U_q(\widehat{\mathfrak{sl}}_n)$-linear.
\end{enumerate}}
\end{proposition}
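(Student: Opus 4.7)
The plan is to check that $\Xi_{\mathfrak{i},\varepsilon}$ intertwines the action of each Chevalley generator of $U_q(\widehat{\mathfrak{sl}}_n)$. Since $\Xi_{\mathfrak{i},\varepsilon}$ is already $U_q(\mathfrak{sl}_n)$-linear by construction, only the compatibility with $F_0$ and $E_0$---that is, with the operators $T_n^{\varepsilon}(\breve{F}_0)$ and $T_n^{\varepsilon}(\breve{E}_0)$ on the target side---remains to be verified. I treat $\varepsilon=1$ in detail; the case $\varepsilon=-1$ is completely parallel, with $L_{\le\mathfrak{i}_-,-1}$ replacing $L_{\ge\mathfrak{i}_+,1}$ and $T_n^{-1}$ replacing $T_n$.

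On the domain, a direct calculation with the iterated coproduct of $U_q(\widehat{\mathfrak{sl}}_n)$ gives, for a pure tensor of weight vectors $v_j \in N_{\chi_{\lambda}(j), i_j}$,
\begin{align*}
F_0 \cdot (v_1 \otimes \cdots \otimes v_m) &= \sum_{k=1}^m q^{\sum_{a<k}\langle \alpha_0^{\vee}, \wt(v_a)\rangle} v_1 \otimes \cdots \otimes F_0 v_k \otimes \cdots \otimes v_m, \\
E_0 \cdot (v_1 \otimes \cdots \otimes v_m) &= \sum_{k=1}^m q^{-\sum_{a>k}\langle \alpha_0^{\vee}, \wt(v_a)\rangle} v_1 \otimes \cdots \otimes E_0 v_k \otimes \cdots \otimes v_m.
\end{align*}

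On the codomain, the explicit computations already performed in the proof of Proposition \ref{Lsub} yield, for $u = u_1 \otimes \cdots \otimes u_m \in L_{\mathfrak{i},1}$,
\[
T_n(\breve{F}_0)(u) \equiv \sum_{k=1}^m q^{\sum_{a<k}\langle \breve{\alpha}_0^{\vee} + \breve{\alpha}_n^{\vee}, \wt(u_a)\rangle} u_1 \otimes \cdots \otimes T_n(\breve{F}_0) u_k \otimes \cdots \otimes u_m \pmod{L_{\ge\mathfrak{i}_+,1}},
\]
and a symmetric expression for $T_n(\breve{E}_0)$; the remaining cross terms from those calculations were shown, via Lemma \ref{0nact} and the relation $\langle \breve{\alpha}_0^{\vee},\breve{\alpha}_n\rangle = -1$, to lie in $L_{\ge\mathfrak{i}_+,1}$.

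To finish, I would match the two diagonal sums. Using $\breve{\alpha}_0^{\vee} + \breve{\alpha}_n^{\vee} = j(\alpha_0^{\vee})$ together with Lemma \ref{2}, the exponent $\langle \breve{\alpha}_0^{\vee} + \breve{\alpha}_n^{\vee}, \wt(u_a)\rangle$ rewrites as $\langle \alpha_0^{\vee}, j^*(\wt(u_a))\rangle$; since the $\widehat{\mathfrak{sl}}_n$-weight of $u_a$ (as an element of $\Psi_1^* V(\breve{\varpi}_{\chi_{\lambda}(a)})$) is precisely $j^*(\wt(u_a))$, this exactly reproduces the coproduct coefficient. Moreover, since each $\psi_{i_k,1}^{\chi_{\lambda}(k)}$ is $U_q(\widehat{\mathfrak{sl}}_n)$-linear by construction (Section \ref{m1sec}), the factor $T_n(\breve{F}_0) u_k$ is the image of $F_0 v_k$ under $\psi_{i_k,1}^{\chi_{\lambda}(k)}$. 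Combining these identifications yields $\Xi_{\mathfrak{i},1}(F_0 \cdot v) = T_n(\breve{F}_0) \cdot \Xi_{\mathfrak{i},1}(v)$ in $\mathcal{L}_{\mathfrak{i},1}$, and the same argument works for $E_0$. The main obstacle is purely the bookkeeping of these coefficients, most of which was already handled inside the proof of Proposition \ref{Lsub}, so I do not anticipate any substantial new difficulty.
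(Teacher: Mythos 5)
Your proof is correct and follows essentially the same route as the paper: both treat $\varepsilon=1$ via the cross-term/diagonal split already established in the proof of Proposition \ref{Lsub}, match the diagonal coefficients through the identity $\langle\breve{\alpha}_0^{\vee}+\breve{\alpha}_n^{\vee},\cdot\rangle = \langle j(\alpha_0^{\vee}),\cdot\rangle = \langle\alpha_0^{\vee},j^*(\cdot)\rangle$ and the $U_q(\widehat{\mathfrak{sl}}_n)$-linearity of each $\psi_{i_k,\varepsilon}^{\chi_{\lambda}(k)}$, and then declare the $\varepsilon=-1$ case symmetric. The one small caveat is that the weight identity you need is the compatibility of $j$ and $j^*$ as transposes (essentially the definition of $j^*$), not Lemma \ref{2} per se, which is stated for $\zeta\in Q$; but this is a trivial adjustment and does not affect the argument.
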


\begin{proof}
 We first assume that $\varepsilon=1$.

Let $u_j\in N_{\chi_{\lambda}(j),i_j}$ for $j\in \{1,\ldots,m\}$ and we set $v_j=\psi_{i_j,1}^{\chi_{\lambda}(j)}(u_j)$. Then we have
    \begin{multline*}
      T_n(\breve{F}_0)\left(\otimes_{j=1}^m \psi_{i_j,1}^{\chi_{\lambda}(j)}(\otimes_{j=1}^m u_j)\right)\\
      \in \sum_{k=1}^m q^{-\sum_{a=1}^{k-1}\langle\breve{\alpha}_0^{\vee}+\breve{\alpha}_n^{\vee},\wt(v_a)\rangle}v_1\otimes\cdots\otimes v_{k-1}\otimes T_n(\breve{F}_0)v_k\otimes v_{k+1}\otimes\cdots\otimes v_m+\bigoplus_{\mathfrak{j}\in \mathcal{I}_p,\mathfrak{j}>\mathfrak{i}}L_{\mathfrak{j},1}
       \end{multline*}
       by the proof of Proposition \ref{Lsub}. Since
       \begin{align*}
         &\sum_{k=1}^m q^{-\sum_{a=1}^{k-1}\langle\breve{\alpha}_0^{\vee}+\breve{\alpha}_n^{\vee},\wt(v_a)\rangle}v_1\otimes\cdots\otimes v_{k-1}\otimes T_n(\breve{F}_0)v_k\otimes v_{k+1}\otimes\cdots\otimes v_m\\
         &=\sum_{k=1}^m q^{-\sum_{a=1}^{k-1}\langle\alpha_0,\wt(u_a)\rangle}v_1\otimes\cdots\otimes v_{k-1}\otimes \psi_{i_k,1}^{\chi_{\lambda}(k)}(F_0(u_k))\otimes v_{k+1}\otimes\cdots\otimes v_m\\
      &=\otimes_{j=1}^m\psi_{i_j,1}^{\chi_{\lambda}(j)}(F_0(\otimes_{j=1}^m u_j)),
       \end{align*}
      we conclude that 
      \[
      T_n(\breve{F}_0)\left(\otimes_{j=1}^m \psi_{i_j,1}^{\chi_{\lambda}(j)}(\otimes_{j=1}^m u_j)\right)\in \otimes_{j=1}^m\psi_{i_j,1}^{\chi_{\lambda}(j)}(F_0(\otimes_{j=1}^m u_j))+\bigoplus_{\mathfrak{j}\in \mathcal{I}_p,\mathfrak{j}>\mathfrak{i}}L_{\mathfrak{j},1}.
      \]
   Similarly, we can show that
   \[
   T_n(\breve{E}_0)\left(\otimes_{j=1}^m \psi_{i_j,1}^{\chi_{\lambda}(j)}(\otimes_{j=1}^m u_j)\right)\in \otimes_{j=1}^m\psi_{i_j,1}^{\chi_{\lambda}(j)}(E_0(\otimes_{j=1}^m u_j))+\bigoplus_{\mathfrak{j}\in \mathcal{I}_p,\mathfrak{j}>\mathfrak{i}}L_{\mathfrak{j},1}.
   \]
   Therefore, the $U_q(\mathfrak{sl}_n)$-isomorphism
   \[
   \bigotimes_{j=1}^m N_{\chi_{\lambda}(j),i_j}\xrightarrow{\otimes_{j=1}^m \psi_{i_j,1}^{\chi(j)}}\bigotimes_{j=1}^m L_{i_j,1}^{\chi_{\lambda}(j)}\hookrightarrow L_{\ge\mathfrak{i},1}\xrightarrow{p_{\mathfrak{i},1}}\mathcal{L}_{\mathfrak{i},1}
   \]
   is $U_q(\widehat{\mathfrak{sl}}_n)$-linear.

  The second assertion can be proved similarly.
\end{proof}

By Proposition \ref{quot}, we have an isomorphism of $U_q(\widehat{\mathfrak{sl}}_n)$-modules
\[
\Xi_{\mathfrak{i},\varepsilon}: \bigotimes_{j=1}^m N_{\chi_{\lambda}(j),i_j}\stackrel{\cong}{\longrightarrow}\mathcal{L}_{\mathfrak{i},\varepsilon}.
\]

\subsection{The $U_q(\widehat{\mathfrak{sl}}_n)$-submodule of $\Psi_{\varepsilon}^*V(\lambda)$ generated by $\breve{S}_xu_{\lambda}$}

In this subsection, we fix $\lambda=\sum_{i\in \breve{I}_0}m_i\breve{\varpi}_i\in \breve{P}_{0,+}$.

For each $x\in \breve{W}$, there is a $U_q(\widehat{\mathfrak{sl}}_n)$-linear morphism
\[
\iota_{x,\varepsilon}:V(j^*(x\lambda))\to \Psi_{\varepsilon}^*V(\lambda)
\]
which maps $u_{j^*(x\lambda)}$ to $\breve{S}_xu_{\lambda}$ since $\breve{S}_xu_{\lambda}$ is $\widehat{\mathfrak{sl}}_n$-extremal.

\begin{proposition}
 \textit{For each $x\in \breve{W}$, the map $\iota_{x,\varepsilon}:V(j^*(x\lambda))\to \Psi_{\varepsilon}^*V(\lambda)$ is injective.}
\end{proposition}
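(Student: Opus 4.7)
The plan is to use the Beck--Nakajima embedding $\Phi_\lambda$ (Theorem \ref{inj}) to transport the question to the tensor product $\widetilde V(\lambda)$, then to use the decomposition of each $\Psi_\varepsilon^* V(\breve\varpi_i)$ from Proposition \ref{b1} (refined in Propositions \ref{Lsub} and \ref{quot}) to produce a $U_q(\widehat{\mathfrak{sl}}_n)$-linear map from $V(j^*(x\lambda))$ into a tensor product of extremal weight $U_q(\widehat{\mathfrak{sl}}_n)$-modules, and finally to identify this map with a Weyl-translate of a second Beck--Nakajima embedding for $\widehat{\mathfrak{sl}}_n$.

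By Theorem \ref{inj}, $\Phi_\lambda$ is injective, so it suffices to show that $\Phi_\lambda\circ\iota_{x,\varepsilon}$ is injective. Since all $\breve\varpi_i$ lie in the dominant Weyl chamber of $\breve W$, Lemma \ref{ext} gives
\[
\Phi_\lambda(\breve S_x u_\lambda)=\bigotimes_{i\in\breve I_0}(\breve S_x u_{\breve\varpi_i})^{\otimes m_i}.
\]
Each factor $\breve S_x u_{\breve\varpi_i}$ is an $\widehat{\mathfrak{sl}}_n$-extremal $q^{\tilde h}$-eigenvector (Proposition \ref{ex}, Lemma \ref{L1L2}), hence lies entirely in one of $L^i_{1,\varepsilon}$ or $L^i_{2,\varepsilon}$; this determines $\mathfrak i\in\mathcal I$ with $\Phi_\lambda(\breve S_x u_\lambda)\in L_{\mathfrak i,\varepsilon}$.

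For $\varepsilon=1$ (the case $\varepsilon=-1$ being symmetric, with $L_{\le\mathfrak i,-1}$ in place of $L_{\ge\mathfrak i,1}$), $L_{\ge\mathfrak i,1}$ is $U_q(\widehat{\mathfrak{sl}}_n)$-stable by Proposition \ref{Lsub} and contains the image of $\Phi_\lambda\circ\iota_{x,1}$. Composing with the projection $p_{\mathfrak i,1}$ and the inverse of $\Xi_{\mathfrak i,1}$ (Proposition \ref{quot}) produces a $U_q(\widehat{\mathfrak{sl}}_n)$-linear map
\[
\tilde\iota:V(j^*(x\lambda))\to\bigotimes_{j=1}^m N_{\chi_\lambda(j),i_j}
\]
sending $u_{j^*(x\lambda)}$ to $\bigotimes_j w_j$, where $w_j:=(\psi^{\chi_\lambda(j)}_{i_j,1})^{-1}(\breve S_x u_{\breve\varpi_{\chi_\lambda(j)}})$ is $\widehat{\mathfrak{sl}}_n$-extremal in $N_{\chi_\lambda(j),i_j}$; injectivity of $\iota_{x,\varepsilon}$ reduces to that of $\tilde\iota$.

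Each $N_{\chi_\lambda(j),i_j}$ is a direct sum of extremal weight $U_q(\widehat{\mathfrak{sl}}_n)$-modules $V(\mu_j)$ with $\mu_j\in\{\varpi_k,k\delta\}$ dominant, and $w_j$ lies in exactly one such summand. Using Lemmas \ref{wact} and \ref{tk} together with (\ref{eq:psi1}) and (\ref{eq:psi2}), one writes each $w_j=c_j S_{y_j}u_{\mu_j}$ for some $y_j\in W$ and $c_j\in\mathbb Q(q)^\times$. The main obstacle is to verify the compatibility condition
\[
\bigotimes_j w_j=c\cdot S_y\!\left(\bigotimes_j u_{\mu_j}\right)
\]
for a common $y\in W$ and scalar $c\in\mathbb Q(q)^\times$; this amounts to a careful analysis of how $\breve S_x$ interacts with the direct-sum decomposition via the group homomorphism $\omega:W\to\breve W$ from Lemma \ref{wact}, and requires a mild variant of Lemma \ref{ext} to handle boundary cases where some $\mu_j=k\delta$. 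Granting compatibility, setting $\nu:=\sum_j\mu_j\in P_{0,+}$ gives $V(j^*(x\lambda))\cong V(\nu)$, under which $\tilde\iota$ identifies (up to the scalar $c$) with the composition
\[
V(\nu)\xrightarrow{\Phi_\nu}\bigotimes_{j=1}^m V(\mu_j)\hookrightarrow\bigotimes_{j=1}^m N_{\chi_\lambda(j),i_j},
\]
where $\Phi_\nu$ is the Beck--Nakajima embedding of Theorem \ref{inj} applied to $\widehat{\mathfrak{sl}}_n$, which is injective.
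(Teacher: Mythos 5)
Your broad strategy matches the paper's: push into $\widetilde V(\lambda)$ via $\Phi_\lambda$, use the $q^{\tilde h}$-eigenspace (i.e.\ the $L_{\mathfrak i,\varepsilon}$) decomposition, project to $\mathcal L_{\mathfrak i,\varepsilon}$ via $p_{\mathfrak i,\varepsilon}\circ\Xi_{\mathfrak i,\varepsilon}^{-1}$, and invoke Beck--Nakajima injectivity (Theorem~\ref{inj}). The reduction from $\iota_{x,\varepsilon}$ to $\tilde\iota$ is correctly justified: the image lands in $L_{\ge\mathfrak i,1}$ (resp.\ $L_{\le\mathfrak i,-1}$) which is $U_q(\widehat{\mathfrak{sl}}_n)$-stable by Proposition~\ref{Lsub}.

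However, what you call ``the main obstacle'' is a genuine gap, not a routine verification. You note that each factor $w_j$ is extremal of some weight and can be written $w_j=c_jS_{y_j}u_{\mu_j}$, but then \emph{grant} that all $y_j$ can be chosen equal to a single $y\in W$. This is not obviously true for arbitrary $x\in\breve W$. The weights $j^*(x\breve\varpi_i)$ for $i\in\breve I_0$ are $n$ data coming from one $x\in\breve W$, and you would need to realize all of them simultaneously as $y\mu_j$ for a single $y\in W$; since $\omega:W\to\breve W$ is not surjective (it misses the $\breve\alpha_n^\vee$ translation and has index $n+1$ in the finite part), there is no immediate reason this should work, and in fact the extra freedom is exactly absorbed by the $V(k\delta)$ factors and the choice of $\mathfrak i$ -- untangling this is precisely the nontrivial content. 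Your proposal does not actually carry out this analysis; it names it and assumes it.

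The paper sidesteps the difficulty with a structural reduction you do not have. It first observes that $W_0\backslash\breve W_0$ has the explicit coset representatives $e,w_1,\dots,w_n$ (with $w_k=s_ns_{n-1}\cdots s_k$), and that Lemma~\ref{wact} lets you replace $x\in\breve W_0$ by its representative (transporting the problem along the isomorphism $S_{\omega^{-1}(w)}$). For these specific $x=w_k$, the composition $\Xi_{\mathfrak i,\varepsilon}^{-1}\circ p_{\mathfrak i,\varepsilon}\circ\Phi_\lambda\circ\iota_{w_k,\varepsilon}$ sends $u_{j^*(w_k\lambda)}$ \emph{literally} to a Beck--Nakajima seed $\bigl(\bigotimes_{i<k}u_{\varpi_i}^{\otimes m_i}\bigr)\otimes\bigl(\bigotimes_{i\ge k}u_{\varpi_{i-1}}^{\otimes m_i}\bigr)$ (possibly tensored with $1$ in Laurent factors), so no Weyl twist $S_y$ is needed at all -- the compatibility question you raise simply does not arise. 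Finally, an arbitrary $x=wt_\xi$ is reduced to $x=w$ by the explicit commutative diagram built from the isomorphism $V(\lambda)\cong V(\lambda)\otimes V(-\langle\xi,\lambda\rangle\breve\delta)$ of Lemma~\ref{txi}. To repair your proof you would either need to prove your compatibility claim (which essentially amounts to re-deriving the coset reduction) or adopt the paper's two-step reduction directly.
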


\begin{proof}
  We set $\breve{W}_0=\langle r_i \mid i\in \breve{I}_0 \rangle \subset \breve{W}$. When $W_0$ is regarded as a subgroup of $\breve{W}_0$, the set of right cosets $W_0\backslash\breve{W}_0$ consists of $W_0w_k$ for $k\in \breve{I}_0$ along with $W_0e$. (For the definition of $w_k$, see (\ref{wi})).

  We first consider the case where $x\in \breve{W}_0$. By Lemma \ref{wact}, we may assume that $x=w_k$ for some $k\in \breve{I}_0$ or that $x=e$.

  Assume first that $x=e$. Then, the image of the composition map 
  \[
  V(j^*(\lambda))\xrightarrow{\iota_{e,\varepsilon}} \Psi_{\varepsilon}^*V(\lambda)\xrightarrow{\Phi_{\lambda}} \Psi_{\varepsilon}^*\left(\bigotimes_{i\in \breve{I}_0}V(\breve{\varpi}_i)^{\otimes m_i}\right)
  \]
  is contained in $L_{0,\varepsilon}=\mathcal{L}_{(2,2,\ldots,2),\varepsilon}$. By composing with the isomorphism 
  \[
  \Xi_{(2,2,\ldots,2),\varepsilon}^{-1}:\mathcal{L}_{(2,2,\ldots,2),\varepsilon}\stackrel{\cong}{\longrightarrow} \bigotimes_{j=1}^m N_{\chi_{\lambda}(j),2}=\left(\bigotimes_{i\in I_0}V(\varpi_i)^{\otimes m_i}\right)\otimes \mathbb{Q}(q)[t^{\pm1}]^{\otimes m_n},
  \]
  we obtain the $U_q(\widehat{\mathfrak{sl}}_n)$-linear morphism
  \[
  V(j^*(\lambda))\to \left(\bigotimes_{i\in I_0}V(\varpi_i)^{\otimes m_i}\right)\otimes \mathbb{Q}(q)[t^{\pm1}]^{\otimes m_n}
  \] 
  which maps $u_{j^*(\lambda)}$ to $(\otimes_{i\in I_0} u_{\varpi_i}^{\otimes m_i})\otimes 1$. Since this map is injective by Theorem \ref{inj}, we conclude that $\iota_{e,\varepsilon}:V(j^*(\lambda))\to \Psi_{\varepsilon}^*(\lambda)$ is injective.

  Next, we assume that $x=w_1$. Then the image of the composition map 
  \[
  V(j^*(w_1(\lambda)))\xrightarrow{\iota_{w_1,\varepsilon}} \Psi_{\varepsilon}^*V(\lambda)\xrightarrow{\Phi_{\lambda}} \Psi_{\varepsilon}^*\left(\bigotimes_{i\in \breve{I}_0}V(\breve{\varpi}_i)^{\otimes m_i}\right)
  \]
  is contained in $L_{m,\varepsilon}=\mathcal{L}_{(1,1,\ldots,1),\varepsilon}$. By composing with the isomorphism 
  \[
  \Xi_{(1,1,\ldots,1),\varepsilon}^{-1}:\mathcal{L}_{(1,1,\ldots,1),\varepsilon}\stackrel{\cong}{\longrightarrow} \bigotimes_{j=1}^m N_{\chi_{\lambda}(j),1}=\mathbb{Q}(q)[t^{\pm1}]^{\otimes m_1}\otimes\left(\bigotimes_{i\in I_0}V(\varpi_i)^{\otimes m_{i+1}}\right),
  \]
  we obtain the $U_q(\widehat{\mathfrak{sl}}_n)$-linear morphism
  \[
  V(j^*(w_1(\lambda)))\to \mathbb{Q}(q)[t^{\pm1}]^{\otimes m_1}\otimes\left(\bigotimes_{i\in I_0}V(\varpi_i)^{\otimes m_{i+1}}\right)
  \] 
  which maps $u_{j^*(w_1(\lambda))}$ to $1\otimes(\otimes_{i\in I_0}u_{\varpi_i}^{\otimes m_{i+1}})$. Since this map is injective by Theorem \ref{inj}, we conclude that $\iota_{w_1,\varepsilon}:V(j^*(w_1(\lambda)))\to \Psi_{\varepsilon}^*(\lambda)$ is injective.

  Now assume $x=w_k$ for some $2\le k\le n$. We set $\mathfrak{i}=(\overbrace{2,2,\ldots,2,2}^{m_1+\cdots+m_{k-1}},\overbrace{1,1,\ldots,1,1}^{m_k+\cdots+m_n})$. For $\varepsilon=1$, the image of the composition map 
  \[
  V(j^*(w_k(\lambda)))\xrightarrow{\iota_{w_k,\varepsilon}}\Psi_1^*(\lambda)\xrightarrow{\Phi_{\lambda}}\Psi_1^*\left(\bigotimes_{i\in \breve{I}_0}V(\breve{\varpi}_i)^{\otimes m_i}\right)
  \]
  is contained in $L_{\ge\mathfrak{i},1}$. Hence, we obtain a $U_q(\widehat{\mathfrak{sl}}_n)$-linear morphism 
  \[
  V(j^*(w_k(\lambda)))\to \mathcal{L}_{\mathfrak{i},1}.
  \]
  For $\varepsilon=-1$, the image of the composition map 
  \[
  V(j^*(w_k(\lambda)))\xrightarrow{\iota_{w_k,\varepsilon}}\Psi_{-1}^*(\lambda)\xrightarrow{\Phi_{\lambda}}\Psi_{-1}^*\left(\bigotimes_{i\in \breve{I}_0}V(\breve{\varpi}_i)^{\otimes m_i}\right)
  \]
  is contained in $L_{\le\mathfrak{i},-1}$, yielding a $U_q(\widehat{\mathfrak{sl}}_n)$-linear morphism 
  \[
  V(j^*(w_k(\lambda)))\to \mathcal{L}_{\mathfrak{i},-1}.
  \]
  Thus, we obtain the $U_q(\widehat{\mathfrak{sl}}_n)$-linear morphism
  \[
  V(j^*(w_k(\lambda)))\to \mathcal{L}_{\mathfrak{i},\varepsilon}
  \]
  for $\varepsilon\in \{\pm1\}$. By composing with the isomorphism
  \[
  \Xi_{\mathfrak{i},\varepsilon}^{-1}:\mathcal{L}_{\mathfrak{i},\varepsilon}\stackrel{\cong}{\longrightarrow} \left(\bigotimes_{i=1}^{k-1} V(\varpi_i)^{\otimes m_i}\right)\otimes\left(\bigotimes_{i=k}^nV(\varpi_{i-1})^{\otimes m_i}\right),
  \]
  we obtain the $U_q(\widehat{\mathfrak{sl}}_n)$-linear morphism
  \[
  V(j^*(w_k(\lambda)))\to \left(\bigotimes_{i=1}^{k-1} V(\varpi_i)^{\otimes m_i}\right)\otimes\left(\bigotimes_{i=k}^nV(\varpi_{i-1})^{\otimes m_i}\right)
  \]
  which sends $u_{j^*(w_k(\lambda))}$ to $(\otimes_{i=1}^{k-1} u_{\varpi_i}^{\otimes m_i})\otimes(\otimes_{i=k}^n u_{\varpi_{i-1}}^{\otimes m_i})$. Since this map is injective by Theorem \ref{inj}, we conclude that $\iota_{w_k,\varepsilon}:V(j^*(w_k(\lambda)))\to\Psi_{\varepsilon}^*(\lambda)$ is injective.

  Finally, consider the case where $x$ is the form $x=wt_{\xi}$ with $w\in \breve{W}_0$ and $\xi\in \breve{Q}_0^{\vee}$. We have the following commutative diagram.
  \[
\xymatrix@C=50pt{
V(j^*(x(\lambda))=V(j^*(w(\lambda))-\langle\xi,\lambda\rangle\delta)\ar[r]^-{\iota_{x,\varepsilon}}\ar[d]^{\cong} & \Psi_{\varepsilon}^*V(\lambda)\ar[d]^{\cong}\\
V(j^*(w(\lambda))\otimes V(-\langle\xi,\lambda\rangle\delta)\ar[r]^-{\iota_{w,\varepsilon}\otimes \id} & \Psi_{\varepsilon}^*V(\lambda)\otimes V(-\langle\xi,\lambda\rangle\delta)
}
\]
Here, $V(j^*(x(\lambda)))\stackrel{\cong}{\longrightarrow}V(j^*(w(\lambda)))\otimes V(-\langle\xi,\lambda\rangle\delta)$ is the $U_q(\widehat{\mathfrak{sl}}_n)$-isomorphism is defined by $u_{j^*(x(\lambda))}\mapsto u_{j^*(w\lambda)}\otimes u_{-\langle\xi,\lambda\rangle\delta}$ and $\Psi_{\varepsilon}^*V(\lambda)\stackrel{\cong}{\longrightarrow} \Psi_{\varepsilon}^*V(\lambda)\otimes V(-\langle\xi,\lambda\rangle\delta)$ is the $U_q(\widehat{\mathfrak{sl}}_n)$-isomorphism induced by the $U_q(\widehat{\mathfrak{sl}}_{n+1})$-isomorphism 
\[
V(\lambda)\stackrel{\cong}{\longrightarrow}V(\lambda)\otimes V\left(-\langle\xi,\lambda\rangle\breve{\delta}\right)
\]
defined by $u_{\lambda}\mapsto (\breve{S}_{-t_{\xi}}u_{\lambda})\otimes u_{-\langle\xi,\lambda\rangle\breve{\delta}}$, where we identify $\Psi_{\varepsilon}^*\left(V(\lambda)\otimes V\left(-\langle\xi,\lambda\rangle\breve{\delta}\right)\right)$ with $\Psi_{\varepsilon}^*V(\lambda)\otimes V(-\langle\xi,\lambda\rangle\delta)$ via $u\otimes u_{-\langle\xi,\lambda\rangle\breve{\delta}}\mapsto u\otimes u_{-\langle\xi,\lambda\rangle\delta}$.

Since $\iota_{w,\varepsilon}$ is injective by the argument above, we conclude that $\iota_{x,\varepsilon}$ is injective. 
\end{proof}

\subsection{Graded character of $(M_{p,\varepsilon}\cap V_e^-(\lambda))$}

In this subsection, we express $\gch(M_{p,\varepsilon}\cap V_e^-(\lambda))$ in terms of Macdonald polynomials for $\lambda\in \breve{P}_{0,+}$.

In the remainder of this subsection, we fix a dominant polynomial weight $\Lambda=\sum_{i=1}^{n+1}\lambda_i\varepsilon_i\in X_{n+1}$ with $\lambda_{n+1}=0$ and we set $\lambda=\pi_{n+1}(\Lambda)=\sum_{i\in \breve{I}_0}(\lambda_i-\lambda_{i+1})\breve{\varpi}_i$.

By specializing at $t=0$, we obtain the following result from Theorem \ref{brMac}.

\begin{proposition}\label{Mac}
\textit{
  \[
  P_{\Lambda}^{\mathop{GL}}(x_1,\ldots,x_n,x_{n+1};q,0)=\sum_{\substack{M=(\mu_1,\ldots,\mu_n)\in \mathcal{P}_n\\ \Lambda/M:\text{horizontal strip}}} x_{n+1}^{|\Lambda-M|}\prod_{i\ge1}\binom{\lambda_i-\lambda_{i+1}}{\lambda_i-\mu_i}_qP_M^{\mathop{GL}}(x_1,\ldots,x_n;q,0)
  \]
  where $\binom{m}{r}_q=\frac{(1-q^m)(1-q^{m-1})\cdots(1-q^{m-r+1})}{(1-q)(1-q^2)\cdots(1-q^r)}$ for $m,r\in \mathbb{Z}_{\ge0}$.
  \qed}
\end{proposition}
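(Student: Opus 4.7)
The plan is to derive Proposition \ref{Mac} by specializing $t=0$ in Theorem \ref{brMac}. The only nontrivial step is evaluating $\psi_{\Lambda/M}$ at $t=0$, which I will carry out termwise over the pairs $1\le i\le j\le \ell(\mu)$.

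Note first that $(at)_{\infty}|_{t=0}=1$, so $f(a)|_{t=0}=1/(aq)_{\infty}$, independent of $t$. For any pair with $j>i$, each of the four arguments appearing in the $(i,j)$-factor of $\psi_{\Lambda/M}$ is of the form $q^{\ast}t^{j-i}$, and hence evaluates to $0$ at $t=0$; since $f(0)=(0)_{\infty}/(0)_{\infty}=1$, every off-diagonal factor contributes trivially.

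For the diagonal pairs $j=i$, the factor becomes
\[
\left.\frac{f(1)\,f(q^{\lambda_i-\lambda_{i+1}})}{f(q^{\lambda_i-\mu_i})\,f(q^{\mu_i-\lambda_{i+1}})}\right|_{t=0}=\frac{(q^{\lambda_i-\mu_i+1})_{\infty}\,(q^{\mu_i-\lambda_{i+1}+1})_{\infty}}{(q)_{\infty}\,(q^{\lambda_i-\lambda_{i+1}+1})_{\infty}}.
\]
Using the identity $(q^{\alpha+1})_{\infty}=(q)_{\infty}/\prod_{k=1}^{\alpha}(1-q^k)$, which is valid because the horizontal-strip condition $\lambda_i\ge\mu_i\ge\lambda_{i+1}$ makes all exponents nonnegative, the ratio collapses to the $q$-binomial coefficient $\binom{\lambda_i-\lambda_{i+1}}{\lambda_i-\mu_i}_q$.

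Taking the product over $i$ and substituting into the $t=0$ specialization of Theorem \ref{brMac} yields the stated formula. The main (and essentially only) obstacle is the bookkeeping of the infinite $q$-Pochhammer symbols to rewrite the diagonal contribution as a clean $q$-binomial, which is routine once the off-diagonal factors have been identified as trivial.
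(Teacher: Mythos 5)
Your proposal is correct and carries out exactly the computation the paper leaves implicit: the paper's "proof" of Proposition \ref{Mac} consists solely of the remark that one specializes Theorem \ref{brMac} at $t=0$, and your termwise evaluation of $\psi_{\Lambda/M}$ (off-diagonal factors become $1$, diagonal factors collapse to $q$-binomials via the $q$-Pochhammer identity) is the right way to do it. One minor gap you glossed over: Theorem \ref{brMac} has the product ranging over $1\le i\le j\le\ell(\mu)$ while the statement has $\prod_{i\ge 1}$, so you should also note that for $i>\ell(\mu)$ the horizontal-strip condition forces $\mu_i=0$ and $\lambda_{i+1}=0$, making $\binom{\lambda_i-\lambda_{i+1}}{\lambda_i-\mu_i}_q=\binom{\lambda_i}{\lambda_i}_q=1$, so the extended product is harmless.
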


We set 
\[
\breve{Q}_0=\bigoplus_{i\in \breve{I}_0}\mathbb{Z}\breve{\alpha}_i,\quad \breve{Q}_{0,k}=\left\{\sum_{i\in \breve{I}_0}k_i\breve{\alpha}_i\in \breve{Q}_0 \mid k_n=k\right\}.
\]
Then, a weight vector $u\in V(\lambda)$ satisfies $u\in M_{p,\varepsilon}$ if and only if $\wt(u)$ is of the form $\lambda+\alpha+k\delta$ with $\alpha\in \breve{Q}_{0,-p}$ by Proposition \ref{dirsum2}.

We set $\tilde{\alpha}_i=\varepsilon_i-\varepsilon_{i+1}\in X_{n+1}$ for $i\in \breve{I}_0$. Then we have $\pi_{n+1}(\tilde{\alpha}_i)=\alpha_i$ for $i\in \breve{I}_0$.

\begin{lemma}\label{GLwt}
  \textit{For a polynomial weight $N=\sum_{i=1}^{n+1}\nu_i\varepsilon_i\in X_{n+1}$ with $|\Lambda|=\nu_1+\cdots+\nu_{n+1}$, $\pi_{n+1}(N)$ is of the form $\lambda+\alpha$ with $\alpha\in \breve{Q}_{0,-p}$ if and only if $\nu_{n+1}=p$.}
\end{lemma}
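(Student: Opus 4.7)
The plan is to express $N - \Lambda$ explicitly as a $\mathbb{Z}$-linear combination of $\tilde{\alpha}_1,\ldots,\tilde{\alpha}_n$ and then read off the coefficient of $\tilde{\alpha}_n$, since under $\pi_{n+1}$ these map to $\breve{\alpha}_1,\ldots,\breve{\alpha}_n$.

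First I would observe that $\tilde{\alpha}_1,\ldots,\tilde{\alpha}_n$ are $\mathbb{Z}$-linearly independent in $X_{n+1}$ and span the sublattice $\{\sum c_i \varepsilon_i \in X_{n+1} \mid \sum_{i=1}^{n+1} c_i = 0\}$. Since $\lambda_{n+1} = 0$, the hypothesis $|N|=|\Lambda|$ gives $\sum_{i=1}^{n+1}(\nu_i - \lambda_i) = 0$, so $N - \Lambda$ lies in this sublattice. Hence there exist unique integers $a_1,\ldots,a_n$ with $N - \Lambda = \sum_{i=1}^n a_i \tilde{\alpha}_i$.

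Next, comparing coefficients of $\varepsilon_i$ on both sides yields $a_i = \sum_{j=1}^i (\nu_j - \lambda_j)$ for $1 \le i \le n$. In particular, using $\lambda_{n+1} = 0$ and $\sum_{j=1}^{n+1}\nu_j = \sum_{j=1}^{n+1}\lambda_j$, we get
\[
a_n = \sum_{j=1}^n(\nu_j - \lambda_j) = -\nu_{n+1}.
\]

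Finally, applying $\pi_{n+1}$ and using $\pi_{n+1}(\tilde{\alpha}_i) = \breve{\alpha}_i$, we find $\pi_{n+1}(N) - \lambda = \pi_{n+1}(N - \Lambda) = \sum_{i=1}^n a_i \breve{\alpha}_i \in \breve{Q}_0$. By the definition of $\breve{Q}_{0,-p}$, this element lies in $\breve{Q}_{0,-p}$ if and only if $a_n = -p$, which by the previous step is equivalent to $\nu_{n+1} = p$. This gives the desired equivalence. There is no real obstacle here; the only care needed is in correctly tracking the telescoping computation for $a_n$ using both the normalization $\lambda_{n+1}=0$ and the size hypothesis $|N|=|\Lambda|$.
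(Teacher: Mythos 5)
Your proof is correct and takes essentially the same route as the paper's: both write $N-\Lambda=\sum_{i=1}^n k_i\tilde{\alpha}_i$ with $k_i=\sum_{a=1}^i(\nu_a-\lambda_a)$, apply $\pi_{n+1}$, and read off $k_n=-\nu_{n+1}$ from $\lambda_{n+1}=0$ and $|N|=|\Lambda|$. The paper states the telescoping identity without comment; you fill in the same calculation more explicitly.
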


\begin{proof}
Since $|\Lambda|=\nu_1+\cdots+\nu_{n+1}$, we have 
\[
\sum_{i=1}^n\lambda_i\varepsilon_i+\sum_{i=1}^nk_i\tilde{\alpha}_i=\sum_{i=1}^{n+1}\nu_i\varepsilon_i,
\] 
where $k_i=\sum_{a=1}^i(\nu_a-\lambda_a)$. Applying $\pi$ to both sides yields the desired result.
\end{proof}

Using Theorem \ref{bq}, Proposition \ref{Mac}, and Lemma \ref{GLwt}, we arrive at the following proposition.

\begin{proposition}\label{Mgch}
  \textit{We have
  \begin{multline*}
   \gch (M_{p,\varepsilon}\cap V_e^-(\lambda))\\
    =\sum_{\substack{M=(\mu_1,\ldots,\mu_n)\in \mathcal{P}_n\\ \Lambda/M:\text{horizontal strip}\\ |\Lambda/M|=p}} \left(\prod_{i\ge1}\left(\prod_{r=1}^{\lambda_i-\mu_i}(1-q^{-r})\right)\left(\prod_{s=1}^{\mu_i-\lambda_{i+1}}(1-q^{-s})\right)\right)^{-1}P_{\pi_n(M)}(x,q^{-1},0).
  \end{multline*}
}
\end{proposition}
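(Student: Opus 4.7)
The plan is to take the formula for $\gch V_e^-(\lambda)$ supplied by Theorem \ref{bq}, isolate the portion that lies in $M_{p,\varepsilon}$, and push the resulting $\widehat{\mathfrak{sl}}_{n+1}$-graded character down to an $\widehat{\mathfrak{sl}}_n$-graded character via the map $\Theta$. By Theorem \ref{bq},
\[
\gch V_e^-(\lambda)=\left(\prod_{i=1}^{n}\prod_{r=1}^{\lambda_i-\lambda_{i+1}}(1-q^{-r})\right)^{-1}P_{\lambda}(x;q^{-1},0),
\]
and since $P_\lambda=\Pi_{n+1}(P_\Lambda^{\mathop{GL}})$, Proposition \ref{Mac} (applied after $q\mapsto q^{-1}$) expands $P_\Lambda^{\mathop{GL}}(x;q^{-1},0)$ as a sum over partitions $M\in\mathcal{P}_n$ with $\Lambda/M$ a horizontal strip.

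Next I would identify which summands contribute to $M_{p,\varepsilon}$. As recorded just before the proposition, a weight vector of $V(\lambda)$ lies in $M_{p,\varepsilon}$ iff its weight is of the form $\lambda+\alpha+k\delta$ with $\alpha\in\breve{Q}_{0,-p}$; Lemma \ref{GLwt} translates this root-lattice condition into the $\mathop{GL}_{n+1}$-monomial condition $\nu_{n+1}=p$. Because $P_M^{\mathop{GL}}(x_1,\ldots,x_n)$ is homogeneous of degree $|M|$ in $x_1,\ldots,x_n$ alone, every monomial in the summand $x_{n+1}^{|\Lambda-M|}P_M^{\mathop{GL}}$ has $x_{n+1}$-degree exactly $|\Lambda-M|$, so the $M_{p,\varepsilon}$-part of the expansion consists precisely of the terms with $|\Lambda-M|=p$.

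I would then apply $\Theta$ to pass to the $\widehat{\mathfrak{sl}}_n$-character. By the computation used in the proof of Lemma \ref{brfin}, $\Theta\circ\Pi_{n+1}$ annihilates $x_{n+1}$ and restricts on $x_1,\ldots,x_n$ to $\Pi_n$, so
\[
(\Theta\circ\Pi_{n+1})\bigl(x_{n+1}^{p}\,P_M^{\mathop{GL}}(x_1,\ldots,x_n;q^{-1},0)\bigr)=P_{\pi_n(M)}(x;q^{-1},0).
\]
The remaining step is a routine per-$i$ simplification of the coefficients: writing
\[
\binom{\lambda_i-\lambda_{i+1}}{\lambda_i-\mu_i}_{q^{-1}}=\prod_{s=1}^{\lambda_i-\mu_i}\frac{1-q^{-(\lambda_i-\lambda_{i+1}-s+1)}}{1-q^{-s}},
\]
the numerator cancels against the upper tail of $\prod_{r=1}^{\lambda_i-\lambda_{i+1}}(1-q^{-r})$ in the prefactor, leaving $\left(\prod_{r=1}^{\lambda_i-\mu_i}(1-q^{-r})\prod_{s=1}^{\mu_i-\lambda_{i+1}}(1-q^{-s})\right)^{-1}$, which is the coefficient demanded by the statement.

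The delicate step is the second one: verifying carefully that extracting the weight subspace defined by the eigenvalue condition on $q^{\tilde{h}}$ and then applying $\Theta$ genuinely yields $\gch(M_{p,\varepsilon}\cap V_e^-(\lambda))$ as an $\widehat{\mathfrak{sl}}_n$-graded character, and matching the $\breve{Q}_{0,-p}$-condition of Proposition \ref{dirsum2} with the $\mathop{GL}_{n+1}$-monomial condition $\nu_{n+1}=p$ of Lemma \ref{GLwt}. Once this bridge is in place, the branching identity of Proposition \ref{Mac} performs all the combinatorial work, and what remains is elementary manipulation of $q$-Pochhammer products.
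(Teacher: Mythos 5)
Your proposal matches the paper's proof: both apply Theorem \ref{bq} and Proposition \ref{Mac} (at $q^{-1}$), use Lemma \ref{GLwt} to identify the $x_{n+1}$-degree-$p$ terms as the $M_{p,\varepsilon}$-component, and then push forward by $\Theta\circ\Pi_{n+1}$, noting it kills $x_{n+1}$ and restricts to $\Pi_n$ on the remaining variables. The paper absorbs the $q$-binomial simplification into a single "Therefore," whereas you carry it out explicitly; that is the only difference, and it is one of exposition rather than substance.
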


\begin{proof}
  Recall that a weight vector $u\in V(\lambda)$ satisfies $u\in M_{p,\varepsilon}$ if and only if $\wt(u)$ is of the form $\lambda+\alpha+k\delta$ with $\alpha\in \breve{Q}_{0,-p}$. Therefore, by Theorem \ref{bq}, Proposition \ref{Mac}, and Lemma \ref{GLwt}, we obtain
  \begin{multline*}
   \gch (M_{p,\varepsilon}\cap V_e^-(\lambda))\\
    =\sum_{\substack{M=(\mu_1,\ldots,\mu_n)\in \mathcal{P}_n\\ \Lambda/M:\text{horizontal strip}\\ |\Lambda/M|=p}} \left(\prod_{i\ge1}\left(\prod_{r=1}^{\lambda_i-\mu_i}(1-q^{-r})\right)\left(\prod_{s=1}^{\mu_i-\lambda_{i+1}}(1-q^{-s})\right)\right)^{-1}\\
    \times\Theta\circ\Pi_{n+1}\left(x_{n+1}^{|\Lambda-M|}P_M^{\mathop{GL}}(x_1,\ldots,x_n;q^{-1},0)\right).
  \end{multline*}
  Since $j^*(\pi_{n+1}(\varepsilon_{n+1}))=0$ and the restriction of $j^*\circ\pi_{n+1}$ to $X_n$ coincides with $\pi_n$, we have
  \[
  \Theta\circ\Pi_{n+1}\left(x_{n+1}^{|\Lambda-M|}P_M^{\mathop{GL}}(x_1,\ldots,x_n;q^{-1},0)\right)=P_{\pi_n(M)}(x;q^{-1},0).
  \]
  These complete the proof.
\end{proof}

\begin{corollary}\label{Mgcht}
  \textit{Keep the setting above. For $\xi\in \breve{Q}_0^{\vee}$, we have
  \begin{multline*}
    \gch (M_{p,\varepsilon}\cap V_{t_{\xi}}^-(\lambda))\\
    =q^{-\langle\xi,\lambda\rangle}\sum_{\substack{M=(\mu_1,\ldots,\mu_n)\in \mathcal{P}_n\\ \Lambda/M:\text{horizontal strip}\\ |\Lambda/M|=p}} \left(\prod_{i\ge1}\left(\prod_{r=1}^{\lambda_i-\mu_i}(1-q^{-r})\right)\left(\prod_{s=1}^{\mu_i-\lambda_{i+1}}(1-q^{-s})\right)\right)^{-1}P_{\pi_n(M)}(x,q^{-1},0).
  \end{multline*}
  }
\end{corollary}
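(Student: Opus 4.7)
The plan is to reduce the statement to Proposition \ref{Mgch} by transporting $V_{t_{\xi}}^-(\lambda)$ onto $V_e^-(\lambda)$ via the isomorphism constructed in Lemma \ref{txi}, exactly in the spirit of the proof of Corollary \ref{br}.

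First, by Lemma \ref{txi} we have a $U_q(\widehat{\mathfrak{sl}}_{n+1})$-linear isomorphism
\[
\Upsilon:V(\lambda)\stackrel{\cong}{\longrightarrow} V(\lambda)\otimes V(-\langle\xi,\lambda\rangle\breve{\delta})
\]
under which $\breve{S}_{t_\xi}u_\lambda\mapsto u_\lambda\otimes u_{-\langle\xi,\lambda\rangle\breve{\delta}}$. Restricting to $U_q^-(\widehat{\mathfrak{sl}}_{n+1})$, this produces an isomorphism of $U_q^-$-modules
\[
V_{t_\xi}^-(\lambda)\stackrel{\cong}{\longrightarrow} V_e^-(\lambda)\otimes u_{-\langle\xi,\lambda\rangle\breve{\delta}},
\]
so every element of $V_{t_\xi}^-(\lambda)$ has weight of the form $\mu-\langle\xi,\lambda\rangle\breve{\delta}$ for some $\mu\in\wt V_e^-(\lambda)$.

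Next, since $\Upsilon$ is $U_q(\widehat{\mathfrak{sl}}_{n+1})$-linear it commutes with the action of $q^{\tilde h}$. The vector $u_{-\langle\xi,\lambda\rangle\breve{\delta}}$ has weight $-\langle\xi,\lambda\rangle\breve{\delta}$, and from $\langle\breve{\alpha}_i^\vee,\breve{\delta}\rangle=0$ for all $i\in\breve I$ we conclude $\langle\tilde h,\breve{\delta}\rangle=0$, so $q^{\tilde h}$ acts trivially on $u_{-\langle\xi,\lambda\rangle\breve{\delta}}$. Consequently $\Upsilon$ identifies the $q^{\tilde h}$-eigenspace of eigenvalue $q^{\langle\tilde h,\lambda\rangle-(n+1)p}$ inside $V_{t_\xi}^-(\lambda)$ with the corresponding eigenspace inside $V_e^-(\lambda)$ tensored with $u_{-\langle\xi,\lambda\rangle\breve{\delta}}$; that is,
\[
\Upsilon\bigl(M_{p,\varepsilon}\cap V_{t_\xi}^-(\lambda)\bigr)=\bigl(M_{p,\varepsilon}\cap V_e^-(\lambda)\bigr)\otimes u_{-\langle\xi,\lambda\rangle\breve{\delta}}.
\]

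Finally, tensoring with $u_{-\langle\xi,\lambda\rangle\breve{\delta}}$ shifts every weight by $-\langle\xi,\lambda\rangle\breve{\delta}$, and under $j^*$ this becomes the shift by $-\langle\xi,\lambda\rangle\delta$; therefore
\[
\gch\bigl(M_{p,\varepsilon}\cap V_{t_\xi}^-(\lambda)\bigr)=q^{-\langle\xi,\lambda\rangle}\gch\bigl(M_{p,\varepsilon}\cap V_e^-(\lambda)\bigr).
\]
Substituting the explicit formula from Proposition \ref{Mgch} yields exactly the right-hand side of the corollary. There is no serious obstacle; the only point to verify carefully is the $q^{\tilde h}$-eigenvalue of $u_{-\langle\xi,\lambda\rangle\breve{\delta}}$, which reduces to $\langle\tilde h,\breve{\delta}\rangle=0$.
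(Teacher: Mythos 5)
Your proof is correct and follows essentially the same route as the paper's: both invoke the isomorphism of Lemma \ref{txi} sending $\breve{S}_{t_\xi}u_\lambda$ to $u_\lambda\otimes u_{-\langle\xi,\lambda\rangle\breve{\delta}}$, observe that it identifies $M_{p,\varepsilon}\cap V_{t_\xi}^-(\lambda)$ with $(M_{p,\varepsilon}\cap V_e^-(\lambda))\otimes u_{-\langle\xi,\lambda\rangle\breve{\delta}}$, and read off the shift $q^{-\langle\xi,\lambda\rangle}$ in the graded character. The only difference is that you spell out the verification that $q^{\tilde h}$ acts trivially on $u_{-\langle\xi,\lambda\rangle\breve{\delta}}$ via $\langle\tilde h,\breve{\delta}\rangle=0$, which the paper takes as immediate.
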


\begin{proof}
  We have the isomorphism of $U_q(\widehat{\mathfrak{sl}}_{n+1})$-modules
  \[
  V(\lambda)\stackrel{\cong}{\longrightarrow} V(\lambda)\otimes V\left(-\langle\xi,\lambda\rangle\breve{\delta}\right)
  \]
  that maps $u_{\lambda}$ to $(\breve{S}_{t_{-\xi}}u_{\lambda})\otimes u_{-\langle\xi,\lambda\rangle\breve{\delta}}$ by Lemma \ref{txi}. Since this isomorphism maps $M_{p,\varepsilon}\cap V_{t_{\xi}}^-(\lambda)$ to $(M_{p,\varepsilon}\cap V_e^-(\lambda))\otimes V\left(-\langle\xi,\lambda\rangle\breve{\delta}\right)$, the desired assertion follows from Proposition \ref{Mgch}.
\end{proof}

\subsection{The structure of $M_{0,\varepsilon}$ and $M_{m,\varepsilon}$}

In this subsection, we fix $\lambda=\sum_{i\in \breve{I}_0}m_i\breve{\varpi}_i\in \breve{P}_{0,+}$.

Let $\breve{z}_{i,\nu}$ be the $U_q^{\prime}(\widehat{\mathfrak{sl}}_{n+1})$-linear automorphism of $\bigotimes_{i\in \breve{I}_0}V(\breve{\varpi}_i)^{\otimes m_i}$ obtained by the action $\breve{z}_i:V(\breve{\varpi}_i)\to V(\breve{\varpi}_i)$ given by $u_{\breve{\varpi}_i}\mapsto u_{\breve{\varpi}_i+\breve{\delta}}$ on the $\nu$-th factor of $V(\breve{\varpi}_i)^{\otimes m_i}$. Then, for $\mathbf{c}_0\in \overline{\Par}(\lambda)$ and $\xi=\sum_{i\in \breve{I}_0}k_i\breve{\alpha}_i^{\vee}$, we have
\[
\Phi_{\lambda}(\breve{S}_{\mathbf{c}_0}^-\breve{S}_{t_{\xi}}u_{\lambda})=s_{\mathbf{c}_0}(\breve{z}_{i,\nu}^{-1})\prod_{i\in \breve{I}_0}(\breve{z}_{i,1}\cdots \breve{z}_{i,m_i})^{-k_i}\tilde{u}_{\lambda}
\]
by Proposition \ref{BN}.

\begin{theorem}\label{M_0}
  \textit{There is an isomorphism of $U_q(\widehat{\mathfrak{sl}}_n)$-modules
  \[
  M_{0,\varepsilon}\cong V(j^*(\lambda))\otimes\left(\mathbb{Q}(q)[t_1^{\pm1},\ldots,t_{m_n}^{\pm1}]^{\mathfrak{S}_{m_n}}\right).
  \]
  }
\end{theorem}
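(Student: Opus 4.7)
The plan is to construct an explicit $U_q(\widehat{\mathfrak{sl}}_n)$-linear map $\Theta\colon V(j^*(\lambda)) \otimes \mathbb{Q}(q)[t_1^{\pm 1},\ldots,t_{m_n}^{\pm 1}]^{\mathfrak{S}_{m_n}} \to M_{0,\varepsilon}$ and verify it is bijective. Using the basis $f_{\rho,k} := s_\rho(t_1^{-1},\ldots,t_{m_n}^{-1})(t_1\cdots t_{m_n})^{-k}$ of the symmetric Laurent polynomials (indexed by $\rho \in \mathcal{P}_{m_n}$ and $k \in \mathbb{Z}$), the tensor product decomposes as $\bigoplus_{\rho,k} V(j^*(\lambda)) \otimes \mathbb{Q}(q) f_{\rho,k}$, each summand being $U_q(\widehat{\mathfrak{sl}}_n)$-isomorphic to $V(j^*(\lambda)-(|\rho|+km_n)\delta)$. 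By Corollary \ref{BN} together with the commutation $\breve{S}_i\,\breve{z}_{j,\nu}^{-1}=\breve{z}_{j,\nu}^{-1}\,\breve{S}_i$ (which holds because $\breve{z}_{j,\nu}$ is $U_q'$-linear and shifts weights only by $\breve\delta$), the element $\breve{S}^-_{(\emptyset,\ldots,\emptyset,\rho)}\,\breve{S}_{t_{k\breve\alpha_n^\vee}}\,u_\lambda$ is $\widehat{\mathfrak{sl}}_{n+1}$-extremal of weight $\lambda-(|\rho|+km_n)\breve\delta$, hence by Proposition \ref{ex} an $\widehat{\mathfrak{sl}}_n$-extremal vector in $M_{0,\varepsilon}$ of weight $j^*(\lambda)-(|\rho|+km_n)\delta$; the universal property of the extremal weight module then defines $\Theta$ on each summand.

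For injectivity I would compose $\Theta$ with $\Xi_{(2,\ldots,2),\varepsilon}^{-1}\circ\Phi_\lambda$, which by Proposition \ref{dirsum2} maps $M_{0,\varepsilon}$ into $\bigotimes_{i=1}^{n-1}V(\varpi_i)^{\otimes m_i}\otimes \mathbb{Q}(q)[t^{\pm 1}]^{\otimes m_n}$. Corollary \ref{BN} together with the correspondence $\breve{z}_{n,\nu}\leftrightarrow t_\nu$ (from the definition of $\psi_{2,\varepsilon}^n$) sends the generator $u_{j^*(\lambda)}\otimes f_{\rho,k}$ to $\Phi_{j^*(\lambda)}(u_{j^*(\lambda)})\otimes f_{\rho,k}$, so by $U_q(\widehat{\mathfrak{sl}}_n)$-linearity the composition equals $\Phi_{j^*(\lambda)}\otimes \iota$, where $\iota\colon \mathbb{Q}(q)[t_1^{\pm1},\ldots,t_{m_n}^{\pm 1}]^{\mathfrak{S}_{m_n}}\hookrightarrow \mathbb{Q}(q)[t^{\pm 1}]^{\otimes m_n}$ is the canonical inclusion. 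This is injective by Theorem \ref{inj}, so $\Theta$ is injective.

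For surjectivity I would match Demazure filtrations. Given $\eta\in\bigoplus_{1\le i<n,\,m_i>0}\mathbb{Z}\alpha_i^\vee$ and $K\ge 0$, I claim $\Theta$ restricts to an isomorphism
\[
V_{t_\eta}^-(j^*(\lambda))\otimes(t_1\cdots t_{m_n})^K\mathbb{Q}(q)[t_1^{-1},\ldots,t_{m_n}^{-1}]^{\mathfrak{S}_{m_n}}\xrightarrow{\sim} M_{0,\varepsilon}\cap V_{t_{j(\eta)-K\breve\alpha_n^\vee}}^-(\lambda).
\]
The image of the generator $S_{t_\eta}u_{j^*(\lambda)}\otimes f_{\rho,-K}$ equals, up to a power of $-q$ (by Lemmas \ref{wact} and \ref{tal}), the element $\breve S_{t_{j(\eta)}}\,\breve S^-_{(\emptyset,\ldots,\emptyset,\rho)}\,\breve S_{t_{-K\breve\alpha_n^\vee}}\,u_\lambda$, and the key identity
\[
\breve S_{t_{j(\eta)}}\,\breve S^-_{(\emptyset,\ldots,\emptyset,\rho)}\,\breve S_{t_{-K\breve\alpha_n^\vee}}\,u_\lambda=\breve S^-_{(\emptyset,\ldots,\emptyset,\rho)}\,\breve S_{t_{j(\eta)-K\breve\alpha_n^\vee}}\,u_\lambda
\]
is verified by computing both sides under $\Phi_\lambda$ via Corollary \ref{BN} (moving $\breve S_{t_{j(\eta)}}$ past the commuting $U_q'$-linear operators $\breve z_{i,\nu}$) and invoking the injectivity of $\Phi_\lambda$ (Theorem \ref{inj}). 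The right-hand side lies in $V_{t_{j(\eta)-K\breve\alpha_n^\vee}}^-(\lambda)$ by definition, and subsequent action by $\Psi_\varepsilon^-(U_q^-(\widehat{\mathfrak{sl}}_n))\subset U_q^-(\widehat{\mathfrak{sl}}_{n+1})$ preserves this Demazure submodule. The graded characters of both sides agree by Corollary \ref{Mgcht}, Theorem \ref{bq} applied to $\widehat{\mathfrak{sl}}_n$, and the identity $\langle\eta,j^*(\lambda)\rangle=\langle j(\eta),\lambda\rangle$, so combined with injectivity of $\Theta$ we obtain the piecewise isomorphism. Taking the union over $(\eta,K)$: Proposition \ref{dem}(2) for $\widehat{\mathfrak{sl}}_n$ exhausts $V(j^*(\lambda))$ in the first tensor factor, varying $K$ covers the symmetric Laurent polynomials, and $\{j(\eta)-K\breve\alpha_n^\vee\}$ is cofinal in the relevant coroot sublattice for Proposition \ref{dem}(2) applied to $\widehat{\mathfrak{sl}}_{n+1}$, so by Proposition \ref{dem}(1) the union of the Demazure targets equals $V(\lambda)$.

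The main obstacle is the commutation identity for $\breve S_{t_{j(\eta)}}$ and $\breve S^-_{(\emptyset,\ldots,\emptyset,\rho)}$ used in surjectivity: the braid-type operator $\breve S_{t_{j(\eta)}}$ is not a member of $U_q^-(\widehat{\mathfrak{sl}}_{n+1})$ and has no obvious algebraic relation to the Beck--Nakajima element. The cleanest way to sidestep this is to push through $\Phi_\lambda$, where both sides become explicit monomials in the pairwise-commuting $U_q'$-linear automorphisms $\breve z_{i,\nu}$ acting on $\tilde u_\lambda$, and then transfer the equality back via the injectivity of $\Phi_\lambda$.
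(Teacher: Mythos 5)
Your proposal is correct in substance, but it routes the argument in a direction opposite to the paper's. The paper defines the map $\sigma_{0,\varepsilon}=\Xi_{(2,\ldots,2),\varepsilon}^{-1}\circ\Phi_{\lambda}$ \emph{out of} $M_{0,\varepsilon}$, then pins it down Demazure-piece by Demazure-piece by computing images of the $\breve{S}_{\mathbf{c}_0}^{-}\breve{S}_{t_{\xi}}u_{\lambda}$ via Corollary~\ref{BN} and comparing graded characters. You instead build a map $\Theta$ \emph{into} $M_{0,\varepsilon}$ from the universal property of the extremal weight module. The structural observation you rely on --- that $\breve{S}^-_{(\emptyset,\ldots,\emptyset,\rho)}\breve{S}_{t_{k\breve\alpha_n^{\vee}}}u_{\lambda}$ is $\widehat{\mathfrak{sl}}_{n+1}$-extremal --- is correct and is not made explicit anywhere in the paper. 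The cleanest way to state why is worth writing out: under $\Phi_{\lambda}$ the vector becomes $Z\tilde{u}_{\lambda}$ with $Z=s_{\rho}(\breve{z}_{n,\nu}^{-1})\prod(\breve{z}_{i,\nu})^{\text{(monomial)}}$, a $U_q'(\widehat{\mathfrak{sl}}_{n+1})$-linear operator that shifts weights uniformly by a multiple of $\breve{\delta}$; any such operator commutes term-by-term with $E_i^{(m)}$ and $F_i^{(m)}$ and preserves the inequalities $\langle\breve{\alpha}_i^{\vee},\cdot\rangle\gtrless 0$, hence sends extremal vectors to extremal vectors, and $\Phi_{\lambda}$ is injective. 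Your one-line justification is terse at exactly this point --- the phrase ``commutation $\breve{S}_i\breve{z}^{-1}=\breve{z}^{-1}\breve{S}_i$'' is an operator identity valid only on extremal vectors, which needs the preceding observation to be invoked non-circularly --- but the fact is true and a short lemma fixes it.

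Once $\Theta$ is defined, your injectivity step (composing with $\Xi^{-1}\circ\Phi_{\lambda}$ and reducing to Theorem~\ref{inj}) and your surjectivity step (matching Demazure filtrations with Corollary~\ref{Mgcht} and Corollary~\ref{br}) consume the same inputs as the paper; the computations are identical in content. The ``key identity'' $\breve{S}_{t_{j(\eta)}}\breve{S}^-_{\mathbf{c}_0}\breve{S}_{t_{-K\breve\alpha_n^{\vee}}}u_{\lambda}=\breve{S}^-_{\mathbf{c}_0}\breve{S}_{t_{j(\eta)-K\breve\alpha_n^{\vee}}}u_{\lambda}$ that you flag as the main obstacle is in fact unproblematic and verifies exactly as you sketch: both sides push to the same product of commuting $\breve{z}_{i,\nu}$-monomials on $\tilde{u}_{\lambda}$ under $\Phi_{\lambda}$, and $\Phi_{\lambda}$ is injective. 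The paper sidesteps this identity altogether since it never needs to commute $\breve{S}_{t_{j(\eta)}}$ past $\breve{S}^{-}_{\mathbf{c}_0}$; in exchange it does not obtain the intrinsic description of the summand generators that your extremality observation provides. In short: both proofs are valid, yours is a genuine dual to the paper's and its extra ingredient (extremality of Beck--Nakajima vectors) is a clean byproduct, but the bulk of the work (graded character comparison, Demazure exhaustion via Proposition~\ref{dem}) is shared.
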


\begin{proof}
  We define the $U_q(\widehat{\mathfrak{sl}}_n)$-homomorphism $\sigma_{0,\varepsilon}$  as the composition
  \[
  \sigma_{0,\varepsilon}:M_{0,\varepsilon}\xrightarrow{\Phi_{\lambda}|_{M_{0,\varepsilon}}} L_{0,\varepsilon}=\mathcal{L}_{(2,2,\ldots,2),\varepsilon}\xrightarrow{\Xi_{(2,\ldots,2),\varepsilon}^{-1}}\bigotimes_{j=1}^mN_{\chi_{\lambda}(j),2}=\left(\bigotimes_{i\in I_0}V(\varpi_i)^{\otimes m_i}\right)\otimes \mathbb{Q}(q)[t^{\pm1}]^{\otimes m_n}.
  \]
  Let $t_{\nu}$ denote the variable in the $\nu$-th factor of $\mathbb{Q}(q)[t^{\pm1}]^{\otimes m_n}$. Then we can identify $\mathbb{Q}(q)[t^{\pm1}]^{\otimes m_n}$ with $\mathbb{Q}(q)[t_1^{\pm1},\ldots,t_{m_n}^{\pm1}]$.

  We take $\xi=\sum_{i\in \breve{I}_0}k_i\breve{\alpha}_i^{\vee}$. We deduce that the restriction of $\sigma_{0,\varepsilon}$ to $M_{0,\varepsilon}\cap V_{t_{\xi}}^-(\lambda)$ gives an isomorphism of $U_q^-(\widehat{\mathfrak{sl}}_n)$-modules
  \[
   M_{0,\varepsilon}\cap V_{t_{\xi}}^-(\lambda)\cong \left(U_q^-(\widehat{\mathfrak{sl}}_n) \left(\bigotimes_{i\in I_0}u_{\varpi_i-k_i\delta}^{\otimes m_i}\right)\right)\otimes \left((t_1\cdots t_{m_n})^{-k_n}\mathbb{Q}(q)[t_1^{-1},\ldots,t_{m_n}^{-1}]^{\mathfrak{S}_{m_n}}\right).
  \]

  Let $\mathbf{c}_0\in \overline{\Par}(\lambda)$ be of the form $(\emptyset,\ldots,\emptyset,\Lambda)$ where $\emptyset$ is the empty partition. Then we have $\breve{S}_{\mathbf{c}_0}^-\breve{S}_{t_{\xi}}u_{\lambda}\in M_{0,\varepsilon}$ and 
  \begin{align*}
  \sigma_{0,\varepsilon}(\breve{S}_{\mathbf{c}_0}^-\breve{S}_{t_{\xi}}u_{\lambda})&=\Xi_{(2,2,\ldots,2),\varepsilon}^{-1}\left(s_{\Lambda}(\breve{z}_{n,1}^{-1},\ldots,\breve{z}_{n,m_n}^{-1})\prod_{i\in \breve{I}_0}(\breve{z}_{i,1}\cdots \breve{z}_{i,m_i})^{-k_i}\tilde{u}_{\lambda}\right)\\
    &=(-q)^{-\sum_{i\in I_0}k_im_ib_{i,\varepsilon}}\left(\bigotimes_{i\in I_0}u_{\varpi_i-k_i\delta}^{\otimes m_i}\right)\otimes \left((t_1\cdots t_{m_n})^{-k_n}s_{\Lambda}(t_1^{-1},\ldots,t_{m_n}^{-1})\right)
  \end{align*}
   by (\ref{eq:psi2}). Since $\{s_{\Lambda}(t_1^{-1},\ldots,t_{m_n}^{-1})\}_{\Lambda\in \mathcal{P}_{m_n}}$ is a $\mathbb{Q}(q)$-basis of $\mathbb{Q}(q)[t_1^{-1},\ldots,t_{m_n}^{-1}]^{\mathfrak{S}_{m_n}}$, the image of $\left(M_{0,\varepsilon}\cap V_{t_{\xi}}^-(\lambda)\right)$ contains the $U_q^-(\widehat{\mathfrak{sl}}_n)$-submodule 
  \[
  \left(U_q^-(\widehat{\mathfrak{sl}}_n) \left(\bigotimes_{i\in I_0}u_{\varpi_i-k_i\delta}^{\otimes m_i}\right)\right)\otimes \left((t_1\cdots t_{m_n})^{-k_n}\mathbb{Q}(q)[t_1^{-1},\ldots,t_{m_n}^{-1}]^{\mathfrak{S}_{m_n}}\right).
  \]

  By Corollary \ref{br} and Corollary \ref{Mgcht}, we have
  \begin{align*}
    \gch(M_{0,\varepsilon}\cap V_{t_{\xi}}^-(\lambda))&=\left(\prod_{i=1}^n\prod_{r=1}^{m_i}(1-q^{-r})\right)^{-1}P_{j^*(\lambda)}(x;q^{-1},0)\\
    &=q^{-k_nm_n}\left(\prod_{r=1}^{m_n}(1-q^{-r})\right)^{-1}\gch V_{t_{\zeta}}^-(j^*(\lambda)),
  \end{align*}
  where $\zeta=\sum_{i\in I_0}k_i\alpha_i^{\vee}\in Q_0^{\vee}$.

  By Theorem \ref{inj}, there is an injective $U_q(\widehat{\mathfrak{sl}}_n)$-linear morphism $\Phi_{j^*(\lambda)}:V(j^*(\lambda))\to \bigotimes_{i\in I_0}V(\varpi_i)^{\otimes m_i}$ given by $u_{j^*(\lambda)}\mapsto \bigotimes_{i\in I_0}u_{\varpi_i}^{\otimes m_i}$. Since $\Phi_{j^*(\lambda)}(S_{t_{\zeta}}u_{j^*(\lambda)})=\left(\otimes_{i\in I_0}u_{\varpi_i-k_i\delta}^{\otimes m_i}\right)$, the restriction of $\Phi_{j^*(\lambda)}$ gives an isomorphism of $U_q^-(\widehat{\mathfrak{sl}}_n)$-modules
  \[
  V_{t_{\zeta}}^-(j^*(\lambda))\cong U_q^-(\widehat{\mathfrak{sl}}_n) \left(\bigotimes_{i\in I_0}u_{\varpi_i-k_i\delta}^{\otimes m_i}\right).
  \]

 Hence, the graded character of $\left(U_q^-(\widehat{\mathfrak{sl}}_n) \left(\bigotimes_{i\in I_0}u_{\varpi_i-k_i\delta}^{\otimes m_i}\right)\right)\otimes \left((t_1\cdots t_{m_n})^{-k_n}\mathbb{Q}(q)[t_1^{-1},\ldots,t_{m_n}^{-1}]^{\mathfrak{S}_{m_n}}\right)$ is computed as
  \[
  q^{-k_nm_n}\sum_{\Lambda\in \mathcal{P}_{m_n}}q^{-|\Lambda|}\gch V_{\zeta}^-(j^*(\lambda))\\
    =q^{-k_nm_n}\left(\prod_{r=1}^{m_n}(1-q^{-r})\right)^{-1}\gch V_{t_{\zeta}}^-(j^*(\lambda)).
  \]

  Thus, by comparing the graded characters, we see that the restriction of $\sigma_{0,\varepsilon}$ gives an isomorphism of $U_q^-(\widehat{\mathfrak{sl}}_n)$-modules 
  \begin{align*}
    M_{0,\varepsilon}\cap V_{t_{\xi}}^-(\lambda)&\cong \left(U_q^-(\widehat{\mathfrak{sl}}_n) \left(\bigotimes_{i\in I_0}u_{\varpi_i-k_i\delta}^{\otimes m_i}\right)\right)\otimes \left((t_1\cdots t_{m_n})^{-k_n}\mathbb{Q}(q)[t_1^{-1},\ldots,t_{m_n}^{-1}]^{\mathfrak{S}_{m_n}}\right)\\
    &=\Phi_{j^*(\lambda)}(V_{t_{\zeta}}^-(j^*(\lambda)))\otimes \left((t_1\cdots t_{m_n})^{-k_n}\mathbb{Q}(q)[t_1^{-1},\ldots,t_{m_n}^{-1}]^{\mathfrak{S}_{m_n}}\right).
  \end{align*}

  By Proposition \ref{dem}, we have
  \[
   M_{0,\varepsilon}=\bigcup_{\xi\in \breve{Q}_0^{\vee}} \left(M_{0,\varepsilon}\cap V_{t_{\xi}}^-(\lambda)\right)\quad\text{and}\quad
    V(j^*(\lambda))=\bigcup_{\zeta\in Q_0^{\vee}} V_{t_{\zeta}}^-(j^*(\lambda)).
  \]

  Therefore, $\sigma_{0,\varepsilon}$ induces an isomorphism
 \[
  M_{0,\varepsilon}\cong \Phi_{j^*(\lambda)}(V(j^*(\lambda)))\otimes(\mathbb{Q}(q)[t_1^{\pm1},\ldots,t_{m_n}^{\pm1}]^{\mathfrak{S}_{m_n}})
    \cong V(j^*(\lambda))\otimes (\mathbb{Q}(q)[t_1^{\pm1},\ldots,t_{m_n}^{\pm1}]^{\mathfrak{S}_{m_n}}),
  \]
 as required.
\end{proof}

\begin{theorem}\label{M_m}
  \textit{There is an isomorphism of $U_q(\widehat{\mathfrak{sl}}_n)$-modules 
  \[
  M_{m,\varepsilon}\cong \left(\mathbb{Q}(q)[t_1^{\pm1},\ldots,t_{m_1}^{\pm1}]^{\mathfrak{S}_{m_1}}\right)\otimes V(j^*(w_1(\lambda))).
  \]
  }
\end{theorem}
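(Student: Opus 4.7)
The plan is to mirror the proof of Theorem \ref{M_0}, now using the minimum tuple $\mathfrak{i}=(1,1,\ldots,1)\in\mathcal{I}_m$ in place of the maximum tuple $(2,2,\ldots,2)\in\mathcal{I}_0$, and replacing $\breve{S}_{t_\xi}u_\lambda$ by $\breve{S}_{w_1 t_\xi}u_\lambda$. First I would define the $U_q(\widehat{\mathfrak{sl}}_n)$-homomorphism
\[
\sigma_{m,\varepsilon}:M_{m,\varepsilon}\xrightarrow{\Phi_\lambda|_{M_{m,\varepsilon}}} L_{m,\varepsilon}=\mathcal{L}_{(1,\ldots,1),\varepsilon}\xrightarrow{\Xi_{(1,\ldots,1),\varepsilon}^{-1}}\mathbb{Q}(q)[t^{\pm1}]^{\otimes m_1}\otimes\bigotimes_{i=2}^n V(\varpi_{i-1})^{\otimes m_i},
\]
and identify $\mathbb{Q}(q)[t^{\pm1}]^{\otimes m_1}$ with $\mathbb{Q}(q)[t_1^{\pm1},\ldots,t_{m_1}^{\pm1}]$, where $t_\nu$ denotes the variable in the $\nu$-th factor. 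Note that $\bigotimes_{i=2}^n V(\varpi_{i-1})^{\otimes m_i}$ is the ambient tensor product through which the injection $\Phi_{j^*(w_1\lambda)}:V(j^*(w_1\lambda))\hookrightarrow\bigotimes_{i=2}^n V(\varpi_{i-1})^{\otimes m_i}$ from Theorem \ref{inj} lands.

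Next, for $\xi=\sum_{i\in\breve{I}_0}k_i\breve{\alpha}_i^\vee\in\breve{Q}_0^\vee$ and a partition $\Lambda\in\mathcal{P}_{m_1}$, I take $\mathbf{c}_0=(\Lambda,\emptyset,\ldots,\emptyset)\in\overline{\Par}(\lambda)$, so that $\breve{S}_{\mathbf{c}_0}^-\breve{S}_{w_1 t_\xi}u_\lambda\in M_{m,\varepsilon}\cap V_{w_1 t_\xi}^-(\lambda)$. Using Proposition \ref{BN} together with Lemmas \ref{ext} and \ref{wt} (which give $\breve{S}_{w_1}u_{\breve{\varpi}_i}=\breve{S}_{w_i}u_{\breve{\varpi}_i}$, since $w_1$ and $w_i$ act identically on $\breve{\varpi}_i$) and the explicit formula (\ref{eq:psi1}) for $\psi_{1,\varepsilon}^{\chi_\lambda(j)}$, I would show
\[
\sigma_{m,\varepsilon}\bigl(\breve{S}_{\mathbf{c}_0}^-\breve{S}_{w_1 t_\xi}u_\lambda\bigr)=(-q)^N\bigl((t_1\cdots t_{m_1})^{-k_1}s_\Lambda(t_1^{-1},\ldots,t_{m_1}^{-1})\bigr)\otimes\bigotimes_{i=2}^n u_{\varpi_{i-1}-k_i\delta}^{\otimes m_i}
\]
for an explicit integer $N$. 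Since $\{s_\Lambda(t_1^{-1},\ldots,t_{m_1}^{-1})\}_{\Lambda\in\mathcal{P}_{m_1}}$ is a $\mathbb{Q}(q)$-basis of $\mathbb{Q}(q)[t_1^{-1},\ldots,t_{m_1}^{-1}]^{\mathfrak{S}_{m_1}}$ and $\Phi_{j^*(w_1\lambda)}(V_{t_\zeta}^-(j^*(w_1\lambda)))=U_q^-(\widehat{\mathfrak{sl}}_n)\bigl(\bigotimes_{i=2}^n u_{\varpi_{i-1}-k_i\delta}^{\otimes m_i}\bigr)$ with $\zeta=\sum_{i=2}^n k_i\alpha_{i-1}^\vee\in Q_0^\vee$, the image of $\sigma_{m,\varepsilon}|_{M_{m,\varepsilon}\cap V_{w_1 t_\xi}^-(\lambda)}$ contains
\[
\bigl((t_1\cdots t_{m_1})^{-k_1}\mathbb{Q}(q)[t_1^{-1},\ldots,t_{m_1}^{-1}]^{\mathfrak{S}_{m_1}}\bigr)\otimes\Phi_{j^*(w_1\lambda)}\bigl(V_{t_\zeta}^-(j^*(w_1\lambda))\bigr).
\]

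A graded-character comparison, using Corollary \ref{Mgcht} (applied to $V_{w_1 t_\xi}^-(\lambda)$, whose character is deduced from that of $V_{w_1}^-(\lambda)$ via Lemma \ref{txi}) together with Corollary \ref{br} for $V_{t_\zeta}^-(j^*(w_1\lambda))$, forces the inclusion above to be an equality. Taking the union over $\xi\in\breve{Q}_0^\vee$ through the analogue of Proposition \ref{dem}(2), which gives $V(\lambda)=\bigcup_\xi V_{w_1 t_\xi}^-(\lambda)$, together with $V(j^*(w_1\lambda))=\bigcup_\zeta V_{t_\zeta}^-(j^*(w_1\lambda))$, then yields the claimed isomorphism. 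The principal obstacle is the explicit computation in the second step: correctly redistributing $\breve{S}_{w_1}$ among the tensor factors of $L_{(1,\ldots,1),\varepsilon}$ (via the equalities $\breve{S}_{w_1}u_{\breve{\varpi}_i}=\breve{S}_{w_i}u_{\breve{\varpi}_i}$) and accumulating the resulting powers of $-q$ coming from (\ref{eq:psi1}); a secondary difficulty is establishing the required character formula for the Demazure submodule $V_{w_1 t_\xi}^-(\lambda)$, which the argument of Corollary \ref{br} addresses only for pure translations.
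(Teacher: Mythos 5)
Your overall strategy mirrors the paper's: same map $\sigma_{m,\varepsilon}$, same identification of the target with $\mathbb{Q}(q)[t^{\pm1}]^{\otimes m_1}\otimes\bigotimes_{i=2}^n V(\varpi_{i-1})^{\otimes m_i}$, and the same Schur-basis-plus-character-comparison scheme. But you place $\breve{S}_{\mathbf{c}_0}^-$ on the outside, applied to $\breve{S}_{w_1 t_\xi}u_\lambda$, whereas the paper places $\breve{S}_{w_1}$ on the outside and works with $\breve{S}_{w_1}\breve{S}_{\mathbf{c}_0}^-\breve{S}_{t_\xi}u_\lambda\in M_{m,\varepsilon}\cap V_{t_\xi}^-(\lambda)$. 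These are genuinely different vectors: $\breve{S}_{w_1}$ acting on $u_\lambda$ is the concrete element $\breve{F}_n^{(m)}\cdots\breve{F}_1^{(m_1)}\in U_q^-$, which does not commute with the imaginary root vectors constituting $\breve{S}_{\mathbf{c}_0}^-$. The paper's choice is what makes the computation go through: by $U_q(\widehat{\mathfrak{sl}}_{n+1})$-linearity of $\Phi_\lambda$, one gets $\Phi_\lambda(\breve{S}_{w_1}\breve{S}_{\mathbf{c}_0}^-\breve{S}_{t_\xi}u_\lambda)=\breve{S}_{w_1}\,\Phi_\lambda(\breve{S}_{\mathbf{c}_0}^-\breve{S}_{t_\xi}u_\lambda)$, and the inner term is exactly what Corollary \ref{BN} supplies. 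Your vector $\breve{S}_{\mathbf{c}_0}^-\breve{S}_{w_1 t_\xi}u_\lambda$ is not covered by Proposition \ref{BN} or Corollary \ref{BN}: those compute $\Phi_\lambda(\breve{S}_{\mathbf{c}_0}^-\breve{S}_{t_\xi}u_\lambda)$ only for pure translations, and the proof of Corollary \ref{BN} rests on a $U_q'(\widehat{\mathfrak{sl}}_{n+1})$-linear automorphism of $\widetilde{V}(\lambda)$ sending $\tilde{u}_\lambda\mapsto\breve{S}_{t_\xi}\tilde{u}_\lambda$, which has no analogue for $w_1 t_\xi$ since $w_1$ changes the $\cl$-weight. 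So the identity you ``would show'' for $\sigma_{m,\varepsilon}(\breve{S}_{\mathbf{c}_0}^-\breve{S}_{w_1 t_\xi}u_\lambda)$ is not justified by anything you cite; this is the principal gap.

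The two secondary problems you flag are also genuine and are not solved in the paper: Corollaries \ref{br} and \ref{Mgcht} give graded characters only for $V_{t_\xi}^-(\lambda)$, not $V_{w_1 t_\xi}^-(\lambda)$, and Proposition \ref{dem}(2) gives $V(\lambda)=\bigcup_\xi V_{t_\xi}^-(\lambda)$, not $\bigcup_\xi V_{w_1 t_\xi}^-(\lambda)$. You would have to prove both extensions before your version of the argument could close. The paper's formulation, working entirely inside $V_{t_\xi}^-(\lambda)$ and putting $\breve{S}_{w_1}$ on the outside, is precisely what lets it use Corollary \ref{BN}, Corollary \ref{Mgcht}, and Proposition \ref{dem}(2) as stated, with no auxiliary lemmas. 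If you instead keep your indexing by $V_{w_1 t_\xi}^-(\lambda)$, you are taking on three extra proof obligations that the paper never needs.
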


\begin{proof}
 We define a $U_q(\widehat{\mathfrak{sl}}_n)$-homomorphism $\sigma_{m,\varepsilon}$ as the composition
  \[
  \sigma_{m,\varepsilon}:M_{m,\varepsilon}\xrightarrow{\Phi_{\lambda}|_{M_m}}L_{m,\varepsilon}=\mathcal{L}_{(1,1,\ldots,1),\varepsilon}\xrightarrow{\Xi_{(1,\ldots,1),\varepsilon}^{-1}}\bigotimes_{j=1}^mN_{\chi_{\lambda}(j),1}=\mathbb{Q}(q)[t^{\pm1}]^{\otimes m_1}\otimes\left(\bigotimes_{i\in I_0}V(\varpi_i)^{\otimes m_{i+1}}\right).
  \]
  Let $t_{\nu}$ denote the variable in the $\nu$-th factor of $\mathbb{Q}(q)[t^{\pm1}]^{\otimes m_1}$. Then we can identify $\mathbb{Q}(q)[t^{\pm1}]^{\otimes m_1}$ with $\mathbb{Q}(q)[t_1^{\pm1},\ldots,t_{m_1}^{\pm1}]$.

  We take $\xi=\sum_{i\in \breve{I}_0}k_i\breve{\alpha}_i^{\vee}$. We deduce that the restriction of $\sigma_{m,\varepsilon}$ to $M_{m,\varepsilon}\cap V_{t_{\xi}}^-(\lambda)$ gives an isomorphism of $U_q^-(\widehat{\mathfrak{sl}}_n)$-modules
  \[
   M_{m,\varepsilon}\cap V_{t_{\xi}}^-(\lambda)\cong \left((t_1\cdots t_{m_1})^{-k_1}\mathbb{Q}(q)[t_1^{-1},\ldots,t_{m_1}^{-1}]^{\mathfrak{S}_{m_1}}\right)\otimes\left(U_q^-(\widehat{\mathfrak{sl}}_n) \left(\bigotimes_{i\in I_0}u_{\varpi_i-k_{i+1}\delta}^{\otimes m_{i+1}}\right)\right).
  \]

  Let $\mathbf{c}_0\in \overline{\Par}(\lambda)$ be of the form $(\Lambda,\emptyset,\ldots,\emptyset)$. Then we have $\breve{S}_{w_1}\breve{S}_{\mathbf{c}_0}^-\breve{S}_{t_{\xi}}u_{\lambda}\in M_{m,\varepsilon}$ and 
  \begin{align*}
  \sigma_{m,\varepsilon}(\breve{S}_{w_1}\breve{S}_{\mathbf{c}_0}^-\breve{S}_{t_{\xi}}u_{\lambda})&=\Xi_{(1,1,\ldots,1),\varepsilon}^{-1}\left(\breve{S}_{w_1}s_{\Lambda}(\breve{z}_{1,1}^{-1},\ldots,\breve{z}_{1,m_1}^{-1})\prod_{i\in \breve{I}_0}(\breve{z}_{i,1}\cdots \breve{z}_{i,m_i})^{-k_i}\tilde{u}_{\lambda}\right)\\
    &=(-q)^{-\sum_{i\in I_0}k_{i+1}m_{i+1}a_{i+1,\varepsilon}}\left((t_1\cdots t_{m_1})^{-k_1}s_{\Lambda}(t_1^{-1},\ldots,t_{m_1}^{-1})\right)\otimes\left(\bigotimes_{i\in I_0}u_{\varpi_i-k_{i+1}\delta}^{\otimes m_{i+1}}\right)
  \end{align*}
   by (\ref{eq:psi2}). (For the definition of $w_1$, see (\ref{wi})). Since $\{s_{\Lambda}(t_1^{-1},\ldots,t_{m_1}^{-1})\}_{\Lambda\in \mathcal{P}_{m_1}}$ is a $\mathbb{Q}(q)$-basis of $\mathbb{Q}(q)[t_1^{-1},\ldots,t_{m_1}^{-1}]^{\mathfrak{S}_{m_1}}$, the image of $\left(M_{m,\varepsilon}\cap V_{t_{\xi}}^-(\lambda)\right)$ contains the $U_q^-(\widehat{\mathfrak{sl}}_n)$-submodule 
  \[
  \left((t_1\cdots t_{m_1})^{-k_1}\mathbb{Q}(q)[t_1^{-1},\ldots,t_{m_1}^{-1}]^{\mathfrak{S}_{m_1}}\right)\otimes\left(U_q^-(\widehat{\mathfrak{sl}}_n) \left(\bigotimes_{i\in I_0}u_{\varpi_i-k_{i+1}\delta}^{\otimes m_{i+1}}\right)\right).
  \]

  By Corollary \ref{br} and Corollary \ref{Mgcht}, we have
  \begin{align*}
    \gch(M_{m,\varepsilon}\cap V_{t_{\xi}}^-(\lambda))&=\left(\prod_{i=1}^n\prod_{r=1}^{m_i}(1-q^{-r})\right)^{-1}P_{j^*(w_1(\lambda))}(x;q^{-1},0)\\
    &=q^{-k_1m_1}\left(\prod_{r=1}^{m_n}(1-q^{-r})\right)^{-1}\gch V_{t_{\zeta}}^-(j^*(w_1(\lambda))),
  \end{align*}
  where $\zeta=\sum_{i\in I_0}k_{i+1}\alpha_i^{\vee}\in Q_0^{\vee}$.

  By Theorem \ref{inj}, there is an injective $U_q(\widehat{\mathfrak{sl}}_n)$-linear morphism $\Phi_{j^*(w_1(\lambda))}:V(j^*(w_1(\lambda)))\to \bigotimes_{i\in I_0}V(\varpi_i)^{\otimes m_{i+1}}$ given by $u_{j^*(w_1(\lambda))}\mapsto \bigotimes_{i\in I_0}u_{\varpi_i}^{\otimes m_{i+1}}$. Since $\Phi_{j^*(w_1(\lambda))}(S_{t_{\zeta}}u_{j^*(w_1(\lambda))})=\left(\bigotimes_{i\in I_0}u_{\varpi_i-k_{i+1}\delta}^{\otimes m_{i+1}}\right)$, the restriction of $\Phi_{j^*(\lambda)}$ gives an isomorphism of $U_q^-(\widehat{\mathfrak{sl}}_n)$-modules
  \[
  V_{t_{\zeta}}^-(j^*(w_1(\lambda)))\cong U_q^-(\widehat{\mathfrak{sl}}_n) \left(\bigotimes_{i\in I_0}u_{\varpi_i-k_{i+1}\delta}^{\otimes m_{i+1}}\right).
  \]

 Hence, the graded character of $\left((t_1\cdots t_{m_1})^{-k_1}\mathbb{Q}(q)[t_1^{-1},\ldots,t_{m_1}^{-1}]^{\mathfrak{S}_{m_1}}\right)\otimes\left(U_q^-(\widehat{\mathfrak{sl}}_n) \left(\bigotimes_{i\in I_0}u_{\varpi_i-k_{i+1}\delta}^{\otimes m_{i+1}}\right)\right)$ is computed as
  \[
  q^{-k_1m_1}\sum_{\Lambda\in \mathcal{P}_{m_1}}q^{-|\Lambda|}\gch V_{\zeta}^-(j^*(w_1(\lambda)))\\
    =q^{-k_1m_1}\left(\prod_{r=1}^{m_1}(1-q^{-r})\right)^{-1}\gch V_{t_{\zeta}}^-(j^*(w_1(\lambda))).
  \]

  Thus, by comparing the graded characters, we see that the restriction of $\sigma_{m,\varepsilon}$ gives an isomorphism of $U_q^-(\widehat{\mathfrak{sl}}_n)$-modules 
  \begin{align*}
    M_{m,\varepsilon}\cap V_{t_{\xi}}^-(\lambda)&\cong \left((t_1\cdots t_{m_1})^{-k_1}\mathbb{Q}(q)[t_1^{-1},\ldots,t_{m_1}^{-1}]^{\mathfrak{S}_{m_1}}\right)\otimes\left(U_q^-(\widehat{\mathfrak{sl}}_n) \left(\bigotimes_{i\in I_0}u_{\varpi_i-k_{i+1}\delta}^{\otimes m_{i+1}}\right)\right)\\
    &=\left((t_1\cdots t_{m_1})^{-k_1}\mathbb{Q}(q)[t_1^{-1},\ldots,t_{m_1}^{-1}]^{\mathfrak{S}_{m_1}}\right)\otimes\Phi_{j^*(w_1(\lambda))}(V_{t_{\zeta}}^-(j^*(w_1(\lambda))).
  \end{align*}

  By Proposition \ref{dem}, we have
  \[
  M_{m,\varepsilon}=\bigcup_{\xi\in \breve{Q}_0^{\vee}} \left(M_{m,\varepsilon}\cap V_{t_{\xi}}^-(\lambda)\right)\quad\text{and}\quad
    V(j^*(w_1(\lambda)))=\bigcup_{\zeta\in Q_0^{\vee}} V_{t_{\zeta}}^-(j^*(w_1(\lambda))).
  \]

  Therefore, $\sigma_{m,\varepsilon}$ induces an isomorphism
  \[
  M_{m,\varepsilon}\cong (\mathbb{Q}(q)[t_1^{\pm1},\ldots,t_{m_1}^{\pm1}]^{\mathfrak{S}_{m_1}})\otimes\Phi_{j^*(w_1(\lambda))}(V(j^*(w_1(\lambda))))\cong  (\mathbb{Q}(q)[t_1^{\pm1},\ldots,t_{m_1}^{\pm1}]^{\mathfrak{S}_{m_1}})\otimes V(j^*(w_1(\lambda))),
  \]
  as required.
\end{proof}

\section{Branching rule}

 Keep the setting of the previous section. In this section, we fix $m\in \mathbb{Z}_{>0}$.

\subsection{Some lemmas}

We establish some lemmas that will be used later.

\begin{lemma}\label{tensor}
  \textit{Let $M_1,\ldots,M_k$ be integrable $U_q(\widehat{\mathfrak{sl}}_{n+1})$-modules and $v_1,\ldots,v_k$ be weight vectors with weights $\lambda_1,\ldots,\lambda_k$. Fix $i\in \breve{I}$. Assume that $\langle \alpha_i^{\vee},\lambda_j\rangle=1$ and $\breve{F}_i^{(2)}v_j=0$ for all $j=1,\ldots,k$. Then we have
  \begin{multline*}
    \breve{F}_i^{(p)}(v_1\otimes\cdots\otimes v_k)=\sum_{1\le i_1<\cdots <i_p\le k}q^{i_1+\cdots+i_p-\frac{1}{2}p(p+1)}v_1\otimes\cdots v_{i_1-1}\otimes \breve{F}_iv_{i_1}\otimes v_{i_1+1}\otimes \cdots\\
    \cdots \otimes v_{i_p-1}\otimes \breve{F}_iv_{i_p}\otimes v_{i_p+1}\otimes\cdots\otimes v_k 
  \end{multline*}
  for all $1\le p\le k$.}
\end{lemma}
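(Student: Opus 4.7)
The plan is to prove this lemma by induction on $p$, using the recursion $\breve{F}_i \cdot \breve{F}_i^{(p-1)} = [p]_q \breve{F}_i^{(p)}$, which is valid since we are in type $A$ (so all simple roots have squared length $2$ and $q_i = q$). The base case $p = 1$ reduces to computing $\breve{F}_i(v_1 \otimes \cdots \otimes v_k)$ via the iterated coproduct $\Delta(\breve{F}_i) = \breve{F}_i \otimes 1 + \breve{t}_i \otimes \breve{F}_i$ of (\ref{cop}): a standard expansion yields
\[
\breve{F}_i(v_1 \otimes \cdots \otimes v_k) = \sum_{j=1}^k q^{\sum_{l<j}\langle \breve{\alpha}_i^\vee,\, \wt(v_l)\rangle}\, v_1 \otimes \cdots \otimes \breve{F}_i v_j \otimes \cdots \otimes v_k,
\]
and the hypothesis $\langle \breve{\alpha}_i^\vee, \lambda_l\rangle = 1$ collapses the exponent to $j-1$, matching the claimed formula.

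For the inductive step, I apply $\breve{F}_i$ once more to the assumed expression for $\breve{F}_i^{(p-1)}(v_1 \otimes \cdots \otimes v_k)$. Two observations control what survives. First, if $\breve{F}_i$ lands on a tensor slot that already contains $\breve{F}_i v_{j_l}$, the result involves $\breve{F}_i^{\,2} v_{j_l} = [2]_q \breve{F}_i^{(2)} v_{j_l} = 0$, so such terms vanish. Second, when $\breve{F}_i$ lands on a new slot $j \notin \{j_1, \ldots, j_{p-1}\}$, each preceding position $j_l < j$ has weight $\lambda_{j_l} - \breve{\alpha}_i$, contributing $-1$ rather than $+1$ to the $q$-exponent. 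After discarding the vanishing terms, I reorganize the remaining double sum by grouping according to the resulting $p$-subset $\{i_1 < \cdots < i_p\}$ of occupied positions; each subset arises in exactly $p$ ways, indexed by which element $i_s$ is the "new" one. A direct bookkeeping check gives the coefficient
\[
\sum_{s=1}^p q^{(i_1+\cdots+i_p) - i_s - \frac{(p-1)p}{2}}\cdot q^{i_s - 1 - 2(s-1)} = q^{(i_1+\cdots+i_p) - \frac{p(p+1)}{2}}\sum_{s=1}^p q^{p - 2s + 1} = [p]_q\cdot q^{(i_1+\cdots+i_p) - \frac{p(p+1)}{2}},
\]
using the identity $[p]_q = \sum_{s=1}^p q^{p-2s+1}$. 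Dividing by $[p]_q$ yields the claimed formula for $\breve{F}_i^{(p)}$.

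The conceptual content is entirely standard; the only real work is the exponent bookkeeping in the inductive step, which I expect to be the main (though routine) obstacle. An alternative approach via the quantum binomial formula for $\Delta(\breve{F}_i^{(p)})$ would also work, but induction on $p$ is more direct given the vanishing condition $\breve{F}_i^{(2)} v_j = 0$, which eliminates all "repeated-slot" terms from the start.
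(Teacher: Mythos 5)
Your proof is correct and takes essentially the same route as the paper: induction on $p$, expanding $\breve{F}_i \cdot \breve{F}_i^{(p-1)}$ via the iterated coproduct, discarding the terms where $\breve{F}_i$ hits an already-lowered slot (since $\breve{F}_i^{(2)}v_j = 0$), and doing the identical exponent bookkeeping with the factor $q^{i_s-2s+1}$ summing to $q^{-p}[p]_q$. The only cosmetic difference is that the paper begins the induction at $p=0$; the argument and all computations match.
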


\begin{proof}
  We prove this assertion by induction on $p$.

  The assertion on $p=0$ is clear. Next, we consider the case of $p>0$. Using  $\Delta(F_i)=F_i\otimes1+t_i\otimes F_i$, we obtain 
  \begin{multline*}
    \breve{F}_i\breve{F}_i^{(p-1)}(v_1\otimes\cdots\otimes v_k)\\
   =\sum_{1\le i_1<\cdots <i_p\le k}\sum_{a=1}^pq^{i_1+\cdots+i_{a-1}+i_{a+1}+i_p-\frac{1}{2}p(p-1)}q^{i_a-2a+1} v_1\otimes\cdots v_{i_1-1}\otimes \breve{F}_iv_{i_1}\otimes v_{i_1+1}\otimes \cdots \\
   \cdots \otimes v_{i_p-1}\otimes \breve{F}_iv_{i_p}\otimes v_{i_p+1}\otimes\cdots\otimes v_k 
  \end{multline*}
  by using the induction hypothesis. Since 
  \[
  \sum_{a=1}^pq^{i_1+\cdots+i_{a-1}+i_{a+1}+\cdots +i_p-\frac{1}{2}p(p-1)}q^{i_a-2a+1}=q^{i_1+\ldots+i_p-\frac{1}{2}p(p+1)}[p]_q,
  \]
  we have
  \begin{multline*}
    \breve{F}_i^{(p)}(v_1\otimes\cdots\otimes v_k)=\sum_{1\le i_1<\cdots <i_p\le k}q^{i_1+\cdots+i_p-\frac{1}{2}p(p+1)}v_1\otimes\cdots v_{i_1-1}\otimes \breve{F}_iv_{i_1}\otimes v_{i_1+1}\otimes \cdots\\
    \cdots \otimes v_{i_p-1}\otimes \breve{F}_iv_{i_p}\otimes v_{i_p+1}\otimes\cdots\otimes v_k .
  \end{multline*}
This gives the desired result.
\end{proof}

\begin{lemma}\label{t}
  \textit{For $0\le p\le m$, we have the following equality in $V(\breve{\varpi}_i)^{\otimes m}$:
  \begin{multline*}
     \breve{F}_n^{(p)}\breve{F}_{n-1}^{(p)}\cdots \breve{F}_i^{(p)}u_{\breve{\varpi}_i+k_1\breve{\delta}}\otimes\cdots\otimes  u_{\breve{\varpi}_i+k_m\breve{\delta}}\\
    =\sum_{1\le i_1< \cdots < i_p \le m} q^{i_1+\cdots+ i_p-\frac{1}{2}p(p+1)} u_{\breve{\varpi}_i+k_1\breve{\delta}}\otimes \cdots \otimes u_{\breve{\varpi}_i+k_{i_1-1}\breve{\delta}}\otimes \breve{S}_{w_i}u_{\breve{\varpi}_i+k_{i_1}\breve{\delta}}\otimes u_{\breve{\varpi}_i+k_{i_1+1}\breve{\delta}}\otimes \cdots \\
     \cdots \otimes u_{\breve{\varpi}_i+k_{i_p-1}\breve{\delta}}\otimes \breve{S}_{w_i}u_{\breve{\varpi}_i+k_{i_p}\breve{\delta}}\otimes u_{\breve{\varpi}_i+k_{i_p+1}\breve{\delta}}\otimes \cdots \otimes u_{\breve{\varpi}_i+k_m\breve{\delta}}.
  \end{multline*}
  Here, $w_i\in \breve{W}$ is defined in (\ref{wi}).}
\end{lemma}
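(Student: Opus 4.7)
The plan is to prove by induction on $j$ that for each $i\le j\le n$ the operator $\breve{F}_j^{(p)}\breve{F}_{j-1}^{(p)}\cdots \breve{F}_i^{(p)}$ sends $u_{\breve{\varpi}_i+k_1\breve{\delta}}\otimes\cdots\otimes u_{\breve{\varpi}_i+k_m\breve{\delta}}$ to $\sum_{1\le i_1<\cdots<i_p\le m} q^{i_1+\cdots+i_p-\frac{1}{2}p(p+1)}\,v^{(i_1,\ldots,i_p)}_{(j)}$, where $v^{(i_1,\ldots,i_p)}_{(j)}$ denotes the tensor obtained by replacing, at each position $i_a$, the factor $u_{\breve{\varpi}_i+k_{i_a}\breve{\delta}}$ by $\breve{F}_j\cdots \breve{F}_i u_{\breve{\varpi}_i+k_{i_a}\breve{\delta}}$. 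Specializing to $j=n$ gives the claim, because iterating $\breve{S}_l v=\breve{F}_l v$ (valid for extremal $v$ with $\langle\breve{\alpha}_l^\vee,\wt(v)\rangle\ge 0$) yields $\breve{F}_n\cdots\breve{F}_i u_{\breve{\varpi}_i+k\breve{\delta}}=\breve{S}_n\cdots\breve{S}_i u_{\breve{\varpi}_i+k\breve{\delta}}=\breve{S}_{w_i}u_{\breve{\varpi}_i+k\breve{\delta}}$.

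The base case $j=i$ is a direct application of Lemma \ref{tensor}: each $u_{\breve{\varpi}_i+k_l\breve{\delta}}$ satisfies $\langle\breve{\alpha}_i^\vee,\breve{\varpi}_i+k_l\breve{\delta}\rangle=1$, and being $\breve{\alpha}_i$-extremal, is killed by $\breve{F}_i^{(2)}$. For the inductive step $j{-}1\Rightarrow j$ with $i<j\le n$, I would analyze the $\breve{\alpha}_j^\vee$-weights of the factors appearing in the inductive hypothesis. At positions outside $I=\{i_1,\ldots,i_p\}$ the factor is still $u_{\breve{\varpi}_i+k_l\breve{\delta}}$, of $\breve{\alpha}_j^\vee$-weight $0$; being $\breve{\alpha}_j$-extremal, such a weight-$0$ vector is annihilated by \emph{both} $\breve{E}_j$ and $\breve{F}_j$ (the $U_q(\mathfrak{sl}_2)_j$-module it generates must be trivial by integrability). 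At positions in $I$ the factor equals $\breve{F}_{j-1}\cdots\breve{F}_i u_{\breve{\varpi}_i+k_{i_a}\breve{\delta}}=u_{s_{j-1}\cdots s_i(\breve{\varpi}_i)+k_{i_a}\breve{\delta}}$, of $\breve{\alpha}_j^\vee$-weight $1$ (since $\langle\breve{\alpha}_j^\vee,\breve{\varpi}_i-\breve{\alpha}_i-\cdots-\breve{\alpha}_{j-1}\rangle=1$), and is killed by $\breve{F}_j^{(2)}$ by extremality. Expanding $\breve{F}_j^{(p)}$ via the iterated coproduct, the only surviving terms are those in which the $p$ copies of $\breve{F}_j$ land one-to-one on the positions in $I$, so the action amounts to inserting $\breve{F}_j$ at each in-$I$ position. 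A direct coproduct calculation, using that $t_j$ acts as the identity on every out-of-$I$ factor, shows that the overall scalar contributed by this step equals $q^{1+2+\cdots+p-\frac{1}{2}p(p+1)}=1$, so the inductive hypothesis is preserved with the same coefficient $q^{i_1+\cdots+i_p-\frac{1}{2}p(p+1)}$.

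The main obstacle is the coefficient bookkeeping in the inductive step: one must verify that the out-of-$I$ ``spectator'' factors contribute no extra powers of $q$ when $\breve{F}_j^{(p)}$ is expanded via the iterated coproduct. This can be handled either by a minor variant of Lemma \ref{tensor} that allows ``type B'' factors (weight $0$, killed by $\breve{F}_j$) alongside the ``type A'' factors (weight $1$, killed by $\breve{F}_j^{(2)}$), or by a short direct induction on $m$ tracking the $t_j$-shifts as the $\breve{F}_j$'s are moved across the tensor. The key observation in both approaches is that the spectator factors have $\breve{\alpha}_j^\vee$-weight $0$, rendering them transparent to every $t_j$-shift, so the computation reduces to the pure type-A version of Lemma \ref{tensor} with the in-$I$ positions relabeled $1,2,\ldots,p$, yielding the trivial exponent $1+2+\cdots+p-\tfrac{1}{2}p(p+1)=0$.
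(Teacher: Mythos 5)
Your proposal follows essentially the same route as the paper: induct on $j$ from $i$ to $n$, settle the base case with Lemma~\ref{tensor}, and in the inductive step observe that the out-of-$I$ factors have $\breve{\alpha}_j^\vee$-weight $0$ (hence are annihilated by both $\breve{E}_j$ and $\breve{F}_j$ and are transparent to $t_j$), while the in-$I$ factors have $\breve{\alpha}_j^\vee$-weight $1$ and are killed by $\breve{F}_j^{(2)}$. You flag the coefficient bookkeeping as the remaining obstacle and propose a "mixed type A/type B" variant of Lemma~\ref{tensor}; the paper avoids formulating such a variant by regrouping the $m$ tensor factors into $p$ consecutive blocks $v_1,\ldots,v_p$, each containing exactly one in-$I$ position, so that every block has $\breve{\alpha}_j^\vee$-weight $1$ and is killed by $\breve{F}_j^{(2)}$. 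Lemma~\ref{tensor} then applies verbatim to the blocks and, since the index set is forced, produces the trivial exponent $1+2+\cdots+p-\tfrac{1}{2}p(p+1)=0$; the coproduct within each block then reduces to a single surviving term exactly as you describe. So your argument is correct and the paper's grouping trick is just a tidy way of packaging the verification you identify as the main obstacle.
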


\begin{proof}
 We prove that 
  \begin{multline*}
    \breve{F}_r^{(p)}\breve{F}_{n-1}^{(p)}\cdots \breve{F}_i^{(p)}u_{\breve{\varpi}_i+k_1\breve{\delta}}\otimes\cdots\otimes  u_{\breve{\varpi}_i+k_m\breve{\delta}}\\
    =\sum_{1\le i_1< \cdots < i_p \le m} q^{i_1+\cdots+ i_p-\frac{1}{2}p(p+1)} u_{\breve{\varpi}_i+k_1\breve{\delta}}\otimes \cdots \otimes u_{\breve{\varpi}_i+k_{i_1-1}\breve{\delta}}\otimes \breve{S}_{w_{i,r}}u_{\breve{\varpi}_i+k_{i_1}\breve{\delta}}\otimes u_{\breve{\varpi}_i+k_{i_1+1}\breve{\delta}}\otimes \cdots \\
     \cdots \otimes u_{\breve{\varpi}_i+k_{i_p-1}\breve{\delta}}\otimes \breve{S}_{w_{i,r}}u_{\breve{\varpi}_i+k_{i_p}\breve{\delta}}\otimes u_{\breve{\varpi}_i+k_{i_p+1}\breve{\delta}}\otimes \cdots \otimes u_{\breve{\varpi}_i+k_m\breve{\delta}}
  \end{multline*}
  for $i\le r\le n$, where $w_{i,r}=s_rs_{r-1}\cdots s_i$. We prove this assertion by induction on $r$.

  The case for $r=i$ follows from Lemma \ref{tensor}. Next, we consider the case where $r>i$. By the induction hypothesis, we have
  \begin{multline*}
    \breve{F}_{r-1}^{(p)}\breve{F}_{n-1}^{(p)}\cdots \breve{F}_i^{(p)}u_{\breve{\varpi}_i+k_1\breve{\delta}}\otimes\cdots\otimes  u_{\breve{\varpi}_i+k_m\breve{\delta}}\\
    =\sum_{1\le i_1< \cdots < i_p \le m} q^{i_1+\cdots +i_p-\frac{1}{2}p(p+1)} u_{\breve{\varpi}_i+k_1\breve{\delta}}\otimes \cdots \otimes u_{\breve{\varpi}_i+k_{i_1-1}\breve{\delta}}\otimes \breve{S}_{w_{i,{r-1}}}u_{\breve{\varpi}_i+k_{i_1}\breve{\delta}}\otimes u_{\breve{\varpi}_i+k_{i_1+1}\breve{\delta}}\otimes \cdots \\
     \cdots \otimes u_{\breve{\varpi}_i+k_{i_p-1}\breve{\delta}}\otimes \breve{S}_{w_{i,{r-1}}}u_{\breve{\varpi}_i+k_{i_p}\breve{\delta}}\otimes u_{\breve{\varpi}_i+k_{i_p+1}\breve{\delta}}\otimes \cdots \otimes u_{\breve{\varpi}_i+k_m\breve{\delta}}.
  \end{multline*}
We fix $(i_1,\ldots,i_p)$ such that $1\le i_1<\cdots<i_p\le m$ and set
\begin{gather*}
  v_1=u_{\breve{\varpi}_i+k_1\breve{\delta}}\otimes \cdots \otimes u_{\breve{\varpi}_i+k_{i_1-1}\breve{\delta}}\otimes \breve{S}_{w_{i,{r-1}}}u_{\breve{\varpi}_i+k_{i_1}\breve{\delta}},\\
  v_2=u_{\breve{\varpi}_i+k_{i_1+1}\breve{\delta}}\otimes \cdots \otimes u_{\breve{\varpi}_i+k_{i_2-1}\breve{\delta}}\otimes \breve{S}_{w_{i,{r-1}}}u_{\breve{\varpi}_i+k_{i_2}\breve{\delta}},\\
  \vdots \\
  v_{p-1}=u_{\breve{\varpi}_i+k_{i_{p-2}+1}\breve{\delta}}\otimes \cdots \otimes u_{\breve{\varpi}_i+k_{i_{p-1}-1}\breve{\delta}}\otimes \breve{S}_{w_{i,{r-1}}}u_{\breve{\varpi}_i+k_{i_{p-1}}\breve{\delta}},\\
  v_p=u_{\breve{\varpi}_i+k_{i_{p-1}+1}\breve{\delta}}\otimes \cdots \otimes u_{\breve{\varpi}_i+k_{i_p-1}\breve{\delta}}\otimes \breve{S}_{w_{i,{r-1}}}u_{\breve{\varpi}_i+k_{i_p}\breve{\delta}}\otimes u_{\breve{\varpi}_i+k_{i_p+1}\breve{\delta}}\otimes \cdots\otimes u_{\breve{\varpi}_i+k_m\breve{\delta}}.
\end{gather*}

Since the weight $\lambda_j$ of $v_j$ satisfies $\langle \alpha_r^{\vee},\lambda_j\rangle =1$ for all $j=1,\ldots,p$, we have
\[
   \breve{F}_r^{(p)}(v_1\otimes\cdots\otimes v_p)=\breve{F}_r(v_1)\otimes \breve{F}_r(v_2)\otimes \cdots \otimes \breve{F}_r(v_p)
\]
by Lemma \ref{tensor}. Since $\Delta(\breve{F}_i)=\breve{F}_i\otimes1+t_i\otimes \breve{F}_i$, we have
\begin{gather*}
  \breve{F}_r(v_1)=u_{\breve{\varpi}_i+k_1\breve{\delta}}\otimes \cdots \otimes u_{\breve{\varpi}_i+k_{i_1-1}\breve{\delta}}\otimes \breve{S}_{w_{i,r}}u_{\breve{\varpi}_i+k_{i_1}\breve{\delta}},\\
  \breve{F}_r(v_2)=u_{\breve{\varpi}_i+k_{i_1+1}\breve{\delta}}\otimes \cdots \otimes u_{\breve{\varpi}_i+k_{i_2-1}\breve{\delta}}\otimes \breve{S}_{w_{i,r}}u_{\breve{\varpi}_i+k_{i_2}\breve{\delta}},\\
  \vdots \\
  \breve{F}_r(v_{p-1})=u_{\breve{\varpi}_i+k_{i_{p-2}+1}\breve{\delta}}\otimes \cdots \otimes u_{\breve{\varpi}_i+k_{i_{p-1}-1}\breve{\delta}}\otimes \breve{S}_{w_{i,r}}u_{\breve{\varpi}_i+k_{i_{p-1}}\breve{\delta}},\\
  \breve{F}_r(v_p)=u_{\breve{\varpi}_i+k_{i_{p-1}+1}\breve{\delta}}\otimes \cdots \otimes u_{\breve{\varpi}_i+k_{i_p-1}\breve{\delta}}\otimes \breve{S}_{w_{i,r}}u_{\breve{\varpi}_i+k_{i_p}\breve{\delta}}\otimes u_{\breve{\varpi}_i+k_{i_p+1}\breve{\delta}}\otimes \cdots\otimes u_{\breve{\varpi}_i+k_m\breve{\delta}}.
\end{gather*}
  Thus, we have 
  \begin{multline*}
    \breve{F}_r^{(p)}\breve{F}_{n-1}^{(p)}\cdots \breve{F}_i^{(p)}u_{\breve{\varpi}_i+k_1\breve{\delta}}\otimes\cdots\otimes  u_{\breve{\varpi}_i+k_m\breve{\delta}}\\
    =\sum_{1\le i_1< \cdots < i_p \le m} q^{i_1+\cdots i_p-\frac{1}{2}p(p+1)} u_{\breve{\varpi}_i+k_1\breve{\delta}}\otimes \cdots \otimes u_{\breve{\varpi}_i+k_{i_1-1}\breve{\delta}}\otimes \breve{S}_{w_{i,r}}u_{\breve{\varpi}_i+k_{i_1}\breve{\delta}}\otimes u_{\breve{\varpi}_i+k_{i_1+1}\breve{\delta}}\otimes \cdots \\
     \cdots \otimes u_{\breve{\varpi}_i+k_{i_p-1}\breve{\delta}}\otimes \breve{S}_{w_{i,r}}u_{\breve{\varpi}_i+k_{i_p}\breve{\delta}}\otimes u_{\breve{\varpi}_i+k_{i_p+1}\breve{\delta}}\otimes \cdots \otimes u_{\breve{\varpi}_i+k_m\breve{\delta}},
  \end{multline*}
  which is the required result.
\end{proof}

\subsection{Branching rule for $V(m\breve{\varpi}_i)$: the case $2\le i\le n-1$}

In this subsection, we assume that $\lambda\in \breve{P}_{0,+}$ is of the form $\lambda=m\breve{\varpi}_i$ with $2\le i\le n-1$.

\begin{theorem}\label{2ni.}
  \textit{For $0\le p\le m$, there is an isomorphism of $U_q(\widehat{\mathfrak{sl}}_n)$-modules
  \[
  M_{p,\varepsilon}\cong V(p\varpi_{i-1}+(m-p)\varpi_i).
  \]
  }
\end{theorem}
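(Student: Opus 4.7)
Following the template of Theorems~\ref{M_0} and~\ref{M_m}, the plan is to construct a $U_q(\widehat{\mathfrak{sl}}_n)$-linear map $\sigma_{p,\varepsilon}$ from $M_{p,\varepsilon}$ into the tensor product $V(\varpi_{i-1})^{\otimes p}\otimes V(\varpi_i)^{\otimes(m-p)}$, show that its image is contained in $\Phi_\mu(V(\mu))$ (where $\mu=p\varpi_{i-1}+(m-p)\varpi_i$), and verify isomorphism by matching graded characters on each Demazure piece. Concretely, let $\mathfrak{i}^*\in\mathcal{I}_p$ be the minimum $(1^p,2^{m-p})$ when $\varepsilon=1$ and the maximum $(2^{m-p},1^p)$ when $\varepsilon=-1$. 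By Proposition~\ref{Lsub} this choice makes $\mathcal{L}_{\mathfrak{i}^*,\varepsilon}$ a quotient of $L_{p,\varepsilon}$, so the composition
\[
\sigma_{p,\varepsilon}:M_{p,\varepsilon}\xrightarrow{\Phi_\lambda} L_{p,\varepsilon}\xrightarrow{p_{\mathfrak{i}^*,\varepsilon}} \mathcal{L}_{\mathfrak{i}^*,\varepsilon}\xrightarrow[\cong]{\Xi_{\mathfrak{i}^*,\varepsilon}^{-1}} \bigotimes_{j=1}^m N_{i,i_j^*}
\]
is a well-defined $U_q(\widehat{\mathfrak{sl}}_n)$-homomorphism whose target is $V(\varpi_{i-1})^{\otimes p}\otimes V(\varpi_i)^{\otimes(m-p)}$ (possibly in reverse order) since $N_{i,1}=V(\varpi_{i-1})$ and $N_{i,2}=V(\varpi_i)$ for $2\le i\le n-1$.

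Next I would analyze $\sigma_{p,\varepsilon}$ on a spanning family of $M_{p,\varepsilon}\cap V_{t_\xi}^-(\lambda)$. Starting from $\breve{S}_{\mathbf{c}_0}^-\breve{S}_{t_\xi}u_\lambda\in V_e^-(\lambda)\cap M_{0,\varepsilon}$ with $\mathbf{c}_0=\rho^{(i)}\in\mathcal{P}_m$, I apply $\breve{F}_n^{(p)}\breve{F}_{n-1}^{(p)}\cdots\breve{F}_i^{(p)}$ to shift into $M_{p,\varepsilon}$ (by (\ref{tildeh}) this raises $k_0-k_n$ from $0$ to $p$). Using Corollary~\ref{BN} and the fact that $\breve{z}_{i,\nu}$ is $U_q'(\widehat{\mathfrak{sl}}_{n+1})$-linear, $\Phi_\lambda$ of this element becomes $s_{\rho^{(i)}}(\breve{z}_{i,\nu}^{-1})(\breve{z}_{i,1}\cdots\breve{z}_{i,m})^{-k_i}$ applied to $\breve{F}_n^{(p)}\cdots\breve{F}_i^{(p)}\tilde{u}_\lambda$, and Lemma~\ref{t} expands the latter as a sum over subsets of $\{1,\ldots,m\}$ of size $p$. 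For the projection to $\mathcal{L}_{\mathfrak{i}^*,\varepsilon}$, only the summand indexed by $\mathfrak{i}^*$ survives; combined with $\Xi_{\mathfrak{i}^*,\varepsilon}^{-1}$ and (\ref{eq:psi1})--(\ref{eq:psi2}), this yields, up to an explicit $(-q)$-power, an element of the form $s_{\rho^{(i)}}(\text{variables})\cdot (u_{\varpi_{i-1}}^{\otimes p}\otimes u_{\varpi_i}^{\otimes(m-p)})$ (shifted by $\delta$'s). By Corollary~\ref{BN} applied to $V(\mu)\hookrightarrow V(\varpi_{i-1})^{\otimes p}\otimes V(\varpi_i)^{\otimes(m-p)}$ (Theorem~\ref{inj}), these are precisely the elements of $\Phi_\mu(V_{t_\zeta}^-(\mu))$ for a suitable $\zeta\in Q_0^\vee$; so $\sigma_{p,\varepsilon}$ restricts to a $U_q^-(\widehat{\mathfrak{sl}}_n)$-linear map $M_{p,\varepsilon}\cap V_{t_\xi}^-(\lambda)\to \Phi_\mu(V_{t_\zeta}^-(\mu))$, which is surjective because the symmetric Schur polynomials $\{s_{\rho^{(i)}}\}$ form a basis for the relevant space.

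To upgrade surjectivity to isomorphism at each Demazure level, I invoke Corollaries~\ref{Mgcht} and~\ref{br}. A direct calculation using $\Lambda=(m^i,0^{n+1-i})$ shows there is a unique horizontal strip $\Lambda/M$ of size $p$, given by $M=(m^{i-1},m-p,0^{n-i})$ with $\pi_n(M)=p\varpi_{i-1}+(m-p)\varpi_i=\mu$. Hence
\[
\gch(M_{p,\varepsilon}\cap V_{t_\xi}^-(\lambda))=q^{-\langle\xi,\lambda\rangle}\!\left(\prod_{r=1}^p(1-q^{-r})\prod_{s=1}^{m-p}(1-q^{-s})\right)^{\!-1}\! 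P_\mu(x;q^{-1},0),
\]
which equals $\gch V_{t_\zeta}^-(\mu)$ as soon as $\langle\zeta,\mu\rangle=\langle\xi,\lambda\rangle=mk_i$; this is arranged by taking $\zeta=k_i(\alpha_{i-1}^\vee+\alpha_i^\vee)$ (or any $\zeta$ matching the pairing). The surjection from the previous paragraph is then an isomorphism on each Demazure piece, and by Proposition~\ref{dem}(2) passing to the directed unions $M_{p,\varepsilon}=\bigcup_\xi(M_{p,\varepsilon}\cap V_{t_\xi}^-(\lambda))$ and $V(\mu)=\bigcup_\zeta V_{t_\zeta}^-(\mu)$ yields the desired isomorphism $M_{p,\varepsilon}\cong V(\mu)$.

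The main obstacle is the compatibility argument in paragraph two: verifying that the image of each generator $\breve{F}_n^{(p)}\cdots\breve{F}_i^{(p)}\breve{S}_{\mathbf{c}_0}^-\breve{S}_{t_\xi}u_\lambda$ under $\sigma_{p,\varepsilon}$ lies in $\Phi_\mu(V(\mu))$ with the correct scalar (so that the induced map on Demazure pieces $M_{p,\varepsilon}\cap V_{t_\xi}^-(\lambda)\to\Phi_\mu(V_{t_\zeta}^-(\mu))$ matches up correctly). The subtlety is that for $0<p<m$ there are $\binom{m}{p}>1$ components $L_{\mathfrak{i},\varepsilon}$ inside $L_{p,\varepsilon}$, unlike the edge cases $p=0,m$ handled in Theorems~\ref{M_0} and~\ref{M_m}; the filtration provided by Proposition~\ref{Lsub} and the quotient isomorphisms of Proposition~\ref{quot} must be used in an essential way to isolate the contribution at $\mathfrak{i}^*$ from the other components, and the relative sign $\varepsilon=\pm1$ determines which endpoint of the lex order gives the required quotient.
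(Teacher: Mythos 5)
Your overall strategy matches the paper's: choose $\mathfrak{i}^*=(1^p,2^{m-p})$ for $\varepsilon=1$ (minimum) and $\mathfrak{i}^*=(2^{m-p},1^p)$ for $\varepsilon=-1$ (maximum) so that Proposition~\ref{Lsub} makes $\mathcal{L}_{\mathfrak{i}^*,\varepsilon}$ a quotient of all of $L_{p,\varepsilon}$; compose $\Phi_\lambda$, the quotient map, and $\Xi_{\mathfrak{i}^*,\varepsilon}^{-1}$ to get $\tau_{p,\varepsilon}^i$; use Lemma~\ref{t} to show that $\breve{F}_n^{(p)}\cdots\breve{F}_i^{(p)}\breve{S}_{t_\xi}u_\lambda$ maps (up to $(-q)$-power) to the extremal vector; match graded characters on each Demazure piece via the unique horizontal strip $M=(m^{i-1},m-p,0^{n-i})$, with $\zeta=k(\alpha_{i-1}^\vee+\alpha_i^\vee)$; then pass to the directed union via Proposition~\ref{dem}. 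All of this agrees with the paper.

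The one substantive deviation is that you import the $\mathbf{c}_0$ and Schur-polynomial machinery into the starting family, modeling your argument on Theorems~\ref{M_0}, \ref{M_m} (and, implicitly, Theorems~\ref{i1.}, \ref{in.}). For $2\le i\le n-1$ this is unnecessary and in fact misaligned: here $N_{i,1}=V(\varpi_{i-1})$ and $N_{i,2}=V(\varpi_i)$ are both extremal weight modules, with no Laurent polynomial factor, so there is no ``relevant space'' for the symmetric Schur polynomials to form a basis of. The paper simply takes $\mathbf{c}_0=\emptyset$: the image of the single element $\breve{F}_n^{(p)}\cdots\breve{F}_i^{(p)}\breve{S}_{t_\xi}u_{m\breve{\varpi}_i}$ under $\tau_{p,\varepsilon}^i$ is already the extremal generator $(u_{\varpi_{i-1}-k\delta})^{\otimes p}\otimes(u_{\varpi_i-k\delta})^{\otimes(m-p)}$ of $\Phi_\mu(V_{t_\zeta}^-(\mu))$, and $U_q^-(\widehat{\mathfrak{sl}}_n)$-linearity of $\tau_{p,\varepsilon}^i$ immediately gives the required containment. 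Your route would instead force a Littlewood--Richardson expansion of $s_{\rho}(\breve{z}_1^{-1},\ldots,\breve{z}_m^{-1})$ across the two tensor blocks followed by an induction on $|\rho|$, exactly as the paper does only when a Laurent factor genuinely appears ($i=1$ or $i=n$). That extra layer could be made to work here, but it isn't needed; recognizing that $2\le i\le n-1$ is the ``generic'' case where both fundamental factors survive the restriction would let you drop it and recover the paper's cleaner proof.
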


\begin{proof}

  We first assume that $\varepsilon=1$. We set $\mathfrak{i}=(\overbrace{1,\ldots,1}^p,\overbrace{2,\ldots,2}^{m-p})\in \mathcal{I}_p$. We define an $U_q(\widehat{\mathfrak{sl}}_n)$-linear morphism $\tau_{p,1}^i$ as the composition
  \[
  \tau_{p,1}^i:M_{p,1}\xrightarrow{\Phi_{m\breve{\varpi}_i}\mid_{M_{p,1}}} L_{p,1}=L_{\ge\mathfrak{i},1}\xrightarrow{p_{\mathfrak{i},1}}\mathcal{L}_{\mathfrak{i},1}\xrightarrow{\Xi_{\mathfrak{i},1}^{-1}}V(\varpi_{i-1})^{\otimes p}\otimes V(\varpi_i)^{\otimes m-p}. 
  \]

 Let $k\in \mathbb{Z}$ and we set $\xi=k\breve{\alpha}_i^{\vee}\in \breve{Q}_0^{\vee}$. By Lemma \ref{t}, we have 
  \[
  \Phi_{m\breve{\varpi}_i}(\breve{F}_n^{(p)}\breve{F}_{n-1}^{(p)}\cdots \breve{F}_i^{(p)}\breve{S}_{t_{\xi}}u_{m\breve{\varpi}_i})\in \left(\breve{S}_{w_i}u_{\breve{\varpi}_i-k\breve{\delta}}\right)^{\otimes p}\otimes (u_{\breve{\varpi}_i-k\breve{\delta}})^{\otimes m-p}+ \left(\bigoplus_{\mathfrak{j}>\mathfrak{i}}L_{\mathfrak{j},1}\right).
  \]
  Hence, we have
  \[
  \tau_{p,1}^i(\breve{F}_n^{(p)}\breve{F}_{n-1}^{(p)}\cdots \breve{F}_i^{(p)}\breve{S}_{t_{\xi}}u_{m\breve{\varpi}_i})=(-q)^{-kpa_{i,1}-k(m-p)b_{i,1}}(u_{{\varpi}_{i-1}-k\delta})^{\otimes p}\otimes(u_{\varpi_i-k\delta})^{\otimes m-p}.
  \]
  Thus, $\tau_{p,1}^i(M_{p,1}\cap V_{t_{\xi}}^-(m\breve{\varpi}_i))$ contains the $U_q^-(\widehat{\mathfrak{sl}}_n)$-submodule
  \[
  U_q^-(\widehat{\mathfrak{sl}}_n)\left((u_{\varpi_{i-1}-k\delta})^{\otimes p}\otimes(u_{\varpi_i-k\delta})^{\otimes m-p}\right)
  \]
  of $V(\varpi_{i-1})^{\otimes p}\otimes V(\varpi_i)^{\otimes m-p}$.

  By Corollary \ref{br} and Corollary \ref{Mgcht}, we have
  \begin{align*}
    \gch \left(M_{p,1}\cap V_{t_{\xi}}^-(m\breve{\varpi}_i)\right)&=q^{-km}\left(\left(\prod_{r=1}^p(1-q^{-r})\right)\left(\prod_{s=1}^{m-p}(1-q^{-s})\right)\right)^{-1}P_{p\varpi_{i-1}+(m-p)\varpi_i}(x;q^{-1},0)\\
    &=\gch V_{t_{\zeta}}^-(p\varpi_{i-1}+(m-p)\varpi_i),
  \end{align*}

where $\zeta=k\alpha_{i-1}^{\vee}+k\alpha_i^{\vee}\in Q_0^{\vee}$. By Theorem \ref{inj}, there is a $U_q(\widehat{\mathfrak{sl}}_n)$-linear injective morphism
\[
\Phi_{p\varpi_{i-1}+(m-p)\varpi_i}:V(p\varpi_{i-1}+(m-p)\varpi_i)\to V(\varpi_{i-1})^{\otimes p}\otimes V(\varpi_i)^{\otimes m-p}
\]
given by $u_{p\varpi_{i-1}+(m-p)\varpi_i}\mapsto \left(u_{\varpi_{i-1}}^{\otimes p}\right)\otimes\left(u_{\varpi_i}^{\otimes m-p}\right)$. Since 
\[
\Phi_{p\varpi_{i-1}+(m-p)\varpi_i}(S_{t_{\zeta}}u_{p\varpi_{i-1}+(m-p)\varpi_i})=(u_{\varpi_{i-1}-k\delta})^{\otimes p}\otimes(u_{\breve{\varpi}_i-k\delta})^{\otimes m-p},
\]
the restriction of $\Phi_{p\varpi_{i-1}+(m-p)\varpi_i}$ to $V_{t_{\zeta}}^-(p\varpi_{i-1}+(m-p)\varpi_i)$ gives an isomorphism of $U_q^-(\widehat{\mathfrak{sl}}_n)$-modules
\[
V_{t_{\zeta}}^-(p\varpi_{i-1}+(m-p)\varpi_i)\stackrel{\cong}{\longrightarrow} U_q^-(\widehat{\mathfrak{sl}}_n)\left((u_{\varpi_{i-1}-k\delta})^{\otimes p}\otimes(u_{\breve{\varpi}_i-k\delta})^{\otimes m-p}\right).
\]

Hence, we have
\[
  \gch U_q^-(\widehat{\mathfrak{sl}}_n)\left((u_{\varpi_{i-1}-k\delta})^{\otimes p}\otimes(u_{\breve{\varpi}_i-k\delta})^{\otimes m-p}\right)=\gch V_{t_{\zeta}}^-(p\varpi_{i-1}+(m-p)\varpi_i)
\]

Thus, by comparing the graded characters, we see that the restriction of $\tau_{p,1}^i$ to $\left(M_{p,1}\cap V_{t_{\xi}}^-(m\breve{\varpi}_i)\right)$ gives an isomorphism of $U_q(\widehat{\mathfrak{sl}}_n)$-modules
\begin{align*}
  M_{p,1}\cap V_{t_{\xi}}^-(m\breve{\varpi}_i)&\cong U_q^-(\widehat{\mathfrak{sl}}_n)\left((u_{\varpi_{i-1}-k\delta})^{\otimes p}\otimes(u_{\breve{\varpi}_i-k\delta})^{\otimes m-p}\right)\\
  &=\Phi_{p\varpi_{i-1}+(m-p)\varpi_i}(V_{t_{\zeta}}^-(p\varpi_{i-1}+(m-p)\varpi_i)).
\end{align*}

By Proposition \ref{dem}, we have
\[
   M_{p,1}=\bigcup_{k\in \mathbb{Z}}\left(M_{p,1}\cap V_{t_{k\breve{\alpha}^{\vee}}}(m\breve{\varpi}_i)\right)\quad\text{and}\quad
  V(p\varpi_{i-1}+(m-p)\varpi_i)=\bigcup_{k\in \mathbb{Z}} V_{t_{k(\alpha_{i-1}^{\vee}+\alpha_i^{\vee})}}^-(p\varpi_{i-1}+(m-p)\varpi_i).
  \]

Therefore, $\tau_{p,1}^i$ induces an isomorphism
  \[
   M_{p,1}\cong\Phi_{p\varpi_{i-1}+(m-p)\varpi_i}(V(p\varpi_{i-1}+(m-p)\varpi_i))\cong V(p\varpi_{i-1}+(m-p)\varpi_i)
  \]
in this case.

  Next, we assume that $\varepsilon=-1$. We set $\mathfrak{i}^{\prime}=(\overbrace{2,\ldots,2}^{m-p},\overbrace{1,\ldots,1}^p)\in \mathcal{I}_p$. We define an $U_q(\widehat{\mathfrak{sl}}_n)$-linear morphism $\tau_{p,-1}^i$ as the composition
  \[
  \tau_{p,-1}^i:M_{p,-1}\xrightarrow{\Phi_{m\breve{\varpi}_i}\mid_{M_{p,-1}}} L_{p,-1}=L_{\le\mathfrak{i}^{\prime},-1}\xrightarrow{p_{\mathfrak{i}^{\prime},-1}}\mathcal{L}_{\mathfrak{i}^{\prime},-1}\xrightarrow{\Xi_{\mathfrak{i}^{\prime},-1}^{-1}}V(\varpi_i)^{\otimes m-p}\otimes V(\varpi_{i-1})^{\otimes p}. 
  \]

 Let $k\in \mathbb{Z}$ and we set $\xi=k\breve{\alpha}_i^{\vee}\in \breve{Q}_0^{\vee}$. By Lemma \ref{t}, we have 
  \[
  \Phi_{m\breve{\varpi}_i}(\breve{F}_n^{(p)}\breve{F}_{n-1}^{(p)}\cdots \breve{F}_i^{(p)}\breve{S}_{t_{\xi}}u_{m\breve{\varpi}_i})\in q^{p(m-p)}(u_{\breve{\varpi}_i-k\breve{\delta}})^{\otimes m-p}\otimes \left(\breve{S}_{w_i}u_{\breve{\varpi}_i-k\breve{\delta}}\right)^{\otimes p}+ \left(\bigoplus_{\mathfrak{j}>\mathfrak{i}^{\prime}}L_{\mathfrak{j},-1}\right).
  \]
  Hence, we have
  \[
  \tau_{p,-1}^i(\breve{F}_n^{(p)}\breve{F}_{n-1}^{(p)}\cdots \breve{F}_i^{(p)}\breve{S}_{t_{\xi}}u_{m\breve{\varpi}_i})=q^{p(m-p)}(-q)^{-kpa_{i,-1}-k(m-p)b_{i,-1}}(u_{\varpi_i-k\delta})^{\otimes m-p}\otimes(u_{\varpi_{i-1}-k\delta})^{\otimes p}.
  \]
  Thus, $\tau_{p,-1}^i(M_{p,-1}\cap V_{t_{\xi}}^-(m\breve{\varpi}_i))$ contains the $U_q^-(\widehat{\mathfrak{sl}}_n)$-submodule
  \[
  U_q^-(\widehat{\mathfrak{sl}}_n)\left((u_{\varpi_i-k\delta})^{\otimes m-p}\otimes (u_{\varpi_{i-1}-k\delta})^{\otimes p}\right)
  \]
  of $V(\varpi_i)^{\otimes m-p}\otimes V(\varpi_{i-1})^{\otimes p}$.

By Theorem \ref{inj}, there is a $U_q(\widehat{\mathfrak{sl}}_n)$-linear injective morphism
\[
\Phi_{p\varpi_{i-1}+(m-p)\varpi_i}^{\prime}:V(p\varpi_{i-1}+(m-p)\varpi_i)\to V(\varpi_i)^{\otimes m-p}\otimes V(\varpi_{i-1})^{\otimes p}
\]
given by $u_{p\varpi_{i-1}+(m-p)\varpi_i}\mapsto \left(u_{\varpi_i}^{\otimes m-p}\right)\otimes \left(u_{\varpi_{i-1}}^{\otimes p}\right)$. Since 
\[
\Phi_{p\varpi_{i-1}+(m-p)\varpi_i}^{\prime}(S_{t_{\zeta}}u_{p\varpi_{i-1}+(m-p)\varpi_i})=(u_{\breve{\varpi}_i-k\delta})^{\otimes m-p}\otimes (u_{\varpi_{i-1}-k\delta})^{\otimes p},
\]
the restriction of $\Phi_{p\varpi_{i-1}+(m-p)\varpi_i}^{\prime}$ gives an isomorphism of $U_q^-(\widehat{\mathfrak{sl}}_n)$-modules
\[
V_{t_{\zeta}}^-(p\varpi_{i-1}+(m-p)\varpi_i)\stackrel{\cong}{\longrightarrow} U_q^-(\widehat{\mathfrak{sl}}_n)\left((u_{\varpi_i-k\delta})^{\otimes m-p}\otimes (u_{\varpi_{i-1}-k\delta})^{\otimes p}\right).
\]

Hence, we have
\[
\gch U_q^-(\widehat{\mathfrak{sl}}_n)\left((u_{\breve{\varpi}_i-k\delta})^{\otimes m-p}\otimes(u_{\varpi_{i-1}-k\delta})^{\otimes p}\right)=\gch V_{t_{\zeta}}^-(p\varpi_{i-1}+(m-p)\varpi_i)  
\]

On the other hand, the graded character of $\left(M_{p,-1}\cap V_{t_{\xi}}^-(m\breve{\varpi}_i)\right)$ coincides with $\gch V_{t_{\zeta}}^-(p\varpi_{i-1}+(m-p)\varpi_i)$ by the argument above. Thus, by comparing the graded characters, the restriction of $\tau_{p,-1}^i$ gives the isomorphism of $U_q(\widehat{\mathfrak{sl}}_n)$-modules
\begin{align*}
  M_{p,-1}\cap V_{t_{\xi}}^-(m\breve{\varpi}_i)&\cong U_q^-(\widehat{\mathfrak{sl}}_n)\left((u_{\varpi_i-k\delta})^{\otimes m-p}\otimes (u_{\varpi_{i-1}-k\delta})^{\otimes p}\right)\\
  &=\Phi_{p\varpi_{i-1}+(m-p)\varpi_i}^{\prime}(V_{t_{\zeta}}^-(p\varpi_{i-1}+(m-p)\varpi_i)).
\end{align*}

Since
\[
  M_{p,-1}=\bigcup_{k\in \mathbb{Z}}\left(M_{p,-1}\cap V_{t_{k\breve{\alpha}^{\vee}}}(m\breve{\varpi}_i)\right)\quad\text{and}\quad
   V(p\varpi_{i-1}+(m-p)\varpi_i)=\bigcup_{k\in \mathbb{Z}} V_{t_{k(\alpha_{i-1}^{\vee}+\alpha_i^{\vee})}}^-(p\varpi_{i-1}+(m-p)\varpi_i)
  \]
as stated above, $\tau_{p,-1}^i$ induces an isomorphism
  \[
   M_{p,-1}\cong\Phi_{p\varpi_{i-1}+(m-p)\varpi_i}^{\prime}(V(p\varpi_{i-1}+(m-p)\varpi_i))\cong V(p\varpi_{i-1}+(m-p)\varpi_i),
  \]
  as required.
\end{proof}

\begin{corollary}\label{2in}
  \textit{There is an isomorphism of $U_q(\widehat{\mathfrak{sl}}_n)$-modules
  \[
  \Psi_{\varepsilon}^*V(m\breve{\varpi}_i)\cong \bigoplus_{p=0}^mV(p\varpi_{i-1}+(m-p)\varpi_i).
  \]
  }
\end{corollary}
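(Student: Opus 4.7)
The plan is to obtain Corollary \ref{2in} as an immediate consequence of the two main structural results established just above. Specifically, for $\lambda = m\breve{\varpi}_i$ with $2 \le i \le n-1$, Proposition \ref{dirsum2} provides a direct sum decomposition
\[
\Psi_{\varepsilon}^* V(m\breve{\varpi}_i) \cong M_{0,\varepsilon} \oplus M_{1,\varepsilon} \oplus \cdots \oplus M_{m,\varepsilon}
\]
as $U_q(\widehat{\mathfrak{sl}}_n)$-modules, where $m = -\langle \breve{\alpha}_0^{\vee}, m\breve{\varpi}_i\rangle$ matches the summation range.

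Then I would invoke Theorem \ref{2ni.}, which identifies each component of this decomposition: for every $0 \le p \le m$, there is an isomorphism of $U_q(\widehat{\mathfrak{sl}}_n)$-modules
\[
M_{p,\varepsilon} \cong V(p\varpi_{i-1} + (m-p)\varpi_i).
\]
Taking the direct sum of these isomorphisms over $p = 0, 1, \ldots, m$ and composing with the decomposition from Proposition \ref{dirsum2} yields exactly the claimed isomorphism
\[
\Psi_{\varepsilon}^* V(m\breve{\varpi}_i) \cong \bigoplus_{p=0}^{m} V(p\varpi_{i-1} + (m-p)\varpi_i).
\]

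Since both Proposition \ref{dirsum2} and Theorem \ref{2ni.} have already been proved, there is no substantial obstacle here; the corollary is purely a bookkeeping consequence. The only thing to verify is the compatibility of the indexing, namely that the range $0 \le p \le m$ of the direct sum in Proposition \ref{dirsum2} (with $m = \sum_i m_i$) indeed specializes to the same range when $\lambda = m\breve{\varpi}_i$, which is immediate since then $\sum_j m_j = m$. No further computation is required.
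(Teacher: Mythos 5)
Your proposal is correct and takes exactly the same route as the paper: combine the direct sum decomposition $\Psi_{\varepsilon}^*V(\lambda)\cong M_{0,\varepsilon}\oplus\cdots\oplus M_{m,\varepsilon}$ from Proposition \ref{dirsum2} with the identification $M_{p,\varepsilon}\cong V(p\varpi_{i-1}+(m-p)\varpi_i)$ from Theorem \ref{2ni.}. The paper's own proof is a one-line citation of these same two results, so there is nothing to add.
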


\begin{proof}
This result follows immediately from Proposition \ref{dirsum2} and Theorem \ref{2ni.}.
\end{proof}

By the proof of Theorem \ref{2ni.}, we obtain the following result.

\begin{corollary}
\textit{
  There is an isomorphism of $U_q^-(\widehat{\mathfrak{sl}}_n)$-modules
  \[
  \pushQED{\qed}
     (\Psi_{\varepsilon}^-)^* V_e^-(m\breve{\varpi}_i)\cong \bigoplus_{p=0}^m V_e^-(p\varpi_{i-1}+(m-p)\varpi_i).\qedhere
\popQED
  \]
  }
\end{corollary}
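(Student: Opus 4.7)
The approach is to specialize the proof of Theorem~\ref{2ni.} to the case $\xi = 0$ and to observe that the decomposition of Proposition~\ref{dirsum2} restricts cleanly to the Demazure submodule $V_e^-(m\breve{\varpi}_i)$.

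First I would note that each $M_{p,\varepsilon}$ is defined as a weight subspace (for the operator $q^{\tilde{h}}$), hence the direct sum decomposition $\Psi_{\varepsilon}^* V(m\breve{\varpi}_i) = \bigoplus_{p=0}^m M_{p,\varepsilon}$ in Proposition~\ref{dirsum2} is compatible with the weight space decomposition. Since $V_e^-(m\breve{\varpi}_i)$ is a sum of weight spaces (being spanned by a subset of the global basis, cf.\ Section~\ref{dema}), intersecting yields a decomposition
\[
(\Psi_{\varepsilon}^-)^* V_e^-(m\breve{\varpi}_i) \;=\; \bigoplus_{p=0}^m \bigl(M_{p,\varepsilon} \cap V_e^-(m\breve{\varpi}_i)\bigr)
\]
as $U_q^-(\widehat{\mathfrak{sl}}_n)$-modules.

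Next I would identify each summand with $V_e^-(p\varpi_{i-1} + (m-p)\varpi_i)$ by invoking the construction used to prove Theorem~\ref{2ni.}. There, the map $\tau_{p,\varepsilon}^i$ was not merely shown to be a global $U_q(\widehat{\mathfrak{sl}}_n)$-isomorphism; rather, for each $\xi \in \breve{Q}_0^{\vee}$ of the form $k\breve{\alpha}_i^{\vee}$, its restriction was shown to give a $U_q^-(\widehat{\mathfrak{sl}}_n)$-isomorphism
\[
M_{p,\varepsilon} \cap V_{t_{\xi}}^-(m\breve{\varpi}_i) \;\stackrel{\cong}{\longrightarrow}\; \Phi_{p\varpi_{i-1}+(m-p)\varpi_i}\bigl(V_{t_{\zeta}}^-(p\varpi_{i-1}+(m-p)\varpi_i)\bigr),
\]
with $\zeta = k\alpha_{i-1}^{\vee} + k\alpha_i^{\vee}$. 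Specializing to $\xi = 0$ (and hence $\zeta = 0$, $k = 0$), and composing with the inverse of the injection $\Phi_{p\varpi_{i-1}+(m-p)\varpi_i}$ from Theorem~\ref{inj}, I obtain the $U_q^-(\widehat{\mathfrak{sl}}_n)$-isomorphism $M_{p,\varepsilon} \cap V_e^-(m\breve{\varpi}_i) \cong V_e^-(p\varpi_{i-1}+(m-p)\varpi_i)$. Taking the direct sum over $p$ produces the claimed isomorphism.

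There is no genuinely new obstacle to overcome: the hardest computation—matching graded characters via Corollary~\ref{br} and Corollary~\ref{Mgcht} to force the restriction of $\tau_{p,\varepsilon}^i$ to be surjective onto a Demazure submodule—was already carried out inside the proof of Theorem~\ref{2ni.} uniformly in $\xi$. The only points that need to be checked explicitly are that $M_{p,\varepsilon} \cap V_e^-(m\breve{\varpi}_i)$ is stable under $U_q^-(\widehat{\mathfrak{sl}}_n)$ (which follows because both $M_{p,\varepsilon}$ and $V_e^-(m\breve{\varpi}_i)$ are) and that the specialization $k = 0$ in the character formulas reduces correctly, which is immediate. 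Thus the corollary is essentially a bookkeeping consequence of the construction already in place.
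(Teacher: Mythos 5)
Your proposal is correct and matches the paper's intended argument: the paper justifies this corollary with the single sentence ``By the proof of Theorem~\ref{2ni.}, we obtain the following result,'' and you have correctly unpacked that the relevant step inside that proof is the $\xi$-wise (equivalently $k$-wise) isomorphism of $U_q^-(\widehat{\mathfrak{sl}}_n)$-modules, which at $k=0$ gives exactly $M_{p,\varepsilon}\cap V_e^-(m\breve{\varpi}_i)\cong V_e^-(p\varpi_{i-1}+(m-p)\varpi_i)$ after inverting $\Phi_{p\varpi_{i-1}+(m-p)\varpi_i}$, and that the intersection with the $q^{\tilde h}$-eigenspace decomposition of Proposition~\ref{dirsum2} yields the asserted direct sum.
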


\subsection{Branching rule for $V(m\breve{\varpi}_i)$: the case $i=1$}

In this subsection, we assume that $\lambda\in \breve{P}_{0,+}$ is of the form $\lambda=m\breve{\varpi}_1$.

Let $c^{\Lambda}_{M,N}\;(\Lambda,M,N\in \mathcal{P})$ denote the \textit{Littlewood-Richardson coefficient} (see \cite[Section \rm{I}-5]{M1} for the definition). Littlewood-Richardson coefficients satisfy the following properties.

\begin{lemma}[{\cite[Section \rm{I}-5]{M1}\label{L-R1}}]
\textit{We have:
  \begin{enumerate}
    \item $c^{\Lambda}_{M,N}=c^{\Lambda}_{N,M}$ for $\Lambda,M,N\in \mathcal{P}$;
    \item $c^{\Lambda}_{\Lambda,\emptyset}=1$ for $\Lambda\in \mathcal{P}$;
    \item $c^{\Lambda}_{M,N}=0$ unless $|\Lambda|=|M|+|N|$;
    \item $c^{\Lambda}_{M,N}=0$ unless $\Lambda\supset M$.\qed
  \end{enumerate}
}
\end{lemma}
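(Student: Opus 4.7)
The plan is to take as starting point the definition of $c^{\Lambda}_{M,N}$ through the Schur function expansion
\[
s_M \cdot s_N = \sum_{\Lambda \in \mathcal{P}} c^{\Lambda}_{M,N}\, s_{\Lambda}
\]
in the ring of symmetric functions, and to derive each of the four properties directly from basic facts about Schur functions. None of the four requires invoking the combinatorial Littlewood--Richardson rule.

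First I would dispose of (1), (2) and (3) by elementary observations. The commutativity $s_M s_N = s_N s_M$ together with the linear independence of the Schur basis immediately gives $c^{\Lambda}_{M,N} = c^{\Lambda}_{N,M}$. The identity $s_{\emptyset} = 1$ yields $s_{\Lambda} \cdot s_{\emptyset} = s_{\Lambda}$, whence $c^{\Lambda}_{\Lambda,\emptyset} = 1$. Homogeneity handles (3): $s_M$ and $s_N$ are homogeneous of degrees $|M|$ and $|N|$, so their product lies in degree $|M| + |N|$, and only $s_{\Lambda}$ with $|\Lambda| = |M| + |N|$ can appear with nonzero coefficient.

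The only item requiring slightly more substantial input is (4). Here I would invoke the Hall inner product $\langle \cdot, \cdot \rangle$ on the ring of symmetric functions, under which the $s_{\Lambda}$ form an orthonormal basis and multiplication by $s_M$ is adjoint to the skewing operator $f \mapsto f / s_M$; that is, $\langle s_M g, h \rangle = \langle g, h/s_M \rangle$. Pairing the defining expansion with $s_{\Lambda}$ then produces
\[
c^{\Lambda}_{M,N} = \langle s_M s_N, s_{\Lambda}\rangle = \langle s_N, s_{\Lambda/M}\rangle.
\]
Since the skew Schur function $s_{\Lambda/M}$ vanishes whenever the Young diagram of $M$ is not contained in that of $\Lambda$, the desired vanishing $c^{\Lambda}_{M,N} = 0$ unless $\Lambda \supset M$ follows.

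The main obstacle here is purely expository: all four assertions are classical results proved in detail in Macdonald's text \cite[Chapter I]{M1}, and since the paper cites that reference explicitly, the cleanest write-up simply recalls the adjointness relation underlying (4) and defers to Macdonald for the machinery of skew Schur functions and the Hall inner product.
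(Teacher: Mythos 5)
Your proof is correct. The paper itself gives no argument for this lemma—it states the four properties, cites Macdonald \cite[Section \rm{I}-5]{M1}, and terminates the statement with \qed, treating the whole thing as a standard fact imported wholesale. You have filled in a clean, self-contained derivation: (1)--(3) fall out of commutativity, $s_{\emptyset}=1$, and homogeneity once one takes the expansion $s_M s_N = \sum_{\Lambda} c^{\Lambda}_{M,N} s_{\Lambda}$ as the definition, and (4) follows from the Hall inner product identity $c^{\Lambda}_{M,N} = \langle s_N, s_{\Lambda/M}\rangle$ together with the vanishing of $s_{\Lambda/M}$ when $M \not\subset \Lambda$. This is exactly the argument in Macdonald, so there is no substantive divergence—only that you have unpacked what the paper delegated to the reference. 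One small stylistic remark: you correctly observe that none of this requires the combinatorial Littlewood--Richardson rule; the only nontrivial input is the adjointness of multiplication by $s_M$ with respect to skewing, which itself rests on the orthonormality of the Schur basis.
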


\begin{lemma}[{\cite[Section \rm{I}-5]{M1}\label{L-R2}}]
   \textit{Let $(x_1,\ldots,x_k)$ and $(y_1,\ldots,y_l)$ be two sets of variables. We have
  \[
  s_{\Lambda}(x_1,\ldots,x_k,y_1,\ldots,y_l)=\sum_{M\in \mathcal{P}_k, N\in\mathcal{P}_l}c^{\Lambda}_{M,N}s_M(x_1,\ldots,x_k)s_N(y_1,\ldots,y_l)
  \]
  for $\Lambda\in \mathcal{P}_{k+l}$. \qed}
\end{lemma}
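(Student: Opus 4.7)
The plan is to deduce the identity by decomposing through skew Schur polynomials. First, I would establish the intermediate equality
\[
s_{\Lambda}(x_{1},\ldots,x_{k},y_{1},\ldots,y_{l}) = \sum_{\substack{M \in \mathcal{P}_{k} \\ M \subset \Lambda}} s_{M}(x_{1},\ldots,x_{k})\, s_{\Lambda/M}(y_{1},\ldots,y_{l})
\]
directly from the combinatorial definition of Schur polynomials as generating functions of semistandard Young tableaux. Given a SSYT $T$ of shape $\Lambda$ on the totally ordered alphabet $\{1<2<\cdots<k<k+1<\cdots<k+l\}$, the cells of $T$ whose entries lie in $\{1,\ldots,k\}$ form a sub-Young-diagram $M \subset \Lambda$, because entries weakly increase along rows and strictly increase along columns, so any cell containing a ``small'' entry forces all cells above it and to its left to contain small entries as well. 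The restrictions of $T$ to the small entries and to the large entries (the latter relabeled by subtracting $k$) give a SSYT of shape $M$ on $\{1,\ldots,k\}$ and a skew SSYT of shape $\Lambda/M$ on $\{1,\ldots,l\}$, respectively. Grouping monomials according to this decomposition yields the displayed formula.

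Next, I would invoke the standard expansion
\[
s_{\Lambda/M}(y_{1},\ldots,y_{l}) = \sum_{N \in \mathcal{P}_{l}} c^{\Lambda}_{M,N}\, s_{N}(y_{1},\ldots,y_{l}),
\]
which follows from the duality between multiplication and skewing with respect to the Hall inner product on the ring of symmetric functions. Specifically, since multiplication by $s_{M}$ and the skewing operation $f \mapsto f_{/M}$ are adjoint, one has
\[
\langle s_{\Lambda/M},\, s_{N} \rangle = \langle s_{\Lambda},\, s_{M}s_{N} \rangle = c^{\Lambda}_{M,N},
\]
the last equality being the defining relation $s_{M}s_{N}=\sum_{\Lambda}c^{\Lambda}_{M,N}s_{\Lambda}$. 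Substituting the second expansion into the first gives the claim.

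The only subtlety is the restriction on the lengths of $M$ and $N$. A priori, the expansion of $s_{\Lambda/M}$ into Schur functions runs over all partitions $N$, but $s_{N}(y_{1},\ldots,y_{l})=0$ whenever $\ell(N)>l$, so the sum collapses to $N \in \mathcal{P}_{l}$; likewise the decomposition of the first step only produces $M$ with $\ell(M) \leq k$. No genuine obstacle is expected here: both ingredients (the two-alphabet splitting of SSYT and the adjointness description of $c^{\Lambda}_{M,N}$) are standard in the theory of symmetric functions and can be quoted from \cite[Section I-5]{M1}.
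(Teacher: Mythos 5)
Your argument is correct and is essentially the standard proof from Macdonald's book, which is exactly what the paper cites without reproving: the two-alphabet SSYT splitting gives $s_{\Lambda}(x,y)=\sum_{M\subset\Lambda}s_M(x)s_{\Lambda/M}(y)$, and the Hall inner product adjunction identifies $s_{\Lambda/M}=\sum_N c^{\Lambda}_{M,N}s_N$. The paper states the lemma with an immediate $\qed$ and a citation to \cite[Section I-5]{M1}, so there is no in-paper proof to diverge from; your write-up simply unpacks that reference.
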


\begin{theorem}\label{i1.}
  \textit{For $0\le p\le m$, there is an isomorphism of $U_q(\widehat{\mathfrak{sl}}_n)$-modules
  \[
  M_{p,\varepsilon}\cong \mathbb{Q}(q)[t_1^{\pm1},\ldots,t_p^{\pm1}]^{\mathfrak{S}_p}\otimes V((m-p)\varpi_1) \left(\cong V((m-p)\varpi_1)\otimes \mathbb{Q}(q)[t_1^{\pm1},\ldots,t_p^{\pm1}]^{\mathfrak{S}_p}\right).
  \]
  }
\end{theorem}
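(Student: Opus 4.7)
The plan is to mirror the strategies of Theorem \ref{2ni.} and Theorem \ref{M_m} simultaneously: for intermediate $p$, the target splits into a symmetric Laurent polynomial factor (as in Theorem \ref{M_m}) tensored with an extremal weight module factor (as in Theorem \ref{2ni.}). For $\varepsilon=1$, set $\mathfrak{i}=(\underbrace{1,\ldots,1}_{p},\underbrace{2,\ldots,2}_{m-p})\in\mathcal{I}_p$ and define the $U_q(\widehat{\mathfrak{sl}}_n)$-linear map
\[
\tau_{p,1}^1\colon M_{p,1}\xrightarrow{\Phi_{m\breve{\varpi}_1}|_{M_{p,1}}} L_{p,1}=L_{\ge\mathfrak{i},1}\xrightarrow{p_{\mathfrak{i},1}}\mathcal{L}_{\mathfrak{i},1}\xrightarrow{\Xi_{\mathfrak{i},1}^{-1}}\mathbb{Q}(q)[t_1^{\pm1},\ldots,t_p^{\pm1}]\otimes V(\varpi_1)^{\otimes(m-p)},
\]
using the identification $\mathbb{Q}(q)[t^{\pm1}]^{\otimes p}\cong\mathbb{Q}(q)[t_1^{\pm1},\ldots,t_p^{\pm1}]$; for $\varepsilon=-1$, take $\mathfrak{i}^{\prime}=(2^{m-p},1^p)$ and swap tensor factors. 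Linearity over $U_q(\widehat{\mathfrak{sl}}_n)$ follows from Proposition \ref{quot}.

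Fix $\xi=k\breve{\alpha}_1^\vee\in\breve{Q}_0^\vee$ and $\zeta=k\alpha_1^\vee\in Q_0^\vee$. For $\Lambda\in\mathcal{P}_m$ with $\mathbf{c}_0=(\Lambda,\emptyset,\ldots,\emptyset)$, the vector $\breve{F}_n^{(p)}\cdots\breve{F}_1^{(p)}\breve{S}_{\mathbf{c}_0}^-\breve{S}_{t_\xi}u_{m\breve{\varpi}_1}$ lies in $M_{p,1}\cap V_{t_\xi}^-(m\breve{\varpi}_1)$. Since the $U_q^{\prime}$-linear automorphisms $\breve{z}_{1,\nu}$ commute with the $\breve{F}_i$'s, Corollary \ref{BN} together with Lemma \ref{t} (and the fact that among the terms indexed by $1\le i_1<\cdots<i_p\le m$ only $(i_1,\ldots,i_p)=(1,\ldots,p)$ survives $p_{\mathfrak{i},1}$, since every other pattern gives an element of some $L_{\mathfrak{j},1}$ with $\mathfrak{j}>\mathfrak{i}$) yields, after splitting $s_\Lambda(\breve{z}_{1,1}^{-1},\ldots,\breve{z}_{1,m}^{-1})$ via Lemma \ref{L-R2} and passing through $\Xi_{\mathfrak{i},1}^{-1}$ (which sends $\breve{z}_{1,\nu}^{-1}$ for $\nu\le p$ to multiplication by $t_\nu^{-1}$ and for $\nu>p$ to the spectral shift $z_{1,\nu-p}^{-1}$ on $V(\varpi_1)^{\otimes(m-p)}$, up to the $(-q)$-factors from \eqref{eq:psi2}), the identity
\[
\tau_{p,1}^1\bigl(\breve{F}_n^{(p)}\cdots\breve{F}_1^{(p)}\breve{S}_{\mathbf{c}_0}^-\breve{S}_{t_\xi}u_{m\breve{\varpi}_1}\bigr)=\kappa\,(t_1\cdots t_p)^{-k}\!\!\!\sum_{\substack{M\in\mathcal{P}_p\\ N\in\mathcal{P}_{m-p}}}\!\!\!c^{\Lambda}_{M,N}\,s_M(t_1^{-1},\ldots,t_p^{-1})\otimes\Phi_{(m-p)\varpi_1}\bigl(S^-_{(N,\emptyset,\ldots)}S_{t_\zeta}u_{(m-p)\varpi_1}\bigr)
\]
for an explicit scalar $\kappa$, where Corollary \ref{BN} is reapplied to the $V((m-p)\varpi_1)$-factor.

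Now specialize $\Lambda=M_0\in\mathcal{P}_p\subset\mathcal{P}_m$: by Lemma \ref{L-R1}, $c^{M_0}_{M_0,\emptyset}=1$ and $c^{M_0}_{M',N'}=0$ unless $M'\subseteq M_0$, which enables an upper-triangular induction on $|M_0|$. Because $U_q^-(\widehat{\mathfrak{sl}}_n)$ acts trivially on the Laurent polynomial factor (every simple root pairs trivially with $\delta$), once $s_{M'}(t^{-1})\otimes\Phi_{(m-p)\varpi_1}(S_{t_\zeta}u_{(m-p)\varpi_1})$ is known to lie in the image for every $M'\subsetneq M_0$, we obtain $s_{M'}(t^{-1})\otimes\Phi_{(m-p)\varpi_1}(v)$ for all $v\in V_{t_\zeta}^-((m-p)\varpi_1)$, in particular all $(M',N')$-contributions on the right-hand side of the formula above; subtracting them isolates $(t_1\cdots t_p)^{-k}s_{M_0}(t^{-1})\otimes\Phi_{(m-p)\varpi_1}(S_{t_\zeta}u_{(m-p)\varpi_1})$. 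Since $\{s_M(t^{-1})\}_{M\in\mathcal{P}_p}$ is a $\mathbb{Q}(q)$-basis of $\mathbb{Q}(q)[t_1^{-1},\ldots,t_p^{-1}]^{\mathfrak{S}_p}$, the image of $\tau_{p,1}^1|_{M_{p,1}\cap V_{t_\xi}^-(m\breve{\varpi}_1)}$ contains
\[
S_\xi:=(t_1\cdots t_p)^{-k}\mathbb{Q}(q)[t_1^{-1},\ldots,t_p^{-1}]^{\mathfrak{S}_p}\otimes\Phi_{(m-p)\varpi_1}\bigl(V_{t_\zeta}^-((m-p)\varpi_1)\bigr).
\]
Corollary \ref{Mgcht} (where the horizontal-strip sum reduces to the single term $M=(m-p,0,\ldots,0)$ coming from $\Lambda=(m,0,\ldots,0)$) together with Corollary \ref{br} gives $\gch(M_{p,1}\cap V_{t_\xi}^-(m\breve{\varpi}_1))=\gch S_\xi$; combined with the containment, the restriction of $\tau_{p,1}^1$ must be an isomorphism onto $S_\xi$. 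Proposition \ref{dem}, applied to both sides by taking the union over $\xi\in\mathbb{Z}\breve{\alpha}_1^\vee\subset\breve{Q}_0^\vee$, then yields $M_{p,1}\cong\mathbb{Q}(q)[t_1^{\pm1},\ldots,t_p^{\pm1}]^{\mathfrak{S}_p}\otimes V((m-p)\varpi_1)$; the case $\varepsilon=-1$ is symmetric, using $\mathfrak{i}^{\prime}$.

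The main obstacle is the upper-triangular extraction of the $s_M(t^{-1})$ components, since the Littlewood-Richardson decomposition mixes the Laurent-polynomial and extremal-module factors across $(M,N)$ pairs, whereas $U_q^-(\widehat{\mathfrak{sl}}_n)$ acts only on the second tensor factor. Coordinating the induction on $|M|$ with the $U_q^-(\widehat{\mathfrak{sl}}_n)$-orbit argument so as to cleanly peel off each Schur component is the technical heart of the proof.
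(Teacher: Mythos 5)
Your proposal takes essentially the same approach as the paper's proof: the same choice of $\mathfrak{i}=(1^p,2^{m-p})$ (resp.\ $\mathfrak{i}'$ for $\varepsilon=-1$), the same composite $\tau_{p,\varepsilon}^1$ through $\mathcal{L}_{\mathfrak{i},\varepsilon}$, the same application of Corollary \ref{BN}, Lemma \ref{t}, and Lemma \ref{L-R2} to produce the Littlewood--Richardson expansion, and the same upper-triangular induction (via Lemma \ref{L-R1} and the $U_q^-(\widehat{\mathfrak{sl}}_n)$-submodule structure of the image) to isolate the $s_\Lambda$ component. The closing steps---graded character comparison from Corollaries \ref{br} and \ref{Mgcht} followed by the union over $\xi$ via Proposition \ref{dem}---also match the paper's argument.
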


\begin{proof}
We first assume that $\varepsilon=1$. We set $\mathfrak{i}=(\overbrace{1,\ldots,1}^p,\overbrace{2,\ldots,2}^{m-p})\in \mathcal{I}_p$. We define an $U_q(\widehat{\mathfrak{sl}}_n)$-linear morphism $\tau_{p,1}^1$ as the composition
  \[
  \tau_{p,1}^1:M_{p,1}\xrightarrow{\Phi_{m\breve{\varpi}_1}\mid_{M_{p,1}}} L_{p,1}=L_{\ge\mathfrak{i},1}\xrightarrow{p_{\mathfrak{i},1}}\mathcal{L}_{\mathfrak{i},1}\xrightarrow{\Xi_{\mathfrak{i},1}^{-1}}\mathbb{Q}(q)[t^{\pm1}]^{\otimes p}\otimes V(\varpi_1)^{\otimes m-p}. 
  \]

  Let $t_{\nu}$ denote the variable in the $\nu$-th factor of $\mathbb{Q}(q)[t^{\pm}]^{\otimes p}$. Then we can identify $\mathbb{Q}(q)[t^{\pm}]^{\otimes p}$ with $\mathbb{Q}(q)[t_1^{\pm1},\ldots,t_p^{\pm1}]$.

Let $k\in \mathbb{Z}$ and we set $\xi=k\breve{\alpha}_1^{\vee}$. We deduce by induction on $|\Lambda|$ that $\tau_{p,1}^1(M_{p,1}\cap V_{t_{\xi}}^-(m\breve{\varpi}_1))$ contains the element
\[
s_{\Lambda}(t_1^{-1},\ldots,t_p^{-1})(t_1\cdots t_p)^{-k}\otimes \left(u_{\varpi_1-k\delta}^{\otimes m-p}\right)
\]
for all $\Lambda\in \mathcal{P}_p$.

Let $\mathbf{c}_0=(\Lambda,\emptyset,\ldots,\emptyset)\in \overline{\Par}(m\breve{\varpi}_1)$, where $\ell(\Lambda)\le p$.

  By Corollary \ref{BN}, Lemma \ref{t} and Lemma \ref{L-R2}, we have 
  \begin{multline*}
    \Phi_{m\breve{\varpi}_1}(\breve{F}_n^{(p)}\breve{F}_{n-1}^{(p)}\cdots \breve{F}_1^{(p)}\breve{S}_{\mathbf{c}_0}^-\breve{S}_{t_{\xi}}u_{m\breve{\varpi}_1})\\
    \in\left(\sum_{\substack{M\in \mathcal{P}_p\\ N\in \mathcal{P}_{m-p}}}c^{\Lambda}_{M,N}\left(\breve{S}_{w_1}\left(s_M(\breve{z}_{1,1}^{-1},\ldots,\breve{z}_{1,p}^{-1})u_{\breve{\varpi}_1-k\breve{\delta}}^{\otimes p}\right)\right)\otimes \left(s_N(\breve{z}_{1,p+1}^{-1},\ldots,\breve{z}_{1,m}^{-1})u_{\breve{\varpi}_1-k\breve{\delta}}^{\otimes m-p}\right)\right)\\
    + \left(\bigoplus_{\mathfrak{j}>\mathfrak{i}}L_{\mathfrak{j},1}\right).
  \end{multline*}

  Hence, we have
  \begin{align*}
  &\tau_{p,1}^1(\breve{F}_n^{(p)}\breve{F}_{n-1}^{(p)}\cdots \breve{F}_1^{(p)}\breve{S}_{\mathbf{c}_0}^-\breve{S}_{t_{\xi}}u_{m\breve{\varpi}_1})\\
  &=\sum_{\substack{M\in \mathcal{P}_p\\ N\in \mathcal{P}_{m-p}}}c^{\Lambda}_{M,N}(-q)^{-(|N|+k(m-p))b_{1,1}}s_M(t_1^{-1},\ldots,t_p^{-1})(t_1\cdots t_p)^{-k}\otimes s_N(z_1^{-1},\ldots,z_{m-p}^{-1})u_{\varpi_1-k\delta}^{\otimes m-p}\\
  &=\sum_{\substack{M\in \mathcal{P}_p\\ N\in \mathcal{P}_{m-p}}}c^{\Lambda}_{M,N}(-q)^{-(|N|+k(m-p))b_{1,1}}s_M(t_1^{-1},\ldots,t_p^{-1})(t_1\cdots t_p)^{-k}\otimes \left(S_{\mathbf{d}_0}^-u_{\varpi_1-k\delta}^{\otimes m-p}\right),
  \end{align*}
where $\mathbf{d}_0=(N,\emptyset,\ldots,\emptyset)\in \overline{\Par}((m-p)\varpi_1)$. By Lemma \ref{L-R1}, we know $c^{\Lambda}_{\Lambda,\emptyset}=1$, and if $c^{\Lambda}_{M,N}\neq 0$ with $M\neq \Lambda$, then $|M|<|\Lambda|$. Thus, by induction hypothesis, we have 
\[
s_{\Lambda}(t_1^{-1},\ldots,t_p^{-1})(t_1\cdots t_p)^{-k}\otimes \left(u_{\varpi_1-k\delta}^{\otimes m-p}\right)\in \tau_{p,1}^1(M_p\cap V_{t_{\xi}}^-(m\breve{\varpi}_1)),
\]
as desired.

Since $\{s_{\Lambda}(t_1^{-1},\ldots,t_p^{-1})\}_{\Lambda\in \mathcal{P}_p}$ is a $\mathbb{Q}(q)$-basis of $\mathbb{Q}(q)[t_1^{-1},\ldots,t_p^{-1}]^{\mathfrak{S}_p}$, $\tau_{p,1}^1(M_{p,1}\cap V_{t_{\xi}}^-(m\breve{\varpi}_1))$ contains the $U_q^-(\widehat{\mathfrak{sl}}_n)$-submodule
\[
\left((t_1\cdots t_p)^{-k}\mathbb{Q}(q)[t_1^{-1},\ldots,t_p^{-1}]^{\mathfrak{S}_p}\right)\otimes \left(U_q^-(\widehat{\mathfrak{sl}}_n)u_{\varpi_1-k\delta}^{\otimes m-p}\right).
\]

 By Corollary \ref{br} and Corollary \ref{Mgcht}, we have
 \begin{align*}
   \gch \left(M_{p,1}\cap V_{t_{\xi}}^-(m\breve{\varpi}_1)\right)&=q^{-km}\left(\left(\prod_{r=1}^p(1-q^{-r})\right)\left(\prod_{s=1}^{m-p}(1-q^{-s})\right)\right)^{-1}P_{(m-p)\varpi_1}(x;q^{-1},0)\\
   &=q^{-kp}\left(\prod_{r=1}^p(1-q^{-r})\right)^{-1}\gch V_{t_{\zeta}}^-((m-p)\varpi_1)
 \end{align*}
where $\zeta=k\alpha_1^{\vee}$. By Theorem \ref{inj}, there is an injective $U_q(\widehat{\mathfrak{sl}}_n)$-linear morphism $\Phi_{(m-p)\varpi_1}:V((m-p)\varpi_1)\to V(\varpi_1)^{\otimes m-p}$ given by $u_{(m-p)\varpi_1}\mapsto u_{\varpi_1}^{\otimes m-p}$. Since $\Phi_{(m-p)\varpi_1}(S_{t_{\zeta}}u_{(m-p)\varpi_1})=u_{\varpi_1-k\delta}^{\otimes m-p}$, the restriction of $\Phi_{(m-p)\varpi_1}$ gives an isomorphism of $U_q^-(\widehat{\mathfrak{sl}}_n)$-modules
\[
V_{t_{\zeta}}^-((m-p)\varpi_1)\stackrel{\cong}{\longrightarrow} U_q^-(\widehat{\mathfrak{sl}}_n)u_{\varpi_1-k\delta}^{\otimes m-p}.
\]

 Hence, the graded character of of $\left((t_1\cdots t_p)^{-k}\mathbb{Q}(q)[t_1^{-1},\ldots,t_p^{-1}]^{\mathfrak{S}_p}\right)\otimes \left(U_q^-(\widehat{\mathfrak{sl}}_n)u_{\varpi_1-k\delta}^{\otimes m-p}\right)$ is computed as
 \[
 q^{-kp}\sum_{\Lambda\in \mathcal{P}_p}q^{-|\Lambda|}\gch V_{t_{\zeta}}^-((m-p)\varpi_1)=q^{-kp}\left(\prod_{r=1}^p(1-q^{-r})\right)^{-1}\gch V_{t_{\zeta}}^-((m-p)\varpi_1).
 \]

 Thus, by comparing the characters, we see that the restriction of $\tau_{p,1}^1$ gives an isomorphism of $U_q^-(\widehat{\mathfrak{sl}}_n)$-modules
 \begin{align*}
   M_{p,1}\cap V_{t_{\xi}}^-(m\breve{\varpi}_1)&\cong \left((t_1\cdots t_p)^{-k}\mathbb{Q}(q)[t_1^{-1},\ldots,t_p^{-1}]^{\mathfrak{S}_p}\right)\otimes \left(U_q^-(\widehat{\mathfrak{sl}}_n)u_{\varpi_1-k\delta}^{\otimes m-p}\right)\\
   &=\left((t_1\cdots t_p)^{-k}\mathbb{Q}(q)[t_1^{-1},\ldots,t_p^{-1}]^{\mathfrak{S}_p}\right)\otimes \Phi_{(m-p)\varpi_1}\left(V_{t_{\zeta}}^-((m-p)\varpi_1)\right).
 \end{align*}

  By Lemma \ref{dem}, we have
  \[
  M_{p,1}=\bigcup_{k\in \mathbb{Z}}\left(M_{p,1}\cap V_{t_{k\breve{\alpha}_1^{\vee}}}^-(m\breve{\varpi}_1)\right)\quad\text{and}\quad
    V((m-p)\varpi_1)=\bigcup_{k\in \mathbb{Z}}V_{t_{k\alpha_1^{\vee}}}^-((m-p)\varpi_1).
  \]

Therefore, $\tau_{p,1}^1$ induces an isomorphism
\begin{align*}
  M_{p,1}&\cong \mathbb{Q}(q)[t_1^{\pm1},\ldots,t_p^{\pm1}]^{\mathfrak{S}_p}\otimes\Phi_{(m-p)\varpi_1}\left(V((m-p)\varpi_1)\right)\\
  &\cong \mathbb{Q}(q)[t_1^{\pm1},\ldots,t_p^{\pm1}]^{\mathfrak{S}_p}\otimes V((m-p)\varpi_1)
\end{align*}
in this case.

  Next, we assume that $\varepsilon=-1$. We set $\mathfrak{i}^{\prime}=(\overbrace{2,\ldots,2}^{m-p},\overbrace{1,\ldots,1}^p)\in \mathcal{I}_p$. We define an $U_q(\widehat{\mathfrak{sl}}_n)$-homomorphism $\tau_{p,-1}^1$ as the composition
  \[
  \tau_{p,-1}^1:M_{p,-1}\xrightarrow{\Phi_{m\breve{\varpi}_1}\mid_{M_{p,-1}}} L_{p,-1}=L_{\le\mathfrak{i}^{\prime},-1}\xrightarrow{p_{\mathfrak{i}^{\prime},-1}}\mathcal{L}_{\mathfrak{i}^{\prime},-1}\xrightarrow{\Xi_{\mathfrak{i}^{\prime},-1}^{-1}}V(\varpi_1)^{\otimes m-p}\otimes \mathbb{Q}(q)[t^{\pm1}]. 
  \]

  Let $t_{\nu}$ denote the variable in the $\nu$-th factor of $\mathbb{Q}(q)[t^{\pm}]^{\otimes p}$. Then we can identify $\mathbb{Q}(q)[t^{\pm}]^{\otimes p}$ with $\mathbb{Q}(q)[t_1^{\pm1},\ldots,t_p^{\pm1}]$.

Let $k\in \mathbb{Z}$ and we set $\xi=k\breve{\alpha}_1^{\vee}$. We deduce by induction on $|\Lambda|$ that $\tau_{p,-1}^1(M_{p,-1}\cap V_{t_{\xi}}^-(m\breve{\varpi}_1))$ contains the element
\[
\left(u_{\varpi_1-k\delta}^{\otimes m-p}\right)\otimes s_{\Lambda}(t_1^{-1},\ldots,t_p^{-1})(t_1\cdots t_p)^{-k}
\]
for all $\Lambda\in \mathcal{P}_p$.

Let $\mathbf{c}_0=(\Lambda,\emptyset,\ldots,\emptyset)\in \overline{\Par}(m\breve{\varpi}_1)$, where $\ell(\Lambda)\le p$.

  By Corollary \ref{BN}, Lemma \ref{t} and Lemma \ref{L-R2}, we have 
  \begin{multline*}
    \Phi_{m\breve{\varpi}_1}(\breve{F}_n^{(p)}\breve{F}_{n-1}^{(p)}\cdots \breve{F}_1^{(p)}\breve{S}_{\mathbf{c}_0}^-\breve{S}_{t_{\xi}}u_{m\breve{\varpi}_1})\\
    \in \sum_{\substack{M\in \mathcal{P}_{m-p}\\ N\in \mathcal{P}_p}}c^{\Lambda}_{M,N} q^{p(m-p)}\left(s_M(\breve{z}_{1,1}^{-1},\ldots,\breve{z}_{1,m-p}^{-1})u_{\breve{\varpi}_1-k\breve{\delta}}^{\otimes m-p}\right)\otimes \left(\breve{S}_{w_1}\left(s_N(\breve{z}_{1,m-p+1}^{-1},\ldots,\breve{z}_{1,m}^{-1})u_{\breve{\varpi}_1-k\breve{\delta}}^{\otimes p}\right)\right)\\
    + \left(\bigoplus_{\mathfrak{j}<\mathfrak{i}^{\prime}}L_{\mathfrak{j},1}\right).
  \end{multline*}

  Hence, we have
  \begin{align*}
  &\tau_{p,-1}^1(\breve{F}_n^{(p)}\breve{F}_{n-1}^{(p)}\cdots \breve{F}_1^{(p)}\breve{S}_{\mathbf{c}_0}^-\breve{S}_{t_{\xi}}u_{m\breve{\varpi}_1})\\
  &=\sum_{\substack{M\in \mathcal{P}_{m-p}\\ N\in \mathcal{P}_p}}c^{\Lambda}_{M,N}q^{p(m-p)}(-q)^{-(|M|+k(m-p))b_{1,-1}}s_M(z_1^{-1},\ldots,z_{m-p}^{-1})u_{\varpi_1-k\delta}^{\otimes m-p}\otimes s_N(t_1^{-1},\ldots,t_p^{-1})(t_1\cdots t_p)^{-k}\\
  &=\sum_{\substack{M\in \mathcal{P}_{m-p}\\ N\in \mathcal{P}_p}}c^{\Lambda}_{M,N}q^{p(m-p)}(-q)^{-(|M|+k(m-p))b_{1,-1}}\left(S_{\mathbf{d}_0}^-u_{\varpi_1-k\delta}^{\otimes m-p}\right)\otimes s_N(t_1^{-1},\ldots,t_p^{-1})(t_1\cdots t_p)^{-k},
  \end{align*}
where $\mathbf{d}_0=(M,\emptyset,\ldots,\emptyset)\in \overline{\Par}((m-p)\varpi_1)$. By Lemma \ref{L-R1}, we know $c^{\Lambda}_{\emptyset,\Lambda}=1$, and if $c^{\Lambda}_{M,N}\neq 0$ with $N\neq \Lambda$, then $|N|<|\Lambda|$. Thus, by induction hypothesis, we have 
\[
\left(u_{\varpi_1-k\delta}^{\otimes m-p}\right)\otimes s_{\Lambda}(t_1^{-1},\ldots,t_p^{-1})(t_1\cdots t_p)^{-k}\in \tau_{p,-1}^1(M_{p,-1}\cap V_{t_{\xi}}^-(m\breve{\varpi}_1)),
\]
as desired.

Since $\{s_{\Lambda}(t_1^{-1},\ldots,t_p^{-1})\}_{\Lambda\in \mathcal{P}_p}$ is a $\mathbb{Q}(q)$-basis of $\mathbb{Q}(q)[t_1^{-1},\ldots,t_p^{-1}]^{\mathfrak{S}_p}$, $\tau_{p,-1}^1(M_{p,-1}\cap V_{t_{\xi}}^-(m\breve{\varpi}_1))$ contains the $U_q^-(\widehat{\mathfrak{sl}}_n)$-submodule
\[
\left(U_q^-(\widehat{\mathfrak{sl}}_n)u_{\varpi_1-k\delta}^{\otimes m-p}\right)\otimes \left((t_1\cdots t_p)^{-k}\mathbb{Q}(q)[t_1^{-1},\ldots,t_p^{-1}]^{\mathfrak{S}_p}\right).
\]

 By the same argument as in the case $\varepsilon=1$, it can be shown that $\tau_{p,-1}^1$ induces an isomorphism
\begin{align*}
  M_{p,-1}&\cong \Phi_{(m-p)\varpi_1}\left(V((m-p)\varpi_1)\right)\otimes \mathbb{Q}(q)[t_1^{\pm1},\ldots,t_p^{\pm1}]^{\mathfrak{S}_p}\\
  &\cong  V((m-p)\varpi_1)\otimes \mathbb{Q}(q)[t_1^{\pm1},\ldots,t_p^{\pm1}]^{\mathfrak{S}_p},
\end{align*}
as required.
\end{proof}

\begin{corollary}\label{i1}
  \textit{There is an isomorphism of $U_q(\widehat{\mathfrak{sl}}_n)$-modules
  \[
  \Psi_{\varepsilon}^*V(m\breve{\varpi}_1)\cong \bigoplus_{p=0}^m \mathbb{Q}(q)[t_1^{\pm1},\ldots,t_p^{\pm1}]^{\mathfrak{S}_p}\otimes V((m-p)\varpi_1).
  \]
  }
\end{corollary}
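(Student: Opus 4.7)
The plan is to deduce this corollary directly by assembling two ingredients that have already been established in the preceding material: the direct sum decomposition coming from the eigenspaces of $q^{\tilde h}$, and the identification of each eigenspace with a tensor product of a Laurent polynomial ring and an extremal weight module. In particular, the special structure of $\lambda = m\breve{\varpi}_1$ means that $m_1 = m$ and $m_2 = \cdots = m_n = 0$, and this case fits inside the setup of Theorem \ref{i1.} for every $p$ in the range $0 \le p \le m$.

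First, I will invoke Proposition \ref{dirsum2} for $\lambda = m\breve{\varpi}_1$, which (since $\sum_i m_i = m$) yields the $U_q(\widehat{\mathfrak{sl}}_n)$-module decomposition
\[
\Psi_{\varepsilon}^* V(m\breve{\varpi}_1) \cong M_{0,\varepsilon} \oplus M_{1,\varepsilon} \oplus \cdots \oplus M_{m,\varepsilon}.
\]
Then, for each $p$ with $0 \le p \le m$, I will apply Theorem \ref{i1.} to identify the individual summand
\[
M_{p,\varepsilon} \;\cong\; \mathbb{Q}(q)[t_1^{\pm1},\ldots,t_p^{\pm1}]^{\mathfrak{S}_p} \otimes V((m-p)\varpi_1)
\]
as $U_q(\widehat{\mathfrak{sl}}_n)$-modules. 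Substituting these isomorphisms termwise into the direct sum above gives the claimed formula, and completes the proof.

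There is essentially no obstacle here, because the substantive work has already been carried out in Theorem \ref{i1.}: constructing the map $\tau_{p,\varepsilon}^1$ via the composition through $L_{p,\varepsilon}$ and the quotient $\mathcal{L}_{\mathfrak{i},\varepsilon}$, showing that the image contains $s_\Lambda(t_1^{-1},\ldots,t_p^{-1})(t_1\cdots t_p)^{-k} \otimes u_{\varpi_1 - k\delta}^{\otimes m-p}$ via the Littlewood--Richardson expansion and induction on $|\Lambda|$, and then matching graded characters using Corollary \ref{Mgcht} and Theorem \ref{inj}. All that remains at the level of this corollary is to record that the right-hand side of Proposition \ref{dirsum2} becomes, summand by summand, exactly the asserted direct sum.
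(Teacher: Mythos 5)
Your argument is correct and matches the paper exactly: the paper's proof of Corollary \ref{i1} is precisely the one-line observation that the claim follows immediately from Proposition \ref{dirsum2} together with Theorem \ref{i1.}. Your write-up simply spells this out, which is fine.
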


\begin{proof}
  This result follows immediately from Proposition \ref{dirsum2} and Theorem \ref{i1.}.
\end{proof}

By the proof of Theorem \ref{i1.}, we obtain the following result.

\begin{corollary}
  \textit{There is an isomorphism of $U_q^-(\widehat{\mathfrak{sl}}_n)$-modules
  \[
  \pushQED{\qed}
  (\Psi_{\varepsilon}^-)^*V_e^-(m\breve{\varpi}_1)\cong \bigoplus_{p=0}^m \left( \mathbb{Q}(q)[t_1^{-1},\ldots,t_p^{-1}]^{\mathfrak{S}_p}\otimes V_e^-((m-p)\varpi_1)\right).\qedhere
\popQED
  \]
 }
\end{corollary}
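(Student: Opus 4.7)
The plan is to observe that the isomorphism claimed in this corollary is essentially already inside the proof of Theorem \ref{i1.}; it is obtained by specializing that proof to $\xi=0$ and then assembling over $p$ via the decomposition of Proposition \ref{dirsum2}. So only two ingredients are needed: (i) the intermediate $U_q^-(\widehat{\mathfrak{sl}}_n)$-linear isomorphism proved along the way in Theorem \ref{i1.}, and (ii) the compatibility of the $M_{p,\varepsilon}$-decomposition with $V_e^-$.

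More precisely, the proof of Theorem \ref{i1.} establishes, for every $\xi=k\breve{\alpha}_1^{\vee}\in\breve{Q}_0^{\vee}$, a $U_q^-(\widehat{\mathfrak{sl}}_n)$-linear isomorphism
\[
M_{p,\varepsilon}\cap V_{t_{\xi}}^-(m\breve{\varpi}_1)\;\cong\;\bigl((t_1\cdots t_p)^{-k}\mathbb{Q}(q)[t_1^{-1},\ldots,t_p^{-1}]^{\mathfrak{S}_p}\bigr)\otimes\Phi_{(m-p)\varpi_1}\bigl(V_{t_{\zeta}}^-((m-p)\varpi_1)\bigr)
\]
(with $\zeta=k\alpha_1^{\vee}$), and for $\varepsilon=-1$ the analogous formula with the two factors swapped. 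The first step is to specialize at $k=0$, which yields
\[
M_{p,\varepsilon}\cap V_e^-(m\breve{\varpi}_1)\;\cong\;\mathbb{Q}(q)[t_1^{-1},\ldots,t_p^{-1}]^{\mathfrak{S}_p}\otimes V_e^-((m-p)\varpi_1)
\]
as $U_q^-(\widehat{\mathfrak{sl}}_n)$-modules. For the $\varepsilon=-1$ case the factors come in the opposite order in Theorem \ref{i1.}, but because $\langle\alpha_i^{\vee},\delta\rangle=0$ the operator $t_i$ acts as the identity on any monomial in $\mathbb{Q}(q)[t_1^{\pm1},\ldots,t_p^{\pm1}]$, so the coproduct $\Delta(F_i)=F_i\otimes 1+t_i\otimes F_i$ gives the same module structure regardless of tensor order; hence both $\varepsilon$'s land on the same right-hand side.

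Next, I would invoke Proposition \ref{dirsum2}. The decomposition $\Psi_{\varepsilon}^*V(m\breve{\varpi}_1)=M_{0,\varepsilon}\oplus\cdots\oplus M_{m,\varepsilon}$ is the eigenspace decomposition for the central-in-$\Psi_{\varepsilon}(U_q(\widehat{\mathfrak{sl}}_n))$ element $q^{\tilde{h}}$, and since each $M_{p,\varepsilon}$ is the sum of the weight spaces of a specified set of weights, the Demazure submodule $V_e^-(m\breve{\varpi}_1)$ (which is a sum of weight spaces because it is compatible with the global basis) decomposes as
\[
V_e^-(m\breve{\varpi}_1)=\bigoplus_{p=0}^{m}\bigl(M_{p,\varepsilon}\cap V_e^-(m\breve{\varpi}_1)\bigr).
\]

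Combining this with the previous step gives
\[
(\Psi_{\varepsilon}^-)^*V_e^-(m\breve{\varpi}_1)\;\cong\;\bigoplus_{p=0}^{m}\bigl(\mathbb{Q}(q)[t_1^{-1},\ldots,t_p^{-1}]^{\mathfrak{S}_p}\otimes V_e^-((m-p)\varpi_1)\bigr),
\]
which is the desired corollary. There is no real obstacle: the content was produced in the proof of Theorem \ref{i1.}; the only small point to verify is that the $U_q(\widehat{\mathfrak{sl}}_n)$-linear isomorphism $\tau_{p,\varepsilon}^1$ restricts to $V_e^-(m\breve{\varpi}_1)$, which is automatic from its construction via the composition $\Phi_\lambda$ followed by the projection $p_{\mathfrak{i},\varepsilon}$ and the isomorphism $\Xi_{\mathfrak{i},\varepsilon}^{-1}$ already shown to identify the relevant $V_{t_\zeta}^-$-submodules.
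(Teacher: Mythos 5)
Your proposal is correct and follows the same route the paper intends (the paper's proof consists of the single remark ``By the proof of Theorem \ref{i1.}, we obtain the following result''): specialize the intermediate $U_q^-(\widehat{\mathfrak{sl}}_n)$-linear isomorphism $M_{p,\varepsilon}\cap V_{t_{\xi}}^-(m\breve{\varpi}_1)\cong\bigl((t_1\cdots t_p)^{-k}\mathbb{Q}(q)[t_1^{-1},\ldots,t_p^{-1}]^{\mathfrak{S}_p}\bigr)\otimes\Phi_{(m-p)\varpi_1}\bigl(V_{t_{\zeta}}^-((m-p)\varpi_1)\bigr)$ to $k=0$, and sum over $p$ using the weight-space decomposition $V_e^-(m\breve{\varpi}_1)=\bigoplus_{p=0}^m\bigl(M_{p,\varepsilon}\cap V_e^-(m\breve{\varpi}_1)\bigr)$ from Proposition \ref{dirsum2}. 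Your observation that the tensor-factor swap for $\varepsilon=-1$ is harmless as a $U_q^-(\widehat{\mathfrak{sl}}_n)$-module isomorphism (since $F_i$ kills the Laurent factor and $t_i$ acts trivially on it) is exactly the right point to note.
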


\subsection{Branching rule for $V(m\breve{\varpi}_i)$: the case $i=n$}

In this subsection, we assume that $\lambda\in \breve{P}_{0,+}$ is of the form $\lambda=m\breve{\varpi}_n$.

\begin{theorem}\label{in.}
  \textit{For $0\le p\le m$, there is an isomorphism of $U_q(\widehat{\mathfrak{sl}}_n)$-modules
  \[
  M_{p,\varepsilon}\cong \mathbb{Q}(q)[t_1^{\pm1},\ldots,t_{m-p}^{\pm1}]^{\mathfrak{S}_{m-p}}\otimes V(p\varpi_{n-1}) \left(\cong V(p\varpi_{n-1})\otimes \mathbb{Q}(q)[t_1^{\pm1},\ldots,t_{m-p}^{\pm1}]^{\mathfrak{S}_{m-p}} \right).
  \]
  }
\end{theorem}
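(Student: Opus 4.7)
The statement is the analogue of Theorem \ref{i1.} with $\breve\varpi_1$ replaced by $\breve\varpi_n$; in this setting the factor $N_{n,1}=V(\varpi_{n-1})$ carries the extremal-weight structure while $N_{n,2}\cong\mathbb{Q}(q)[t^{\pm1}]$ carries the polynomial structure, so the extremal and Laurent factors appear in the opposite tensor order. The plan is to imitate the argument of Theorem \ref{i1.} with these swapped roles.

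For $\varepsilon=1$, I will set $\mathfrak{i}=(\underbrace{1,\ldots,1}_{p},\underbrace{2,\ldots,2}_{m-p})\in\mathcal{I}_p$ and define the $U_q(\widehat{\mathfrak{sl}}_n)$-linear composition
\[
\tau_{p,1}^n : M_{p,1}\xrightarrow{\Phi_{m\breve\varpi_n}} L_{p,1}=L_{\ge\mathfrak{i},1}\xrightarrow{p_{\mathfrak{i},1}}\mathcal{L}_{\mathfrak{i},1}\xrightarrow{\Xi_{\mathfrak{i},1}^{-1}} V(\varpi_{n-1})^{\otimes p}\otimes\mathbb{Q}(q)[t_1^{\pm1},\ldots,t_{m-p}^{\pm1}],
\]
labeling the polynomial factors $t_1,\ldots,t_{m-p}$ by their position in the tensor product. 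For $k\in\mathbb{Z}$ and $\xi=k\breve\alpha_n^{\vee}$, and for each partition $\Lambda\in\mathcal{P}_{m-p}$, I will consider the test element $\breve F_n^{(p)}\breve S_{\mathbf{c}_0}^-\breve S_{t_\xi}u_{m\breve\varpi_n}$ with $\mathbf{c}_0=(\emptyset,\ldots,\emptyset,\Lambda)\in\overline{\Par}(m\breve\varpi_n)$; the extra weight shift $-p\breve\alpha_n$ from $\breve F_n^{(p)}$ places it in $M_{p,1}\cap V_{t_\xi}^-(m\breve\varpi_n)$ by Proposition \ref{dirsum2}. Applying Corollary \ref{BN}, splitting $s_\Lambda(\breve z_{n,1}^{-1},\ldots,\breve z_{n,m}^{-1})$ by Lemma \ref{L-R2} into $s_M$ on the first $p$ variables and $s_N$ on the last $m-p$, and extracting the $\mathfrak{i}$-leading contribution using Lemma \ref{t} (which for $i=n$ produces $w_n=s_n$ acting at the lowest-index positions), one computes $\tau_{p,1}^n$ on this element as a double sum
\[
\sum_{M\in\mathcal{P}_p,\,N\in\mathcal{P}_{m-p}} c^{\Lambda}_{M,N}\,(-q)^{\kappa(M,N,k)}\bigl(S_{\mathbf{d}_0}^- u_{\varpi_{n-1}-k\delta}^{\otimes p}\bigr)\otimes s_N(t_1^{-1},\ldots,t_{m-p}^{-1})(t_1\cdots t_{m-p})^{-k},
\]
where $\mathbf{d}_0=(\emptyset,\ldots,\emptyset,M)\in\overline{\Par}(p\varpi_{n-1})$ and $\kappa(M,N,k)$ is an integer scalar.

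By Lemma \ref{L-R1}, $c^{\Lambda}_{\emptyset,\Lambda}=1$ and any nonzero $c^{\Lambda}_{M,N}$ with $(M,N)\neq(\emptyset,\Lambda)$ forces $|N|<|\Lambda|$. Since $E_i,F_i$ act trivially on $\mathbb{Q}(q)[t_1^{\pm1},\ldots,t_{m-p}^{\pm1}]$, the operator $S_{\mathbf{d}_0}^-$ acts only on the extremal factor, so the non-leading terms factor as $S_{\mathbf{d}_0}^-$ applied to elements known inductively (in $|N|$) to lie in the image. Induction on $|\Lambda|$ then shows that $\tau_{p,1}^n(M_{p,1}\cap V_{t_\xi}^-(m\breve\varpi_n))$ contains the leading element $u_{\varpi_{n-1}-k\delta}^{\otimes p}\otimes s_\Lambda(t_1^{-1},\ldots,t_{m-p}^{-1})(t_1\cdots t_{m-p})^{-k}$ for every $\Lambda$, and hence the $U_q^-(\widehat{\mathfrak{sl}}_n)$-submodule
\[
\bigl(U_q^-(\widehat{\mathfrak{sl}}_n)u_{\varpi_{n-1}-k\delta}^{\otimes p}\bigr)\otimes\bigl((t_1\cdots t_{m-p})^{-k}\mathbb{Q}(q)[t_1^{-1},\ldots,t_{m-p}^{-1}]^{\mathfrak{S}_{m-p}}\bigr).
\]

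Comparing graded characters via Corollary \ref{br} and Corollary \ref{Mgcht}, with $\zeta=k\alpha_{n-1}^{\vee}\in Q_0^{\vee}$ and the injection $\Phi_{p\varpi_{n-1}}: V(p\varpi_{n-1})\hookrightarrow V(\varpi_{n-1})^{\otimes p}$ from Theorem \ref{inj}, shows this inclusion is an equality, so $\tau_{p,1}^n$ restricts to an isomorphism on each Demazure slice. Proposition \ref{dem} then gives $M_{p,1}=\bigcup_{k}(M_{p,1}\cap V_{t_{k\breve\alpha_n^{\vee}}}^-(m\breve\varpi_n))$ and $V(p\varpi_{n-1})=\bigcup_{k}V_{t_{k\alpha_{n-1}^{\vee}}}^-(p\varpi_{n-1})$, globalizing $\tau_{p,1}^n$ to the claimed isomorphism. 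The case $\varepsilon=-1$ proceeds analogously using $\mathfrak{i}'=(\underbrace{2,\ldots,2}_{m-p},\underbrace{1,\ldots,1}_{p})$ and $p_{\mathfrak{i}',-1}$. The main technical obstacle will be the careful bookkeeping of the scalar prefactors (involving $a_{n,\varepsilon}$ and $b_{n,\varepsilon}$) through the maps $\Xi_{\mathfrak{i},\varepsilon}^{-1}$; as in the proofs of Theorem \ref{M_0}, Theorem \ref{M_m}, Theorem \ref{2ni.} and Theorem \ref{i1.}, these constants are immaterial to the final isomorphism class.
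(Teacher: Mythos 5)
Your proposal follows essentially the same route as the paper's proof: the same choices of $\mathfrak{i}$ and $\mathfrak{i}'$, the same test elements $\breve F_n^{(p)}\breve S_{\mathbf{c}_0}^-\breve S_{t_\xi}u_{m\breve\varpi_n}$ with $\mathbf{c}_0=(\emptyset,\ldots,\emptyset,\Lambda)$, the same Littlewood--Richardson splitting and downward induction on $|\Lambda|$ via $c^\Lambda_{\emptyset,\Lambda}=1$, the same graded-character comparison against $V_{t_\zeta}^-(p\varpi_{n-1})$ with $\zeta=k\alpha_{n-1}^\vee$, and the same globalization by Proposition \ref{dem}. No substantive gaps; the only differences are cosmetic (leaving the scalar as $\kappa(M,N,k)$ instead of the explicit $-(|M|+kp)a_{n,1}$, and the helpful remark that $S_{\mathbf{d}_0}^-$ acts only on the extremal tensor factor).
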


\begin{proof}
  We first assume that $\varepsilon=1$. We set $\mathfrak{i}=(\overbrace{1,\ldots,1}^p,\overbrace{2,\ldots,2}^{m-p})\in \mathcal{I}_p$. We define an $U_q(\widehat{\mathfrak{sl}}_n)$-linear morphism $\tau_{p,1}^n$ as the composition
  \[
  \tau_{p,1}^n:M_{p,1}\xrightarrow{\Phi_{m\breve{\varpi}_n}\mid_{M_{p,1}}} L_{p,1}=L_{\ge\mathfrak{i},1}\xrightarrow{p_{\mathfrak{i},1}}\mathcal{L}_{\mathfrak{i},1}\xrightarrow{\Xi_{\mathfrak{i},1}^{-1}} V(\varpi_{n-1})^{\otimes p}\otimes \mathbb{Q}(q)[t^{\pm1}]^{\otimes m-p}. 
  \]

  Let $t_{\nu}$ denote the variable in the $\nu$-th factor of $\mathbb{Q}(q)[t^{\pm}]^{\otimes p}$. Then we can identify $\mathbb{Q}(q)[t^{\pm}]^{\otimes p}$ with $\mathbb{Q}(q)[t_1^{\pm1},\ldots,t_{m-p}^{\pm1}]$.

Let $k\in \mathbb{Z}$ and we set $\xi=k\breve{\alpha}_n^{\vee}$. We deduce by induction on $|\Lambda|$ that $\tau_{p,1}^n(M_{p,1}\cap V_{t_{\xi}}^-(m\breve{\varpi}_n))$ contains the element
\[
 \left(u_{\varpi_{n-1}-k\delta}^{\otimes p}\right)\otimes s_{\Lambda}(t_1^{-1},\ldots,t_{m-p}^{-1})(t_1\cdots t_{m-p})^{-k}
\]
for all $\Lambda\in \mathcal{P}_{m-p}$.

Let $\mathbf{c}_0=(\emptyset,\ldots,\emptyset,\Lambda)\in \overline{\Par}(m\breve{\varpi}_n)$, where $\ell(\Lambda)\le {m-p}$.

  By Corollary \ref{BN}, Lemma \ref{t} and Lemma \ref{L-R2}, we have 
  \begin{multline*}
    \Phi_{m\breve{\varpi}_n}(\breve{F}_n^{(p)}\breve{S}_{\mathbf{c}_0}^-\breve{S}_{t_{\xi}}u_{m\breve{\varpi}_1})\\
    \in\sum_{\substack{M\in \mathcal{P}_p\\ N\in \mathcal{P}_{m-p}}}c^{\Lambda}_{M,N}\left(\breve{S}_{w_n}\left(s_M(\breve{z}_{1,1}^{-1},\ldots,\breve{z}_{1,p}^{-1})u_{\breve{\varpi}_n-k\breve{\delta}}^{\otimes p}\right)\right)\otimes \left(s_N(\breve{z}_{1,p+1}^{-1},\ldots,\breve{z}_{1,m}^{-1})u_{\breve{\varpi}_n-k\breve{\delta}}^{\otimes m-p}\right)\\
    + \left(\bigoplus_{\mathfrak{j}>\mathfrak{i}}L_{\mathfrak{j},1}\right).
  \end{multline*}

  Hence, we have
  \begin{align*}
  &\tau_{p,1}^n(\breve{F}_n^{(p)}\breve{S}_{\mathbf{c}_0}^-\breve{S}_{t_{\xi}}u_{m\breve{\varpi}_n})\\
  &=\sum_{\substack{M\in \mathcal{P}_p\\ N\in \mathcal{P}_{m-p}}}c^{\Lambda}_{M,N}(-q)^{-(|M|+kp)a_{n,1}} s_M(z_1^{-1},\ldots,z_p^{-1})u_{\varpi_{n-1}-k\delta}^{\otimes p}\otimes s_N(t_1^{-1},\ldots,t_{m-p}^{-1})(t_1\cdots t_{m-p})^{-k}\\
  &=\sum_{\substack{M\in \mathcal{P}_p\\ N\in \mathcal{P}_{m-p}}}c^{\Lambda}_{M,N}(-q)^{-(|M|+kp)a_{n,1}}\left(S_{\mathbf{d}_0}^-u_{\varpi_{n-1}-k\delta}^{\otimes p}\right)\otimes s_N(t_1^{-1},\ldots,t_{m-p}^{-1})(t_1\cdots t_{m-p})^{-k},
  \end{align*}
where $\mathbf{d}_0=(\emptyset,\ldots,\emptyset,M)\in \overline{\Par}(p\varpi_{n-1})$. By Lemma \ref{L-R1}, we know $c^{\Lambda}_{\emptyset,\Lambda}=1$, and if $c^{\Lambda}_{M,N}\neq 0$ with $N\neq \Lambda$, then $|N|<|\Lambda|$. Thus, by induction hypothesis, we have 
\[
\left(u_{\varpi_{n-1}-k\delta}^{\otimes p}\right)\otimes s_{\Lambda}(t_1^{-1},\ldots,t_{m-p}^{-1})(t_1\cdots t_{m-p})^{-k}\in \tau_{p,1}^n(M_{p,1}\cap V_{t_{\xi}}^-(m\breve{\varpi}_n)),
\]
as desired.

Since $\{s_{\Lambda}(t_1^{-1},\ldots,t_{m-p}^{-1})\}_{\Lambda\in \mathcal{P}_{m-p}}$ is a $\mathbb{Q}(q)$-basis of $\mathbb{Q}(q)[t_1^{-1},\ldots,t_{m-p}^{-1}]^{\mathfrak{S}_{m-p}}$, $\tau_{p,1}^n(M_{p,1}\cap V_{t_{\xi}}^-(m\breve{\varpi}_1))$ contains the $U_q^-(\widehat{\mathfrak{sl}}_n)$-submodule
\[
 \left(U_q^-(\widehat{\mathfrak{sl}}_n)u_{\varpi_{n-1}-k\delta}^{\otimes p}\right)\otimes \left((t_1\cdots t_{m-p})^{-k}\mathbb{Q}(q)[t_1^{-1},\ldots,t_{m-p}^{-1}]^{\mathfrak{S}_{m-p}}\right).
\]

 By Corollary \ref{br} and Corollary \ref{Mgcht}, we have
 \begin{align*}
   \gch \left(M_{p,1}\cap V_{t_{\xi}}^-(m\breve{\varpi}_n)\right)&=q^{-km}\left(\left(\prod_{r=1}^p(1-q^{-r})\right)\left(\prod_{s=1}^{m-p}(1-q^{-s})\right)\right)^{-1}P_{p\varpi_{n-1}}(x;q^{-1},0)\\
   &=q^{-k(m-p)}\left(\prod_{s=1}^{m-p}(1-q^{-s})\right)^{-1}\gch V_{t_{\zeta}}^-(p\varpi_{n-1})
 \end{align*}
where $\zeta=k\alpha_{n-1}^{\vee}$. By Theorem \ref{inj}, there is an injective $U_q(\widehat{\mathfrak{sl}}_n)$-linear morphism $\Phi_{p\varpi_{n-1}}:V(p\varpi_{n-1})\to V(\varpi_{n-1})^{\otimes p}$ given by $u_{p\varpi_{n-1}}\mapsto u_{\varpi_{n-1}}^{\otimes p}$. Since $\Phi_{p\varpi_{n-1}}(S_{t_{\zeta}}u_{p\varpi_{n-1}})=u_{\varpi_{n-1}-k\delta}^{\otimes p}$, the restriction of $\Phi_{p\varpi_{n-1}}$ gives an isomorphism of $U_q^-(\widehat{\mathfrak{sl}}_n)$-modules
\[
V_{t_{\zeta}}^-(p\varpi_{n-1})\stackrel{\cong}{\longrightarrow} U_q^-(\widehat{\mathfrak{sl}}_n)u_{\varpi_{n-1}-k\delta}^{\otimes p}.
\]

 Hence, the graded character of $\left(U_q^-(\widehat{\mathfrak{sl}}_n)u_{\varpi_{n-1}-k\delta}^{\otimes p}\right)\otimes \left((t_1\cdots t_{m-p})^{-k}\mathbb{Q}(q)[t_1^{-1},\ldots,t_{m-p}^{-1}]^{\mathfrak{S}_{m-p}}\right)$ is computed as
 \[
 q^{-k(m-p)}\sum_{\Lambda\in \mathcal{P}_{m-p}}q^{-|\Lambda|}\gch V_{t_{\zeta}}^-(p\varpi_{n-1})=q^{-k(m-p)}\left(\prod_{s=1}^{m-p}(1-q^{-s})\right)^{-1}\gch V_{t_{\zeta}}^-(p\varpi_{n-1}).
 \]

 Thus, by comparing the graded characters, we see that the restriction of $\tau_{p,1}^n$ gives an isomorphism of $U_q^-(\widehat{\mathfrak{sl}}_n)$-modules
 \begin{align*}
   M_{p,1}\cap V_{t_{\xi}}^-(m\breve{\varpi}_n)&\cong \left(U_q^-(\widehat{\mathfrak{sl}}_n)u_{\varpi_{n-1}-k\delta}^{\otimes p}\right)\otimes \left((t_1\cdots t_{m-p})^{-k}\mathbb{Q}(q)[t_1^{-1},\ldots,t_{m-p}^{-1}]^{\mathfrak{S}_{m-p}}\right)\\
   &=\Phi_{p\varpi_{n-1}}\left((V_{t_{\zeta}}^-(p\varpi_{n-1})\right)\otimes \left((t_1\cdots t_{m-p})^{-k}\mathbb{Q}(q)[t_1^{-1},\ldots,t_{m-p}^{-1}]^{\mathfrak{S}_{m-p}}\right).
 \end{align*}

  By Lemma \ref{dem}, we have
  \[
  M_{p,1}=\bigcup_{k\in \mathbb{Z}}\left(M_{p,1}\cap V_{t_{k\breve{\alpha}_1^{\vee}}}^-(m\breve{\varpi}_n)\right)\quad\text{and}\quad
    V((m-p)\varpi_{n-1})=\bigcup_{k\in \mathbb{Z}}V_{t_{k\alpha_1^{\vee}}}^-(p\varpi_{n-1}).
  \]

Therefore, $\tau_{p,1}^n$ induces an isomorphism
\begin{align*}
  M_{p,1}&\cong \Phi_{p\varpi_{n-1}}\left(V(p\varpi_{n-1})\right)\otimes \mathbb{Q}(q)[t_1^{\pm1},\ldots,t_{m-p}^{\pm1}]^{\mathfrak{S}_{m-p}}\\
  &\cong V(p\varpi_{n-1})\otimes \mathbb{Q}(q)[t_1^{\pm1},\ldots,t_{m-p}^{\pm1}]^{\mathfrak{S}_{m-p}}
\end{align*}
in this case.

  Next, we assume that $\varepsilon=-1$. We set $\mathfrak{i}^{\prime}=(\overbrace{2,\ldots,2}^{m-p},\overbrace{1,\ldots,1}^p)\in \mathcal{I}_p$. We define an $U_q(\widehat{\mathfrak{sl}}_n)$-homomorphism $\tau_{p,-1}^n$ as the composition
  \[
  \tau_{p,-1}^n:M_{p,-1}\xrightarrow{\Phi_{m\breve{\varpi}_1}\mid_{M_{p,-1}}} L_{p,-1}=L_{\le\mathfrak{i}^{\prime},-1}\xrightarrow{p_{\mathfrak{i}^{\prime},-1}}\mathcal{L}_{\mathfrak{i}^{\prime},-1}\xrightarrow{\Xi_{\mathfrak{i}^{\prime},-1}^{-1}} \mathbb{Q}(q)[t^{\pm1}]^{\otimes m-p}\otimes V(\varpi_{n-1})^{\otimes p}. 
  \]

  Let $t_{\nu}$ denote the variable in the $\nu$-th factor of $\mathbb{Q}(q)[t^{\pm}]^{\otimes p}$. Then we can identify $\mathbb{Q}(q)[t^{\pm}]^{\otimes p}$ with $\mathbb{Q}(q)[t_1^{\pm1},\ldots,t_p^{\pm1}]$.

Let $k\in \mathbb{Z}$ and we set $\xi=k\breve{\alpha}_1^{\vee}$. We deduce by induction on $|\Lambda|$ that $\tau_{p,-1}^n(M_{p,-1}\cap V_{t_{\xi}}^-(m\breve{\varpi}_n))$ contains the element
\[
s_{\Lambda}(t_1^{-1},\ldots,t_p^{-1})(t_1\cdots t_{m-p})^{-k}\otimes \left(u_{\varpi_{n-1}-k\delta}^{\otimes p}\right)
\]
for all $\Lambda\in \mathcal{P}_{m-p}$.

Let $\mathbf{c}_0=(\emptyset,\ldots,\emptyset,\Lambda)\in \overline{\Par}(m\breve{\varpi}_n)$, where $\ell(\Lambda)\le {m-p}$.

  By Corollary \ref{BN}, Lemma \ref{t} and Lemma \ref{L-R2}, we have 
  \begin{multline*}
    \Phi_{m\breve{\varpi}_n}(\breve{F}_n^{(p)}\breve{S}_{\mathbf{c}_0}^-\breve{S}_{t_{\xi}}u_{m\breve{\varpi}_n})\\
    \in \sum_{\substack{M\in \mathcal{P}_{m-p}\\ N\in \mathcal{P}_p}}c^{\Lambda}_{M,N} q^{p(m-p)}\left(s_M(\breve{z}_{1,1}^{-1},\ldots,\breve{z}_{1,m-p}^{-1})u_{\breve{\varpi}_n-k\breve{\delta}}^{\otimes m-p}\right)\otimes \left(\breve{S}_{w_n}\left(s_N(\breve{z}_{1,m-p+1}^{-1},\ldots,\breve{z}_{1,m}^{-1})u_{\breve{\varpi}_n-k\breve{\delta}}^{\otimes p}\right)\right)\\
    + \left(\bigoplus_{\mathfrak{j}<\mathfrak{i}^{\prime}}L_{\mathfrak{j},1}\right).
  \end{multline*}

  Hence, we have
  \begin{align*}
  &\tau_{p,-1}^n(\breve{F}_n^{(p)}\breve{S}_{\mathbf{c}_0}^-\breve{S}_{t_{\xi}}u_{m\breve{\varpi}_n})\\
  &=\sum_{\substack{M\in \mathcal{P}_{m-p}\\ N\in \mathcal{P}_p}}c^{\Lambda}_{M,N}q^{p(m-p)}(-q)^{-(|M|+k(m-p))a_{n,-1}}s_M(t_1^{-1},\ldots,t_{m-p}^{-1})(t_1\cdots t_{m-p})^{-k}\otimes s_N(z_1^{-1},\ldots,z_p^{-1})u_{\varpi_n-k\delta}^{\otimes p}\\
  &=\sum_{\substack{M\in \mathcal{P}_{m-p}\\ N\in \mathcal{P}_p}}c^{\Lambda}_{M,N}q^{p(m-p)}(-q)^{-(|M|+k(m-p))a_{n,-1}}s_M(t_1^{-1},\ldots,t_{m-p}^{-1})(t_1\cdots t_{m-p})^{-k}\otimes \left(S_{\mathbf{d}_0}^-u_{\varpi_1-k\delta}^{\otimes p}\right),
  \end{align*}
where $\mathbf{d}_0=(\emptyset,\ldots,\emptyset,N)\in \overline{\Par}(p\varpi_{n-1})$. By Lemma \ref{L-R1}, we know $c^{\Lambda}_{\Lambda,\emptyset}=1$, and if $c^{\Lambda}_{M,N}\neq 0$ with $M\neq \Lambda$, then $|M|<|\Lambda|$. Thus, by induction hypothesis, we have 
\[
s_{\Lambda}(t_1^{-1},\ldots,t_p^{-1})(t_1\cdots t_{m-p})^{-k}\otimes \left(u_{\varpi_{n-1}-k\delta}^{\otimes p}\right)\in \tau_{p,-1}^n(M_{p,-1}\cap V_{t_{\xi}}^-(m\breve{\varpi}_1)),
\]
as desired.

Since $\{s_{\Lambda}(t_1^{-1},\ldots,t_{m-p}^{-1})\}_{\Lambda\in \mathcal{P}_{m-p}}$ is a $\mathbb{Q}(q)$-basis of $\mathbb{Q}(q)[t_1^{-1},\ldots,t_{m-p}^{-1}]^{\mathfrak{S}_{m-p}}$, $\tau_{p,-1}^n(M_{p,-1}\cap V_{t_{\xi}}^-(m\breve{\varpi}_n))$ contains the $U_q^-(\widehat{\mathfrak{sl}}_n)$-submodule
\[
\left((t_1\cdots t_{m-p})^{-k}\mathbb{Q}(q)[t_1^{-1},\ldots,t_{m-p}^{-1}]^{\mathfrak{S}_{m-p}}\right)\otimes \left(U_q^-(\widehat{\mathfrak{sl}}_n)u_{\varpi_{n-1}-k\delta}^{\otimes p}\right).
\]

 By the same argument as in the case $\varepsilon=1$, it can be shown that $\tau_{p,-1}^1$ induces an isomorphism
\begin{align*}
  M_{p,-1}&\cong \mathbb{Q}(q)[t_1^{\pm1},\ldots,t_{m-p}^{\pm1}]^{\mathfrak{S}_{m-p}}\otimes \Phi_{p\varpi_{n-1}}\left(V(p\varpi_{n-1})\right)\\
  &\cong  \mathbb{Q}(q)[t_1^{\pm1},\ldots,t_{m-p}^{\pm1}]^{\mathfrak{S}_{m-p}}\otimes V(p\varpi_{n-1}),
\end{align*}
as required.
\end{proof}

\begin{corollary}\label{in}
  \textit{There is an isomorphism of $U_q(\widehat{\mathfrak{sl}}_n)$-modules
  \[
  \Psi_{\varepsilon}^*V(m\breve{\varpi}_n)\cong \bigoplus_{p=0}^m \mathbb{Q}(q)[t_1^{\pm1},\ldots,t_{m-p}^{\pm1}]^{\mathfrak{S}_{m-p}}\otimes V(p\varpi_{n-1}).
  \]
  }
\end{corollary}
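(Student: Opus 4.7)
The plan is to assemble this corollary directly from two previously established results applied to the specific weight $\lambda = m\breve{\varpi}_n$. The first ingredient is Proposition \ref{dirsum2}, which provides the $q^{\tilde h}$-eigenspace decomposition
\[
\Psi_{\varepsilon}^* V(m\breve{\varpi}_n) \cong \bigoplus_{p=0}^{m} M_{p,\varepsilon}
\]
as $U_q(\widehat{\mathfrak{sl}}_n)$-modules; this applies in our situation because for $\lambda = m\breve{\varpi}_n$ the parameter $\sum_{i\in\breve{I}_0} m_i$ in Proposition \ref{dirsum2} equals $m$, so the range of summation is precisely $0 \le p \le m$. Each $M_{p,\varepsilon}$ is indeed a $U_q(\widehat{\mathfrak{sl}}_n)$-submodule by Lemma \ref{qhcom}, which ensures that $q^{\tilde h}$ commutes with $\Psi_{\varepsilon}(U_q(\widehat{\mathfrak{sl}}_n))$.

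The second ingredient is Theorem \ref{in.}, which identifies each block explicitly: for every $0 \le p \le m$ and every $\varepsilon \in \{\pm 1\}$, it furnishes an isomorphism
\[
M_{p,\varepsilon} \cong \mathbb{Q}(q)[t_1^{\pm1},\ldots,t_{m-p}^{\pm1}]^{\mathfrak{S}_{m-p}} \otimes V(p\varpi_{n-1})
\]
of $U_q(\widehat{\mathfrak{sl}}_n)$-modules. Plugging this identification into the direct sum decomposition above immediately produces the desired formula. The corollary is therefore purely formal at this stage: the substantive work has already been carried out in proving Theorem \ref{in.}, which combines the graded-character identity of Corollary \ref{Mgcht} with the tensor-product description of the quotient pieces $\mathcal{L}_{\mathfrak{i},\varepsilon}$ supplied by Proposition \ref{quot}, together with the injectivity of $\Phi_{p\varpi_{n-1}}$ from Theorem \ref{inj}. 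No new obstacle arises in passing from Theorem \ref{in.} to Corollary \ref{in}.
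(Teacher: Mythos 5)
Your proposal is correct and follows exactly the same route as the paper: invoke Proposition \ref{dirsum2} for the direct sum decomposition $\Psi_{\varepsilon}^*V(m\breve{\varpi}_n)\cong\bigoplus_{p=0}^m M_{p,\varepsilon}$ and then substitute the identification of each block $M_{p,\varepsilon}$ supplied by Theorem \ref{in.}. No discrepancy.
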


\begin{proof}
   This result follows immediately from Proposition \ref{dirsum2} and Theorem \ref{in.}.
\end{proof}

By the proof of Theorem \ref{in.}, we obtain the following result.

\begin{corollary}
  \textit{There is an isomorphism of $U_q^-(\widehat{\mathfrak{sl}}_n)$-modules
  \[
  \pushQED{\qed}
  (\Psi_{\varepsilon}^-)^*V_e^-(m\breve{\varpi}_n)\cong \bigoplus_{p=0}^m \left(\mathbb{Q}(q)[t_1^{-1},\ldots,t_{m-p}^{-1}]^{\mathfrak{S}_{m-p}}\otimes V_e^-(p\varpi_{n-1})\right).\qedhere
\popQED
 \]
 }
\end{corollary}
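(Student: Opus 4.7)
The plan is to read this off from the proof of Theorem \ref{in.} by specializing to the trivial translation $\xi = 0$. Recall from that proof, in both cases $\varepsilon = \pm 1$, we constructed an explicit $U_q(\widehat{\mathfrak{sl}}_n)$-linear map $\tau_{p,\varepsilon}^n$ and showed that, for each $\xi = k\breve{\alpha}_n^{\vee}$, its restriction to $M_{p,\varepsilon} \cap V_{t_{\xi}}^-(m\breve{\varpi}_n)$ gives an isomorphism of $U_q^-(\widehat{\mathfrak{sl}}_n)$-modules onto an explicit tensor factor, from which the full isomorphism $M_{p,\varepsilon} \cong \mathbb{Q}(q)[t_1^{\pm 1},\ldots,t_{m-p}^{\pm 1}]^{\mathfrak{S}_{m-p}} \otimes V(p\varpi_{n-1})$ was obtained by taking a union over $k$. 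Setting $k = 0$ (so $\xi = 0$ and $\zeta = 0$) and keeping only the polynomials (rather than Laurent polynomials) in the $t_i$'s yields
\[
M_{p,\varepsilon} \cap V_e^-(m\breve{\varpi}_n) \;\cong\; \mathbb{Q}(q)[t_1^{-1},\ldots,t_{m-p}^{-1}]^{\mathfrak{S}_{m-p}} \otimes V_e^-(p\varpi_{n-1})
\]
as $U_q^-(\widehat{\mathfrak{sl}}_n)$-modules.

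Next I would observe that $V_e^-(m\breve{\varpi}_n)$ is stable under $\Psi_{\varepsilon}(U_q^-(\widehat{\mathfrak{sl}}_n))$: this is because $\Psi_{\varepsilon}$ sends $F_i$ to $\breve{F}_i$ for $i \neq 0$ and $\Psi_{\varepsilon}(F_0) = T_n^{\varepsilon}(\breve{F}_0)$, which by the explicit formulas $T_n(\breve{F}_0) = \breve{F}_0\breve{F}_n - q\breve{F}_n\breve{F}_0$ and $T_n^{-1}(\breve{F}_0) = -q\breve{F}_0\breve{F}_n + \breve{F}_n\breve{F}_0$ lies in $U_q^-(\widehat{\mathfrak{sl}}_{n+1})$. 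Hence $(\Psi_{\varepsilon}^-)^*V_e^-(m\breve{\varpi}_n)$ is a well-defined $U_q^-(\widehat{\mathfrak{sl}}_n)$-module, and intersecting the direct sum decomposition of Proposition \ref{dirsum2} with $V_e^-(m\breve{\varpi}_n)$ gives
\[
(\Psi_{\varepsilon}^-)^*V_e^-(m\breve{\varpi}_n) \;=\; \bigoplus_{p=0}^m \bigl(M_{p,\varepsilon} \cap V_e^-(m\breve{\varpi}_n)\bigr),
\]
where the intersection on each summand makes sense as a $U_q^-(\widehat{\mathfrak{sl}}_n)$-submodule since each $M_{p,\varepsilon}$ is $U_q(\widehat{\mathfrak{sl}}_n)$-stable. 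Combining this decomposition with the isomorphism for each $p$ established above yields the claim.

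There is essentially no obstacle here, since all the content is already in the proof of Theorem \ref{in.}; the only thing to check carefully is that the isomorphism $M_{p,\varepsilon} \cap V_{t_{\xi}}^-(m\breve{\varpi}_n) \cong \Phi_{p\varpi_{n-1}}(V_{t_{\zeta}}^-(p\varpi_{n-1})) \otimes ((t_1\cdots t_{m-p})^{-k}\mathbb{Q}(q)[t_1^{-1},\ldots,t_{m-p}^{-1}]^{\mathfrak{S}_{m-p}})$ from the proof of Theorem \ref{in.}, at $k = 0$, indeed identifies $V_e^-$ on both sides with the polynomial (rather than Laurent polynomial) ring as its tensor factor. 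This is immediate from inspection of $\tau_{p,\varepsilon}^n$: the image of $\breve{S}_{\mathbf{c}_0}^- u_{m\breve{\varpi}_n}$ (with $\mathbf{c}_0 = (\emptyset,\ldots,\emptyset,\Lambda)$) under $\tau_{p,\varepsilon}^n$ has polynomial (non-negative degree in $t_i^{-1}$) factors, so the $U_q^-$-submodule generated by these vectors matches exactly the stated tensor product.
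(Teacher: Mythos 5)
Your proposal is correct and takes essentially the same route as the paper, which simply cites the proof of Theorem \ref{in.}: there the restriction of $\tau_{p,\varepsilon}^n$ is shown to give an isomorphism of $U_q^-(\widehat{\mathfrak{sl}}_n)$-modules $M_{p,\varepsilon}\cap V_{t_{\xi}}^-(m\breve{\varpi}_n)\cong\Phi_{p\varpi_{n-1}}(V_{t_{\zeta}}^-(p\varpi_{n-1}))\otimes\bigl((t_1\cdots t_{m-p})^{-k}\mathbb{Q}(q)[t_1^{-1},\ldots,t_{m-p}^{-1}]^{\mathfrak{S}_{m-p}}\bigr)$ for each $\xi=k\breve{\alpha}_n^\vee$, and your specialization to $k=0$ together with the observation that $\Psi_{\varepsilon}^-$ lands in $U_q^-(\widehat{\mathfrak{sl}}_{n+1})$ and that $V_e^-(m\breve{\varpi}_n)$ therefore decomposes into the $q^{\tilde{h}}$-eigenspaces $M_{p,\varepsilon}\cap V_e^-(m\breve{\varpi}_n)$ is exactly what the paper's one-line citation leaves to the reader.
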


\section{Acknowledgment}

The author would like to thank his supervisor Syu Kato for his helpful advice and continuous encouragement.

\end{document}